\newtheorem{theorem}{Theorem}[section]
\newtheorem{lemma}[theorem]{Lemma}
\newtheorem{corollary}[theorem]{Corollary}
\newtheorem{definition}[theorem]{Definition}
\newtheorem{proposition}[theorem]{Proposition}
\newtheorem{remark}[theorem]{Remark}
\def\cC{{\mathcal C}}
\def\bbN{{\mathbb N}} \def\bbZ{{\mathbb Z}} \def\bbQ{{\mathbb Q}}
\def\bbC{{\mathbb C}}  \def\bbP{{\mathbb P}}
     \def\bfU{{\bf U}}
 \def\leq{\leqslant} \def\geq{\geqslant}
\def\Hom{\mbox{\rm Hom}} 
\def\Ext{\mbox{\rm Ext}}  
\def\dim{\mbox{\rm dim}}  
\def\coker{\mbox{\rm coker}}
\def\bfV{{\mathbf V}}
\def\supp{{\rm supp}}
\def\bfV{\mathbf{V}}
\def\bfW{\mathbf{W}}
\def\bfE{\mathbf{E}}
\def\bfG{\mathbf{G}}
\def\bfU{\mathbf{U}}
\def\bfM{\mathbf{M}}
\newcommand\mk{{\mathcal{K}}}
\newcommand\mn{{\mathcal{N}}}
\newcommand\mm{{\mathcal{M}}}
\newcommand\me{{\mathcal{E}}}
\newcommand\mf{{\mathcal{F}}}
\newcommand\mi{{\mathcal{I}}}
\newcommand\mo{{\mathcal{O}}}
\newcommand\mP{{\mathcal{P}}}
\newcommand\mq{{\mathcal{Q}}}
\newcommand\ml{{\mathcal{L}}}
\newcommand\mv{{\mathcal{V}}}
\begin{document}

\title[]{Lusztig sheaves and integrable highest weight modules in the symmetrizable case}

\author{Yixin Lan, Yumeng Wu, Jie Xiao}
\address{Max Planck Institute for Mathematics}
\email{lanyixin@amss.ac.cn (Y. Lan)}
\address{Beijing International Center for Mathematical Research, Peking University, Beijing 100871, P. R. China}
\email{wuym25@pku.edu.cn (Y. Wu)}
\address{School of Mathematical Sciences, Beijing Normal University, Beijing 100875, P. R. China}
\email{jxiao@bnu.edu.cn(J.Xiao)}

\subjclass[2020]{16G20, 17B37}

\keywords{}

\bibliographystyle{abbrv}

\begin{abstract}
This paper continues the work of \cite{fang2023lusztigsheavesintegrablehighest} and \cite{fang2023lusztigsheavestensorproducts}. For a symmetrizable generalized Cartan matrix $C$ and the corresponding quantum group $\mathbf{U}$, we consider an associated quiver $Q$ equipped with an admissible automorphism $a$. We construct a category $\widetilde{\mathcal{Q}/\mathcal{N}}$ obtained from localizations of Lusztig sheaves for the corresponding framed and $2$-framed quivers with automorphism. The Grothendieck groups of these categories realize the integrable highest weight module $L(\lambda)$ and the tensor product $L(\lambda_1)\otimes L(\lambda_2)$ of integrable highest weight $\mathbf{U}$-modules. After quotienting by traceless objects, Lusztig sheaves yield the signed canonical bases of $L(\lambda)$ and $L(\lambda_1)\otimes L(\lambda_2)$. As applications, we recover symmetrizable crystal structures on Nakajima quiver varieties, Nakajima tensor product varieties, and Lusztig nilpotent varieties of preprojective algebras.
\end{abstract}

\maketitle

\setcounter{tocdepth}{1}\tableofcontents
\begin{spacing}{1.5}
\section{Introduction}
\subsection{Symmetrizable Cartan data and quantum groups}
Let $C=(c_{ij})_{i,j\in I'}=DB$ be a symmetrizable generalized Cartan matrix, where $D=\mathbf{diag}(\frac{1}{s_i})_{i\in I'}$ is diagonal and $B=(b_{ij})_{i,j\in I'}$ is symmetric. This matrix determines a bilinear form $\langle\ ,\ \rangle : \bbZ[I'] \times \bbZ[I'] \rightarrow \bbZ$ by $\langle i,j\rangle = c_{ij}$. The corresponding quantum group $\mathbf{U}$ is the $\mathbb{Q}(v)$-algebra generated by $E_i, F_i, K_{\nu}$ for $i\in I', \nu\in \bbZ[I']$, subject to the following relations:
	\begin{itemize}
		\item {\rm{(a)}} $K_0 = 1$, and $K_{\nu}K_{\nu'} = K_{\nu+\nu'}$ for any $\nu,\nu' \in \bbZ[I']$;
		\item {\rm{(b)}} $K_{\nu}E_i = v^{\langle \nu,i \rangle} E_i K_{\nu}$ for $i \in I', \nu \in \bbZ[I']$;
		\item {\rm{(c)}} $K_{\nu}F_i = v^{-\langle \nu,i \rangle} F_i K_{\nu}$ for $i \in I', \nu \in \bbZ[I']$;
		\item {\rm{(d)}} $E_i F_j - F_j E_i = \delta_{ij} \frac{\tilde{K}_{i} - \tilde{K}_{-i}}{v_i - v_i^{-1}},$ 
		\item {\rm{(e)}} $\sum\limits_{p+q=1-c_{ij}} (-1)^p E_i^{(p)} E_j E_i^{(q)} = 0$;
		\item {\rm{(f)}} $\sum\limits_{p+q=1-c_{ij}} (-1)^p F_i^{(p)} F_j F_i^{(q)} = 0$.
	\end{itemize}
	Here we use the notation $v_i := v^{s_i}$. For $\nu = \sum\limits_{i\in I'} \nu_i i \in \bbZ[I']$, we denote $\tilde{K}_{\nu} := \prod\limits_{i\in I'} K_{s_i \nu_i i}$, and $E_i^{(n)} := E_i^n/[n]^!_i$, $F_i^{(n)} := F_i^n/[n]^!_i$ are the divided powers of $E_i$ and $F_i$, where $[n]_i := \frac{v_i^n - v_i^{-n}}{v_i - v_i^{-1}}$, $[n]^!_i := \Pi_{s=1}^n [s]_i$.

	Let $M$ be a $\mathbf{U}$-module with a weight-space decomposition $M = \oplus_{\lambda \in \bbZ[I']} M^{\lambda}$ such that, for any $m \in M^{\lambda}$, one has $K_{\nu}m = v^{\langle \nu,\lambda \rangle}m$. We say that $M$ is integrable if, for every $m \in M$, there exists $n_0 \in \bbN$ such that $E_i^{(n)}m=0$ and $F_i^{(n)}m=0$ for all $n>n_0$. We refer to \cite{Jantzen,L01} for details.
	
	For a dominant weight $\lambda$ (that is, $\langle i, \lambda \rangle \in \bbN$ for all $i \in I'$), the irreducible integrable highest weight module $L(\lambda)$ is defined by
$$L(\lambda) := \mathbf{U}/ \left( \sum\limits_{i\in I'} \mathbf{U} E_i + \sum\limits_{\nu \in \bbZ[I']} \mathbf{U} (K_{\nu} - v^{\langle \nu,\lambda \rangle}) + \sum\limits_{i \in I'} \mathbf{U} F_i^{\langle i,\lambda \rangle +1}\right).$$
The $\mathbf{U}$-module structure on $L(\lambda_1)\otimes L(\lambda_2)$ is induced by the comultiplication on $\mathbf{U}$.
	
\subsection{Lusztig's sheaves for quivers with automorphisms}\label{quiver a}	In \cite[Chapters 11, 12, and 14]{lusztig2010introduction}, Lusztig considers a finite quiver $Q=(I,H,\Omega)$ with an admissible automorphism $a$ attached to a given Cartan matrix. 

A finite quiver $Q=(I,H,\Omega)$ consists of finite sets $I$ and $H$, together with a subset $\Omega\subset H$, where $I$ is the set of vertices, $H$ is the set of all oriented arrows $s(h) \xrightarrow{h} t(h)$, and $H$ is the disjoint union of $\Omega$ and $\bar{\Omega}$. Here $\bar{\quad}:H\rightarrow H$ sends an arrow to the same edge with the opposite orientation. Such a subset $\Omega$ is called an orientation of $Q$.

For a finite quiver $(I,H,\Omega)$ without loops, an admissible automorphism $a$ consists of two permutations $a:I\rightarrow I$ and $a:H\rightarrow H$ satisfying the following conditions:\\
{\rm{(a)}} $a(s(h))=s(a(h))$ and $a(t(h))=t(a(h))$ for any $h\in H$;\\
{\rm{(b)}} $s(h)$ and $t(h)\in I$ belong to different $a$-orbits for any $h\in H$.

Given a symmetrizable generalized Cartan matrix $C=(c_{ij})_{i,j\in I'}=DB$, there exists, in general non-uniquely, a finite quiver $Q=(I,H,\Omega)$ with an admissible automorphism $a$ such that the set of $a$-orbits in $I$ is in bijection with $I'$, the order of the $a$-orbit corresponding to $i \in I'$ is $s_i$, and the number of arrows between the $a$-orbits corresponding to $i,j \in I'$ is $c_{i,j}s_i=c_{j,i}s_j$. For such a quiver with the automorphism $a$, Lusztig considers a full subcategory $\mathcal{Q}=\coprod\limits_{\mathbf{V}} \mathcal{Q}_{\mathbf{V}}$ of semisimple perverse sheaves on the moduli space $\coprod\limits_{\mathbf{V}}\mathbf{E}_{\mathbf{V},\Omega}$ of quiver representations. Via the periodic functor $a^{\ast}$, Lusztig has constructed a certain graded additive category $\tilde{\mathcal{Q}}$ from $\mathcal{Q}$, consisting of pairs $(B,\phi)$ of semisimple complexes $B$ and morphisms $\phi: a^{\ast} B \rightarrow B$. He has also defined the induction and restriction functors for the category $\tilde{\mathcal{Q}}$. (See \cite[Chapter 9 and 12]{lusztig2010introduction} or Section 2.1.) The (generalized) Grothendieck group of $\tilde{\mathcal{Q}}$ has a bialgebra structure, which is canonically isomorphic to the positive (or negative) part $\mathbf{U}^{\pm}$ of $\mathbf{U}$. Moreover, certain objects of  $\tilde{\mathcal{Q}}$ provide an integral basis of $\mathbf{U}^{\pm}$, which is called the signed basis by Lusztig. 

When the generalized Cartan matrix $C$ is symmetric, or equivalently, the admissible automorphism $a$ is trivial, the simple perverse sheaves in $\mathcal{Q}$ form a basis of $\mathbf{U}^{\pm}$, which is called the canonical basis and shares many remarkable properties.

Lusztig also defines induction and restriction functors as follows. For a fixed subspace $\bfW\subset \bfV$, $\dim \bfW=\dim \bfV_2$, $\bfV\cong \bfV_1\oplus\bfV_2$, $P$ is the stabilizer of $\bfW$ in $G_{\bfV}$, $U$ is the unipotent radical of $P$ and $F=\{x\in \bfE_{\bfV}|x(\bfW)\subset \bfW\}$.

$$\bfE_{\bfV_1}\times \bfE_{\bfV_2}\xleftarrow{p_1}G_{\bfV}\times_{U}F\xrightarrow{p_2}G_{\bfV}\times_{P}F\xrightarrow{p_3}\bfE_{\bfV},$$

where $p_1(g,x)=(x|_{\bfV/\bfW},x|_\bfW)$, $p_2$ is the quotient map, and $p_3(g,x)=g(x)$.

$$\bfE_{\bfV_1}\times \bfE_{\bfV_2}\xleftarrow{\kappa}F\xrightarrow{\iota}\bfE_{\bfV},$$

where $\iota$ is the embedding and $\kappa(x)=(x|_{\bfV/\bfW},x|_{\bfW})$.

Lusztig has defined $\mathbf{Ind}^{\bfV}_{\bfV_1,\bfV_2}=(p_3)_!(p_2)_{\flat}(p_1)^*[d_1-d_2]$, where the relative dimension of $p_i$ is $d_i$, $(p_2)_{\flat}$ is the inverse functor of $p_2^*$ and $\mathbf{Res}_{\bfV_1,\bfV_2}^{\bfV}=(\kappa)_!(\iota)^*[d_1-d_2-2\dim G_{\bfV}/P]$.

\subsection{Localizations of Lusztig's sheaves and other categorical realizations}

When the generalized Cartan matrix $C$ is symmetric, inspired by \cite{2007A,zheng2014categorification,2014Tensor}, the authors of \cite{fang2023lusztigsheavesintegrablehighest} and \cite{fang2023lusztigsheavestensorproducts} have considered certain localizations of Lusztig's category $\mathcal{Q}$ to realize the irreducible integrable highest weight module $L(\lambda)$ and tensor product $L(\lambda_1)\otimes L(\lambda_2)$.

For a dominant weight $\lambda_1,\lambda_2$, they have considered the moduli space $\mathbf{E}_{\mathbf{V},\mathbf{W}^{\bullet},\Omega}$ of a framed quiver and certain thick subcategory $\mathcal{N}_{\mathbf{V}}$ of the $G_{\mathbf{V}}$-equivariant derived category $\mathcal{D}^{b}_{G_{\mathbf{V}} } (\mathbf{E}_{\mathbf{V},\mathbf{W}^{\bullet},\Omega} )$ of constructible $\overline{\mathbb{Q}}_{l}$-sheaves. They have defined functors $\mathcal{E}_{i}$ and $\mathcal{F}_{i} , i \in I$ for the Verdier quotient $\mathcal{D}^{b}_{G_{\mathbf{V}} } (\mathbf{E}_{\mathbf{V},\mathbf{W}^{\bullet},\Omega} )/\mathcal{N}_{\mathbf{V}}$ and showed that Lusztig's sheaves $\mathcal{Q}_{\mathbf{V},\mathbf{W}^{\bullet}}$ for the framed quiver are preserved by these functors $\mathcal{E}_{i}$ and $\mathcal{F}_{i}$ (up to isomorphisms in localizations). (See \cite{fang2023lusztigsheavesintegrablehighest} or Section 2.2.) The Grothendieck group of $\coprod\limits_{\mathbf{V}}\mathcal{Q}_{\mathbf{V},\mathbf{W}^{\bullet}}/\mathcal{N}_{\mathbf{V}}$ becomes a $\mathbf{U}$-module with the action of functors $\mathcal{E}_{i}$ and $\mathcal{F}_{i}$, and is isomorphic to $L(\lambda)$ canonically. Moreover, the nonzero simple perverse sheaves in $\mathcal{Q}_{\mathbf{V},\mathbf{W}^{\bullet}}/\mathcal{N}_{\mathbf{V}}$ form the canonical basis of $L(\lambda_1)\otimes L(\lambda_2)$.

\subsection{Perverse sheaf realizations of integrable highest weight modules and their tensor products}

Since the work of Zheng~\cite{2007A,zheng2014categorification}, there has been a sequence of developments in the categorification of integrable highest weight modules. For example, the authors of~\cite{CKL} construct geometric $\mathfrak{g}$-actions on coherent sheaves over quiver varieties, while the authors of~\cite{BL} consider geometric $\mathfrak{g}$-actions on quantized quiver varieties. Webster also uses both D-modules~\cite{WG} and diagrammatic approaches~\cite{webster2015canonical} to construct similar categorical actions. Kashiwara and Kang, in~\cite{Seok2012Categorification}, have constructed highest weight modules via KLR algebras, including the symmetrizable cases. 

The present paper provides a geometric construction of highest weight modules and their tensor products in the symmetrizable case, within the framework of Lusztig~\cite[Chapter 12]{lusztig2010introduction}. This work continues the program initiated by Fang, Lan, and Xiao~\cite{fang2023lusztigsheavestensorproducts, fang2023lusztigsheavesintegrablehighest}.

From the perspective of sheaf complexes, it is natural to ask how to generalize the construction of \cite{fang2023lusztigsheavesintegrablehighest} to the symmetrizable case. More precisely, given a symmetrizable generalized Cartan matrix $C$, our goal is to realize the irreducible highest weight module $L(\lambda)$ of $\mathbf{U}$ associated with $C$ via sheaves on moduli spaces of a quiver with automorphism. 

In this article, we apply Lusztig's method of periodic functors to the localizations for $N$-framed quivers introduced in \cite{fang2023lusztigsheavestensorproducts}, and obtain a category $\widetilde{\mathcal{Q}/\mathcal{N}}$. Its Grothendieck group is a $\mathbf{U}$-module, which is canonically isomorphic to $L(\lambda)$ of the symmetrizable quantum group associated to $C$ and, in the $2$-framed case, to $L(\lambda_1)\otimes L(\lambda_2)$. See Theorem \ref{4.6} and \ref{span} for details. 

Moreover, under the canonical isomorphism, the objects $(B,\phi)$ in $\widetilde{\mathcal{Q}/\mathcal{N}}$ naturally give the signed basis of $L(\lambda)$, and this basis is almost orthogonal with respect to a contravariant geometric pairing. See Proposition \ref{bilinear} for details. The signed basis of $L(\lambda_1)\otimes L(\lambda_2)$ obtained in the same way coincides with Lusztig's signed basis for tensor products of highest weight modules. 

As an application, following \cite{fang2023lusztigsheavestensorproducts} we give a new proof of the Yang--Baxter relation for the operators $\mathbf{R}_{ij}$ arising from the functor $\Delta'\circ (\Delta^{\prime,\vee})^{-1}$. We also deduce a symmetrizable crystal structure on Nakajima's quiver varieties in \cite{N94} and \cite{N98}, Nakajima's tensor product varieties in \cite{Ntensor},\cite{Ntensor2} and \cite{Malkin2003Tensor} and Lusztig's nilpotent varieties \cite{semicb} of preprojective algebras. See Proposition \ref{Yang}, Theorems \ref{4.15} and \ref{5.7} and their corollaries for details. 

\subsection{Organization of the paper}
In Section 2, we recall the notation and results from \cite{fang2023lusztigsheavesintegrablehighest}. In Section 3, we define our category	$\widetilde{\mathcal{Q}/\mathcal{N}}$ and its functors, and prove the commutation relations of these functors. The proofs of these commutation relations differ substantially from those in the symmetric case. In Section 4, we determine the module structure of the Grothendieck group of $\widetilde{\mathcal{Q}/\mathcal{N}}$, and construct the signed bases of $L(\lambda)$ and $L(\lambda_1)\otimes L(\lambda_2)$. As applications, in Section 5, we verify the Yang--Baxter equation and deduce crystal structures on quiver varieties, tensor product varieties and nilpotent varieties in the symmetrizable case.

\section{The category $\mq_{\bfV, \bfW^{\bullet}}/\mn_{\bfV}$ and functors $\mathcal{E}^{(n)}_{i},\mathcal{F}^{(n)}_{i}$}

\subsection{Lusztig's sheaves}\label{Lusztig sheaf}

In this subsection, we recall the category of Lusztig sheaves for quivers. Let $\mathbf{k}$ be an algebraically closed field with $\operatorname{char}(\mathbf{k})=p>0$. Given a quiver $Q=(I,H,\Omega)$ and an $I$-graded $\mathbf{k}$-space $\bfV=\bigoplus\limits_{i \in I} \bfV_{i}$ of dimension vector $\nu$, the affine variety $\bfE_{\bfV,\Omega}$ is defined by 
$$ \bfE_{\bfV,\Omega}=\bigoplus\limits_{h\in\Omega} \Hom(\bfV_{s(h)},\bfV_{t(h)} ). $$ The algebraic group $G_{\bfV}=\prod\limits_{i \in I}GL(\bfV_{i})$ acts naturally on $\bfE_{\bfV,\Omega}$ by change of basis. We denote the $G_{\bfV}$-equivariant derived category of constructible $\overline{\bbQ}_{l}$-sheaves on $\bfE_{\bfV,\Omega}$ by $\mathcal{D}^{b}_{G_{\bfV}}(\bfE_{\bfV,\Omega})$. We denote by $[n]$ the shift functor by $n$ and by $\mathbf{D}$ the Verdier duality functor.

Let $\mathcal{S}$ be the set of finite sequences $\boldsymbol{\nu}=(\nu^{1},\nu^{2},\cdots,\nu^{s})$ of dimension vectors such that each $\nu^{l}=a_{l}i_{l}$ for some $a_{l} \in \bbN_{\geqslant 1}$ and $i_{l} \in I$. If $\sum\limits_{1\leqslant l \leqslant s} \nu^{l} =\nu$, we say $\boldsymbol{\nu}$ is a flag type of $\nu$ or $\bfV$. 

For a flag type $\boldsymbol{\nu}$ of $\bfV$, the flag variety $\mathcal{F}_{\boldsymbol{\nu},\Omega}$ is the smooth variety which consists of pairs $(x,f)$, where $x \in \bfE_{\bfV,\Omega}$ and $f=(\bfV=\bfV^{s} \subseteq \bfV^{s-1} \subseteq \cdots \subseteq \bfV^{0}=0 )$ is a filtration of the $I$-graded space such that $x(\bfV^{l})\subseteq \bfV^{l}$ and the dimension vector of $\bfV^{l-1}/\bfV^{l}=\nu^{l}$ for any $l$. There is a proper map $\pi_{\boldsymbol{\nu},\Omega}: \mathcal{F}_{\boldsymbol{\nu},\Omega} \rightarrow \bfE_{\bfV,\Omega}; (x,f) \mapsto x. $ Hence by the decomposition theorem in \cite{BBD}, the complex $L_{\boldsymbol{\nu}}= (\pi_{\boldsymbol{\nu},\Omega})_{!} \bar{\mathbb{Q}}_{l}[\dim \mathcal{F}_{\boldsymbol{\nu},\Omega}]$ is a semisimple complex on $\mathbf{E}_{\mathbf{V},\Omega}$, where $\bar{\mathbb{Q}}_{l}$ is the constant sheaf on $\mathcal{F}_{\boldsymbol{\nu},\Omega}$. 
\begin{definition}
	Let $\mathcal{P}_{\bfV}$ be the set consisting of those simple perverse sheaves $L$ in $\mathcal{D}^{b}_{G_{\bfV}}(\bfE_{\bfV,\Omega})$ such that $L$ is a direct summand (up to shifts) of $L_{\boldsymbol{\nu}}$ for some flag type $\boldsymbol{\nu}$ of $\bfV$. Let $\mq_{\bfV}$ be the full subcategory of $\mathcal{D}^{b}_{G_{\bfV}}(\bfE_{\bfV,\Omega})$, which consists of
	finite direct sums of shifted simple perverse sheaves in $\mathcal{P}_{\bfV}$. Following \cite{OSNotes}, we call $\mq_{\bfV}$ the category of Lusztig sheaves for $Q$.
\end{definition}

In order to realize the tensor product of $N$ irreducible integrable highest weight $\mathbf{U}$-modules, we introduce the $N$-framed quiver. The $N$-framed quiver $\tilde{Q}^{(N)}$ of $Q=(I,H,\Omega)$ is defined by $\tilde{Q}^{(N)} = (I^{(N)}, \tilde{H}^{(N)}, \tilde{\Omega}^{(N)})$. Here $I^{(N)}=I \cup I^{1} \cup \cdots \cup I^{N}$ such that each $I^{k}$ is a copy of $I$, and we denote the $k$-th copy of $i \in I$ by $i^{k} \in I^{k}$, and $\tilde{\Omega}^{(N)}$ is the set of oriented arrows $\Omega \cup \{i \rightarrow i^k,i \in I, 1\leqslant k \leqslant N\}$, and $\tilde{H}^{(N)}=\tilde{\Omega}^{(N)} \cup \bar{\tilde{\Omega}}^{(N)} $. Here $\bar{\quad }: \tilde{H}^{(N)} \rightarrow \tilde{H}^{(N)}$ is the involution defined by taking the opposite orientation. When $N=1$, the $N$-framed quiver $\tilde{Q}^{(N)}$ is exactly the framed quiver considered in \cite{fang2023lusztigsheavesintegrablehighest} and \cite{zheng2014categorification}.

Given graded spaces $\bfW^{1}, \bfW^{2}, \cdots, \bfW^{N}$, with each $\bfW^{k}$ supported on $I^{k}$, the moduli space of the $N$-framed quiver is defined by $$\bfE_{\bfV, \bfW^{\bullet},\Omega} := \bigoplus\limits_{i \in I,1\leqslant k \leqslant N} \Hom(\bfV_i, \bfW^{k}_{i^{k}}) \oplus \bfE_{\bfV,\Omega}.$$ The algebraic group $G_{\bfV}$ acts canonically on $\bfE_{\bfV, \bfW^{\bullet},\Omega}$ and we consider the $G_{\bfV}$-equivariant derived category $\mathcal{D}^{b}_{G_{\bfV}}(\bfE_{\bfV, \bfW^{\bullet},\Omega})$. 

Assume that $I=\{i_{1},i_{2},\cdots,i_{t} \}$ and $d^{k}_{i_{l}}=\dim \bfW^{k}_{i^{k}_{l}}$ for each $l$ and $k$. We take $\boldsymbol{e}^{k}=(d^{k}_{i_{1}}i^{k}_{1},d^{k}_{i_{2}}i^{k}_{2} ,\cdots,d^{k}_{i_{t}}i^{k}_{t})$ as a flag type of $\bfW^{k}$. For any flag types $\boldsymbol{\nu}^{1},\boldsymbol{\nu}^{2}, \cdots, \boldsymbol{\nu}^{N} $, the flag type $\boldsymbol{\nu}^{1}\boldsymbol{e}^{1}\boldsymbol{\nu}^{2}\boldsymbol{e}^{2} \cdots \boldsymbol{\nu}^{N}\boldsymbol{e}^{N}$ is a flag type of $\bfV\oplus \bigoplus\limits_{1\leqslant k \leqslant N} \bfW^{k}$ for some $\bfV$. In particular, the sheaf $L_{\boldsymbol{\nu}^{1}\boldsymbol{e}^{1}\boldsymbol{\nu}^{2}\boldsymbol{e}^{2} \cdots \boldsymbol{\nu}^{N}\boldsymbol{e}^{N} }$ is a semisimple complex in $\mathcal{D}^{b}_{G_{\bfV}}(\bfE_{\bfV, \bfW^{\bullet},\Omega})$.

\begin{definition}
	We define the category $\mq_{\bfV, \bfW^{\bullet}}$ of Lusztig sheaves for $\tilde{Q}^{(N)}$ to be the full subcategory of $\mathcal{D}^{b}_{G_{\bfV}}(\bfE_{\bfV, \bfW^{\bullet},\Omega})$ consisting of finite direct sums of shifted simple direct summands of those $L_{\boldsymbol{\nu}^{1}\boldsymbol{e}^{1}\boldsymbol{\nu}^{2}\boldsymbol{e}^{2} \cdots \boldsymbol{\nu}^{N}\boldsymbol{e}^{N} }$, where the $\boldsymbol{\nu}^{k}$ run over all flag types.
	
\end{definition}

\subsection{Localizations}\label{local}
In this subsection, we introduce the localization of Lusztig's sheaves in \cite{fang2023lusztigsheavesintegrablehighest} and \cite{fang2023lusztigsheavestensorproducts}. Choose an orientation $\Omega_{i}$ such that $i\in I$ is a source in $\Omega_{i}$. Let $\bfE_{\bfV, \bfW^{\bullet}, i}^0$ be the open subset of $\bfE_{\bfV, \bfW^{\bullet},\Omega_{i}}$ defined by

 $$\bfE_{\bfV, \bfW^{\bullet}, i}^0=\{ x \in \bfE_{\bfV, \bfW^{\bullet},\Omega_{i}} | \dim \ker(\bigoplus\limits_{h \in \tilde{\Omega}^{(N)}_{i}, s(h) = i} x_h) = 0 \}.$$
 We also denote its complement by $\bfE_{\bfV, \bfW^{\bullet}, i}^{\geqslant 1}$. Let $\mathcal{N}_{\mathbf{V},\bfW^{\bullet},i}$ be the thick subcategory of $\mathcal{D}^{b}_{G_{\bfV}}(\bfE_{\bfV, \bfW^{\bullet},\Omega_{i}}),$ consisting of complexes supported on $\bfE_{\bfV, \bfW^{\bullet}, i}^{\geqslant 1}$. Since $\bfW^{\bullet}$ is fixed throughout, we write $\mathcal{N}_{\mathbf{V},i}$ for $\mathcal{N}_{\mathbf{V},\bfW^{\bullet},i}$ when there is no ambiguity.
 
 Recall that the Fourier-Deligne transform $\mathcal{F}_{\Omega_{i},\Omega}: \mathcal{D}^{b}_{G_{\bfV}}(\bfE_{\bfV, \bfW^{\bullet},\Omega_{i}}) \rightarrow \mathcal{D}^{b}_{G_{\bfV}}(\bfE_{\bfV, \bfW^{\bullet},\Omega})$ induces derived equivalence between different orientations. (See \cite[Section 2.2]{fang2023lusztigsheavesintegrablehighest}, \cite[Chapter 10]{Abook}, or \cite[Chapter 10]{lusztig2010introduction}.) We can consider the thick subcategory $\mathcal{F}_{\Omega_{i},\Omega}(\mathcal{N}_{\bfV,i})$ of $\mathcal{D}^{b}_{G_{\bfV}}(\bfE_{\bfV, \bfW^{\bullet},\Omega})$ for each $i$, and let $\mn_{\bfV}$ be the thick subcategory of $\mathcal{D}^{b}_{G_{\bfV}}(\bfE_{\bfV, \bfW^{\bullet},\Omega})$ generated by $\mathcal{F}_{\Omega_{i},\Omega}(\mathcal{N}_{\bfV,i}), i\in I$.
 
 \begin{definition}
 	Let $\mathcal{D}^{b}_{G_{\bfV}}(\bfE_{\bfV, \bfW^{\bullet},\Omega})/\mn_{\bfV}$ be the Verdier quotient of $\mathcal{D}^{b}_{G_{\bfV}}(\bfE_{\bfV, \bfW^{\bullet},\Omega})$ with respect to the thick subcategory $\mn_{\bfV}$, the localization $\mq_{\bfV, \bfW^{\bullet}}/\mn_{\bfV}$ is defined to be the full subcategory of $\mathcal{D}^{b}_{G_{\bfV}}(\bfE_{\bfV, \bfW^{\bullet},\Omega})/\mn_{\bfV}$, which consists of objects isomorphic to those of $\mq_{\bfV, \bfW^{\bullet}}$ in $\mathcal{D}^{b}_{G_{\bfV}}(\bfE_{\bfV, \bfW^{\bullet},\Omega})/\mn_{\bfV}$.
 \end{definition}
 The category $\mathcal{D}^{b}_{G_{\bfV}}(\bfE_{\bfV, \bfW^{\bullet},\Omega})/\mn_{\bfV}$ inherits a perverse $t$-structure from $\mathcal{D}^{b}_{G_{\bfV}}(\bfE_{\bfV, \bfW^{\bullet},\Omega})$, and the Verdier Duality $\mathbf{D}$ of $\mathcal{D}^{b}_{G_{\bfV}}(\bfE_{\bfV, \bfW^{\bullet},\Omega})$ also acts on $\mathcal{D}^{b}_{G_{\bfV}}(\bfE_{\bfV, \bfW^{\bullet},\Omega})/\mn_{\bfV}$ and $\mq_{\bfV, \bfW^{\bullet}}/\mn_{\bfV}$.
 
 If we denote the localization functor by $L: \mathcal{D}^{b}_{G_{\bfV}}(\bfE_{\bfV, \bfW^{\bullet},\Omega}) \rightarrow \mathcal{D}^{b}_{G_{\bfV}}(\bfE_{\bfV, \bfW^{\bullet},\Omega})/\mn_{\bfV}$, then $L$ restricts to an additive functor $L:\mq_{\bfV, \bfW^{\bullet}} \rightarrow \mq_{\bfV, \bfW^{\bullet}}/\mn_{\bfV}$.

\subsection{Functors of localizations}

In this subsection, we recall the definition of $\me^{(n)}_{i}$ and $\mf^{(n)}_{i}$ in \cite{fang2023lusztigsheavesintegrablehighest}.
\subsubsection{The functor $\me^{(n)}_i$}
For $\nu, \nu' \in \bbN[I]$ such that $\nu' = \nu - ni$, take graded spaces $\bfV, \bfV'$ of dimension vectors $\nu$ and $\nu'$ respectively. Define $\dot{\bfE}_{\bfV, \bfW^{\bullet}, i} = \bigoplus\limits_{h \in \Omega_{i}, s(h) \not= i} \Hom(\bfV_{s(h)}, \bfV_{t(h)}) \oplus \bigoplus\limits_{j \not= i, j \in I,1\leqslant k\leqslant N} \Hom(\bfV_j, \bfW^{k}_{j^{k}})$ and denote $\tilde{\nu_i} = \sum\limits_{h \in \Omega_{i},s(h)=i} \nu_{t(h)} + \sum\limits_{1\leqslant k \leqslant N} \dim \bfW^{k}_{i^{k}}$. Then we consider the following diagram
\[
\xymatrix{
	\mathbf{E}_{\mathbf{V},\mathbf{W}^{\bullet},\Omega_{i}}
	&
	& \mathbf{E}_{\mathbf{V}',\mathbf{W}^{\bullet},\Omega_{i}} \\
	\mathbf{E}^{0}_{\mathbf{V},\mathbf{W}^{\bullet},i} \ar[d]_{\phi_{\mathbf{V},i}} \ar[u]^{j_{\mathbf{V},i}}
	&
	& \mathbf{E}^{0}_{\mathbf{V}',\mathbf{W}^{\bullet},i} \ar[d]^{\phi_{\mathbf{V}',i}} \ar[u]_{j_{\mathbf{V}',i}} \\
	\dot{\mathbf{E}}_{\mathbf{V},\mathbf{W}^{\bullet},i} \times \mathbf{Gr}(\nu_i, \tilde{\nu}_{i})
	& \dot{\mathbf{E}}_{\mathbf{V},\mathbf{W}^{\bullet},i} \times \mathbf{Fl}(\nu'_{i},\nu_{i},\tilde{\nu}_{i}) \ar[r]^{q_{2}} \ar[l]_{q_{1}}
	& \dot{\mathbf{E}}_{\mathbf{V},\mathbf{W}^{\bullet},i} \times \mathbf{Gr}(\nu'_{i}, \tilde{\nu}_{i});
}
\]
where $\mathbf{Gr}(\nu_i, \tilde{\nu}_{i})$ is the Grassmannian consisting of $\nu_{i}$-dimensional subspaces of $\tilde{\nu}_{i}$-dimensional space $(\bigoplus\limits_{s(h)=i,h \in \Omega_{i}}\mathbf{V}_{t(h)})\oplus \bigoplus\limits_{1\leqslant k \leqslant N}\mathbf{W}^{k}_{i^{k}}$, and
\begin{equation*}
	\mathbf{Fl}(\nu_{i}-n,\nu_{i},\tilde{\nu}_{i})=\{ \mathbf{S}_{1}\subset \mathbf{S}_{2} \subset (\bigoplus\limits_{s(h)=i}\mathbf{V}_{t(h)}) \oplus \bigoplus\limits_{1\leqslant k \leqslant N}\mathbf{W}^{k}_{i^{k}} )|{\rm{dim}} \mathbf{S}_{1} = \nu_{i}-n, {\rm{dim}}\mathbf{S}_{2}=\nu_{i} \}.
\end{equation*}
is the flag variety. The morphisms are defined by
\begin{align*}
	\phi_{\mathbf{V},i}:\mathbf{E}^{0}_{\mathbf{V},\mathbf{W}^{\bullet},i} \longrightarrow \dot{\mathbf{E}}_{\mathbf{V},\mathbf{W}^{\bullet},i} \times \mathbf{Gr}(\nu_i, \tilde{\nu}_{i});
	x \mapsto (\dot{x}, {\rm{Im}} (\bigoplus \limits_{h \in \tilde{\Omega}_{i}, s(h)=i} x_{h} ) ),
\end{align*}where $\dot{x}=(x_h)_{s(h)\not=i},$
and $j_{\bfV,i},j_{\bfV',i}$ are open inclusions, $q_{1},q_{2}$ are natural projections
\begin{equation*}
	q_{1}(\dot{x}, \mathbf{S}_{1},\mathbf{S}_{2})=(\dot{x},\mathbf{S}_{2}),
\end{equation*}
\begin{equation*}
	q_{2}(\dot{x},\mathbf{S}_{1},\mathbf{S}_{2})=(\dot{x},\mathbf{S}_{1}).
\end{equation*}

\begin{definition}
 The functor $\tilde{\me}^{(n)}_{i}:\mathcal{D}^{b}_{G_{\bfV}}(\bfE_{\bfV, \bfW^{\bullet},\Omega_{i}}) \rightarrow \mathcal{D}^{b}_{G_{\bfV'}}(\bfE_{\bfV', \bfW^{\bullet},\Omega_{i}})$ is defined by	$$\tilde{\me}^{(n)}_i := (j_{\bfV', i})_! (\phi_{\bfV', i})^* (q_2)_! (q_1)^* (\phi_{\bfV, i})_{\flat} (j_{\bfV, i})^* [-n\nu_i],$$
 where $f_{\flat}$ denotes the inverse of $f^{\ast}$ for a principal bundle $f$. We also denote $\me^{(1)}_{i}$ by $\me_i$. For a general fixed orientation $\Omega$, the functor $\me^{(n)}_{i}:\mathcal{D}^{b}_{G_{\bfV}}(\bfE_{\bfV, \bfW^{\bullet},\Omega}) \rightarrow \mathcal{D}^{b}_{G_{\bfV'}}(\bfE_{\bfV', \bfW^{\bullet},\Omega})$ is defined by 
 $$\me^{(n)}_i = \mf_{\Omega_{i}, \Omega} \tilde{\me}^{(n)}_i \mf_{\Omega, \Omega_{i}}.$$
\end{definition}

By \cite[Proposition 3.18 and Corollary 3.19]{fang2023lusztigsheavesintegrablehighest}, the functor $\me^{(n)}_{i}$ induces a functor $$\me^{(n)}_{i}:\mathcal{D}^{b}_{G_{\bfV}}(\bfE_{\bfV, \bfW^{\bullet},\Omega})/\mn_{\bfV} \rightarrow \mathcal{D}^{b}_{G_{\bfV'}}(\bfE_{\bfV', \bfW^{\bullet},\Omega})/\mn_{\bfV'}, $$
and restricts to a functor
$\me^{(n)}_{i}:\mq_{\bfV, \bfW^{\bullet}}/\mn_{\bfV} \rightarrow \mq_{\bfV',\bfW^{\bullet}}/\mn_{\bfV'}. $ Observe that $\me^{(n)}_{i}$ is independent of the choice of $\Omega_{i}$ and $\Omega$ up to Fourier-Deligne transforms. (See \cite[Lemma 3.6]{fang2023lusztigsheavesintegrablehighest} for a proof.)

\subsubsection{The functor $\mf^{(n)}_{i}$}
Take graded spaces $\mathbf{V}, \mathbf{V}''$ of dimension vectors $\nu,\nu''$ respectively such that $ni+\nu''=\nu$. 

Let $\mathbf{E}'_{\mathbf{V},\mathbf{W}^{\bullet},\Omega}$ be the variety consisting of $(x,\mathbf{S}, \rho)$, where $x \in \mathbf{E}_{\mathbf{V},\mathbf{W}^{\bullet},\Omega}$ and $\mathbf{S}$ is a subspace of $\mathbf{V}\oplus \bigoplus\limits_{1\leqslant k \leqslant N} \mathbf{W}^{k}$ such that $x(\mathbf{S})\subseteq \mathbf{S}$, and $\rho:\mathbf{S} \simeq \mathbf{V}''\oplus\bigoplus\limits_{1\leqslant k \leqslant N} \mathbf{W}^{k}$ is a linear isomorphism of graded spaces such that $\rho|_{\mathbf{W}^{k}}=id_{\mathbf{W}^{k}}$ for each $k$. Observe that it is equivalent to forget $\mathbf{W}^{k}$ and just take a linear isomorphism $\mathbf{S}\cap \mathbf{V} \simeq \mathbf{V}''$, hence we sometimes also say $\rho$ is a linear isomorphism between $\mathbf{S}$ and $\mathbf{V}''$. Let $\mathbf{E}''_{\mathbf{V},\mathbf{W}^{\bullet},\Omega}$ be the variety consisting of $(x,\mathbf{S})$ with the same conditions as above. Consider the following diagram
\begin{center}
	$ \mathbf{E}_{\mathbf{V}'',\mathbf{W}^{\bullet},\Omega}\xleftarrow{p_{1}} \mathbf{E}'_{\mathbf{V},\mathbf{W}^{\bullet},\Omega} \xrightarrow{p_{2}} \mathbf{E}''_{\mathbf{V},\mathbf{W}^{\bullet},\Omega} \xrightarrow{p_{3}} \mathbf{E}_{\mathbf{V},\mathbf{W}^{\bullet},\Omega},$
\end{center}
where the morphisms are defined by
\begin{equation*}
		p_{1}(x,\mathbf{S}, \rho)=(\rho_{\ast}(x|_{\mathbf{S}}));~
		p_{2}(x,\mathbf{S}, \rho) =(x,\mathbf{S});~
		p_{3}(x,\mathbf{S})=x.
\end{equation*}
Observe that $p_{1}$ is smooth with connected fiber, $p_{2}$ is a principal $G_{\bfV''}$ bundle and $p_{3}$ is proper.
\begin{definition}
	The functor $\mf^{(n)}_{i}:\mathcal{D}^{b}_{G_{\bfV''}}(\bfE_{\bfV'', \bfW,\Omega}) \rightarrow \mathcal{D}^{b}_{G_{\bfV}}(\bfE_{\bfV, \bfW^{\bullet},\Omega})$ is defined by 
	$$\mf^{(n)}_i = (p_{3})_{!}(p_{2})_{\flat}(p_{1})^{\ast}[d_{1}-d_{2}],$$
	where $d_{1}$ and $d_{2}$ are the relative dimensions of $p_{1}$ and $p_{2}$ respectively. We also denote $\mf^{(1)}_{i}$ by $\mf_i$.
\end{definition}

Observe that $\mf^{(n)}_i$ is isomorphic to Lusztig's induction functor $\mathbf{Ind}^{\mathbf{V}\oplus\bfW}_{ni,\mathbf{V''\oplus\bfW}}(\overline{\mathbb{Q}}_{l}\boxtimes -)$ (See \cite[Section 3]{fang2023lusztigsheavesintegrablehighest}), so 
\begin{equation}\label{ind}
	\mf^{(n)}_i(L_{\boldsymbol{\nu}^{1}\boldsymbol{e}^{1}\boldsymbol{\nu}^{2}\boldsymbol{e}^{2} \cdots \boldsymbol{\nu}^{N}\boldsymbol{e}^{N} })= L_{ni,\boldsymbol{\nu}^{1}\boldsymbol{e}^{1}\boldsymbol{\nu}^{2}\boldsymbol{e}^{2} \cdots \boldsymbol{\nu}^{N}\boldsymbol{e}^{N} }.
\end{equation}

By \cite[Lemma 3.7]{fang2023lusztigsheavesintegrablehighest}, the functor $\mf^{(n)}_{i}$ induces a functor $$\mf^{(n)}_{i}:\mathcal{D}^{b}_{G_{\bfV''}}(\bfE_{\bfV'', \bfW^{\bullet},\Omega})/\mn_{\bfV''} \rightarrow \mathcal{D}^{b}_{G_{\bfV}}(\bfE_{\bfV, \bfW^{\bullet},\Omega})/\mn_{\bfV}, $$
and restricts to a functor
$\mf^{(n)}_{i}:\mq_{\bfV'', \bfW^{\bullet}}/\mn_{\bfV''} \rightarrow \mq_{\bfV,\bfW^{\bullet}}/\mn_{\bfV}. $

\section{The category $\widetilde{\mq_{\bfV,\bfW^{\bullet}}/\mn_{\bfV}}$ and functors $\mathcal{E}^{(n)}_{\underline{i}},\mathcal{F}^{(n)}_{\underline{i}}$}

\subsection{Periodic functor}\label{periodic functor}

In this subsection, we review the definition of a periodic functor and refer to \cite[Chapter 11]{lusztig2010introduction} for further details. Let $o$ be a fixed positive integer.

	 Let $\cC$ be a $\overline{\bbQ}_l$-linear additive category. A periodic functor on $\cC$ is a linear functor $a^{\ast}:\cC\rightarrow \cC$ such that $(a^{*})^{o}$ is the identity functor on $\cC$.

\begin{definition}\label{3.1}
 Let $a^{\ast}$ be a periodic functor on $\cC$, we define the additive category $\tilde{\cC}$ as follows:\\
	$\bullet$ Its objects are pairs $(A,\varphi)$, where $A\in \cC$ and $\phi:a^*(A)\rightarrow A$ is an isomorphism in $\cC$ such that the composition
	$$A=a^{*o}(A)\xrightarrow{a^{*(o-1)(\varphi)}}a^{*(o-1)}(A)\rightarrow....\rightarrow a^*(A)\xrightarrow{\varphi}A$$
	is the identity morphism on $A$.\\
	$\bullet$ For any $(A,\varphi),(A',\varphi')\in \tilde{\cC}$, the morphism space
	$$\Hom_{\tilde{\cC}}((A,\varphi),(A',\varphi'))=\{f\in \Hom_{\cC}(A,A')|f\varphi=\varphi'(a^*(f))\}.$$ 
	$\bullet$ The direct sum of $(A,\varphi),(A',\varphi')\in \tilde{\cC}$ is defined naturally by $(A\oplus A',\varphi\oplus \varphi')$.
\end{definition}

\begin{definition}
 An object $(A,\phi)\in\tilde{\cC}$ is called traceless if there exists an object $B\in \cC$ and an integer $t\geqslant 2$ dividing $o$ such that $a^{*t}(B)\cong B$, $A\cong B\oplus a^*(B)\oplus...\oplus a^{*(t-1)}(B)$ and $\varphi:a^*(A)\rightarrow A$ corresponds to the isomorphism $a^*(B)\oplus a^{*2}(B)\oplus...\oplus a^{*t}(B)$ taking $a^{*s}(B)$ onto $a^*(a^{*s-1}(B))$ for $1\leqslant s\leqslant t-1$ and taking $a^{*t}(B)$ onto $B$, giving a permutation between the direct summands of $A$ and $a^*A$.
\end{definition} 

\begin{definition}
	We say $A$ and $B$ in $\tilde{\cC}$ are isomorphic modulo traceless objects if there exist traceless objects $C$ and $D$ such that $A \oplus C \cong B \oplus D.$
\end{definition}

\begin{lemma}[{\cite[Section 11.1.3]{lusztig2010introduction}}]\label{split criterion}
	Let $(A,\varphi),(A',\varphi'),(A'',\varphi'')$ be objects in $\tilde{\cC}$ and $i':(A',\varphi')\rightarrow (A,\varphi),\ p'':(A,\varphi)\rightarrow (A'',\varphi'')$ be morphisms in $\tilde{\cC}$, if there exist morphisms $i'':A''\rightarrow A,\ p':A\rightarrow A'$ in $\cC$ such that $$p'i'=1_{A'},\ p'i''=0,\ p''i'=0,\ p''i''=1_{A''},\ i'p'+i''p''=1_A,$$
 then $(A,\varphi)\cong(A',\varphi')\oplus(A'',\varphi'')$ in $\tilde{\cC}$.
\end{lemma}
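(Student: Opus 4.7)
The plan is to manufacture replacements $\tilde{p'}$, $\tilde{i''}$ of $p'$, $i''$ that still satisfy the five splitting relations but also lie in $\tilde{\cC}$; the morphisms $(i',\tilde{i''}):(A',\varphi')\oplus(A'',\varphi'') \to (A,\varphi)$ and $\binom{\tilde{p'}}{p''}:(A,\varphi)\to (A',\varphi')\oplus(A'',\varphi'')$ will then be mutually inverse in $\tilde{\cC}$, giving the desired isomorphism. The main tool is cyclic averaging over the $\bbZ/o\bbZ$-action induced by the periodic structure, available since our coefficients lie in $\overline{\bbQ}_l$ and hence in characteristic zero.

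First I would define $F(f) := \varphi' \circ a^*(f) \circ \varphi^{-1}$ on $\Hom_{\cC}(A,A')$ and verify $F^o = \mathrm{id}$: inductively one has $F^k(f) = \varphi'_{(k)}\, a^{*k}(f)\, \varphi_{(k)}^{-1}$, where $\varphi_{(k)} := \varphi \circ a^*(\varphi)\circ \cdots \circ a^{*(k-1)}(\varphi)$, and the normalization condition in Definition \ref{3.1} forces $\varphi_{(o)}=1_A$ and $\varphi'_{(o)}=1_{A'}$. Then $\tilde{p'} := \frac{1}{o}\sum_{k=0}^{o-1} F^k(p')$ is $F$-fixed, hence lies in $\tilde{\cC}$. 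The identity $i'\circ\varphi' = \varphi\circ a^*(i')$, expressing $i' \in \tilde{\cC}$, rearranges to $\varphi_{(k)}^{-1}\circ i' = a^{*k}(i')\circ \varphi'^{-1}_{(k)}$; combined with $p'\circ i' = 1_{A'}$ this yields $F^k(p')\circ i' = \varphi'_{(k)}\circ a^{*k}(p'i')\circ \varphi'^{-1}_{(k)} = 1_{A'}$ for every $k$, and therefore $\tilde{p'}\circ i' = 1_{A'}$.

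Next I introduce the idempotent $\tilde{e''} := 1_A - i'\tilde{p'} \in \tilde{\cC}$, which satisfies $p''\tilde{e''} = p''$, and set $\tilde{i''}^{(0)} := \tilde{e''}\circ i''$. From $p''i''=1_{A''}$, $p''i'=0$ and $\tilde{p'}i'=1_{A'}$ one checks
$$p''\tilde{i''}^{(0)} = 1_{A''},\qquad \tilde{p'}\tilde{i''}^{(0)} = 0,\qquad \tilde{i''}^{(0)}\circ p'' = \tilde{e''}.$$
Since $\tilde{i''}^{(0)}$ need not be in $\tilde{\cC}$, I average a second time with $G(f) := \varphi \circ a^*(f) \circ \varphi''^{-1}$ on $\Hom_{\cC}(A'',A)$ (again $G^o = \mathrm{id}$) and set $\tilde{i''} := \frac{1}{o}\sum_{k=0}^{o-1} G^k(\tilde{i''}^{(0)})$, which lies in $\tilde{\cC}$. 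Using $p''\varphi = \varphi''\, a^*(p'')$, $\tilde{p'}\varphi = \varphi'\, a^*(\tilde{p'})$, and $\varphi\, a^*(\tilde{e''})\,\varphi^{-1} = \tilde{e''}$, one verifies that $G$ preserves each of the three displayed identities, so they survive the averaging as $p''\tilde{i''} = 1_{A''}$, $\tilde{p'}\tilde{i''} = 0$, $\tilde{i''}\circ p'' = \tilde{e''}$. The last equation together with the definition of $\tilde{e''}$ immediately yields $i'\tilde{p'} + \tilde{i''}p'' = 1_A$, completing the five splitting relations in $\tilde{\cC}$.

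The main delicacy is that a naive simultaneous averaging of both $p'$ and $i''$ would not preserve the cross relation $p'i''=0$, because the two averaging operators act on unrelated Hom-spaces and produce off-diagonal interference terms $F^j(p')\circ G^k(i'')$ with $j\neq k$ which are nonzero in general. The two-stage construction above circumvents this by building $\tilde{i''}^{(0)}$ from $\tilde{p'}$ so that $\tilde{p'}\tilde{i''}^{(0)} = 0$ holds by construction, and then arranging the second averaging so that the identities to be preserved involve only morphisms already in $\tilde{\cC}$, which $G$ intertwines in precisely the way needed for averaging to fix them.
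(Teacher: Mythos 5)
Your proof is correct and follows essentially the same cyclic-averaging argument as the proof in Lusztig's book (Section 11.1.3), which the paper cites rather than reproduces: one averages $p'$ over the $\bbZ/o\bbZ$-action to make it compatible with the periodic structure, checks $\tilde{p'}i'=1_{A'}$, and then splits off the complementary idempotent. The only remark worth adding is that your second averaging is superfluous: once $\tilde{p'}$ is $F$-fixed, the map $\binom{\tilde{p'}}{p''}:(A,\varphi)\to(A',\varphi')\oplus(A'',\varphi'')$ is a morphism of $\tilde{\cC}$ that is invertible in $\cC$ (with inverse $(i',\tilde{i''}^{(0)})$), and the inverse of a $\tilde{\cC}$-morphism that is invertible in $\cC$ automatically lies in $\tilde{\cC}$, since $f^{-1}\psi=\varphi\, a^*(f^{-1})$ follows from $f\varphi=\psi\, a^*(f)$.
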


\subsection{Localization and its functors for a quiver with automorphism}

 For a given generalized Cartan matrix $C=DB$, we construct, using \cite[Proposition 14.1.2]{lusztig2010introduction}, a finite quiver $Q=(I,H,\Omega)$ with an admissible automorphism $a$. Let $o$ be the order of $a$. The automorphism $a$ naturally induces an admissible automorphism of the $N$-framed quiver $\tilde{Q}^{(N)}$. We denote the set of $a$-orbits of $I$ by $\underline{I}=I/\langle a \rangle$, and denote the set of $a$-invariant dimension vectors by $\bbN I^a$. Then any $\underline{i} \in \underline{I}$ determines a dimension vector $\underline{i}= \sum\limits_{i \in \underline{i}}i$ in $\bbN I^a$. We also denote the $k$-th copy of $\underline{i}$ in $\tilde{Q}^{(N)}$ by $\underline{i}^{k}$.

Let $\mathcal{S}'$ be the set of finite sequences $\boldsymbol{\nu}=(\nu^{1},\nu^{2},\cdots,\nu^{s})$ of dimension vectors such that each $\nu^{l}=a_{l}\underline{i}_{l}$ for some $a_{l} \in \bbN_{\geqslant 1}$ and $\underline{i}_{l} \in \underline{I} \subset \bbN I^a$. For $\nu \in \bbN I^a$, we say $\boldsymbol{\nu} \in \mathcal{S}'$ is an $a$-flag type of $\nu$ if $\sum\limits_{1\leqslant l \leqslant s} \nu^{l} =\nu$. We may also define the semisimple complex $L_{\boldsymbol{\nu}}$ for $\boldsymbol{\nu} \in \mathcal{S}'$ as in Section 2.1.

Given a $I^{k}$-graded space $\bfW^{k}$ with dimension vectors $\omega^{k} \in (\bbN I^{k})^{a}$ for each $k$, we assume $\omega^{k}=\sum\limits_{\underline{i} \in \underline{I}} e^{k}_{\underline{i}}\underline{i}^{k}$. Assume that $I'=\{\underline{i}_{1},\underline{i}_{2},\cdots, \underline{i}_{m} \}$. We fix an $a$-flag type $\boldsymbol{e}^{k}=(e^{k}_{\underline{i}_{1}}\underline{i}^{k}_{1},e^{k}_{\underline{i}_{2}}\underline{i}^{k}_{2},\cdots,e^{k}_{\underline{i}_{m}}\underline{i}^{k}_{m})$. For any flag types $\boldsymbol{\nu}^{1},\boldsymbol{\nu}^{2},\cdots,\boldsymbol{\nu}^{N} $, there is an induced flag type $\boldsymbol{\nu}^{1}\boldsymbol{e}^{1}\boldsymbol{\nu}^{2}\boldsymbol{e}^{2}\cdots \boldsymbol{\nu}^{N}\boldsymbol{e}^{N} $ of $\bfV \oplus\bigoplus\limits_{1\leqslant k \leqslant N}\bf W^{k}$ for some $\bfV$. Moreover, if these $\boldsymbol{\nu}^{k}$ are $a$-flag types, so is $\boldsymbol{\nu}^{1}\boldsymbol{e}^{1}\boldsymbol{\nu}^{2}\boldsymbol{e}^{2}\cdots \boldsymbol{\nu}^{N}\boldsymbol{e}^{N} $. We also denote $L_{\boldsymbol{\nu}^{1}\boldsymbol{e}^{1}\boldsymbol{\nu}^{2}\boldsymbol{e}^{2}\cdots \boldsymbol{\nu}^{N}\boldsymbol{e}^{N}}$ by $L_{\boldsymbol{\nu}^{1},\boldsymbol{\nu}^{2},\cdots,\boldsymbol{\nu}^{N} }$.

\begin{definition}
	Let $\bfV$ be an $I$-graded space with dimension vector $\nu \in \bbN I$.\\
	\rm{(1)} Let $\mathcal{P}_{\bfV,\bfW^{\bullet}}$ be the set consisting of those simple perverse sheaves $L$ in $\mathcal{D}^{b}_{G_{\bfV}}(\bfE_{\bfV,\bfW^{\bullet},\Omega})$ such that $L$ is a direct summand (up to shifts) of $L_{\boldsymbol{\nu}^{1},\boldsymbol{\nu}^{2},\cdots,\boldsymbol{\nu}^{N}}$ for some flag type $\boldsymbol{\nu}$ of $\bfV$. Let $\mq_{\bfV,\bfW^{\bullet}}$ be the full subcategory of $\mathcal{D}^{b}_{G_{\bfV}}(\bfE_{\bfV,\bfW^{\bullet},\Omega})$, which consists of
	finite direct sums of shifted simple perverse sheaves in $\mathcal{P}_{\bfV,\bfW^{\bullet}}$.\\
	\rm{(2)} The localization $\mq_{\bfV, \bfW^{\bullet}}/\mn_{\bfV}$ is defined to be the full subcategory of $\mathcal{D}^{b}_{G_{\bfV}}(\bfE_{\bfV, \bfW^{\bullet},\Omega})/\mn_{\bfV}$, which consists of objects isomorphic to those of $\mq_{\bfV, \bfW^{\bullet}}$ in $\mathcal{D}^{b}_{G_{\bfV}}(\bfE_{\bfV, \bfW^{\bullet},\Omega})/\mn_{\bfV}$.
\end{definition}

\begin{definition}\label{E_i}
	For $\underline{i}\in\underline{I}$ and $I$-graded spaces $\bfV,\bfV'$ with dimension vectors $\nu,\nu'=\nu-n\underline{i}$, the functor $\me^{(n)}_{\underline{i}}:\mathcal{D}^{b}_{G_{\bfV}}(\bfE_{\bfV, \bfW^{\bullet},\Omega})/\mn_{\bfV} \rightarrow \mathcal{D}^{b}_{G_{\bfV'}}(\bfE_{\bfV', \bfW^{\bullet},\Omega})/\mn_{\bfV'}$ is defined by 
	$$ \me^{(n)}_{\underline{i}}=\prod\limits_{i \in \underline{i}} \me^{(n)}_i. $$ 
\end{definition}

\begin{definition}\label{F_i}
	For $\underline{i}\in\underline{I}$ and $I$-graded spaces $\bfV,\bfV''$ with dimension vectors $\nu,\nu''=\nu-n\underline{i}$, the functor $\mf^{(n)}_{\underline{i}}:\mathcal{D}^{b}_{G_{\bfV''}}(\bfE_{\bfV'', \bfW^{\bullet},\Omega})/\mn_{\bfV''} \rightarrow \mathcal{D}^{b}_{G_{\bfV}}(\bfE_{\bfV, \bfW^{\bullet},\Omega})/\mn_{\bfV}$ is defined by 
	$$ \mf^{(n)}_{\underline{i}}=\prod\limits_{i \in \underline{i}} \mf^{(n)}_i. $$ 
\end{definition}

Since the automorphism $a$ is admissible, there are no arrows between vertices in $\underline{i}$; hence the functors $\me^{(n)}_{i}$ (or $\mf^{(n)}_{i}$), $i\in \underline{i}$ commute with each other. Thus the above compositions of functors are well-defined.

\begin{proposition}
	The functors $\mf^{(n)}_{\underline{i}}$ and $\me^{(n)}_{\underline{i}}$ restrict to functors between $\mq_{\bfV, \bfW^{\bullet}}/\mn_{\bfV}$ and $\mq_{\bfV, \bfW^{\bullet}}/\mn_{\bfV}$ for $\bfV$, $\bfV''$ with dimension vectors $\nu,\nu''=\nu-n\underline{i}$, 
	$$\me^{(n)}_{\underline{i}}:\mq_{\bfV, \bfW^{\bullet}}/\mn_{\bfV} \rightarrow \mq_{\bfV', \bfW^{\bullet}}/\mn_{\bfV'}, $$
	$$\mf^{(n)}_{\underline{i}}:\mq_{\bfV'', \bfW^{\bullet}}/\mn_{\bfV''} \rightarrow \mq_{\bfV, \bfW^{\bullet}}/\mn_{\bfV}. $$
\end{proposition}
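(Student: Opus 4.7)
\emph{Proof plan.}

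My plan is to reduce the claim to the single-vertex case treated in \cite{fang2023lusztigsheavesintegrablehighest} and then track how products of single-vertex functors act on $a$-flag-type Lusztig sheaves. Recall from Section 2.3 that each functor $\me^{(n)}_i$ and $\mf^{(n)}_i$, $i\in I$, descends to the localization and preserves the broader Lusztig subcategory defined using arbitrary flag types, by \cite[Lemma 3.7, Proposition 3.18, Corollary 3.19]{fang2023lusztigsheavesintegrablehighest}. Since the automorphism $a$ is admissible, no arrow of $Q$ connects two vertices of the same $a$-orbit, so the paragraph preceding the proposition ensures that the single-vertex functors indexed by $i\in\underline{i}$ commute pairwise and hence the products in Definitions~\ref{E_i} and~\ref{F_i} are well-defined on the localized categories.

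The remaining content of the proposition is to verify that applying these products to a Lusztig sheaf built from $a$-flag types (as in Definition 3.4) yields again an $a$-flag-type Lusztig sheaf. For $\mf^{(n)}_{\underline{i}}$, writing $\underline{i}=\{i_1,\ldots,i_r\}$ and iterating formula~(\ref{ind}) gives
\[
\mf^{(n)}_{\underline{i}}\bigl(L_{\boldsymbol{\nu}^1,\ldots,\boldsymbol{\nu}^N}\bigr) \;=\; L_{n i_1,\,n i_2,\,\ldots,\,n i_r,\,\boldsymbol{\nu}^1,\ldots,\boldsymbol{\nu}^N}.
\]
Because there are no arrows among the $i_k$, the flag variety parametrising filtrations with successive quotients of dimensions $ni_1,\ldots,ni_r$ is isomorphic to the product $\prod_k \mathbf{Gr}(n,\bfV_{i_k})$, which is precisely the variety attached to the single $a$-flag-type symbol $n\underline{i}\in\mathcal{S}'$. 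Hence the right-hand side is isomorphic to $L_{n\underline{i},\boldsymbol{\nu}^1,\ldots,\boldsymbol{\nu}^N}$, which lies in $\mq_{\bfV,\bfW^{\bullet}}$ by construction; passing to the localization concludes the claim for $\mf^{(n)}_{\underline{i}}$.

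For $\me^{(n)}_{\underline{i}}$, the analogous argument requires an explicit description of $\me^{(n)}_i(L_{\boldsymbol{\nu}^1,\ldots,\boldsymbol{\nu}^N})$ modulo $\mn_{\bfV'}$; such a description is already extracted in the proof of preservation in \cite{fang2023lusztigsheavesintegrablehighest} and expresses the output as a sum of Lusztig sheaves whose flag types are obtained from $(\boldsymbol{\nu}^1,\ldots,\boldsymbol{\nu}^N)$ by removing $n$ copies of $i$ from suitable positions. Iterating over all $i\in\underline{i}$ and invoking once more the product-of-Grassmannians identification of the previous paragraph shows that, modulo $\mn_{\bfV'}$, the output is a direct sum of $a$-flag-type sheaves. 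The main obstacle I expect is bookkeeping rather than conceptual: the definition of $\me^{(n)}_i$ passes through an intermediate Fourier-Deligne transform $\mf_{\Omega_i,\Omega}$, and the auxiliary source-orientations $\Omega_i$ vary with $i\in\underline{i}$, so one must check that the iterated reductions over $i_1,\ldots,i_r$ remain compatible with these intermediate Fourier transforms before the $a$-flag-type structure of the output can be read off.
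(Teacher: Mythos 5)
Your proposal is correct and takes essentially the same route as the paper: the $\mf^{(n)}_{\underline{i}}$ case is handled via the identification (\ref{ind}) with Lusztig's induction functor, and the $\me^{(n)}_{\underline{i}}$ case reduces to the single-vertex preservation results of \cite{fang2023lusztigsheavesintegrablehighest}, which the paper re-derives by commuting $\me^{(n)}_{\underline{i}}$ past the functors $\mf_{\underline{j}}$ and left multiplication by $L_{\boldsymbol{e}^{k}}$ and inducting on the length of the flag type. The Fourier--Deligne compatibility you flag at the end is already covered by the orientation-independence of $\me^{(n)}_{i}$ recorded in Section 2.3, so it is not a genuine gap.
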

\begin{proof}
	It suffices to show that for any $\boldsymbol{\nu}^{k} \in \mathcal{S}$, the functors $\me^{(n)}_{\underline{i}}$ and $\mf^{(n)}_{\underline{i}}$ send $L_{\boldsymbol{\nu}^{1},\boldsymbol{\nu}^{2},\cdots,\boldsymbol{\nu}^{N}}$ to a finite direct sum of those shifted $L_{\boldsymbol{\nu}'^{1},\boldsymbol{\nu}'^{2},\cdots,\boldsymbol{\nu}'^{N}}$ with $\boldsymbol{\nu}'^{k} \in \mathcal{S}$.

	By Equation (\ref{ind}), it follows that 
	$ \mf^{(n)}_{\underline{i}}L_{\boldsymbol{\nu}^{1},\boldsymbol{\nu}^{2},\cdots,\boldsymbol{\nu}^{N}} =L_{n\underline{i},\boldsymbol{\nu}^{1},\boldsymbol{\nu}^{2},\cdots,\boldsymbol{\nu}^{N}}, $ hence $\mf^{(n)}_{\underline{i}}:\mq_{\bfV'', \bfW^{\bullet}}/\mn_{\bfV''} \rightarrow \mq_{\bfV, \bfW^{\bullet}}/\mn_{\bfV}$ is well-defined.
	
	 By \cite[Corollary 3.20]{fang2023lusztigsheavesintegrablehighest}, $\me^{(n)}_{\underline{i}}$ commutes with $\mf_{\underline{j}}$ if $\underline{i}\neq \underline{j}$. Otherwise, $\me^{(n)}_{\underline{i}}\mf_{\underline{i}}$ and $\mf_{\underline{i}}\me^{(n)}_{\underline{i}}$ differ by a direct sum of some shifts of $\mathbf{Id}$. Regarding the vertices $I^{k}$ as unframed vertices, it also follows that $\me^{(n)}_{\underline{i}}$ commutes with the left multiplication by $L_{\boldsymbol{e}^{k}}$.
	 By induction on the lengths of the sequences $\boldsymbol{\nu}^{k}$, we prove that $\me^{(n)}_{\underline{i}}:\mq_{\bfV, \bfW^{\bullet}}/\mn_{\bfV} \rightarrow \mq_{\bfV'', \bfW^{\bullet}}/\mn_{\bfV''} $ is well-defined.
\end{proof}

Observe that when $a(\nu)=\nu$, the automorphism $a$ induces a natural map $a: \bfE_{\bfV,\bfW^{\bullet},\Omega}\rightarrow \bfE_{\bfV,\bfW^{\bullet},\Omega}$, then $a^{\ast}$ is a periodic functor on $\mathcal{D}^{b}_{G_{\bfV}}(\bfE_{\bfV, \bfW^{\bullet},\Omega})$. Since $a^{\ast}(\mn_{\bfV,i}) \subseteq\mn_{\bfV,a^{-1}(i)}$, $a^{\ast}$ preserves $\mn_{\bfV}$ and induces a periodic functor on $\mathcal{D}^{b}_{G_{\bfV}}(\bfE_{\bfV, \bfW^{\bullet},\Omega})/\mn_{\bfV}$. Since $a^{\ast}$ also preserves $\mq_{\bfV, \bfW^{\bullet}}$, it restricts to a periodic functor on $\mq_{\bfV, \bfW^{\bullet}}/\mn_{\bfV}$. 

When $\nu\not=a(\nu)$, we have $a^{\ast}\mathcal{D}^{b}_{G_{\bfV}}(\bfE_{\bfV, \bfW^{\bullet},\Omega})=\mathcal{D}^{b}_{G_{a^{-1}(\bfV})}(\bfE_{a^{-1}(\bfV), \bfW^{\bullet},\Omega})$ and $a^{\ast}\mn_{\bfV}=\mn_{a^{-1}\bfV}$(here $a$ acts on $\bfV$ as a permutation of $\bfV_i$), thus $a^{\ast}$ is a periodic functor on $\bigoplus_{\bfV}\mq_{\bfV, \bfW^{\bullet}}/\mn_{\bfV}$. Here $a^{-1}(\mathbf{V})$ is a graded space with dimension vector $a^{-1}(\nu)$. 
\begin{definition}
	We define $\widetilde{\bigoplus_{\bfV}\mq_{\bfV, \bfW^{\bullet}}/\mn_{\bfV}}$ to be the additive category obtained by applying Definition \ref{3.1} to the periodic functor $a^{\ast}$ on $\bigoplus_{\bfV}\mq_{\bfV, \bfW^{\bullet}}/\mn_{\bfV}$.  The Verdier duality is defined by $\mathbf{D}(L,\phi) =(\mathbf{D}L,\mathbf{D}(\phi)^{-1}).$
\end{definition}

 Since $\me^{(n)}_i a^{\ast} \cong a^{\ast} \me^{(n)}_{a^{-1}(i)}$, we know that $\me^{(n)}_{\underline{i}} a^{\ast} \cong a^{\ast} \me^{(n)}_{\underline{i}}.$ Similarly, $\mf^{(n)}_{\underline{i}} a^{\ast} \cong a^{\ast} \mf^{(n)}_{\underline{i}}.$ 
For $\phi: a^{\ast} L \rightarrow L$, we denote the composition $a^{\ast}\me^{(n)}_{\underline{i}}L \cong \me^{(n)}_{\underline{i}}a^{\ast}L \xrightarrow{\me^{(n)}_{\underline{i}}(\phi)} \me^{(n)}_{\underline{i}}L $ by $\me^{(n)}_{\underline{i}}\phi$, and denote the composition $a^{\ast}\mf^{(n)}_{\underline{i}}L \cong \mf^{(n)}_{\underline{i}}a^{\ast}L \xrightarrow{\mf^{(n)}_{\underline{i}}(\phi)} \mf^{(n)}_{\underline{i}}L $ by $\mf^{(n)}_{\underline{i}}\phi$.

\begin{definition}
	The functors $\me^{(n)}_{\underline{i}},\mf^{(n)}_{\underline{i}}: \widetilde{\bigoplus\limits_{\bfV}\mathcal{Q}_{\mathbf{V},\mathbf{W}^{\bullet}}/\mathcal{N}_{\mathbf{V}}} \rightarrow \widetilde{\bigoplus\limits_{\bfV}\mathcal{Q}_{\mathbf{V},\mathbf{W}^{\bullet}}/\mathcal{N}_{\mathbf{V}}}$ are defined respectively by
	$$\me^{(n)}_{\underline{i}}((L,\phi))=(\me^{(n)}_{\underline{i}}L,\me^{(n)}_{\underline{i}}\phi), $$
	$$ \mf^{(n)}_{\underline{i}}((L,\phi))=(\mf^{(n)}_{\underline{i}}L,\mf^{(n)}_{\underline{i}}\phi). $$
	For $\bfV$ with $a(\nu)=\nu$, the functor $\mk_{\underline{i}},\mk_{-\underline{i}}:\widetilde{\mathcal{Q}_{\mathbf{V},\mathbf{W}^{\bullet}}/\mathcal{N}_{\mathbf{V}}} \rightarrow \widetilde{\mathcal{Q}_{\mathbf{V},\mathbf{W}^{\bullet}}/\mathcal{N}_{\mathbf{V}}}$ are defined by $$\mk_{\underline{i}} =\mathbf{Id} [s_{i}(\tilde{\nu}_{i}-2\nu_{i})],~ \mk_{-\underline{i}} =\mathbf{Id} [-s_{i}(\tilde{\nu}_{i}-2\nu_{i})],$$
	where $s_{i}$ is the order of $\underline{i}$ and $\tilde{\nu_i} = \sum\limits_{h \in \Omega_{i},s(h)=i} \nu_{t(h)} + \sum\limits_{1\leqslant k \leqslant N}\dim \bfW^{k}_{i^{k}}$ as before.
\end{definition}

\subsection{Commutation relations of functors}

In this subsection, we study the commutation relations of our functors, and we assume throughout that the dimension vectors are $a$-stable. 
\subsubsection{Relations of divided powers}

Let $\underline{i}$ be the $a$-orbit of $i \in I$, let $s_i$ be the order of $\underline{i}$, and choose an orientation $\Omega$ such that every vertex in $\underline{i}$ is a source in $\Omega$.
\begin{proposition}\label{dp}
	For any $(L, \phi)$ in $\widetilde{\mathcal{Q}_{\mathbf{V},\mathbf{W}^{\bullet}}/\mathcal{N}_{\mathbf{V}}}$, there are isomorphisms modulo traceless objects
	$$
		\me_{\underline{i}}^{(n-1)} \me_{\underline{i}}(L, \phi)\cong \me_{\underline{i}} \me_{\underline{i}}^{(n-1)}(L, \phi) \cong \bigoplus_{r=0}^{n-1} \me_{\underline{i}}^{(n)}[-2s_i r + s_i(n-1)](L, \phi),
	$$
	$$\mf_{\underline{i}}^{(n-1)} \mf_{\underline{i}}(L, \phi) \cong
	\mf_{\underline{i}} \mf_{\underline{i}}^{(n-1)}(L, \phi) \cong \bigoplus_{r=0}^{n-1} \mf_{\underline{i}}^{(n)}[-2s_i r + s_i(n-1)](L, \phi),
	$$
	where $\me_{\underline{i}}^{(0)}$ and $\mf_{\underline{i}}^{(0)}$ are defined to be the identity functor $\mathbf{Id}$.
\end{proposition}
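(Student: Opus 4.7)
The plan is to reduce to the divided-power identity for a single vertex proven in the symmetric setting and then assemble the orbit-level statement using the admissibility of $a$ together with the general ``fixed vs.\ free orbit'' dichotomy for periodic functors. First, for any single vertex $i \in I$, the relation
$$
\me_i^{(n-1)}\me_i \,\cong\, \me_i\me_i^{(n-1)} \,\cong\, \bigoplus_{r=0}^{n-1} \me_i^{(n)}[-2r + (n-1)]
$$
holds in $\mq_{\bfV,\bfW^{\bullet}}/\mn_{\bfV}$; this is the analogous divided-power relation established in \cite{fang2023lusztigsheavesintegrablehighest} by a standard base-change computation on a partial flag variety fibered as a tower of Grassmannians. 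Since by Definition \ref{E_i} we have $\me_{\underline{i}}^{(m)} = \prod_{i \in \underline{i}} \me_i^{(m)}$ and the factors pairwise commute (by admissibility there are no arrows between distinct vertices of $\underline{i}$), this immediately yields $\me_{\underline{i}}^{(n-1)}\me_{\underline{i}} \cong \me_{\underline{i}}\me_{\underline{i}}^{(n-1)}$.

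Next, I expand the composition by distributing the product over the direct sum, obtaining in $\bigoplus_{\bfV} \mq_{\bfV,\bfW^{\bullet}}/\mn_{\bfV}$
$$
\me_{\underline{i}}^{(n-1)}\me_{\underline{i}} \,=\, \prod_{i \in \underline{i}}\bigl(\me_i^{(n-1)}\me_i\bigr) \,\cong\, \bigoplus_{\mathbf{r}\colon \underline{i} \to \{0,\dots,n-1\}} \me_{\underline{i}}^{(n)}\Bigl[\sum_{i \in \underline{i}} (-2 r_i + n - 1)\Bigr],
$$
where the index runs over all functions $\mathbf{r} = (r_i)_{i \in \underline{i}}$.

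Finally, I lift this decomposition to the periodic category $\widetilde{\bigoplus_{\bfV}\mq_{\bfV,\bfW^{\bullet}}/\mn_{\bfV}}$. The canonical isomorphisms $a^{\ast}\me_i \cong \me_{a^{-1}(i)}a^{\ast}$ (induced by the functoriality of the Fourier--Deligne transform and of the defining Grassmannian/flag diagram under $a$) translate into a cyclic permutation action of $\langle a \rangle$ on the index set of functions $\underline{i} \to \{0,\dots,n-1\}$, and each $a$-orbit has length $t$ dividing $s_i$. An orbit of length $t \geq 2$ contributes a subsum fitting the template $B \oplus a^{\ast}B \oplus \cdots \oplus a^{\ast(t-1)}B$ with the canonical cyclic identification, hence is traceless in the sense of the definition following Definition \ref{3.1}. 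The fixed points are exactly the constant functions $r_i = r$ for $0 \leq r \leq n-1$; each contributes a single summand with shift $\sum_{i \in \underline{i}}(-2r + n - 1) = -2 s_i r + s_i(n-1)$ whose induced $a^{\ast}$-equivariance structure is precisely $\me_{\underline{i}}^{(n)}\phi$. Summing over orbits gives the claimed isomorphism up to traceless elements, and the statement for $\mf_{\underline{i}}$ follows by the same argument applied to the defining diagram of $\mf_i^{(n)}$.

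The main obstacle will be the bookkeeping in the last step: verifying that the equivariance datum on a non-fixed orbit really fits the strict cyclic template of the traceless definition (and not some twisted variant), and that on a fixed orbit it genuinely agrees with the equivariance defining $\me_{\underline{i}}^{(n)}\phi$ and $\mf_{\underline{i}}^{(n)}\phi$. Both are consequences of the naturality of the Fourier--Deligne transform and of the base-change isomorphisms involved, so the argument reduces to unwinding canonical natural transformations rather than establishing any new geometric identity.
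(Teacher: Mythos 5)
Your argument is essentially the same as the paper's, merely organized differently. You reduce to the single‑vertex divided‑power relation (as established in the symmetric setting) and then distribute the product $\me_{\underline{i}}^{(n-1)}\me_{\underline{i}} = \prod_{i\in\underline{i}}\me_i^{(n-1)}\me_i$ over direct sums, obtaining summands indexed by functions $\mathbf{r}\colon\underline{i}\to\{0,\dots,n-1\}$; the paper instead works directly with the $s_i$-fold product of Grassmannians and flag varieties, reads off $\pi_!\overline{\bbQ}_l\cong\mathbf{H}^*((\bbP^{n-1})^{s_i})$, and then decomposes via K\"unneth into exactly the same index set of tuples $(r_1,\dots,r_{s_i})$. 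In both cases the crux is that $\phi$ acts on these summands by a cyclic shift, non‑constant tuples group into orbits that instantiate the traceless template, and only the constant tuples $r_1=\cdots=r_{s_i}=r$ survive, each contributing the stated shift $[-2s_i r + s_i(n-1)]$. Your version makes the combinatorial bookkeeping slightly more transparent (and incidentally the paper has a typo where it states the range of the $r_k$ as $\{0,\dots,s_i-1\}$ rather than $\{0,\dots,n-1\}$), at the cost of deferring the geometric content to a citation; the paper's version is self‑contained and directly exhibits the geometric source of the cyclic action as the $a$-permutation of the $\bbP^{n-1}$-factors of the fibration. The one point you flag as ``bookkeeping'' — that the induced equivariance on a non‑fixed orbit really is the strict cyclic template — is exactly what the paper's explicit identification of $\pi_!\overline{\bbQ}_l$ with the cohomology of the fiber product provides, so it is worth saying a word more there, but this is a matter of exposition rather than a gap.
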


\begin{proof}
	Since the functor $\mf^{(n)}_{i}$ is isomorphic to Lusztig's induction functor, the second relation follows from the proof of \cite[Lemma 12.3.4]{lusztig2010introduction}. It remains to prove the first relation.

	Let $\bfV, \bfV', \bfV''$ have dimensions $\nu$, $\nu' = \nu - (n-1)\underline{i}$, and $\nu'' = \nu - n\underline{i}$, respectively. Let $\bfE_{\bfV,\bfW^{\bullet},\underline{i}}^0$ be the open subset of $\bfE_{\bfV,\bfW^{\bullet},\Omega}$ where $x$ satisfies $\dim \ker(\bigoplus\limits_{h \in \tilde{\Omega}^{(N)}, s(h) = i} x_h) = 0$ for any $i \in \underline{i}$. We also define
	$$
	\dot{\bfE}_{\bfV,\bfW^{\bullet},\underline{i}} = \bigoplus_{h \in \Omega, s(h) \notin \underline{i}} \Hom(\bfV_{s(h)}, \bfV_{t(h)}) \oplus \bigoplus_{j \notin \underline{i}, j \in I,1\leqslant k\leqslant N} \Hom(\bfV_j, \bfW^{k}_{j^{k}}),
	$$
	and denote
	$
	\tilde{\nu}_i = \sum_{h \in H, s(h) = i} \nu_{t(h)} +\sum\limits_{1\leqslant k \leqslant N} \dim \bfW^{k}_{i^{k}}.
	$
	We consider the following diagram
	\[
	\xymatrix{
		\bfE_{\bfV, \bfW^{\bullet},\Omega} \ar@{<-}[d]_{j_{\bfV,\underline{i}}} & & \bfE_{\bfV'', \bfW^{\bullet},\Omega} \ar@{<-}[d]_{j_{\bfV'',\underline{i}}}\\
		\bfE_{\bfV, \bfW^{\bullet}, \underline{i}}^0 \ar@{->}[d]_{\phi_{\bfV,\underline{i}}} & & \bfE_{\bfV'', \bfW^{\bullet}, \underline{i}}^0 \ar@{->}[d]_{\phi_{\bfV'',\underline{i}}} \\
		\dot{\bfE}_{\bfV, \bfW^{\bullet}, \underline{i}} \times \mathbf{Gr}^{s_i}(\nu_i, \tilde{\nu}_i) \ar@{<-}[r]_{q_1} & 
		\dot{\bfE}_{\bfV, \bfW^{\bullet}, \underline{i}} \times \mathbf{Fl}^{s_i}(\nu_i - n, \nu_i, \tilde{\nu}_i) \ar@{->}[r]_{q_2} & 
		\dot{\bfE}_{\bfV, \bfW^{\bullet}, \underline{i}} \times \mathbf{Gr}^{s_i}(\nu_i - n, \tilde{\nu}_i),
	}
	\]
	where the maps $j_{\bfV,\underline{i}}$ are open inclusions, and $\phi_{\bfV,\underline{i}}$ is defined by
	\begin{align*}
		\phi_{\mathbf{V},\underline{i}}:\mathbf{E}^{0}_{\mathbf{V},\mathbf{W}^{\bullet},\underline{i}} \longrightarrow \dot{\mathbf{E}}_{\mathbf{V},\mathbf{W}^{\bullet},\underline{i}} \times \mathbf{Gr}^{s_{i}}(\nu_i, \tilde{\nu}_{i});
		x \mapsto (\dot{x}, ({\rm{Im}} (\bigoplus \limits_{h \in \tilde{\Omega}, s(h)=i} x_{h} )_{i \in \underline{i}} ) ),
	\end{align*}
	$q_{1},q_{2}$ are the natural projections, which are smooth and proper. Then the functor $\me_{\underline{i}}^{(n)}$ is isomorphic to $(j_{\bfV'', \underline{i}})_! (\phi_{\bfV'', \underline{i}})^* (q_2)_! (q_1)^* (\phi_{\bfV, \underline{i}})_{\flat} (j_{\bfV, \underline{i}})^* [-ns_{i}\nu_i+s_{i}(n-1)]$.
	
	Similarly, the functor $\me_{\underline{i}}^{(n-1)}\me_{\underline{i}}$ is isomorphic to $$(j_{\bfV'', \underline{i}})_! (\phi_{\bfV'', \underline{i}})^* (q''_2)_! (q''_1)^* (\phi_{\bfV', \underline{i}})_{\flat} (j_{\bfV', \underline{i}})^*(j_{\bfV', \underline{i}})_! (\phi_{\bfV', \underline{i}})^* (q'_2)_! (q'_1)^* (\phi_{\bfV, \underline{i}})_{\flat} (j_{\bfV, \underline{i}})^* [-ns_{i}\nu_i],$$ where the morphisms are defined in the following diagram:
	\[
	\xymatrix{
		\bfE_{\bfV, \mathbf{W}^{\bullet},\Omega} \ar@{<-}[d]_{j_{\bfV,\underline{i}}} & & \bfE_{\bfV', \mathbf{W}^{\bullet},\Omega} \ar@{<-}[d]_{j_{\bfV',\underline{i}}} & & \bfE_{\bfV'', \mathbf{W}^{\bullet},\Omega} \ar@{<-}[d]_{j_{\bfV'',\underline{i}}} \\
		\bfE^0_{\bfV, \mathbf{W}^{\bullet}, \underline{i}} \ar@{->}[d]_{\phi_{\bfV,\underline{i}}} & & \bfE^0_{\bfV', \mathbf{W}^{\bullet}, \underline{i}} \ar@{->}[d]_{\phi_{\bfV',\underline{i}}} & & \bfE^0_{\bfV'', \mathbf{W}^{\bullet}, \underline{i}} \ar@{->}[d]_{\phi_{\bfV'',\underline{i}}} \\
		\txt{$\dot{\bfE}_{\bfV, \mathbf{W}^{\bullet}, \underline{i}}$\\ $\times$\\ $\mathbf{Gr}^{s_i}(\nu_i, \tilde{\nu}_i)$} 
		\ar@{<-}[r]^-{q_1'} & 
		\txt{$\dot{\bfE}_{\bfV, \mathbf{W}^{\bullet}, \underline{i}}$\\ $\times$\\ $\mathbf{Fl}^{s_i}(\nu'_i, \nu_i, \tilde{\nu}_i)$} 
		\ar@{->}[r]^-{q_2'} & 
		\txt{$\dot{\bfE}_{\bfV, \mathbf{W}^{\bullet}, \underline{i}}$\\ $\times$\\ $\mathbf{Gr}^{s_i}(\nu'_i, \tilde{\nu}_i)$} 
		\ar@{<-}[r]^-{q_1''} & 
		\txt{$\dot{\bfE}_{\bfV, \mathbf{W}^{\bullet}, \underline{i}}$\\ $\times$\\ $\mathbf{Fl}^{s_i}(\nu''_i, \nu'_i , \tilde{\nu}_i)$} 
		\ar@{->}[r]^-{q_2''} & 
		\txt{$\dot{\bfE}_{\bfV, \mathbf{W}^{\bullet}, \underline{i}}$\\ $\times$\\ $\mathbf{Gr}^{s_i}(\nu''_i , \tilde{\nu}_i)$}.
	}
	\]
	Since $(\phi_{\bfV', \underline{i}})_{\flat} (j_{\bfV', \underline{i}})^*(j_{\bfV', i})_! (\phi_{\bfV', i})^* \cong \mathbf{Id}$, we can reduce to the following diagram
	\[
	\xymatrix{
			{\dot{\bfE}_{\bfV, \mathbf{W}^{\bullet}, \underline{i}} \times \mathbf{Gr}^{s_i}(\nu'_i, \tilde{\nu}_i)} 
			& {\dot{\bfE}_{\bfV, \mathbf{W}^{\bullet}, \underline{i}} \times \mathbf{Fl}^{s_i}(\nu''_i, \nu'_i, \tilde{\nu}_i)} 
			\ar[l]_{q_1''} \ar[r]^{q_2''} 
			& {\dot{\bfE}_{\bfV, \mathbf{W}^{\bullet}, \underline{i}} \times \mathbf{Gr}^{s_i}(\nu''_i, \tilde{\nu}_i)} 
			\\
			{\dot{\bfE}_{\bfV, \mathbf{W}^{\bullet}, \underline{i}} \times \mathbf{Fl}^{s_i}(\nu'_i, \nu_i, \tilde{\nu}_i)} 
			\ar[u]^{q_2'} \ar[d]_{q_1'}
			& {\dot{\bfE}_{\bfV, \mathbf{W}^{\bullet}, \underline{i}} \times \mathbf{Fl}^{s_i}(\nu''_i, \nu'_i, \nu_i, \tilde{\nu}_i)} 
			\ar[u]^{r_1} \ar[l]_{r_2} \ar[rd]^{\pi} 
			& \\
			{\dot{\bfE}_{\bfV, \mathbf{W}^{\bullet}, \underline{i}} \times \mathbf{Gr}^{s_i}(\nu_i, \tilde{\nu}_i)} 
			& 
			& {\dot{\bfE}_{\bfV, \mathbf{W}^{\bullet}, \underline{i}} \times \mathbf{Fl}^{s_i}(\nu''_i, \nu_i, \tilde{\nu}_i)}. 
			\ar[uu]_{q_2} \ar[ll]^{q_1}
		}
	\]
	Observe that $\pi$ is a locally trivial fibration whose fiber is the $s_i$-fold product of $\bbP^{n-1}$. Thus $(q_2'')_! (q_1'')^* (q_2')_! (q_1')^* \cong (q_2)_! \pi_! \pi^* (q_1)^* \cong (q_2)_! (\pi_! \overline{\bbQ}_{l} \otimes (q_1)^* (-)) \cong (q_2)_! ( \mathbf{H}^{\ast}( (\bbP^{n-1})^{s_{i}} ) \otimes (q_1)^* (-))$. 
	
	We can identify $\mathbf{H}^{\ast}( (\bbP^{n-1})^{s_{i}} ) $ with $\mathbf{H}^{\ast}( (\bbP^{n-1}) )^{\otimes s_{i}} $ via the K\"unneth formula and let $\overline{\bbQ}_{l}[-2r_1 - 2r_2 - \cdots - 2r_{s_i}] \subseteqq \mathbf{H}^{\ast}( (\bbP^{n-1})^{s_{i}} ) $ be the direct summand contributed by $\mathbf{H}^{r_{1}}( \bbP^{n-1} )[-2r_{1}] \otimes \mathbf{H}^{r_{2}}( \bbP^{n-1} )[-2r_{2}] \cdots \otimes \mathbf{H}^{r_{s_{i}}}( \bbP^{n-1} )[-2r_{s_{i}}]. $ Then for a morphism $\phi: a^{\ast}L \rightarrow L$, the morphism
	$$
	(q_2)_! (\pi_! \overline{\bbQ}_{l} \otimes (q_1)^* \phi): (q_2)_! (\pi_! \overline{\bbQ}_{l} \otimes (q_1)^* a^* L) \rightarrow (q_2)_! (\pi_! \overline{\bbQ}_{l} \otimes (q_1)^* L)
	$$
	sends $(q_2)_!(\overline{\bbQ}_{l}[-2r_1 - 2r_2 - \cdots - 2r_{s_i}] \otimes (q_1)^*L)$ to $(q_2)_!(\overline{\bbQ}_{l}[-2r_2 - 2r_3 - \cdots - 2r_{s_i} - 2r_1] \otimes (q_1)^*L)$, where $r_k \in \{0, 1, 2, \dots, s_{i}-1\}$. If $r_{s'}\neq r_{s''}$ for some $s',s''$, $\phi$ acts by a cyclic permutation on the direct summands corresponding to $\{r_{k}, 0\leqslant k \leqslant s_{i}-1 \}$, hence the direct sum of these direct summands contributes to a traceless object. As a result, only those $\overline{\bbQ}_{l}[-2rs_{i}]$ with $0\leqslant r \leqslant n-1$ contribute to the non-traceless part. This proves the assertion for $\me_{\underline{i}}^{(n-1)}\me_{\underline{i}}$. The proof for $\me_{\underline{i}}\me_{\underline{i}}^{(n-1)}$ is similar.

\end{proof}

\subsubsection{Relations of $\me_{\underline{i}}^{(n)}$ and $\mf_{\underline{j}}$}
Let $\underline{i}$ and $\underline{j}$ be the $a$-orbits of $i,j \in I$, respectively, and let $s_i$ be the order of $\underline{i}$.
\begin{proposition}\label{dq1}
	For any $(L, \phi)$ in $\widetilde{\mathcal{Q}_{\mathbf{V},\mathbf{W}^{\bullet}}/\mathcal{N}_{\mathbf{V}}}$, if $\underline{i} \neq \underline{j}$, there is an isomorphism
  \begin{equation}\label{c1}
  	 \me_{\underline{i}}^{(n)} \mf_{\underline{j}}(L, \phi) \cong \mf_{\underline{j}} \me_{\underline{i}}^{(n)}(L, \phi).
  \end{equation}
  If $\underline{i}=\underline{j}$, set $m = n + \tilde{\nu_i} - 2\nu_i - 1$ and there is an isomorphism modulo traceless objects
  \begin{equation}
  	\begin{split}
  			 &\me_{\underline{i}}^{(n)} \mf_{\underline{i}}(L, \phi) \oplus \bigoplus_{k=0}^{-m-1} \me_{\underline{i}}^{(n-1)}[(-m-1-2k) s_i](L, \phi)\\
  		\cong &\mf_{\underline{i}} \me_{\underline{i}}^{(n)}(L, \phi) \oplus \bigoplus_{k=0}^{m-1} \me_{\underline{i}}^{(n-1)}[(m-1-2k) s_i](L, \phi).
  	\end{split}
  \end{equation}
 .
\end{proposition}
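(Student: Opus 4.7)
The plan is to reduce both cases to the symmetric-case commutation relation of $\me_i^{(n)}$ and $\mf_j$ established in \cite[Corollary 3.20]{fang2023lusztigsheavesintegrablehighest}, and then extract the claimed formula in the periodic category $\widetilde{\mq_{\bfV,\bfW^{\bullet}}/\mn_{\bfV}}$ via a traceless argument in the spirit of Proposition \ref{dp}.

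For $\underline{i}\neq\underline{j}$, by Definitions \ref{E_i}--\ref{F_i} we have $\me_{\underline{i}}^{(n)}=\prod_{i\in\underline{i}}\me_i^{(n)}$ and $\mf_{\underline{j}}=\prod_{j\in\underline{j}}\mf_j$ with disjoint index sets. The symmetric case yields $\me_i^{(n)}\mf_j\cong\mf_j\me_i^{(n)}$ for each pair $(i,j)\in\underline{i}\times\underline{j}$, hence the full products commute. The resulting isomorphism is $a^*$-equivariant because $a^*$ merely permutes the factors indexed by $\underline{i}$ and $\underline{j}$, so it descends to the periodic category and gives (\ref{c1}).

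For $\underline{i}=\underline{j}$, enumerate $\underline{i}=\{i_1,\ldots,i_{s_i}\}$ with $a(i_k)=i_{k+1}$ cyclically. The vertices in $\underline{i}$ are pairwise disconnected by admissibility, so $\me_{i_k}^{(n)}$ commutes with $\mf_{i_l}$ whenever $k\neq l$. This allows the rewriting
$$\me_{\underline{i}}^{(n)}\mf_{\underline{i}}\cong \me_{i_1}^{(n)}\mf_{i_1}\,\me_{i_2}^{(n)}\mf_{i_2}\cdots\me_{i_{s_i}}^{(n)}\mf_{i_{s_i}}.$$
At each factor, the symmetric relation reads $\me_{i_k}^{(n)}\mf_{i_k}\oplus N_k\cong\mf_{i_k}\me_{i_k}^{(n)}\oplus P_k$ with $N_k,P_k$ the prescribed shifts of $\me_{i_k}^{(n-1)}$, and common parameter $m=n+\tilde{\nu}_i-2\nu_i-1$ (independent of $k$ since $\nu$ is $a$-invariant). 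Iterating through all $s_i$ factors and commuting $\me_{i_k}^{(n-1)}$ past the remaining $\me_{i_l}^{(n)},\mf_{i_l}$ (for $l\neq k$), one expresses $\me_{\underline{i}}^{(n)}\mf_{\underline{i}}$ as $\mf_{\underline{i}}\me_{\underline{i}}^{(n)}$ plus a direct sum of correction terms indexed by nonempty subsets $S\subseteq\underline{i}$ (tracking which factors contributed a correction) and multi-indices $(l_k)_{k\in S}$.

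The decisive observation is that $a$ acts on $\underline{i}$ by a cyclic permutation of order $s_i$, whose only invariant subsets are $\emptyset$ and $\underline{i}$. Hence every correction term with $0<|S|<s_i$ assembles into a nontrivial $a^*$-orbit of size $t\mid s_i$ with $t>1$, producing a traceless object in $\widetilde{\mq_{\bfV,\bfW^\bullet}/\mn_\bfV}$. For $S=\underline{i}$, we compose $\me_{i_1}^{(n-1)}\cdots\me_{i_{s_i}}^{(n-1)}=\me_{\underline{i}}^{(n-1)}$ and the shifts add, giving $\bigoplus_{(l_1,\ldots,l_{s_i})}\me_{\underline{i}}^{(n-1)}[(m-1)s_i-2(l_1+\cdots+l_{s_i})]$; under the cyclic $a^*$-action on multi-indices only the diagonal ones $l_1=\cdots=l_{s_i}=l$ are invariant, contributing exactly $\bigoplus_{l=0}^{m-1}\me_{\underline{i}}^{(n-1)}[(m-1-2l)s_i]$ on the right-hand side, and analogously for the $N_k$-terms on the left.

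The principal obstacle is the bookkeeping required to iterate the stable equivalence through $s_i$ factors while matching cross terms with their $a^*$-orbits to conclude they are traceless. A cleaner alternative, which I would ultimately prefer, is to mirror the geometric proof of Proposition \ref{dp} directly: assemble a single commutative diagram involving the product flag variety $\mathbf{Fl}^{s_i}$ and the product Grassmannian $\mathbf{Gr}^{s_i}$, identify the fiber of the appropriate projection as $(\bbP^{|m|-1})^{s_i}$, and apply the K\"unneth formula. The $a^*$-action cyclically permutes the K\"unneth factors, so only diagonal cohomology summands survive modulo traceless, directly producing the shifts $(m-1-2l)s_i$ and $(-m-1-2l)s_i$ in the statement.
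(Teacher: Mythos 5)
Your first case ($\underline{i}\neq\underline{j}$) is exactly the paper's argument. For $\underline{i}=\underline{j}$ your primary route --- factoring $\me_{\underline{i}}^{(n)}\mf_{\underline{i}}$ into the $s_i$ commuting single-vertex pieces and iterating the symmetric-case stable isomorphism --- is genuinely different from the paper's, and it has a real gap. The single-vertex relation at $i_k$ is an isomorphism $\me_{i_k}^{(n)}\mf_{i_k}\oplus N_k\cong\mf_{i_k}\me_{i_k}^{(n)}\oplus P_k$ that is \emph{not} individually $a^*$-equivariant: $a^*$ carries the relation at $i_k$ to the relation at $i_{k+1}$. Iterating through all $s_i$ factors therefore produces only an identity with correction terms on \emph{both} sides whose realization as an honest isomorphism in $\widetilde{\mq_{\bfV,\bfW^\bullet}/\mn_\bfV}$ (compatible with $\phi$, so that the cross terms really carry the cyclic permutation and Lemma \ref{split criterion} applies) is exactly what needs to be constructed; you assert the orbit structure of the cross terms but never exhibit the equivariant isomorphism, nor explain how the $N$- and $P$-corrections at intermediate stages cancel at the object level rather than merely in the split Grothendieck group. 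This is more than bookkeeping.

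Your ``preferred alternative'' is indeed the paper's actual strategy, but as sketched it omits its essential mechanism. The paper does not compute a single fiber $(\bbP^{|m|-1})^{s_i}$; it compares the two semisimple complexes $r_!\overline{\bbQ}_l$ and $r'_!\overline{\bbQ}_l$ (coming from the two orders of composition) over a stratification of the target indexed by $t\in\{0,\dots,s_i\}$, where $t$ counts how many of the $s_i$ components satisfy $W'\subset W$. The strata with $0<t<s_i$ contribute traceless objects (their components have nontrivial $a$-orbits); on the $t=0$ stratum the two maps restrict to the same isomorphism, so those summands agree on the nose; and only on the $t=s_i$ stratum does one get local fibrations with fibers $(\bbP^{\tilde{\nu}_i-\nu_i-1})^{s_i}$ and $(\bbP^{\nu_i-n})^{s_i}$ respectively, whose K\"unneth diagonals differ by precisely the claimed shifts of $\me_{\underline{i}}^{(n-1)}$. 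Without this three-case stratification your geometric sketch does not yet yield the statement; with it, you would be reproducing the paper's proof.
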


\begin{proof}
 If $\underline{i} \neq \underline{j}$, by \cite[Lemma 3.14 and Corollary 3.20]{fang2023lusztigsheavesintegrablehighest} the functors $\me_{i}^{(n)} $ and $\mf_{j}$ commute with each other on those $\mathcal{D}^{b}_{G_{\bfV}}(\bfE_{\bfV, \bfW^{\bullet},\Omega})/\mn_{\bfV}$, so their products $\me_{\underline{i}}^{(n)} $ and $\mf_{\underline{j}}$ also commute on those $\mathcal{D}^{b}_{G_{\bfV}}(\bfE_{\bfV, \bfW^{\bullet},\Omega})/\mn_{\bfV}$. Hence they induce commuting functors on $\widetilde{\mathcal{Q}_{\mathbf{V},\mathbf{W}^{\bullet}}/\mathcal{N}_{\mathbf{V}}}$. The equation (\ref{c1}) is proved.

Now, for $\underline{i} = \underline{j}$, we consider $\nu''', \nu', \nu'' \in \bbN[I]^{a}$, and take $\bfV''', \bfV', \bfV''$ of dimensions $\nu''' = \nu + \underline{i}, \nu' = \nu - (n-1)\underline{i}, \nu'' = \nu - n\underline{i}$, respectively. Choose an orientation $\Omega$ such that all vertices $i \in \underline{i}$ are sources in $\Omega$. Then we can draw the following diagrams for $\me_{\underline{i}}^{(n)}\mf_{\underline{i}}$,
\[
\xymatrix{
\bfE_{\bfV, \bfW^{\bullet},\Omega} & \bfE'_{\bfV''', \bfW^{\bullet},\Omega} \ar[l]_{p_1} \ar[r]^{p_2} & \bfE''_{\bfV''', \bfW^{\bullet},\Omega} \ar[r]^{p_3} & \bfE_{\bfV''', \bfW^{\bullet},\Omega} \\
\bfE^{0}_{\bfV, \bfW^{\bullet}, \underline{i}} \ar[u]^{j_{\bfV, \underline{i}}} \ar[d]_{\phi_{\bfV, \underline{i}}} & \bfE^{0,'}_{\bfV''', \bfW^{\bullet}, \underline{i}} \ar[l] \ar[u] \ar[r] \ar[d]_{\phi'} & \bfE^{0,''}_{\bfV''', \bfW^{\bullet}, \underline{i}} \ar[u] \ar[r] \ar[d]^{\phi''} & \bfE^{0}_{\bfV''', \bfW^{\bullet}, \underline{i}} \ar[u]^{j_{\bfV''', \underline{i}}} \ar[d]_{\phi_{\bfV''', \underline{i}}} \\
\txt{$\dot{\bfE}_{\bfV, \bfW^{\bullet},\underline{i}}$\\ $\times$\\ $\mathbf{Gr}^{s_i}(\nu_i, \tilde{\nu}_i)$} & \txt{$\dot{\bfE}_{\bfV''', \bfW^{\bullet},\underline{i}}$\\ $\times$\\ $\mathbf{Fl}^{s_i}(\nu_i, \nu_i + 1, \tilde{\nu}_i)$} \ar[l]^{q_1} \ar@{=}[r] & \txt{$\dot{\bfE}_{\bfV''', \bfW^{\bullet},\underline{i}}$\\ $\times$\\ $\mathbf{Fl}^{s_i}(\nu_i, \nu_i + 1, \tilde{\nu}_i)$} \ar[r]_{q_2} & \txt{$\dot{\bfE}_{\bfV''', \bfW^{\bullet},\underline{i}}$\\ $\times$\\ $\mathbf{Gr}^{s_i}(\nu_i + 1, \tilde{\nu}_i)$},
}
\]
\[
\xymatrix{
\bfE_{\bfV''', \bfW^{\bullet},\Omega} \ar@{<-}[d]_{j_{\bfV''', \underline{i}}} & & \bfE_{\bfV', \bfW^{\bullet},\Omega} \ar@{<-}[d]_{j_{\bfV', \underline{i}}} \\
\bfE_{\bfV''', \bfW^{\bullet}, \underline{i}}^0 \ar@{->}[d]_{\phi_{\bfV''', \underline{i}}} & & \bfE_{\bfV', \bfW^{\bullet}, \underline{i}}^0 \ar@{->}[d]_{\phi_{\bfV', \underline{i}}} \\
\txt{$\dot{\bfE}_{\bfV, \bfW^{\bullet}, \underline{i}}$\\ $\times$\\ $\mathbf{Gr}^{s_i}(\nu_i + 1, \tilde{\nu}_i)$} \ar@{<-}[r]_-{q_1'} & \txt{$\dot{\bfE}_{\bfV, \bfW^{\bullet}, \underline{i}}$\\ $\times$ \\ $\mathbf{Fl}^{s_i}(\nu_i - n + 1, \nu_i + 1, \tilde{\nu}_i)$} \ar@{->}[r]_-{q_2'} & \txt{$\dot{\bfE}_{\bfV, \bfW^{\bullet}, \underline{i}}$\\ $\times$\\ $\mathbf{Gr}^{s_i}(\nu_i - n + 1, \tilde{\nu}_i)$},
}
\]
where $\bfE^{0,''}_{\bfV''', \bfW^{\bullet}, \underline{i}}$ is the preimage of $\bfE^{0}_{\bfV''', \bfW^{\bullet}, \underline{i}}$ under $p_{3}$, $\bfE^{0,'}_{\bfV''', \bfW^{\bullet}, \underline{i}}$ is the preimage of $\bfE^{0,''}_{\bfV''', \bfW^{\bullet}, \underline{i}}$ under $p_{2}$, and
$$\phi'(x,\rho,\mathbf{S})=(\dot{x},({\rm{Im}} (\bigoplus\limits_{h \in \tilde{\Omega}_{i}, s(h)=i}x_{h}|_{\mathbf{S}_{i}}) \subseteq  {\rm{Im}} (\bigoplus\limits_{h \in \tilde{\Omega}_{i}, s(h)=i}x_{h}|)\subseteq \tilde{\mathbf{V}}_{i}''')_{i \in \underline{i}}   ),$$ 
  $$\phi''(x,\mathbf{S})=(\dot{x},({\rm{Im}} (\bigoplus\limits_{h \in \tilde{\Omega}_{i}, s(h)=i}x_{h}|_{\mathbf{S}_{i}}) \subseteq  {\rm{Im}} (\bigoplus\limits_{h \in \tilde{\Omega}_{i}, s(h)=i}x_{h}|)\subseteq \tilde{\mathbf{V}}_{i}''')_{i \in \underline{i}}   ).$$  Furthermore, the lower-left square is Cartesian. Then the functor
$\me_{\underline{i}}^{(n)} \mf_{\underline{i}}$ is isomorphic to $$(j_{\bfV', \underline{i}})_! (\phi_{\bfV', \underline{i}})^* (q_2')_! (q_1')^* (q_2)_! (q_1)^* (\phi_{\bfV, \underline{i}})_{\flat} (j_{\bfV, \underline{i}})^* [s_i(\nu_i + \tilde{\nu}_i - n \nu_i - n)].$$

Similarly, consider the following diagrams
\[
\xymatrix{
\bfE_{\bfV, \bfW^{\bullet},\Omega} \ar@{<-}[d]_{j_{\bfV, \underline{i}}} & & \bfE_{\bfV'', \bfW^{\bullet},\Omega} \ar@{<-}[d]_{j_{\bfV'', \underline{i}}} \\
\bfE_{\bfV, \bfW^{\bullet}, \underline{i}}^0 \ar@{->}[d]_{\phi_{\bfV, \underline{i}}} & & \bfE_{\bfV'', \bfW^{\bullet}, \underline{i}}^0 \ar@{->}[d]_{\phi_{\bfV'', \underline{i}}} \\
\txt{$\dot{\bfE}_{\bfV, \bfW^{\bullet}, \underline{i}}$\\ $\times$ \\$\mathbf{Gr}^{s_i}(\nu_i, \tilde{\nu}_i)$} \ar@{<-}[r]_-{\tilde{q_1}'} & \txt{$\dot{\bfE}_{\bfV, \bfW^{\bullet}, \underline{i}}$\\ $\times$ \\$\mathbf{Fl}^{s_i}(\nu_i - n, \nu_i, \tilde{\nu}_i)$} \ar@{->}[r]_-{\tilde{q_2}'} & \txt{$\dot{\bfE}_{\bfV, \bfW^{\bullet}, \underline{i}}$\\ $\times$\\ $\mathbf{Gr}^{s_i}(\nu_i - n, \tilde{\nu}_i)$};
}
\]

\[
\xymatrix{
\bfE_{\bfV'', \bfW^{\bullet},\Omega} & \bfE'_{\bfV', \bfW^{\bullet},\Omega} \ar[l]_{p_1} \ar[r]^{p_2} & \bfE''_{\bfV', \bfW^{\bullet},\Omega} \ar[r]^{p_3} & \bfE_{\bfV', \bfW^{\bullet},\Omega} \\
\bfE^{0}_{\bfV'', \bfW^{\bullet}, \underline{i}} \ar[u]^{j_{\bfV'', \underline{i}}} \ar[d]_{\phi_{\bfV'', \underline{i}}} & \bfE^{0,'}_{\bfV', \bfW^{\bullet}, \underline{i}} \ar[l] \ar[u] \ar[r] \ar[d] & \bfE^{0,''}_{\bfV', \bfW^{\bullet}, \underline{i}} \ar[u] \ar[r] \ar[d] & \bfE^{0}_{\bfV', \bfW^{\bullet}, \underline{i}} \ar[u]^{j_{\bfV', \underline{i}}} \ar[d]_{\phi_{\bfV', \underline{i}}} \\
\txt{$\dot{\bfE}_{\bfV'', \bfW^{\bullet},\underline{i}}$\\ $\times$\\ $\mathbf{Gr}^{s_i}(\nu''_i, \tilde{\nu}_i)$} & \txt{$\dot{\bfE}_{\bfV', \bfW^{\bullet},\underline{i}}$ \\ $\times$ \\ $\mathbf{Fl}^{s_i}(\nu''_i , \nu'_i, \tilde{\nu}_i)$} \ar[l]^{\tilde{q_1}} \ar@{=}[r] & \txt{$\dot{\bfE}_{\bfV', \bfW^{\bullet},\underline{i}}$\\ $\times$ \\$\mathbf{Fl}^{s_i}(\nu''_i , \nu'_i, \tilde{\nu}_i)$} \ar[r]_{\tilde{q_2}} & \txt{$\dot{\bfE}_{\bfV', \bfW^{\bullet},\underline{i}}$ \\ $\times$ \\ $\mathbf{Gr}^{s_i}(\nu'_i, \tilde{\nu}_i)$}.
}
\]
Then the functor $\mf_{\underline{i}} \me_{\underline{i}}^{(n)}$ is isomorphic to $$ (j_{\bfV', \underline{i}})_! (\phi_{\bfV', \underline{i}})^* (\tilde{q_2})_! (\tilde{q_1})^* (\tilde{q_2}')_! (\tilde{q_1}')^* (\phi_{\bfV, \underline{i}})_{\flat} (j_{\bfV, \underline{i}})^* [s_i(\nu_i + \tilde{\nu}_i - n \nu_i - n)].$$

We now only need to study the relation between $(q_2')_! (q_1')^* (q_2)_! (q_1)^*$ and $(\tilde{q_2})_! (\tilde{q_1})^* (\tilde{q_2}')_! (\tilde{q_1}')^*$. Let $X_{1}$ be the pullback of $q_2, q_1'$, and let $X_{2}$ be the pullback of $\tilde{q_2}', \tilde{q_1}$. There are varieties 
$$X_1 := \dot{\bfE}_{\bfV, \bfW^{\bullet},\underline{i}} \times \{(W,W' \subset W''' \subset \tilde{W} \mid \dim W' = \nu_i', \dim W = \nu_i, \dim W''' = \nu_i''', \dim \tilde{W} = \tilde{\nu}_i \}^{s_i},$$ 
$$X_2 := \dot{\bfE}_{\bfV, \bfW^{\bullet},\underline{i}} \times \{(W'' \subset W,W' \subset \tilde{W} \mid \dim W' = \nu_i', \dim W = \nu_i, \dim W'' = \nu_i'', \dim \tilde{W} = \tilde{\nu}_i \}^{s_i}.$$
Then $(q_2')_! (q_1')^* (q_2)_! (q_1)^* \cong (\pi_2)_! r_! r^* (\pi_1)^*$, where $\pi_{1},\pi_{2}$ and $r$ are the natural projections in the following diagram
\[
\xymatrix{
\txt{$\dot{\bfE}_{\bfV, \bfW^{\bullet},\underline{i}}$ \\ $\times$ \\ $\mathbf{Gr}^{s_i}(\nu_i, \tilde{\nu}_i)$} 
& \txt{$\dot{\bfE}_{\bfV''', \bfW^{\bullet},\underline{i}}$ \\ $\times$ \\ $\mathbf{Fl}^{s_i}(\nu_i, \nu_i + 1, \tilde{\nu}_i)$} 
\ar[l]_{q_1} \ar[r]^{q_2} 
& \txt{$\dot{\bfE}_{\bfV''', \bfW^{\bullet},\underline{i}}$\\ $\times$ \\ $\mathbf{Gr}^{s_i}(\nu_i + 1, \tilde{\nu}_i)$} \\
& X_1 \ar[r] \ar[u] \ar[ld]_{r} 
& \txt{$\dot{\bfE}_{\bfV, \bfW^{\bullet}, \underline{i}}$ \\ $\times$ \\ $\mathbf{Fl}^{s_i}(\nu_i - n + 1, \nu_i + 1, \tilde{\nu}_i)$} 
\ar[u]^{q_1'} \ar[d]_{q_2'} \\
\txt{$\dot{\bfE}_{\bfV, \bfW^{\bullet},\underline{i}}$ \\ $\times$ \\ $\mathbf{Gr}^{s_i}(\nu_i, \tilde{\nu}_i)$ \\ $\times$ \\ $\mathbf{Gr}^{s_i}(\nu_i - n + 1, \tilde{\nu}_i)$} 
\ar[uu]^{\pi_1} \ar[rr]^{\pi_2} 
& 
& \txt{$\dot{\bfE}_{\bfV, \bfW^{\bullet}, \underline{i}}$ \\ $\times$ \\ $\mathbf{Gr}^{s_i}(\nu_i - n + 1, \tilde{\nu}_i)$}.
}
\]

Similarly, $(\tilde{q_2})_! (\tilde{q_1})^* (\tilde{q_2}')_! (\tilde{q_1}')^* \cong (\pi_2)_! r'_! r'^* (\pi_1)^*$, where $\pi_{1},\pi_{2}$ and $r'$ are the natural projections in the following diagram
\[
	\xymatrix{
		\txt{$\dot{\bfE}_{\bfV,\bfW^{\bullet},\underline{i}}$ \\ $\times$ \\ $\mathbf{Gr}^{s_i}(\nu_i, \tilde{\nu}_i)$} 
		& \txt{$\dot{\bfE}_{\bfV''',\bfW^{\bullet},\underline{i}}$ \\ $\times$ \\ $\mathbf{Fl}^{s_i}(\nu_i - n, \nu_i, \tilde{\nu}_i)$} 
		\ar[l]_{\tilde{q_1}'} \ar[r]^{\tilde{q_2}'} 
		& \txt{$\dot{\bfE}_{\bfV''',\bfW^{\bullet},\underline{i}}$ \\ $\times$ \\ $\mathbf{Gr}^{s_i}(\nu_i - n, \tilde{\nu}_i)$} \\
		& X_2 \ar[r] \ar[u] \ar[ld]_{r'} 
		& \txt{$\dot{\bfE}_{\bfV,\bfW^{\bullet},\underline{i}}$ \\ $\times$ \\ $\mathbf{Fl}^{s_i}(\nu_i - n, \nu_i - n+1, \tilde{\nu_i})$} 
		\ar[u]^{\tilde{q_1}} \ar[d]_{\tilde{q_2}} \\
		\txt{$\dot{\bfE}_{\bfV,\bfW^{\bullet},\underline{i}}$ \\ $\times$ \\ $\mathbf{Gr}^{s_i}(\nu_i, \tilde{\nu}_i)$ \\ $\times$ \\ $\mathbf{Gr}^{s_i}(\nu_i - n+1, \tilde{\nu_i})$} 
		\ar[uu]^{\pi_1} \ar[rr]^{\pi_2} 
		& 
		& \txt{$\dot{\bfE}_{\bfV,\bfW^{\bullet},\underline{i}}$ \\ $\times$ \\ $\mathbf{Gr}^{s_i}(\nu_i - n+1, \tilde{\nu_i})$}.
	}
\]

By projection formula, we have $(\pi_2)_! r_! r^* (\pi_1)^* \cong (\pi_2)_! (r_! \overline{\bbQ}_{l} \otimes (\pi_1)^* (-))$, and $(\pi_2)_! r'_! r'^* (\pi_1)^* \cong (\pi_2)_! (r'_!\overline{\bbQ}_{l} \otimes (\pi_1)^* (-))$. We need to study the differences between the semisimple complexes $r'_!\overline{\bbQ}_{l}$ and $r_! \overline{\bbQ}_{l} $. (They are semisimple since $X_{1},X_{2}$ are smooth and $r,r'$ are proper.) It suffices to calculate restrictions of these complexes on a stratification.

Now we introduce the subsets $X_1^{\geq t}$ and $X_2^{\geq t}$ as follows. Define a map $\phi_{1}: \{(W,W' \subset W''' \subset \tilde{W} \mid \dim W' = \nu_i', \dim W = \nu_i, \dim W''' = \nu_i''', \dim \tilde{W} = \tilde{\nu}_i \} \rightarrow \{0,1\}$ by setting $\phi_{1}( (W,W', W''',\tilde{W}))=1 $ if $W' \subset W$ and $\phi_{1}( (W,W', W''',\tilde{W}))=0 $ otherwise. It induces a map $\Phi_{1}:X_{1} \rightarrow \{k\in \mathbb{N}| 0\leqslant k \leqslant s_{i} \}$ by $\Phi_{1}= \sum\limits_{1\leqslant l \leqslant s_{i}} \phi_{1,l},$ where $\phi_{1,l}$ is defined as $\phi_{1}$ on the $l$-th $\{(W,W' \subset W''' \subset \tilde{W} \mid \dim W' = \nu_i', \dim W = \nu_i, \dim W''' = \nu_i''', \dim \tilde{W} = \tilde{\nu}_i \}$ component in $X_{1}$. Then $X_1^{\geq t}$ is defined to be the preimage of $\{t,t+1,\cdots ,s_{i}\}$ under $\Phi_{1}$ and is closed in $X_1$. Denote $X_1^{\geq t}-X_1^{\geq t+1}$ by $X^{t}_{1}$. Similarly, we define $\phi_{2}: \{(W'' \subset W,W' \subset \tilde{W} \mid \dim W' = \nu_i', \dim W = \nu_i, \dim W'' = \nu_i'', \dim \tilde{W} = \tilde{\nu}_i \} \rightarrow \{0,1\}$ by setting $\phi_{2}( (W'',W, W',\tilde{W}))=1 $ if $W' \subset W$ and $\phi_{2}( (W'',W, W',\tilde{W}))=0 .$ Otherwise. The map $\phi_{2}$ induces a map $\Phi_{2}= \sum\limits_{1\leqslant l \leqslant s_{i}} \phi_{2,l}$ on $X_{2}$, and we define $X_2^{\geq t}$ to be the preimage of $\{t,t+1,\cdots ,s_{i}\}$ under $\Phi_{2}$ and is closed in $X_2$. Denote $X_2^{\geq t}-X_2^{\geq t+1}$ by $X^{t}_{2}$.

Denote $Y_t := r(X_1^{\geq t}) - r(X_1^{\geq t+1})$, then $
r_!\overline{\bbQ}_{l} = \bigoplus\limits_{0\leqslant t \leqslant s_{i}} F_{t},$
where each $F_{t}$ is the direct summand of $
r_!\overline{\bbQ}_{l}$ such that any simple direct summand (up to shift) $L_{t}$ of $F_{t}$ satisfies $\supp L_{t} \subset r(X_1^{\geq t})$ and $\supp L_{t} \cap Y_t \neq \emptyset$. Similarly, denote $Z_t := r(X_2^{\geq t}) - r(X_2^{\geq t+1})$. There is also a decomposition $
r'_!\overline{\bbQ}_{l} = \bigoplus\limits_{0\leqslant t \leqslant s_{i}} F'_{t},$ where the simple direct summands (up to shift) $L'_{t}$ of $F'_{t}$ satisfy $\supp L'_{t} \subset r'(X_2^{\geq t})$ and $\supp L'_{t} \cap Z_t \neq \emptyset$.

\textbf{(1)Case $t \neq 0,s_{i}$ :}
 Let $j_t: Y_t \rightarrow r(X_1^{\geq t})$. Then any simple perverse sheaf of the form $L_{t}$ satisfies $L_{t} \cong (j_t)_{*!} (j_t)^* L_{t}$. Observe that if $t \neq 0,s_{i}$, the connected components of $Y_t$ have nontrivial $a$-orbits. The simple perverse sheaf $(j_t)^* L_{t}$ is supported on a unique connected component and $a^{\ast}$ sends it to another simple perverse sheaf supported on another connected component of $Y_{t}$. We may assume $o(L) \neq 1$ is the least number such that $(a^{\ast})^{o(L)} L_{t}\cong L_{t}$. By the same reasoning as in the proof of Proposition \ref{dp}, after taking composition with $\otimes$ and $(\pi_{2})_{!}$, the direct sum of these $L_{t}\oplus a^{\ast} L_{t} \oplus (a^{\ast})^{2} L_{t}\cdots \oplus (a^{\ast})^{o(L)-1} L_{t}$ is acted by $\phi$ as a cyclic permutation, hence contributes to a traceless object. Since $F_{t}$ is a direct sum of objects of the form $L_{t}\oplus a^{\ast} L_{t} \oplus (a^{\ast})^{2} L_{t}\cdots \oplus (a^{\ast})^{o(L)-1} L_{t}$, we know that $F_{t}$ also contributes to traceless objects if $t\neq 0,s_{i}$. Similarly, those $F'_{t}$ also contribute to traceless objects if $t\neq 0,s_{i}$. 

\textbf{(2)Case $t = 0$ :}
 When $t=0$, $r$ and $r'$ restrict to the same isomorphism on the open subset $Z_0 = Y_0$ of the images, since in this case $W'''= W+W'$ and $W''=W\cap W'$ are uniquely determined by $W,W'$. Then $(j_{0})^{\ast}F_{0} \cong r_{!}(j^{1}_{0})^{\ast}\bar{\mathbb{Q}} \cong  r'_{!}(j^{2}_{0})^{\ast}\bar{\mathbb{Q}} \cong (j_{0})^{\ast}F'_{0}$, where $j^{1}_{0},j^{2}_{0}$ are the inclusion of $X^{0}_{1},X^{0}_{2}$ respectively. In particular, $F_{0} \cong F'_{0}$.

\textbf{(3)Case $t = s_{i}$ :}
Denote $j_{s_{i}}: Y_{s_i}=Z_{s_{i}} \rightarrow \dot{\bfE}_{\bfV, \bfW^{\bullet},\underline{i}} \times \mathbf{Gr}^{s_i}(\nu_i, \tilde{\nu}_i) \times \mathbf{Gr}^{s_i}(\nu_i - n+1, \tilde{\nu_i})$, then $(j_{s_{i}})^{\ast}r_{!} \overline{\bbQ}_{l} \cong r_{!}(\overline{\mathbb{Q}}_{l}|_{ X^{s_{i}}_{1} } ). $ Since the restriction of $r$ on $X^{s_{i}}_{1}$ is a locally trivial fibration and the fiber is $s_{i}$-times product of the projective space $\bbP^{\tilde{\nu_i}-\nu_{i}-1}$, we have $$(j_{s_{i}})^{\ast}r_{!} \overline{\bbQ}_{l} \cong \bigoplus\limits_{0 \leq r_1, \dots, r_{s_i} \leq \tilde{\nu_{i}}-\nu_i -1} \overline{\bbQ}_{l}|_{Y_{s_{i}}} [-2r_1 - 2r_2 - \dots - 2r_{s_i}]$$ with respect to the the K\"unneth formula. Similarly, 
$$(j_{s_{i}})^{\ast}r'_{!} \overline{\bbQ}_{l} \cong \bigoplus\limits_{0 \leq r'_1, \dots, r'_{s_i} \leq \nu_i -n} \overline{\bbQ}_{l}|_{Y_{s_{i}}} [-2r'_1 - 2r'_2 - \dots - 2r'_{s_i}].$$

By an argument similar to that in the proof of Proposition \ref*{dp}, only the direct summands with $r_1 = r_2 = \dots = r_{s_i}$ and $r'_1 = r'_2 = \dots = r'_{s_i}$ contribute to non-traceless elements. Observe that for $n\geq 2$, $(\pi_2)_!( \overline{\bbQ}_{l}|_{Y_{s_{i}}} \otimes (\pi_1)^*(-)) \cong (\bar{q}_{2})_{!}(\bar{q}_{1})^{\ast} $, where $\bar{q}_{1},\bar{q}_{2}$ are the natural projections appearing in the definition of $\mathcal{E}^{(n-1)}_{\underline{i}}$
$$ \bar{q}_{1}:\dot{\mathbf{E}}_{\mathbf{V},\mathbf{W}^{\bullet},\underline{i}} \times \mathbf{Fl}^{s_{i}}(\nu_{i}-n+1,\nu_{i},\tilde{\nu}_{i}) \rightarrow \dot{\mathbf{E}}_{\mathbf{V},\mathbf{W}^{\bullet},\underline{i}} \times \mathbf{Gr}^{s_{i}}(\nu_{i},\tilde{\nu}_{i}),$$
$$ \bar{q}_{2}:\dot{\mathbf{E}}_{\mathbf{V},\mathbf{W}^{\bullet},\underline{i}} \times \mathbf{Fl}^{s_{i}}(\nu_{i}-n+1,\nu_{i},\tilde{\nu}_{i}) \rightarrow \dot{\mathbf{E}}_{\mathbf{V},\mathbf{W}^{\bullet},\underline{i}} \times \mathbf{Gr}^{s_{i}}(\nu_{i}-n+1,\tilde{\nu}_{i}),$$
we can obtain
\begin{equation*}
	\begin{split}
		&\me_{\underline{i}}^{(n)} \mf_{\underline{i}}(L, \phi) \oplus \bigoplus_{k=0}^{-m-1} \me_{\underline{i}}^{(n-1)}[(-m-1-2k) s_i](L, \phi)\\
		\cong &\mf_{\underline{i}} \me_{\underline{i}}^{(n)}(L, \phi) \oplus \bigoplus_{k=0}^{m-1} \me_{\underline{i}}^{(n-1)}[(m-1-2k) s_i](L, \phi)
	\end{split}
\end{equation*}
modulo traceless objects. For $n=1$, since $(j_{\bfV,i})_!(j_{\bfV,i})^*\cong \mathbf{Id}$ on $\mathcal{D}^{b}_{G_{\bfV}}(\bfE_{\bfV, \bfW^{\bullet},\Omega})/\mn_{\bfV}$, we can also get the proof.
\end{proof}

\section{The realization of $\mathbf{U}$-modules and their signed bases}\label{Grothendieck g}
In this section, we determine the structure of the Grothendieck group of $\widetilde{\mathcal{Q}_{\mathbf{V},\mathbf{W}^{\bullet}}/\mathcal{N}_{\mathbf{V}}} $.

\subsection{Key Lemma} 
 We first recall the key inductive lemma from \cite{fang2023lusztigsheavesintegrablehighest} and \cite{fang2023lusztigsheavestensorproducts}, which generalizes Lusztig's lemma in \cite{L01}. Assume $\Omega$ is an orientation such that $i$ is a sink and $\mathbf{V}$ is a graded space with dimension vectors $\nu \in \mathbb{N}I$, and each $\bfW^{k}$ is a graded space with dimension vectors $\omega^{k} \in \mathbb{N}I^{k}$. The variety $\bfE_{\bfV, \bfW^{\bullet},\Omega}$ has a stratification $^{r}\bfE_{\bfV, \bfW^{\bullet},i}, 0\leqslant r \leqslant \nu_{i}$, where each $^{r}\bfE_{\bfV, \bfW^{\bullet},\Omega}$ is defined by 
 $$^{r}\bfE_{\bfV, \bfW^{\bullet}, i}=\{ x \in \bfE_{\bfV, \bfW^{\bullet},\Omega} | \dim \coker(\sum\limits_{h \in \tilde{\Omega}^{(N)}, t(h) = i} x_h) = r \}.$$ 
 Then for each simple perverse sheaf $L$, there exists a unique $t=:t_{i}(L) $ such that $\supp L \cap {^{t}\bfE}_{\bfV, \bfW^{\bullet}, i}$ is dense in $\supp L$.

 By \cite[Corollary 4.5]{fang2023lusztigsheavesintegrablehighest} and \cite[Proposition 4.1]{fang2023lusztigsheavestensorproducts}, we have the following lemma.
 \begin{lemma} 
 	For a simple perverse sheaf $L$ in $\mq_{\bfV, \bfW^{\bullet}}$ with $t_{i}(L)=r$, take $\bfV'$ with dimension vector $\nu -ri$, then there exists a unique simple perverse sheaf $K$ in $\mq_{\bfV', \bfW^{\bullet}}$ such that $t_{i}(K)=0$ and $L$ is a direct summand of $\mf^{(r)}_{i} K$ and the other shifted direct summands $L'$ have $t_{i}>r$, that is,
 	 $$\mf^{(r)}_{i} K \cong L \oplus \bigoplus\limits_{t_{i}(L')>r}L'[n_{L'}].$$
 	With the notation above, we denote by $\pi_{i,r}(L)=K,$ then $\pi_{i,r}$ is a bijection between the set $\{L \in \mq_{\bfV, \bfW^{\bullet}}|$ $L$ is a simple perverse sheaf with $t_{i}(L)=r\}$ and the set $\{K \in \mq_{\bfV', \bfW^{\bullet}}|$ $K$ is a simple perverse sheaf with $t_{i}(K)=0\}$.
 \end{lemma}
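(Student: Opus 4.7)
The plan is to prove the lemma by direct geometric analysis of the induction functor $\mf^{(r)}_i$ with respect to the stratification $\{{}^r\bfE_{\bfV,\bfW^\bullet,i}\}_{r\geq 0}$, essentially transferring the strategy of \cite{fang2023lusztigsheavesintegrablehighest} to the present $N$-framed setting of \cite{fang2023lusztigsheavestensorproducts}. First I would work in an orientation $\Omega$ in which $i$ is a sink. On the stratum ${}^r\bfE_{\bfV,\bfW^\bullet,i}$ the cokernel of $\sum_{t(h)=i}x_h$ is locally constant of rank $r$, so there is a natural correspondence
\[
{}^r\bfE_{\bfV,\bfW^\bullet,i}\ \xleftarrow{\pi_r}\ Z_{\bfV,r}\ \xrightarrow{q_r}\ {}^0\bfE_{\bfV',\bfW^\bullet,i},
\]
where $Z_{\bfV,r}$ parametrises pairs $(x,T)$ consisting of a point $x\in{}^r\bfE_{\bfV,\bfW^\bullet,i}$ and a splitting $T\subseteq\bfV_i$ of the cokernel. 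Here $\pi_r$ is a principal bundle and $q_r$ is smooth with connected fibres, so the correspondence gives, up to a cohomological shift, a fully faithful embedding of $G_{\bfV'}$-equivariant simple perverse sheaves on ${}^0\bfE_{\bfV',\bfW^\bullet,i}$ into $G_{\bfV}$-equivariant simple perverse sheaves on ${}^r\bfE_{\bfV,\bfW^\bullet,i}$, and conversely.

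The heart of the argument is to compute how $\mf^{(r)}_i$ interacts with this stratification. Using $\mf^{(r)}_i\cong \mathbf{Ind}^{\bfV\oplus\bfW}_{ri,\bfV'\oplus\bfW}(\overline{\bbQ}_l\boxtimes-)$, defined via $p_1,p_2,p_3$ as in Section~2.3.2, one shows that the image of $p_3$ meets ${}^r\bfE_{\bfV,\bfW^\bullet,i}$ in a dense open subset, and that over this open stratum the induction diagram reduces to $(\pi_r,q_r)$ modulo a trivial Grassmannian contribution from $p_2$ that only shifts degrees. Consequently, for a simple $K\in\mq_{\bfV',\bfW^\bullet}$ with $t_i(K)=0$, the restriction of $\mf^{(r)}_i K$ to ${}^r\bfE_{\bfV,\bfW^\bullet,i}$ is the image of $K$ through the correspondence above, up to a known shift, and in particular remains simple perverse. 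Therefore $\mf^{(r)}_i K$ has a unique simple summand $L$ with $t_i(L)=r$, namely the intermediate extension of this top-stratum restriction; every other simple summand $L'$ must be supported in the closure of $\bigsqcup_{s>r}{}^s\bfE_{\bfV,\bfW^\bullet,i}$, forcing $t_i(L')>r$.

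The correspondence $(\pi_r,q_r)$ then defines $\pi_{i,r}$: given a simple $L$ with $t_i(L)=r$, restrict to $L|_{{}^r\bfE_{\bfV,\bfW^\bullet,i}}$, descend through the correspondence to a simple perverse sheaf $K_0$ on ${}^0\bfE_{\bfV',\bfW^\bullet,i}$, and let $K$ be its intermediate extension to $\bfE_{\bfV',\bfW^\bullet,\Omega}$. By construction $t_i(K)=0$, and the previous paragraph applied to this $K$ gives $\mf^{(r)}_i K\cong L\oplus\bigoplus_{t_i(L')>r}L'[n_{L'}]$, establishing existence and uniqueness and exhibiting $L\mapsto K$ and $K\mapsto L$ as mutually inverse bijections between the indicated sets. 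A final verification ensures that $K$ lies in $\mq_{\bfV',\bfW^\bullet}$: if $L$ is a summand of a flag complex $L_{\boldsymbol{\nu}^1\boldsymbol{e}^1\cdots\boldsymbol{\nu}^N\boldsymbol{e}^N}$, applying a suitable Lusztig restriction sends this flag complex to a direct sum of flag complexes in $\mq_{\bfV',\bfW^\bullet}$ in which $K$ must appear. The main obstacle I expect is the second paragraph, namely pinning down the precise shift contributed by the $p_2$-Grassmannian so that the top-stratum restriction of $\mf^{(r)}_i K$ is simple perverse rather than a direct sum; in the $N$-framed setting this requires an extra bookkeeping step for the framing vertices $i^k$, but since each $i^k$ is a sink of $\tilde Q^{(N)}$ the framed data enters $\tilde\nu_i$ only additively and does not otherwise disturb the geometry.
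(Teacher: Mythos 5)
The paper does not prove this lemma itself: it is quoted directly with the citation "By \cite[Corollary 4.5]{fang2023lusztigsheavesintegrablehighest} and \cite[Proposition 4.1]{fang2023lusztigsheavestensorproducts}, we have the following lemma," so there is no proof in this paper against which to compare. Your sketch reconstructs the Lusztig-style argument those references use — stratify $\bfE_{\bfV,\bfW^\bullet,\Omega}$ by the cokernel rank $r$, observe that the proper map $p_3$ in the induction diagram restricts to an isomorphism over the stratum ${}^r\bfE_{\bfV,\bfW^\bullet,i}$, and use that $p_2$ is a principal $G_{\bfV'}$-bundle and $p_1$ is smooth with connected fibres to conclude that the restriction of $\mf^{(r)}_iK$ to ${}^r\bfE_{\bfV,\bfW^\bullet,i}$ is a single shifted simple perverse sheaf — so the strategy is essentially the same as the cited proof.

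Two points in your write-up should be tightened. First, the phrase "the image of $p_3$ meets ${}^r\bfE_{\bfV,\bfW^\bullet,i}$ in a dense open subset" undersells the key point: since $i$ is a sink in $\Omega$, the image of $p_3$ is exactly the closure $\overline{{}^r\bfE_{\bfV,\bfW^\bullet,i}}=\bigcup_{s\geq r}{}^s\bfE_{\bfV,\bfW^\bullet,i}$, and over the dense open stratum ${}^r\bfE_{\bfV,\bfW^\bullet,i}$ the $x$-stable graded subspace $\bfS$ of the required dimension is unique (it is forced to be $\operatorname{im}(\bigoplus_{t(h)=i}x_h)$ in degree $i$ and everything in the other degrees), so $p_3$ is an isomorphism there; this is what makes the top-stratum restriction a single simple perverse sheaf rather than merely "simple up to a Grassmannian shift." Second, your $\pi_r$ (forgetting the splitting $T$) is an affine bundle — a torsor for a vector group — not a principal bundle in the relevant sense; the principal bundle in Lusztig's induction diagram is $p_2$ (forgetting the framing $\rho$), which is where the $G_{\bfV'}$-equivariance is handled. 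Finally, the verification that $K\in\mq_{\bfV',\bfW^\bullet}$ is a genuine point that the cited references treat via the compatibility of the restriction functor with the category of Lusztig sheaves, not by the informal argument you give; but since you flag this as a verification step and it is indeed true, this is an imprecision rather than a gap.
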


 Now we apply the lemma above to a quiver with automorphism $a$. Since $t_{a^{-1}(i)}(L)=t_i(a^{\ast}L),$ if $L\cong a^{\ast}L,$ $t_i(L)$ is invariant as $i$ varies within a single orbit.
 \begin{corollary}\label{ind1}
 		With the notation above, we assume $L$ is a simple perverse sheaf in $\mq_{\bfV, \bfW^{\bullet}}$ with $t_{i}(L)=r$ and $a^{\ast} L \cong L.$ Take $\bfV'$ with dimension vector $\nu -r\underline{i}.$ There exists a unique simple perverse sheaf $K$ in $\mq_{\bfV', \bfW^{\bullet}}$ such that
 		\begin{enumerate}
 			\item  $t_{i}(K)=0,a^{\ast} K \cong K$,
 			\item   $L$ is a direct summand of $\mf^{(r)}_{\underline{i}} K$,
 			\item   and the other shifted direct summands $L'$ have $t_{i}\geqslant r$ for any $i \in \underline{i}$ and $t_{i}>r$ for some $i\in \underline{i}$,
 		\end{enumerate}
 		that is, $$\mf^{(r)}_{  \underline{i}} K \cong L \oplus \bigoplus\limits_{\forall i \in \underline{i}, t_{i}(L') \geqslant i; \exists i\in \underline{i}, t_{i}(L')>r}L'[n_{L'}].$$
 		Moreover, with the notations above, if a shifted direct summand $L'$ of $\mf^{(r)}_{\underline{i}}K$ satisfyies $L' \cong a^{\ast}L'$, then either $L' \cong L$, or $t_{i}(L') >r$ for any $i \in \underline{i}$.   
 \end{corollary}
 
\begin{proof}
  Repeatedly applying the above lemma,  it follows that there exists a simple perverse sheaf $K$ such that $t_{i}(K)=0$ and $$\mf^{(r)}_{\underline{i}} K \cong L \oplus \bigoplus\limits_{\forall i \in \underline{i}, t_{i}(L') \geqslant i; \exists i \in \underline{i}, t_{i}(L')>r}L'[n_{L'}].$$ 
  By the uniqueness of $K$, we must have $a^{\ast}K \cong K$. The other statement holds easily.
\end{proof} 
With the notation above, we denote by $\pi_{\underline{i},r}(L)=K,$ then $\pi_{\underline{i},r}$ is a bijection between the set $\{L \in \mq_{\bfV, \bfW^{\bullet}}/\mathcal{N}_{\mathbf{V}}|$ $L$ is a nonzero simple perverse sheaf with $a^{\ast} L \cong L$ and $t_{i}(L)=r, \text{ for any }  i \in \underline{i} \}$ and the set $\{K \in \mq_{\bfV', \bfW^{\bullet}}/\mathcal{N}_{\mathbf{V}'}|$ $K$ is a nonzero simple perverse sheaf with $a^{\ast} K \cong K$ and $t_{i}(K)=0, \text{ for any } i \in \underline{i}\}$.

Similarly, by assuming that $i$ is a source, we define $t^{\ast}_{i}(L)$ for any simple perverse sheaf $L,$ changing $\dim\coker(\sum\limits_{h \in \tilde{\Omega}^{(N)}, t(h) = i} x_h)$ in the definition of $t_i(L)$ to $\dim\ker(\bigoplus\limits_{h \in \tilde{\Omega}^{(N)}, s(h) = i} x_h).$ We also 
denote the right multiplication by $\overline{\bbQ_l}$ by $\mf_{\underline{i}}^{(r),\vee}$. Then we have the following proposition, which is an analogue of \cite[Proposition 4.2]{fang2023lusztigsheavestensorproducts}.  We do not assume $L$ in $\mq_{\bfV, \bfW^{\bullet}}$, because $\mf_{\underline{i}}^{(r),\vee}$ does not send objects of $\mq_{\bfV',\bfW^{\bullet}}$ to those of $\mq_{\bfV,\bfW^{\bullet}}$.

 \begin{corollary}\label{ind1'}
	With the notation above, we assume $L$ is a simple perverse sheaf with $t^{\ast}_{i}(L)=r$ and $a^{\ast} L \cong L$, and take $\bfV'$ with dimension vector $\nu -r\underline{i}$, then there exists a unique simple perverse sheaf $K$ such that $t^{\ast}_{i}(K)=0,a^{\ast} K \cong K$ and $$\mf^{(r),\vee}_{\underline{i}} K \cong L \oplus \bigoplus\limits_{t_{i}(L')>r}L'[n_{L'}].$$
\end{corollary}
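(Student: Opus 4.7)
The plan is to mirror the proof of Corollary \ref{ind1} verbatim, but with the source-version inductive lemma in place of the sink-version. First, I would invoke the analog of \cite[Proposition 4.1]{fang2023lusztigsheavestensorproducts} for sources (this is \cite[Proposition 4.2]{fang2023lusztigsheavestensorproducts}, which is the content immediately preceding the corollary): for a single source $i$, each simple perverse sheaf $L$ with $t^{\ast}_{i}(L)=r$ arises as the leading simple summand of $\mf^{(r),\vee}_{i} K$ for a unique simple perverse sheaf $K$ with $t^{\ast}_{i}(K)=0$, and $\pi^{\ast}_{i,r}: L\mapsto K$ is a bijection on the corresponding sets of simple perverse sheaves.

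Next I would iterate this bijection across the $a$-orbit $\underline{i}=\{i_{1},\ldots,i_{s_{i}}\}$. Since $a$ is admissible there are no arrows between the vertices in $\underline{i}$, so the functors $\mf^{(r),\vee}_{i_{k}}$ (for $i_{k}\in\underline{i}$) commute, and each $i_{k}$ remains a source after applying the others. Applying $\pi^{\ast}_{i_{1},r}\circ\pi^{\ast}_{i_{2},r}\circ\cdots\circ\pi^{\ast}_{i_{s_{i}},r}$ in succession then produces a unique simple perverse sheaf $K$ with $t^{\ast}_{i}(K)=0$ such that
\[
\mf^{(r),\vee}_{\underline{i}} K \;\cong\; L \;\oplus\; \bigoplus_{t^{\ast}_{i}(L')>r} L'[n_{L'}],
\]
exactly as in the sink case. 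Existence and uniqueness of $K$ are immediate from uniqueness in each factor.

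Finally, to transfer $a$-invariance, I would apply $a^{\ast}$ to the displayed decomposition. Because $a$ permutes $\underline{i}$ and the definition $\mf^{(r),\vee}_{\underline{i}}=\prod_{j\in\underline{i}}\mf^{(r),\vee}_{j}$ is symmetric in that orbit, there is a canonical isomorphism $a^{\ast}\mf^{(r),\vee}_{\underline{i}}\cong \mf^{(r),\vee}_{\underline{i}}a^{\ast}$. Moreover $t^{\ast}_{i}$ only depends on the $a$-orbit, so $a^{\ast}$ permutes the summands $L'$ with $t^{\ast}_{i}(L')>r$ among themselves. Combined with the hypothesis $a^{\ast}L\cong L$, this shows that $\mf^{(r),\vee}_{\underline{i}}(a^{\ast}K)$ has the same decomposition as $\mf^{(r),\vee}_{\underline{i}}(K)$ (with $L$ as the unique simple summand of minimal $t^{\ast}_{i}$-stratum). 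The uniqueness clause of the construction then forces $a^{\ast}K\cong K$.

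The only delicate point I foresee is verifying the symmetry $a^{\ast}\mf^{(r),\vee}_{\underline{i}}\cong\mf^{(r),\vee}_{\underline{i}}a^{\ast}$ at the level needed, i.e.\ compatibly with the identification of leading summands. This is essentially automatic from how $\mf_{i}$ and $a^{\ast}$ are constructed geometrically (both via pullback/pushforward along the natural maps of moduli spaces, which $a$ permutes), and it is the exact analog of the commutation $\me^{(n)}_{i}\circ a^{\ast}\cong a^{\ast}\circ \me^{(n)}_{a^{-1}(i)}$ already used in Section 3.2, so no new calculation is required.
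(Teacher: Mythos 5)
Your proposal is correct and follows essentially the same route as the paper, which omits the proof of this corollary precisely because it is the source-vertex analogue of Corollary \ref{ind1}: apply the single-vertex key lemma (the source version, \cite[Proposition 4.2]{fang2023lusztigsheavestensorproducts}) repeatedly across the $a$-orbit $\underline{i}$, then deduce $a^{\ast}K\cong K$ from the uniqueness of $K$ together with $a^{\ast}L\cong L$ and the compatibility of $a^{\ast}$ with $\mf^{(r),\vee}_{\underline{i}}$. Your extra care with the commutation $a^{\ast}\mf^{(r),\vee}_{\underline{i}}\cong\mf^{(r),\vee}_{\underline{i}}a^{\ast}$ is harmless and consistent with how the paper handles the sink case.
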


\subsection{The Grothendieck group}

Now let $\mathcal{A}=\mathbb{Z}[v,v^{-1}],$ $\mathcal{O}=\mathbb{Z}[\zeta]$ and $\mathcal{O}'=\mathcal{O}[v,v^{-1}]$, where $\zeta$ is a primitive $o$-th root of unity and $o$ is the order of the automorphism $a$ on the quiver $Q$. There is a $\mathbb{Z}$-linear involution $\bar{\ }$ on $\mathcal{O}'$ defined by $\bar{v}=v^{-1}$ and $\bar{\zeta}=\zeta^{-1}$.

Let $_{\mathcal{O}'}\mathbf{U}$ be the $\mathcal{O}'$-subalgebra of $\mathbf{U}$ generated by Lusztig's divided powers and those $K_{\pm i'}$ and $_{\mathcal{O}'}L(\lambda)$ be the $_{\mathcal{O}'}\mathbf{U}$-module generated by the highest weight vector. We can also define $_{\mathcal{A}}\mathbf{U}$ and $_{\mathcal{A}}L(\lambda)$ in the same way.

Now assume $\mathbf{V}$ is a graded space with the dimension vector $\nu\in \mathbb{N}I^{a}$ and each $\bfW^{k}$ a graded space with the dimension vector $\omega^{k} \in (\mathbb{N}I^{k})^{a}$. Since $I'$ is in bijection with $\underline{I}$, we can identify an $a$-orbit $\underline{i}$ of $i$ with the element $i'\in I'$. Then each $\omega^{k}$ determines a dominant weight $\lambda^{k}= \sum\limits_{i' \in I'} \omega^{k}_{i}\beta_{i'},$ where $\beta_{i'}$ is the $i'$-th fundamental weight.

\begin{definition}
	Define $\mk(\omega^{\bullet})=\bigoplus\limits_{\nu \in \bbN[I]^{a}} \mk(\nu,\omega^{\bullet}) $ and the Grothendieck group $\mk(\nu,\omega^{\bullet})$ of $\widetilde{\mathcal{Q}_{\mathbf{V},\mathbf{W}^{\bullet}}/\mathcal{N}_{\mathbf{V}}} $ is defined to be the $\mathcal{O}'$-module spanned by $[(L,\phi)]$, $(L,\phi)$ in $\widetilde{\mathcal{Q}_{\mathbf{V},\mathbf{W}^{\bullet}}/\mathcal{N}_{\mathbf{V}}} $, subject to the following relations
	$$[(L_1, \phi_1)] + [(L_2, \phi_2)] = [(L_1 \oplus L_{2}, \phi_1 \oplus \phi_{2})]; $$
	$$[(L, \phi)[n]] = v^n [(L, \phi)];$$
	$$[(L, t \phi)] = t [(L, \phi)] \quad \text{for} \quad t \in \overline{\bbQ}_l \quad \text{with} \quad t^{o}=1;$$
	$$[(L, \phi)]=0 \quad \text{for a traceless object $(L,\phi)$}.$$ 
	
	Similarly, we define $\mv(\omega^{\bullet})=\bigoplus\limits_{\nu \in \bbN[I]^{a}} \mv(\nu,\omega^{\bullet}) $ to be the $\mathcal{O}'$-module spanned by $[(L,\phi)]$, $(L,\phi)$ in $\widetilde{\mathcal{Q}_{\mathbf{V},\mathbf{W}^{\bullet}}}$ subject to the same relations.
\end{definition}

Observe that if $a(\nu) \neq \nu$, then $\mathcal{Q}_{\mathbf{V},\mathbf{W}^{\bullet}}$ always contributes traceless elements, hence we only need to consider dimension vectors $\nu \in \bbN[I]^{a}$.

If each $\boldsymbol{\nu}^{k}$ is an $a$-flag type, there is a canonical choice $\phi_{0}$ for $L_{\boldsymbol{\nu}^{1}\boldsymbol{e}^{1}\boldsymbol{\nu}^{2}\boldsymbol{e}^{1}\cdots\boldsymbol{\nu}^{N}\boldsymbol{e}^{N}}$ by \cite{lusztig2010introduction}, given by 
$$ \phi_{0} :a^{\ast} \pi_{\boldsymbol{\nu}^{1}\boldsymbol{e}^{1}\boldsymbol{\nu}^{2}\boldsymbol{e}^{1}\cdots\boldsymbol{\nu}^{N}\boldsymbol{e}^{N},\tilde{\Omega}^{(N)}}\overline{\bbQ}_{l} = \pi_{\boldsymbol{\nu}^{1}\boldsymbol{e}^{1}\boldsymbol{\nu}^{2}\boldsymbol{e}^{1}\cdots\boldsymbol{\nu}^{N}\boldsymbol{e}^{N},\tilde{\Omega}^{(N)}} a^{\ast}\overline{\bbQ}_{l} \cong  \pi_{\boldsymbol{\nu}^{1}\boldsymbol{e}^{1}\boldsymbol{\nu}^{2}\boldsymbol{e}^{1}\cdots\boldsymbol{\nu}^{N}\boldsymbol{e}^{N},\tilde{\Omega}^{(N)}}\overline{\bbQ}_{l}.$$
We will show that the images of these $(L_{\boldsymbol{\nu}^{1}\boldsymbol{e}^{1}\boldsymbol{\nu}^{2}\boldsymbol{e}^{1}\cdots\boldsymbol{\nu}^{N}\boldsymbol{e}^{N}},\phi_{0})$ span the whole Grothendieck group in the next subsections.

The following proposition is an analogue of \cite[Proposition 12.5.2]{lusztig2010introduction}.
\begin{proposition}\label{4.5}
	Take a graded space $\bfV$ such that its dimension vector $\nu \in \mathbb{N}I^{a}$. Given a nonzero simple perverse sheaf $L$ in $\mathcal{Q}_{\mathbf{V}, \mathbf{W}^{\bullet}}/\mathcal{N}_{\mathbf{V}}$ such that $a^{\ast} L \cong L$, then there exists $\phi$ such that $(L,\phi)$ belongs to $\widetilde{\mathcal{Q}_{\mathbf{V}, \mathbf{W}^{\bullet}}/\mathcal{N}_{\mathbf{V}}} $ and $(\mathbf{D}L,\mathbf{D}(\phi)^{-1}) \cong (L,\phi)$ in $\widetilde{\mathcal{Q}_{\mathbf{V}, \mathbf{W}^{\bullet}}/\mathcal{N}_{\mathbf{V}}} $. In particular, $ \mathbf{D}L \cong L$. Moreover, such $\phi$ is unique if $o$ is odd and is unique up to $\pm 1$ if $o$ is even. 
	
\end{proposition}

\begin{proof}
	Since an isomorphism $\phi:a^{\ast} L \rightarrow L$ in $\mathcal{Q}_{\mathbf{V}, \mathbf{W}^{\bullet}}$ naturally induces an isomorphism $\phi:a^{\ast} L \rightarrow L$ in $\mathcal{Q}_{\mathbf{V}, \mathbf{W}^{\bullet}}/\mathcal{N}_{\mathbf{V}} $, it suffices to find $\phi$ in $\mathcal{Q}_{\mathbf{V}, \mathbf{W}^{\bullet}}$ such that $(L,\phi)$ satisfies the same properties in $\widetilde{\mathcal{Q}_{\mathbf{V}, \mathbf{W}^{\bullet}}} $.

	We prove the proposition by induction on $N.$ When $N=1,$ if $t_i(L)=0$ for any $i\in I,$ $L=L_{\boldsymbol{e_1}}.$ There exists $\phi,$ satisfying $\mathbf{D}(L_{\boldsymbol{e_1}},\phi)\cong (L_{\boldsymbol{e_1}},\phi).$
	
	Given $a^{\ast} L \cong L$ with $\bfV$ nonzero, there exists an $i$ such that $t_{i}(L)=r>0$. Then there exists a unique $K$ with $t_{i}(K)=0$ and $a^{\ast}K \cong K$ such that
	$$\mf^{(r)}_{\underline{i}} K \cong L \oplus \bigoplus\limits_{t_{i}(L')>r}L'[n_{L'}].$$
	
	By induction on the dimension vector of $\bfV$ and descending induction on $r$, we may assume that there exist $\phi'$ for $K$ and $\phi_{L'}$ for those $a$-fixed $L'$ with $t_{i}(L')>r$ such that $(K,\phi')$ and $(L',\phi_{L'})$ satisfy the condition $(\mathbf{D}K,\mathbf{D}(\phi')^{-1}) \cong (K,\phi'), (\mathbf{D}L',\mathbf{D}(\phi_{L'})^{-1}) \cong (L',\phi_{L'}).$ 
	
	There exists $\phi,$ such that the following equation holds in $\mv(\nu,\omega)$, $$\mf^{(r)}_{\underline{i}}[(K,\phi')]=[(L,\phi)]+ \sum\limits_{t_{i}(L')>r} P_{L'} [(L',\phi_{L'})] $$ with $P_{L'} \in \mathcal{O}'$. Apply $\mf^{(r)}_{\underline{i}}$ to $\mathbf{D}[(K,\phi')]=[(K,\phi')]$, we obtain 
	$$\mathbf{D}[(L,\phi)]+ \sum\limits_{t_{i}(L')>r} \bar{P}_{L'} \mathbf{D}[(L',\phi_{L'})] = [(L,\phi)]+ \sum\limits_{t_{i}(L')>r} P_{L'} [(L',\phi_{L'})],$$
	then $\mathbf{D}[(L,\phi)]- [(L,\phi)] = \sum\limits_{t_{i}(L')>r}(P_{L'} \bar{P}_{L'}) \mathbf{D}[(L',\phi_{L'})] $.
	
	Observe that those $[(L,\phi)]$ and $[(L',\phi_{L'})]$ are contained in an $\mathcal{O}'$ basis of $\mv(\nu,\omega)$ and $\mathbf{D}[(L,\phi)]$ must be a multiple of one of those elements. Since $t_{i}(L) \neq t_{i}(L')$, $\mathbf{D}L$ can never be isomorphic to the other $L' \cong \mathbf{D}L'.$ The equation $\mathbf{D}[(L,\phi)]- [(L,\phi)] = \sum\limits_{t_{i}(L')>r}(P_{L'} \bar{P}_{L'}) \mathbf{D}[(L',\phi_{L'})] $ forces $\mathbf{D}L \cong L$ and $\mathbf{D}[(L,\phi)]= [(L,\phi)]$. This proves the existence of $\phi$. 

    We claim that for any simple perverse sheaf $a^*L\cong L$ as a direct summand of $(L_{\boldsymbol{\nu}^{1}\boldsymbol{e}^{1}\cdots \boldsymbol{e}^{N-1}\boldsymbol{\nu}^{N}},\phi_0),$ up to shift, there exists $\phi$ satisfying $\mathbf{D}(L,\phi)\cong (L,\phi).$
Applying Corollary \ref{ind1'}, if $t_i^*(L)=r,$ there exist $(K,\phi')$ and $\phi$ satisfying $\mf^{(r),\vee}_{\underline{i}}(K,\phi')=(L,\phi)\oplus \bigoplus(L',\phi')[n_{L',\phi'}]\oplus N,$ where $t_i^*(L')>r$ and $N$ is a traceless object. $(K,\phi'), (L',\phi')$ are direct summands up to shift of some flag sheaf complex $(L_{\boldsymbol{\nu}'^{1}\boldsymbol{e}^{1}\cdots \boldsymbol{e}^{N-1}\boldsymbol{\nu}'^{N}},\phi_0)$ and $(K,\phi')$ is stable under Verdier duality. Then $(L,\phi)$ is stable under Verdier duality. By induction on $\dim \bfV$ and descending induction on $t_i^*(L),$ applying the same methods as in the case $N=1,$ it is reduced to the case $\boldsymbol{\nu^N}=\emptyset,$ which is the case $N-1.$

Considering the projection map $\pi_{\bfV,\bfW^N}:\bfE_{\bfV,\bfW^{\bullet},\Omega}\rightarrow \bfE_{\bfV,\bfW^{\bullet-N},\Omega},$ where $\bfW^{\bullet-N}$ is the set of graded spaces $\bfW^1,\cdots,\bfW^{N-1},$ $\pi_{\bfV,\bfW^N}^*[\sum_{i\in I}\dim \bfV_i\dim\bfW^N_{i^N}]L_{\boldsymbol{\nu}}=L_{\boldsymbol{\nu}\boldsymbol{e^N}},$ and $\pi_{\bfV,\bfW^N}^*[\sum_{i\in I}\dim \bfV_i\dim\bfW^N_{i^N}]$ preserves simple perverse sheaf. Since $\pi_{\bfV,\bfW^N}^*[\sum_{i\in I}\dim \bfV_i\dim\bfW^N_{i^N}]$ communites with $a^*$ and $\mathbf{D},$ if $(L,\phi)$ satisfying $\mathbf{D}(L,\phi)\cong (L,\phi),$ $\mathbf{D}(\pi_{\bfV,\bfW^N}^*[\sum_{i\in I}\dim \bfV_i\dim\bfW^N_{i^N}](L,\phi))\cong \pi_{\bfV,\bfW^N}^*[\sum_{i\in I}\dim \bfV_i\dim\bfW^N_{i^N}](L,\phi).$ Hence, for any simple perverse sheaf $a^*L\cong L$ as a direct summand of $(L_{\boldsymbol{\nu}^{1}\boldsymbol{e}^{1}\cdots \boldsymbol{e}^{N-1}\boldsymbol{\nu}^{N}\boldsymbol{e^N}},\phi_0),$ up to shift, there exists $\phi,$ $\mathbf{D}(L,\phi)\cong (L,\phi).$

	If $\eta \phi$ is another such isomorphism with $\eta^{o}=1$, then applying $\mathbf{D}$ gives $\eta^{-1}=\eta$. Hence $\eta=1$ if $o$ is odd and $\eta = \pm 1$ if $o$ is even. The uniqueness follows.
\end{proof}

\subsection{The irreducible integrable highest weight modules}
In this subsection, we assume $N=1$. We denote $\mathcal{Q}_{\mathbf{V},\mathbf{W}^{\bullet}}$ by $\mathcal{Q}_{\mathbf{V},\mathbf{W}}$, and $\mk(\omega^{\bullet})$ by $\mk(\omega)$.

Observe that by \cite[11.1.8]{lusztig2010introduction}, $\mk(\nu,\omega)$ is a $\mathcal{O}'$-module with a basis corresponding to the set of nonzero simple perverse sheaves $L$ in $\mathcal{Q}_{\mathbf{V},\mathbf{W}}/\mathcal{N}_{\mathbf{V}}$ with $a^{\ast}L \cong L$. 

\begin{theorem}\label{4.6}
	If we identify the orbit $\underline{i} \in \underline{I}$ of $i$ with the element $i' \in I'$, then the functors $\me^{(n)}_{\underline{i}}, \mf^{(n)}_{\underline{i}},\mk_{\pm\underline{i}} $ act by $E^{(n)}_{i'},F^{(n)}_{i'},\tilde{K}_{\pm i'}$($\mathbf{id}[-2\nu_i+\tilde{\nu_i}]$ act by $K_i'$, $i\in \underline{i}$) on the Grothendieck group $\mk(\omega)$ such that $\mk(\omega)$ becomes an integrable $_{\mathcal{O}'}\mathbf{U}$-module and is canonically isomorphic to $_{\mathcal{O}'}L(\lambda)$, where $\lambda=\sum\limits_{i'\in I'}\omega_{i'}\beta_{i'}$.
\end{theorem}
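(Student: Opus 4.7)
The plan is to verify the defining relations (a)--(f) of the quantum group $\mathbf{U}$ for the operators $\me^{(n)}_{\underline{i}}, \mf^{(n)}_{\underline{i}}, \mk_{\pm\underline{i}}$ acting on $\mk(\omega)$, then identify $\mk(\omega)$ with $_{\mathcal{O}'}L(\lambda)$ by exhibiting the highest weight vector and comparing characters.

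First, I would dispatch the easy relations. Relations (a), (b), (c) with $K_\nu$ follow immediately from the definition $\mk_{\pm\underline{i}} = \mathbf{Id}[\pm s_i(\tilde{\nu}_i - 2\nu_i)]$ together with the dimension-changing behavior of $\me_{\underline{i}}$ and $\mf_{\underline{i}}$: the difference of shift exponents under conjugation produces the required factor $v^{\pm\langle \nu, \underline{i}\rangle}$. The divided-power relations defining $E_{i'}^{(n)}, F_{i'}^{(n)}$ at the categorical level are precisely Proposition \ref{dp}. The commutator relation (d) follows from Proposition \ref{dq1}: after recognizing $m = n + \tilde{\nu}_i - 2\nu_i - 1$ in weight-space language, the telescoping of shifts in the ``up to traceless'' isomorphism translates, in the Grothendieck group, to the right-hand side $\delta_{i'j'}(\tilde{K}_{i'} - \tilde{K}_{-i'})/(v_{i'} - v_{i'}^{-1})$.

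For the Serre relations (e), (f), the cleanest route is to pass through Lusztig's unframed symmetrizable construction in Chapters~12--14 of \cite{lusztig2010introduction}, where the Serre relations for $\me_{\underline{i}}, \mf_{\underline{i}}$ on $\widetilde{\mq}$ have already been established at the level of $\tilde{\mq}$. Since these relations involve only compositions of $\me$'s (or $\mf$'s), and since our $\me, \mf$ on the framed localization are compatible with the unframed functors via a natural embedding, the Serre relations descend to $\mk(\omega)$. For integrability, $\me^{(n)}_{\underline{i}}$ lowers the dimension vector by $n\underline{i}$ and hence kills $\mk(\nu, \omega)$ as soon as $n > \nu_i$; on the other hand, the localization by $\mn_\bfV$ forces $\mf_{\underline{i}}^{\langle \underline{i}, \lambda\rangle + 1}$ to annihilate the highest weight vector, exactly as in the symmetric case of \cite{fang2023lusztigsheavesintegrablehighest}.

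To identify the result with $L(\lambda)$, note that when $\nu = 0$ the only nonzero simple object in $\mq_{0, \bfW}/\mn_0$ is the constant sheaf $\overline{\bbQ}_l$ with its canonical automorphism $\phi_0$; set $v_\lambda := [(\overline{\bbQ}_l, \phi_0)]$. Then $\me_{\underline{i}} v_\lambda = 0$ by dimension reasons, and $\mk_{\pm\underline{j}}$ acts by $v^{\pm\langle \underline{j}, \lambda\rangle}$, so $v_\lambda$ is a highest weight vector of weight $\lambda$. The universal property of $L(\lambda)$ yields a canonical surjection $\psi:{}_{\mathcal{O}'}L(\lambda) \twoheadrightarrow \mk(\omega)$. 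Surjectivity is supplied by iterating Corollary \ref{ind1}: every nonzero $a$-fixed simple perverse sheaf $L$ equals $\mf^{(r)}_{\underline{i}} K$ modulo higher-$t_i$ strata, and Proposition \ref{4.5} lifts each such $L$ canonically to $\widetilde{\mq_{\bfV, \bfW}/\mn_\bfV}$, ensuring that repeated $\mf^{(r)}_{\underline{i}}$-applications to $v_\lambda$ span the entire signed basis.

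The main obstacle is injectivity of $\psi$. My approach will be a rank comparison: the $(\lambda - \nu)$-weight component of $\mk(\omega)$ is freely spanned over $\mathcal{O}'$ by the classes of nonzero $a$-fixed simple perverse sheaves in $\mq_{\bfV, \bfW}/\mn_\bfV$, and one must show this count equals $\dim_{\bbQ(v)} L(\lambda)_{\lambda - \nu}$. In the symmetric case this was achieved in \cite{fang2023lusztigsheavesintegrablehighest} using quiver-variety multiplicities. For the symmetrizable case the plan is to deduce the analogous count by applying a fixed-point principle to the symmetric unfolding, together with Lusztig's descent techniques from Chapter~14 of \cite{lusztig2010introduction}, so as to match the resulting generating function with the Weyl--Kac character of $L(\lambda)$. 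The careful bookkeeping of $a$-orbits of different sizes, and the role of the primitive root of unity $\zeta \in \mathcal{O}$, are where the symmetrizable case genuinely diverges from the symmetric one and will constitute the most delicate step.
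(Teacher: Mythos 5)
Your verification of the module structure (divided powers via Proposition \ref{dp}, the commutator via Proposition \ref{dq1}, the $K$-relations from the shift bookkeeping), the integrability argument, and the generation of $\mk(\omega)$ by the highest weight vector via Corollary \ref{ind1} and Proposition \ref{4.5} all match the paper's route. The problem is the last step. You declare injectivity of $\psi:{}_{\mathcal{O}'}L(\lambda)\twoheadrightarrow\mk(\omega)$ to be ``the main obstacle'' and propose to resolve it by counting $a$-fixed simple perverse sheaves and matching the count against the Weyl--Kac character via a fixed-point principle applied to the symmetric unfolding. You do not carry this out, and you explicitly flag it as the most delicate unfinished step; as written, the proof is therefore incomplete precisely where it matters.

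Moreover, the missing step does not require any rank or character comparison. Once you know that $\mk(\omega)$ is (i) integrable, (ii) generated over $_{\mathcal{O}'}\mathbf{U}$ by the highest weight vector $[(L_{\boldsymbol{e}},\phi_0)]$ of weight $\lambda$, and (iii) nonzero and free over $\mathcal{O}'$ (the latter by \cite[11.1.8]{lusztig2010introduction}), you are done: after base change to the fraction field, a nonzero integrable highest weight module of highest weight $\lambda$ is automatically a quotient of $L(\lambda)=M(\lambda)/\bigl(\sum_i \mathbf{U}F_i^{\langle i,\lambda\rangle+1}v_\lambda\bigr)$, which is irreducible, so the surjection is an isomorphism; the integral forms on both sides are then the $_{\mathcal{O}'}\mathbf{U}$-submodules generated by the highest weight vector and hence correspond. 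This is exactly how the paper concludes, and it is why no enumeration of $a$-fixed irreducible components or appeal to Lusztig's Chapter~14 descent is needed at this point. You should replace your final paragraph with this standard argument; the fixed-point counting you sketch is not only unproven but superfluous.
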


\begin{proof}
	By Proposition \ref{dp} and \ref{dq1}, we know that $\mk(\omega)$ carries an $_{\mathcal{O}'}\mathbf{U}$ module structure. For any $x \in \mk(\nu,\omega)$, if $n > \nu_{i} $, it follows that $E^{(n)}_{i'}(x)=0$. If $n> \tilde{\nu_{i}},$ then $F^{(n)}_{i'}(x)$ belongs to $\mk(\nu',\omega)$, where $\nu'=\nu+n\underline{i}$ satisfies $\nu'_{i} > \tilde{\nu}'_{i}$. In this case, $\bfE^{\geqslant 1}_{\bfV, \bfW, \underline{i}} = \bfE_{\bfV, \bfW, \Omega} $, we obtain $\mk(\nu',\omega) =0$. Hence $\mk(\omega)$ is an integrable module.
	
	It suffices to show that $\mk(\omega)$ is a highest weight module. Let $\mathcal{M}(\nu,\omega)$ be the $\mathcal{O}'$ module spanned by $[(L_{\boldsymbol{\nu}\boldsymbol{e}},\phi_{0})], \boldsymbol{\nu} \in \mathcal{S}'. $ Since $	 \mf^{(n)}_{\underline{i}}(L_{\boldsymbol{\nu}\boldsymbol{e}},\phi_{0} )=(L_{ (n\underline{i},\boldsymbol{\nu}\boldsymbol{e})},\phi_{0}), $
	$\bigoplus\limits_{\nu \in \bbN[I]^{a}}\mathcal{M}(\nu,\omega)$ is generated by $(L_{ (\boldsymbol{e})},\phi_{0})$ under the action of $ \mf^{(n)}_{\underline{i}}, n \in \mathbb{N}, \underline{i} \in \underline{I}$.

	We claim that if $(L,\phi)$ satisfies Proposition \ref{4.5}, then $[(L,\phi)]$ belongs to $\mathcal{M}(\nu,\omega)$. If $\bfV=0$, it is trivial. Otherwise, we can find $i$ such that $t_{i}(L)=r>0.$ Then there exists a unique $K$ such that $t_{i}(K)=0$ and
	\begin{equation}\label{OZ}
		\mf^{(r)}_{\underline{i}}[(K,\phi')]=[(L,\phi)]+ \sum\limits_{t_{i}(L')>r} P_{L'} [(L',\phi_{L'})]. 
	\end{equation} 
	By induction on dimension vector of $\bfV$ and descending induction on $r$, we may assume $[(K,\phi')]$ and $[(L',\phi_{L'})] $ belong to $\bigoplus\limits_{\nu \in \bbN[I]^{a}}\mathcal{M}(\nu,\omega)$. Since $\bigoplus\limits_{\nu \in \bbN[I]^{a}}\mathcal{M}(\nu,\omega)$ is closed under $\mf^{(r)}_{\underline{i}}$, it follows that $[(L,\phi)]$ also belongs to $\mathcal{M}(\nu,\omega)$.
	
	In particular, $\mk(\omega)=\bigoplus\limits_{\nu \in \bbN[I]^{a}}\mathcal{M}(\nu,\omega)$ is an integrable highest weight module with the highest weight vector $[(L_{\boldsymbol{e},\phi_{0}} )]$, hence it is canonically isomorphic to $_{\mathcal{O}'}L(\lambda)$.
\end{proof}

\subsection{The tensor products of irreducible integrable highest weight modules}

In this section, we prove that $\mk(\omega^{\bullet})=\bigoplus\limits_{\nu \in \bbN[I]^{a}} \mk(\nu,\omega^{\bullet}) $ is isomorphic to the tensor product $ _{\mathcal{O}'}L(\lambda^{1}) \otimes_{\mathcal{O}'} {_{\mathcal{O}'}L}(\lambda^{2}) \otimes_{\mathcal{O}'} \cdots \otimes_{\mathcal{O}'} {_{\mathcal{O}'}L}(\lambda^{N})$. 

\begin{proposition}
The $\mathcal{O}'$-module	$\mk(\omega^{\bullet})$ is equal to $\mo'-$spanned of $[(L_{\boldsymbol{\nu}^{1}\boldsymbol{e}^{1}\cdots \boldsymbol{\nu}^{N}\boldsymbol{e}^{N}},\phi_0)]$ where $\boldsymbol{\nu}^{i}$ is an $a$-flag type for $i=1,2,\cdots,N.$
\end{proposition}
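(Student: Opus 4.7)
The strategy mirrors the inductive argument used in the proof of Theorem \ref{4.6}, now carried out in the non-localized setting and extended to the $2$-framed quiver. By the general theory of periodic functors \cite[11.1.8]{lusztig2010introduction}, $\mv(\nu,\omega^{\bullet})$ is a free $\mathcal{O}'$-module with basis indexed by the $a^{\ast}$-fixed nonzero simple perverse sheaves $L\in\mathcal{Q}_{\mathbf{V},\mathbf{W}^{\bullet}}$, each equipped with a canonical self-dual isomorphism $\phi_{L}$ (whose existence is the direct analog of Proposition \ref{4.5}, proved by the same argument transposed to the non-localized category). It therefore suffices to show that every such class $[(L,\phi_{L})]$ lies in the $\mathcal{O}'$-span $\mathcal{M}$ of the classes $[(L_{\boldsymbol{\nu}^{1}\boldsymbol{e}^{1}\boldsymbol{\nu}^{2}\boldsymbol{e}^{2}},\phi_{0})]$, as $\boldsymbol{\nu}^{1},\boldsymbol{\nu}^{2}$ range over $a$-flag types.

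The argument would proceed by induction on $|\nu|$. When $\nu=0$, $\mathbf{E}_{0,\mathbf{W}^{\bullet},\Omega}$ is a point and the unique nonzero simple is the constant sheaf, which coincides with $L_{\boldsymbol{e}^{1}\boldsymbol{e}^{2}}$, so the statement is immediate. Assume $|\nu|>0$ and the statement holds for all smaller dimension vectors. Given an $a^{\ast}$-fixed simple $L$, since $L$ is not a constant sheaf on a point there exists $i\in I$ with $r:=t_{i}(L)>0$. Corollary \ref{ind1} then produces an $a^{\ast}$-fixed simple $K\in\mathcal{Q}_{\mathbf{V}',\mathbf{W}^{\bullet}}$ (with $|\nu'|<|\nu|$, $t_{i}(K)=0$) satisfying
\begin{equation*}
\mathcal{F}_{\underline{i}}^{(r)}K\cong L\oplus\bigoplus_{t_{i}(L')>r}L'[n_{L'}].
\end{equation*}
Equipping both sides with canonical $\phi$'s, this translates in $\mv(\nu,\omega^{\bullet})$ into an identity
\begin{equation*}
\mathcal{F}_{\underline{i}}^{(r)}[(K,\phi_{K})]=[(L,\phi_{L})]+\sum_{t_{i}(L')>r}P_{L'}[(L',\phi_{L'})], \quad P_{L'}\in\mathcal{O}'.
\end{equation*}

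The key closure property is that $\mathcal{M}$ is stable under $\mathcal{F}_{\underline{i}}^{(r)}$: iterating equation (\ref{ind}) over $i\in\underline{i}$ yields $\mathcal{F}_{\underline{i}}^{(r)}L_{\boldsymbol{\nu}^{1}\boldsymbol{e}^{1}\boldsymbol{\nu}^{2}\boldsymbol{e}^{2}}\cong L_{(r\underline{i},\boldsymbol{\nu}^{1})\boldsymbol{e}^{1}\boldsymbol{\nu}^{2}\boldsymbol{e}^{2}}$, and $(r\underline{i},\boldsymbol{\nu}^{1})$ is again an $a$-flag type; the canonical $\phi_{0}$ on the target is compatible with $\mathcal{F}_{\underline{i}}^{(r)}\phi_{0}$ on the source via the natural isomorphism $\mathcal{F}_{\underline{i}}^{(r)}a^{\ast}\cong a^{\ast}\mathcal{F}_{\underline{i}}^{(r)}$. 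Combined with the outer induction hypothesis $[(K,\phi_{K})]\in\mathcal{M}$, a descending induction on $r$ (whose base case is the maximal value, where the sum on the right is empty and $[(L,\phi_{L})]=\mathcal{F}_{\underline{i}}^{(r)}[(K,\phi_{K})]\in\mathcal{M}$) then forces $[(L,\phi_{L})]\in\mathcal{M}$ in general, since all correction terms $[(L',\phi_{L'})]$ with $t_{i}(L')>r$ have already been placed in $\mathcal{M}$ at an earlier step of the descending induction.

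The subtlety requiring the most care, and what distinguishes this argument from its symmetric-case counterpart, is verifying that the canonical periodic-functor structure $\phi_{0}$ on $L_{(r\underline{i},\boldsymbol{\nu}^{1})\boldsymbol{e}^{1}\boldsymbol{\nu}^{2}\boldsymbol{e}^{2}}$ actually matches the one produced by applying $\mathcal{F}_{\underline{i}}^{(r)}$ to $(L_{\boldsymbol{\nu}^{1}\boldsymbol{e}^{1}\boldsymbol{\nu}^{2}\boldsymbol{e}^{2}},\phi_{0})$, and that the $\phi_{L'}$ extracted from the direct-sum decomposition agree with the canonical self-dual choices up to the scalar relations imposed in $\mv(\nu,\omega^{\bullet})$. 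These compatibilities are ultimately naturality statements for the finite-order automorphism $a^{\ast}$ against the principal bundle, smooth pullback, and proper pushforward entering the definition of $\mathcal{F}_{\underline{i}}^{(r)}$, and the bookkeeping is noticeably more delicate than in the symmetric case.
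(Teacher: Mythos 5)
There is a genuine gap. Your induction is launched from the claim that any $a^{\ast}$-fixed simple $L$ with $\bfV\neq 0$ has $t_{i}(L)=r>0$ for some $i$, so that Corollary \ref{ind1} applies. That claim is true for the $1$-framed quiver (a simple summand of $L_{\boldsymbol{\nu}\boldsymbol{e}}$ with $\boldsymbol{\nu}=(a_{1}i_{1},\dots)$ nonempty has $t_{i_{1}}(L)\geqslant a_{1}$), but it fails for the $2$-framed quiver: a simple summand of $L_{\boldsymbol{e}^{1}\boldsymbol{\nu}^{2}\boldsymbol{e}^{2}}$ with $\boldsymbol{\nu}^{1}$ empty and $\boldsymbol{\nu}^{2}$ nonempty can have $t_{i}(L)=0$ for every $i\in I$ while $\bfV\neq 0$. (Already for $Q$ a single vertex $i$, $\dim\bfW^{1}_{i^{1}}=1$, $\dim\bfV_{i}=1$, the complex $L_{(i^{1},i)}$ is the shifted constant sheaf, which has $t_{i}=0$.) For such $L$ the key lemma with the left induction functor $\mf_{\underline{i}}^{(r)}$ gives you nothing, and your induction never reaches them; note that these $L$ are exactly the classes whose expression requires the generators with $\boldsymbol{\nu}^{1}=\emptyset$ and $\boldsymbol{\nu}^{2}\neq\emptyset$, i.e.\ the part of the flag type sitting \emph{after} $\boldsymbol{e}^{1}$, which left multiplication by $r\underline{i}$ cannot produce.

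This is precisely the point where the paper's proof does something different. It first reduces to the case $\bfW^{2}=0$ by observing that left multiplication by $L_{\boldsymbol{e}^{2}}$ is the fully faithful functor $\pi_{\bfV,\bfW^{2}}^{\ast}[\,\cdot\,]$ for the trivial vector bundle $\bfE_{\bfV,\bfW^{\bullet},\Omega}\rightarrow\bfE_{\bfV,\bfW^{1},\Omega}$, which identifies both sides of the asserted equality; and then, for $\bfW^{2}=0$, it runs a \emph{double} descending induction on $t_{i}$ and on the dual statistic $t_{i}^{\ast}$, using Corollary \ref{ind1} for the left induction functor $\mf_{\underline{i}}^{(r)}$ (prepending to $\boldsymbol{\nu}^{1}$) together with Corollary \ref{ind1'} for the right multiplication $\mf_{\underline{i}}^{(r),\vee}$ (appending to $\boldsymbol{\nu}^{2}$). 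The simples you miss have $t_{i}^{\ast}(L)>0$ for some $i$, and the dual key lemma handles them. To repair your argument you would need to incorporate both reductions; the rest of your bookkeeping (stability of the span under $\mf_{\underline{i}}^{(r)}$, compatibility of $\phi_{0}$ with $a^{\ast}$) is fine but does not address this case.
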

\begin{proof}
Let $M(\omega^{\bullet})=\bigoplus\limits_{\nu \in \bbN[I]^{a}} M(\nu,\omega^{\bullet}) $ be the $\mo'$-module spanned by these $[(L_{\boldsymbol{\nu}^{1}\boldsymbol{e}^{1}\cdots \boldsymbol{\nu}^{N}\boldsymbol{e}^{N}},\phi_0)]$. It is easy to see that $M(\omega^{\bullet})\subset \mk(\omega^{\bullet}).$ We only need to prove that $[(L,\phi)]\in M(\omega^{\bullet})$ for $L\in \mP_{\bfV,\bfW^{\bullet}}$ and $a^{\ast}(L)\cong L.$ We will prove for $[(L,\phi)]\in \mv(\omega^{\bullet})$ before the localization by induction on $N.$

When $N=1,$ the proof is the same as in Theorem \ref{4.6}. If the proposition holds for $N=k,$ then when $N=k+1,$ we first consider $\mo'-$module $\mv'(\omega^{\bullet})$ generated by $[(L_{\boldsymbol{\nu}^{1}\boldsymbol{e}^{1}\cdots \boldsymbol{e}^{N-1}\boldsymbol{\nu}^{N}},\phi_0)].$ We claim that for any simple perverse sheaf $(L,\phi)$ appeared as a direct summand of $(L_{\boldsymbol{\nu}^{1}\boldsymbol{e}^{1}\cdots \boldsymbol{e}^{N-1}\boldsymbol{\nu}^{N}},\phi_0),$ up to shift, $[(L,\phi)]\in \mv'(\omega^{\bullet}).$

Here, we prove the claim. By Corollary \ref{ind1'}, if $(L,\phi)$ is a direct summand of $(L_{\boldsymbol{\nu}^{1}\boldsymbol{e}^{1}\cdots \boldsymbol{e}^{N-1}\boldsymbol{\nu}^{N}},\phi_0),$ up to shift, and $\boldsymbol{\nu}_{n}\not=\emptyset$, there exist $i\in I,$ $t_i^*(L)=r$ and $(K,\phi'),$ satisfying $\mf_{\underline{i}}^{(r),\vee}(K,\phi')=(L,\phi)\oplus \bigoplus(L',\phi')[n_{L',\phi'}]\oplus N,$ where $t_i^*(L')>r$ and $N$ is a traceless object. $(K,\phi'), (L',\phi')$ are direct summands up to shift of some flag sheaf complex $(L_{\boldsymbol{\nu}'^{1}\boldsymbol{e}^{1}\cdots \boldsymbol{e}^{N-1}\boldsymbol{\nu}'^{N}},\phi_0),$.

Then, by induction of $\dim \bfV$ and descending induction of $t^*(L),$ we reduce the case to $\boldsymbol{\nu}_{n}=\emptyset,$ which is the case $N=k.$ The claim is proved.

Applying map $\pi_{\bfV,\bfW^N}[\sum_{i\in I}\dim\bfV_i\dim \bfW^N_{i^N}]$ defined in the proof of Proposition \ref{4.5}, the proof of $N=k+1$ follows. This completes the proof.
\end{proof}

In what follows, we consider the case $N=2,$ and denote $\omega^{\bullet}$ by $\omega^{1,0}$ when $\omega_2=0.$
\begin{corollary}
	 The $\mathcal{O}'$-module $\mk(\omega^{\bullet})$ is spanned by the elements of the form $[(L_{\boldsymbol{\nu}^{1}\boldsymbol{e}^{1}\boldsymbol{\nu}^{2}\boldsymbol{e}^{2}},\phi_0)]$, where each $\boldsymbol{\nu}^{i}$ is an $a$-flag type for $i=1,2$.
\end{corollary}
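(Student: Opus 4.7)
The plan is to derive this corollary almost immediately from the preceding proposition together with the surjectivity statement that has just been observed. The preceding proposition tells us that $\mv(\omega^{\bullet})$ is the $\mathcal{O}'$-span of the elements $[(L_{\boldsymbol{\nu}^{1}\boldsymbol{e}^{1}\boldsymbol{\nu}^{2}\boldsymbol{e}^{2}},\phi_0)]$ ranging over pairs of $a$-flag types, so we only need to transport this spanning property along the localization map at the level of Grothendieck groups.

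More concretely, the first step is to verify that the localization functor $L : \mq_{\bfV,\bfW^{\bullet}} \rightarrow \mq_{\bfV,\bfW^{\bullet}}/\mn_{\bfV}$ is compatible with the periodic functor $a^{*}$: since $a^{*}$ preserves $\mn_{\bfV}$ (as noted in Section 3.2) and preserves $\mq_{\bfV,\bfW^{\bullet}}$, the functor $L$ commutes with $a^{*}$ and therefore induces an additive functor $\widetilde{L}: \widetilde{\mq_{\bfV,\bfW^{\bullet}}} \rightarrow \widetilde{\mq_{\bfV,\bfW^{\bullet}}/\mn_{\bfV}}$ sending $(B,\varphi)$ to $(L(B), L(\varphi))$. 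In particular, $\widetilde{L}([(L_{\boldsymbol{\nu}^{1}\boldsymbol{e}^{1}\boldsymbol{\nu}^{2}\boldsymbol{e}^{2}},\phi_0)]) = [(L_{\boldsymbol{\nu}^{1}\boldsymbol{e}^{1}\boldsymbol{\nu}^{2}\boldsymbol{e}^{2}},\phi_0)]$ on the quotient side, so the image of the proposition's spanning set lies inside $\mk(\omega^{\bullet})$ and consists of exactly the elements named in the corollary.

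The second step is the surjectivity of the induced $\mathcal{O}'$-linear map $\mv(\nu,\omega^{\bullet}) \twoheadrightarrow \mk(\nu,\omega^{\bullet})$, which is the content of the parenthetical remark preceding the corollary. By definition of $\mq_{\bfV,\bfW^{\bullet}}/\mn_{\bfV}$, every object arises from one in $\mq_{\bfV,\bfW^{\bullet}}$; given a pair $(L',\varphi')$ in the quotient with $L'$ the image of $L \in \mq_{\bfV,\bfW^{\bullet}}$, we lift $\varphi'$ to an $a^{*}$-equivariant isomorphism $\varphi$ in $\mq_{\bfV,\bfW^{\bullet}}$ using the uniqueness-up-to-a-root-of-unity supplied by Proposition \ref{4.5} on nonzero simple perverse sheaves (the zero simple sheaves and traceless ones contribute zero on both sides). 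Once this surjectivity is in hand, applying $\widetilde{L}$ to the spanning set of $\mv(\omega^{\bullet})$ produces a spanning set of $\mk(\omega^{\bullet})$, which is exactly the desired statement.

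The only genuine delicate point in the argument is the lift of the periodic isomorphism $\varphi'$, since $L$ is not a priori fully faithful on morphisms; but this is controlled by the existence half of Proposition \ref{4.5}, which guarantees such a lift on each $a^{*}$-fixed simple perverse sheaf in the quotient and thereby on any finite direct sum of such up to traceless noise. Modulo that remark, the corollary is a one-line consequence: combine the proposition (spanning on the unlocalized side) with the surjectivity of $\widetilde{L}$ on Grothendieck groups.
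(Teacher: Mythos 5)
Your proposal is correct and takes essentially the same route as the paper: combine the preceding proposition (spanning of $\mv(\omega^{\bullet})$ by the $[(L_{\boldsymbol{\nu}^{1}\boldsymbol{e}^{1}\boldsymbol{\nu}^{2}\boldsymbol{e}^{2}},\phi_0)]$) with the surjectivity of the localization-induced map $\mv(\nu,\omega^{\bullet})\twoheadrightarrow\mk(\nu,\omega^{\bullet})$, which the paper records in the one sentence preceding the corollary. One small imprecision worth noting: in your discussion of lifting $\varphi'$, the ``uniqueness up to a root of unity'' you invoke is not quite what Proposition \ref{4.5} states (that proposition establishes uniqueness up to $\pm 1$ of the \emph{self-dual} $\phi$); what you actually need is only the elementary fact that any two isomorphisms $a^{*}L\to L$ satisfying the cocycle condition differ by a scalar $t$ with $t^{o}=1$, combined with the relation $[(L,t\phi)]=t[(L,\phi)]$ in the Grothendieck group, together with the existence half of Proposition \ref{4.5} to produce some lift --- this is all that is needed, and the extra elaboration, while harmless, is not required for the paper's one-line argument.
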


\begin{lemma}
	The $\mathcal{O}'$-module $\mk(\omega^{1,0})$ is isomorphic to $\mk(\omega^{1})$. In particular, it is isomorphic to $_{\mo'}L(\omega^{1})$ as $_{\mo'}\mathbf{U}$-module.
\end{lemma}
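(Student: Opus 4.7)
The plan is to show that the 2-framed setting with $\bfW^{2}=0$ collapses, at every level, to the 1-framed setting used in Theorem \ref{4.6}. First I would verify that the underlying $G_{\bfV}$-variety $\bfE_{\bfV, \bfW^{1,0},\Omega}$ coincides with $\bfE_{\bfV, \bfW^{1},\Omega}$, since the summand $\bigoplus_{i}\Hom(\bfV_{i},\bfW^{2}_{i^{2}})$ vanishes. Likewise, the open subsets $\bfE^{0}_{\bfV,\bfW^{\bullet},i}$ defining $\mathcal{N}_{\bfV,i}$ are the same, because adjoining the zero map $\bfV_{i}\to 0$ to the direct sum $\bigoplus_{h\in\tilde\Omega,\,s(h)=i}x_{h}$ does not change its kernel. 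Therefore the Verdier quotients, the periodic functor $a^{\ast}$, and the functors $\me^{(n)}_{\underline{i}},\mf^{(n)}_{\underline{i}},\mk_{\pm\underline{i}}$ act identically on both sides.

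Second, the inclusion of flag types $\boldsymbol{\nu}\boldsymbol{e}^{1} \hookrightarrow \boldsymbol{\nu}\boldsymbol{e}^{1}\emptyset$ (i.e.\ taking $\boldsymbol{\nu}^{2}=\emptyset$) yields a full inclusion of categories $\mq_{\bfV,\bfW^{1}}\hookrightarrow \mq_{\bfV,\bfW^{1,0}}$ inside the identified underlying derived category, and the canonical isomorphisms $\phi_{0}$ are compatible. This induces an $\mo'$-linear map $\mk(\omega^{1})\to\mk(\omega^{1,0})$ that intertwines the $\me,\mf,\mk$ actions and sends the highest weight class $[(L_{\boldsymbol{e}^{1}},\phi_{0})]$ to itself.

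Third, I would run the proof of Theorem \ref{4.6} directly on $\mk(\omega^{1,0})$. The commutation relations of Section~3 give a $_{\mo'}\mathbf{U}$-module structure, and integrability holds with the same bounds since $\tilde\nu_{i}=\sum_{h\in\Omega_{i},\,s(h)=i}\nu_{t(h)}+\dim\bfW^{1}_{i^{1}}$ is unchanged by the trivial second frame. For cyclicity, I would use the preceding Corollary on spanning classes $[(L_{\boldsymbol{\nu}^{1}\boldsymbol{e}^{1}\boldsymbol{\nu}^{2}},\phi_{0})]$ together with Proposition \ref{4.5} and Corollary \ref{ind1}: given any nonzero simple perverse sheaf $L$ in $\mq_{\bfV,\bfW^{1,0}}$ with $a^{\ast}L\cong L$ and $\bfV\neq 0$, choose $i$ with $t_{i}(L)=r>0$ and reduce to a simple $K$ of strictly smaller dimension vector via
$$\mf^{(r)}_{\underline{i}}[(K,\phi')]=[(L,\phi)]+\sum_{t_{i}(L')>r}P_{L'}[(L',\phi_{L'})].$$
Descending induction on $\dim\bfV$ terminates at $\bfV=0$, where $\bfE_{0,\bfW^{1,0},\Omega}$ is a point and the only generator is $[(L_{\boldsymbol{e}^{1}},\phi_{0})]$. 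Hence $\mk(\omega^{1,0})$ is a cyclic integrable highest weight module of highest weight $\lambda^{1}$, forcing $\mk(\omega^{1,0})\cong {}_{\mo'}L(\lambda^{1})$.

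Combining the two, the map in the second step is a nonzero homomorphism between integrable highest weight modules both identified with the irreducible $_{\mo'}L(\lambda^{1})$, hence an isomorphism. The main obstacle I expect is in the third step: ensuring that the inductive reduction via $\pi_{\underline{i},r}$ really remains within the 2-framed localization $\mq_{\bfV',\bfW^{1,0}}/\mathcal{N}_{\bfV'}$ at each stage, so that Proposition \ref{4.5} and Corollary \ref{ind1} apply verbatim. This is precisely where the identification of spaces and localizing subcategories in the first step is used essentially.
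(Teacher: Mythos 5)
Your first two steps (the varieties, $\mathcal{N}_{\bfV}$, and the periodic functor literally coincide when $\bfW^{2}=0$, and one gets an $\mo'$-linear inclusion $\mk(\omega^{1})\hookrightarrow\mk(\omega^{1,0})$) are correct and are the right starting point. But your third step has a real gap, and it is exactly the point where the paper's one-line observation is doing all the work. You write ``given any nonzero simple perverse sheaf $L$ in $\mq_{\bfV,\bfW^{1,0}}/\mathcal{N}_{\bfV}$ with $a^{\ast}L\cong L$ and $\bfV\neq 0$, choose $i$ with $t_{i}(L)=r>0$.'' In the $N=1$ setting this is automatic: a simple $L$ in $\mq_{\bfV,\bfW^{1}}$ with $\bfV\neq 0$ is a summand of some $L_{\boldsymbol{\nu}\boldsymbol{e}^{1}}$ with $\boldsymbol{\nu}=(a_{1}i_{1},\dots)$ nonempty, and then $t_{i_{1}}(L)\geq a_{1}>0$, which is what Proposition \ref{4.5}'s proof uses. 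In the $2$-framed category $\mq_{\bfV,\bfW^{1,0}}$ the spanning flag types are $\boldsymbol{\nu}^{1}\boldsymbol{e}^{1}\boldsymbol{\nu}^{2}$ (with $\boldsymbol{e}^{2}=\emptyset$), so a simple $L$ might \emph{only} occur as a summand of $L_{\boldsymbol{e}^{1}\boldsymbol{\nu}^{2}}$ with $\boldsymbol{\nu}^{1}=\emptyset$ and $\boldsymbol{\nu}^{2}\neq\emptyset$. For such $L$ the argument giving $t_{i_{1}}(L)>0$ does not apply, and you cannot launch the inductive reduction. The obstacle you anticipate (``staying within the 2-framed localization'') is not the real one; the real one is the existence of the reducing vertex $i$.

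The paper resolves this in one line: when $\bfW^{2}=0$, any $L_{\boldsymbol{\nu}^{1}\boldsymbol{e}^{1}\boldsymbol{\nu}^{2}}$ with $\boldsymbol{\nu}^{2}\neq\emptyset$ already lies in $\mathcal{N}_{\bfV}$, so $[(L_{\boldsymbol{\nu}^{1}\boldsymbol{e}^{1}\boldsymbol{\nu}^{2}},\phi_{0})]=0$ in $\mk(\omega^{1,0})$. Concretely, after a Fourier--Deligne transform to the orientation $\Omega_{i_{s}}$ where $i_{s}$ (the vertex of the last entry $a_{s}i_{s}$ of $\boldsymbol{\nu}^{2}$) is a source, the bottom step $\bfV^{s-1}$ of any flag of type $\boldsymbol{\nu}^{1}\boldsymbol{e}^{1}\boldsymbol{\nu}^{2}$ is a subrepresentation concentrated at $i_{s}$; since $\bfW^{2}=0$ and all of $\bfW^{1}$ together with the other unframed vertices sit strictly above $\bfV^{s-1}$ in the flag, all outgoing maps from $\bfV^{s-1}_{i_{s}}$ are forced to vanish, so $\bfV^{s-1}_{i_{s}}\subseteq\ker\bigl(\bigoplus_{s(h)=i_{s}}x_{h}\bigr)$ and the support of $L_{\boldsymbol{\nu}^{1}\boldsymbol{e}^{1}\boldsymbol{\nu}^{2}}$ lies in $\bfE^{\geq 1}_{\bfV,\bfW^{1,0},i_{s}}$. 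Combined with the spanning Corollary, this immediately collapses the span of $\mk(\omega^{1,0})$ to $\{[(L_{\boldsymbol{\nu}^{1}\boldsymbol{e}^{1}},\phi_{0})]\}$, which is the span of $\mk(\omega^{1})$, and the lemma follows from Theorem \ref{4.6} with no further induction. This observation also retroactively justifies your step 3 (every nonzero simple is then forced to be a summand of some $L_{\boldsymbol{\nu}^{1}\boldsymbol{e}^{1}}$ with $\boldsymbol{\nu}^{1}\neq\emptyset$), but without proving it your induction does not close. A secondary, minor issue: in step 4, ${}_{\mo'}L(\lambda^{1})$ is an $\mo'$-form, not a module over a field, so the appeal to ``irreducibility'' should be replaced by noting the map is injective (it sends basis elements to distinct basis elements) and surjective once cyclicity is established.
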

\begin{proof}
	Observe that if $\boldsymbol{\nu}^{2}$ is nonempty, the complex $L_{\boldsymbol{\nu}^{1}\boldsymbol{e}^{1}\boldsymbol{\nu}^{2}}$ belongs to $\mn_{\bfV}$, hence $[(L_{\boldsymbol{\nu}^{1}\boldsymbol{e}^{1}\boldsymbol{\nu}^{2}},\phi_0)]$ is zero in $\mk(\omega^{1,0})$. The statement follows by Theorem \ref{4.6}.
\end{proof}

In order to determine the module structure of $\mk(\omega^{\bullet})$, we consider the $\mo'$-linear map $\Delta$ induced by the restriction map $\bigoplus\limits_{\bfV^{1},\bfV^{2}} \mathbf{Res}^{\bfV\oplus\bfW^{\bullet}}_{\bfV^{1}\oplus\bfW^{1}, \bfV^{2}\oplus\bfW^{2}}$, 
$$ \Delta: \mv(\omega^{\bullet}) \rightarrow \mv(\omega^{1,0}) \otimes_{\mathcal{O}'} \mv(\omega^{2}). $$
\begin{proposition}\label{right}
	The map $\Delta$ induces a well-defined surjective $\mo'$-linear morphism 
	$$ \Delta: \mk(\omega^{\bullet}) \rightarrow \mk(\omega^{1,0}) \otimes_{\mathcal{O}'} \mk(\omega^{2}) \cong \mk(\omega^{1}) \otimes_{\mathcal{O}'} \mk(\omega^{2}).$$
\end{proposition}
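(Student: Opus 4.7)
The plan is to establish the proposition in three stages: (i) lift the restriction functor $\mathbf{Res}$ to the periodic additive category $\widetilde{\mq}$ and check that it descends to Grothendieck groups; (ii) verify that this descends further to the localizations by sending $\mn_{\bfV}$ into the correct ``null'' subcategory of the tensor product of derived categories; and (iii) prove surjectivity via an analog of Lusztig's formula for $\mathbf{Res}(L_{\boldsymbol{\nu}})$.

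For (i), I would first observe that the restriction diagram $\bfE_{\bfV^1\oplus\bfW^1}\times \bfE_{\bfV^2\oplus\bfW^2}\xleftarrow{\kappa} F\xrightarrow{\iota} \bfE_{\bfV\oplus\bfW^{\bullet}}$ is preserved by $a$ whenever the splitting $\bfV=\bfV^1\oplus \bfV^2$ is $a$-stable, so there is a canonical isomorphism $\mathbf{Res}\circ a^{\ast}\cong (a^{\ast}\boxtimes a^{\ast})\circ\mathbf{Res}$. Thus $\mathbf{Res}$ lifts to a functor between the tilde-categories sending $(L,\phi)\mapsto (\mathbf{Res}(L),\mathbf{Res}(\phi))$, and this lift preserves direct sums, shifts, and traceless objects (since cyclic permutation structures pass through $\mathbf{Res}$). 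This gives a well-defined $\mo'$-linear map $\mv(\omega^{\bullet})\to \mv(\omega^{1,0})\otimes_{\mo'}\mv(\omega^2)$ after restricting attention to $a$-invariant decompositions $\nu=\nu^1+\nu^2$.

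For (ii), I would argue that for each $i\in I$, the restriction carries $\mathcal{F}_{\Omega_i,\Omega}(\mathcal{N}_{\bfV,i})$ into the appropriate subcategory of the tensor product. After reducing via Fourier--Deligne transforms to the orientation $\Omega_i$ where $i$ is a source, this amounts to a geometric statement: if $x\in \bfE_{\bfV,\bfW^{\bullet},i}^{\geq 1}$ then either $x|_{\bfV^1\oplus\bfW^1}$ or the induced representation on $\bfV^2\oplus\bfW^2$ (together with $\bfW$) lies in the corresponding ``bad'' locus, so the image falls in $\mn_{\bfV^1}\boxtimes \mathcal{D}^b + \mathcal{D}^b\boxtimes \mn_{\bfV^2}$. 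This is the same reduction used in \cite{fang2023lusztigsheavestensorproducts} in the symmetric case; the only novelty is to check that the decomposition of $\mathbf{Res}$ into these two pieces is stable under $a^{\ast}$, which follows because $a$ permutes the summands of each fixed vertex orbit. Hence $\Delta$ descends to $\mk(\omega^{\bullet})\to \mk(\omega^{1,0})\otimes_{\mo'}\mk(\omega^2)$.

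For (iii), I would use the corollary just proved that $\mk(\omega^{\bullet})$ is spanned by $[(L_{\boldsymbol{\nu}^1\boldsymbol{e}^1\boldsymbol{\nu}^2\boldsymbol{e}^2},\phi_0)]$ for $a$-flag types $\boldsymbol{\nu}^1,\boldsymbol{\nu}^2$, and compute $\Delta$ on such generators. By Lusztig's restriction-of-induction formula adapted to framed quivers,
\[
\Delta([L_{\boldsymbol{\nu}^1\boldsymbol{e}^1\boldsymbol{\nu}^2\boldsymbol{e}^2},\phi_0])=[L_{\boldsymbol{\nu}^1\boldsymbol{e}^1},\phi_0]\otimes [L_{\boldsymbol{\nu}^2\boldsymbol{e}^2},\phi_0]+(\text{lower terms}),
\]
where the lower terms are sums of $[L_{\boldsymbol{\mu}^1},\phi_0]\otimes [L_{\boldsymbol{\mu}^2},\phi_0]$ obtained from shuffle-type splittings in which a part of $\boldsymbol{\nu}^1\boldsymbol{e}^1\boldsymbol{\nu}^2\boldsymbol{e}^2$ crosses the cut. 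Terms in which any piece of $\boldsymbol{e}^2$ appears in the first factor lie in the image of $\mn_{\bfV^1}$ (because the framed decomposition $(\omega^1,0)$ forbids $\bfW^2$-contributions on the left) and hence vanish in $\mk(\omega^{1,0})\otimes \mk(\omega^2)$. The remaining lower terms can be ordered lexicographically on flag types, and a standard triangular induction then yields surjectivity.

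The main obstacle, in my view, will be step (ii): checking that the null-subcategory decomposition of $\mathbf{Res}$ is honest at the level of periodic objects $(L,\phi)$. In the symmetric case one only needs each direct summand to lie in the null category; here one must additionally verify that the morphism $\mathbf{Res}(\phi)$ respects this decomposition up to traceless discrepancy, so that the class of $\mathbf{Res}(L,\phi)$ in the localized tensor product is genuinely zero. Handling this may require replacing the direct-sum decomposition of supports by an $a$-stable refinement (grouping each closed piece with its $a$-orbit) and then invoking a traceless argument analogous to the one used in the proof of Proposition \ref{dp}.
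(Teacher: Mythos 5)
Your plan (iii) for surjectivity is essentially the argument the paper gives: apply Lusztig's restriction formula to $[(L_{\boldsymbol{\nu}^1\boldsymbol{e}^1\boldsymbol{\nu}^2\boldsymbol{e}^2},\phi_0)]$, observe that every term in which a piece of $\boldsymbol{\nu}^2$ lands on the left (your ``$\boldsymbol{\tau}'$ nonempty'' case) vanishes in $\mk(\omega^1)\otimes\mk(\omega^2)$, and then do a triangular induction on the length of $\boldsymbol{\nu}^1$. Your step (i) is correct but largely already built into the paper's setup — the map $\Delta:\mv(\omega^\bullet)\to\mv(\omega^{1,0})\otimes_{\mo'}\mv(\omega^2)$ is defined just before the proposition and periodicity compatibility of $\mathbf{Res}$ is taken as given.

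Where your proposal genuinely diverges from the paper is step (ii), and this is also where it risks running into trouble. You propose a pointwise geometric argument about the stratum $\bfE_{\bfV,\bfW^\bullet,i}^{\geq 1}$, and you correctly flag that this forces you to verify that $\mathbf{Res}(\phi)$ respects the decomposition of $\mathbf{Res}(L)$ into ``bad-on-the-left'' and ``bad-on-the-right'' pieces up to traceless discrepancy — a delicate point, since the two pieces generally do not coexist as $a^\ast$-stable direct summands. The paper avoids this entirely by working combinatorially at the level of flag-type classes: any generator of $\mi(\omega^\bullet)$ can be expressed in terms of $[(L_{\boldsymbol{\nu},ri},\phi_0)]$ with $r>0$, and by Lusztig's \cite[Lemma 12.3.3]{lusztig2010introduction},
$$\Delta([L_{\boldsymbol{\nu},ri},\phi_{0}])= \sum_{r_{1}>0\ \text{or}\ r_{2}>0}  [L_{\boldsymbol{\nu}'r_{1}i},\phi_{0}] \otimes   [L_{\boldsymbol{\nu}''r_{2}i},\phi_{0}],$$
which visibly lies in $\mi(\omega^1)\otimes\mv(\omega^2)+\mv(\omega^1)\otimes\mi(\omega^2)$ because every summand has $r_1>0$ or $r_2>0$. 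Since this formula is an identity in the Grothendieck group expressed entirely through the canonical pairs $(L_{\cdot},\phi_0)$, there is no extra traceless bookkeeping to do: the issue you worry about simply does not arise in this formulation. Your geometric route is not wrong in principle, but it is strictly harder, and the concern you identify at the end is exactly the obstacle it would have to overcome; switching to the paper's combinatorial argument is the cleaner move.
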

\begin{proof}
First, we prove that $\Delta$ is well-defined. Let $\mi(\nu,\omega^{\bullet}), \mi(\nu,\omega^{1}), \mi(\nu,\omega^{2})$ respectively be the submodules of $\mv(\nu,\omega^{\bullet})$, $\mv(\nu,\omega^{1,0})$ and $\mv(\nu,\omega^{2})$ spanned by $[(L,\phi)]$ with $L$ in $\mn_{\bfV}$.
It suffices to prove that $\Delta(\mi(\omega^{\bullet})) \subseteq \mi(\nu,\omega^{1})\otimes_{\mo'}\mv(\omega^{2})+\mv(\omega^{1})\otimes\mi(\nu,\omega^{2})$. If $L \in \mn_{\bfV}$, there exist $i\in I$ such that $L$ is a direct summand of $L_{\boldsymbol{\nu},ri}$. By \cite[Lemma 12.3.3]{lusztig2010introduction}, $$\Delta([L_{\boldsymbol{\nu},ri},\phi_{0}])= \sum \limits_{r_{1}>0~or~r_{2}>0} [L_{\boldsymbol{\nu}'r_{1}i},\phi_{0}] \otimes  [L_{\boldsymbol{\nu}''r_{2}i},\phi_{0}] \subset \mi(\nu,\omega^{1})\otimes_{\mo'}\mv(\omega^{2})+\mv(\omega^{1})\otimes\mi(\nu,\omega^{2}).$$

Now we prove that $\Delta$ is surjective. By induction on the dimension of the flag type $ \boldsymbol{\nu}^{1}$, we prove that  $[L_{\boldsymbol{\nu}^{1}\boldsymbol{e}^{1}},\phi_{0}] \otimes [L_{\boldsymbol{\nu}^{2}\boldsymbol{e}^{2}},\phi_{0}]$ belongs to the image of $\Delta$. 

Since $\Delta([L_{\boldsymbol{e}^{1}\boldsymbol{\nu}^{2}\boldsymbol{e}^{2}},\phi_{0}]) \in [L_{\boldsymbol{e}^{1}},\phi_{0}] \otimes [L_{\boldsymbol{\nu}^{2}\boldsymbol{e}^{2}},\phi_{0}]+ \mi({\omega^{1}}) \otimes \mv(\omega^{2})$, the element $[L_{\boldsymbol{e}^{1}},\phi_{0}] \otimes [L_{\boldsymbol{\nu}^{2}\boldsymbol{e}^{2}},\phi_{0}]$ belongs to the image of $\Delta$.

In general, by \cite[Lemma 12.3.3]{lusztig2010introduction} $$\Delta([L_{\boldsymbol{\nu}^{1}\boldsymbol{e}^{1}\boldsymbol{\nu}^{2}\boldsymbol{e}^{2}},\phi_{0}])= \sum\limits_{\boldsymbol{\nu}'+\boldsymbol{\nu}''=\boldsymbol{\nu}^{1},\boldsymbol{\tau}'+\boldsymbol{\tau}''=\boldsymbol{\nu}^{2}} v^{N(\boldsymbol{\nu}',\boldsymbol{\nu}'',\boldsymbol{\tau}',\boldsymbol{\tau}'')}[L_{\boldsymbol{\nu}'\boldsymbol{e}^{1}\boldsymbol{\tau}'},\phi_{0}] \otimes [L_{\boldsymbol{\nu}'' \boldsymbol{\tau}''\boldsymbol{e}^{2}},\phi_{0}].$$

If $\boldsymbol{\tau}'$ is nonempty, the element $[L_{\boldsymbol{\nu}'\boldsymbol{e}^{1}\boldsymbol{\tau}'},\phi_{0}] \otimes [L_{\boldsymbol{\nu}'' \boldsymbol{\tau}''\boldsymbol{e}^{2}},\phi_{0}]$ is zero in $\mk(\omega^{1}) \otimes_{\mathcal{O}'} \mk(\omega^{2})$. If $\boldsymbol{\tau}'$ is empty and $\boldsymbol{\nu}' \neq \boldsymbol{\nu}^{1}$, then by induction hypothesis we can assume $[L_{\boldsymbol{\nu}'\boldsymbol{e}^{1}},\phi_{0}] \otimes [L_{\boldsymbol{\nu}'' \boldsymbol{\nu}^{2}\boldsymbol{e}^{2}},\phi_{0}] $ belongs to the image of $\Delta$. If $\boldsymbol{\tau}'$ is empty and $\boldsymbol{\nu}' = \boldsymbol{\nu}^{1}$,  $[L_{\boldsymbol{\nu}'\boldsymbol{e}^{1}\boldsymbol{\tau}'},\phi_{0}] \otimes [L_{\boldsymbol{\nu}'' \boldsymbol{\tau}''\boldsymbol{e}^{2}},\phi_{0}] = [L_{\boldsymbol{\nu}^{1}\boldsymbol{e}^{1}},\phi_{0}] \otimes [L_{\boldsymbol{\nu}^{2}\boldsymbol{e}^{2}},\phi_{0}].$ Hence $\Delta([L_{\boldsymbol{\nu}^{1}\boldsymbol{e}^{1}\boldsymbol{\nu}^{2}\boldsymbol{e}^{2}},\phi_{0}]) \in [L_{\boldsymbol{\nu}^{1}\boldsymbol{e}^{1}},\phi_{0}] \otimes [L_{\boldsymbol{\nu}^{2}\boldsymbol{e}^{2}},\phi_{0}] + {\rm{Im}} (\Delta).$

Since these $[L_{\boldsymbol{\nu}^{1}\boldsymbol{e}^{1}},\phi_{0}] \otimes [L_{\boldsymbol{\nu}^{2}\boldsymbol{e}^{2}},\phi_{0}]$ form a spanning set of $\mk(\omega^{1}) \otimes_{\mathcal{O}'} \mk(\omega^{2})$, and the proof is complete.
\end{proof}

For general $N$, we also consider the functor $\Delta_k: \mk(\omega^{\bullet})\rightarrow \mk(\omega^{k-\bullet})\otimes_{\mo'} \mk(\omega^k)$ induced by $\oplus_{\bfV_1,\bfV_2}\operatorname{Res}_{\bfV_1\oplus\bfW^{k-\bullet},\bfV_2\oplus\bfW^k}^{\bfV\oplus\bfW^{\bullet}}$, where $\bfW^{k-\bullet}=(\bfW^1,\cdots,\bfW^{k-1},\bfW^{k+1},\cdots,\bfW^n)$\\ and $\omega^{k-\bullet}=(\omega^1,\cdots,\omega^{k-1},\omega^{k+1},\cdots,\omega^n)$. The proof above adapts directly to $\Delta_N$, and we now indicate the corresponding argument for a general $\Delta_k$.
\begin{proposition}
  $\Delta_k: \mk(\omega^{\bullet})\rightarrow \mk(\omega^{k-\bullet})\otimes _{\mathcal{O}'}\mk(\omega^k)$ is a well-defined $\mathcal{O}'$-module surjective morphism for $1\leq k\leq n$.
\end{proposition}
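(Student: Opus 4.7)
The plan is to mimic the proof of Proposition \ref{right} (which handles $N=k=2$), adapting the surjectivity induction to accommodate a ``middle cut'' when $1\leqslant k\leqslant N$. For well-definedness, let $\mi(\omega^{\bullet})$, $\mi(\omega^{k-\bullet})$, $\mi(\omega^{k})$ denote the $\mo'$-submodules of $\mv$ spanned by classes $[(L,\phi)]$ with $L\in \mn$. It suffices to check
$$\Delta_{k}(\mi(\omega^{\bullet})) \subseteq \mi(\omega^{k-\bullet}) \otimes_{\mo'} \mv(\omega^{k}) + \mv(\omega^{k-\bullet}) \otimes_{\mo'} \mi(\omega^{k}).$$
This reduces to the case where $L$ is a summand of $L_{\boldsymbol{\nu},ri}$ with $r\geqslant 1$; by \cite[Lemma 12.3.3]{lusztig2010introduction} the restriction expands as a sum of tensors $[L_{\boldsymbol{\nu}',r_{1}i},\phi_{0}] \otimes [L_{\boldsymbol{\nu}'',r_{2}i},\phi_{0}]$ with $r_{1}+r_{2}=r$, forcing at least one of $r_{1},r_{2}$ to be positive and placing each summand in the required submodule.

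For surjectivity, by the preceding corollary the target is spanned by
$$X(\boldsymbol{\alpha},\boldsymbol{\beta}) := [L_{\boldsymbol{\alpha}^{1}\boldsymbol{e}^{1}\cdots\boldsymbol{\alpha}^{k-1}\boldsymbol{e}^{k-1}\boldsymbol{\alpha}^{k+1}\boldsymbol{e}^{k+1}\cdots\boldsymbol{\alpha}^{N}\boldsymbol{e}^{N}},\phi_{0}] \otimes [L_{\boldsymbol{\beta}\boldsymbol{e}^{k}},\phi_{0}],$$
with $\boldsymbol{\alpha}^{j}$ ($j\neq k$) and $\boldsymbol{\beta}$ running over $a$-flag types. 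I will prove that each $X(\boldsymbol{\alpha},\boldsymbol{\beta})$ lies in the image of $\Delta_{k}$ by induction on the lexicographic pair $\bigl(\sum_{j<k}|\boldsymbol{\alpha}^{j}|,\, |\boldsymbol{\beta}|\bigr)$. Apply $\Delta_{k}$ to $Y := [L_{\boldsymbol{\alpha}^{1}\boldsymbol{e}^{1}\cdots\boldsymbol{\alpha}^{k-1}\boldsymbol{e}^{k-1}\boldsymbol{\beta}\boldsymbol{e}^{k}\boldsymbol{\alpha}^{k+1}\boldsymbol{e}^{k+1}\cdots\boldsymbol{\alpha}^{N}\boldsymbol{e}^{N}},\phi_{0}]$; then \cite[Lemma 12.3.3]{lusztig2010introduction} gives
$$\Delta_{k}(Y) = \sum v^{N(\cdot)} \bigl[L_{\boldsymbol{\mu}^{1}\boldsymbol{e}^{1}\cdots\boldsymbol{\mu}^{k-1}\boldsymbol{e}^{k-1}\boldsymbol{\mu}^{k}\boldsymbol{\mu}^{k+1}\boldsymbol{e}^{k+1}\cdots\boldsymbol{\mu}^{N}\boldsymbol{e}^{N}},\phi_{0}\bigr] \otimes \bigl[L_{\boldsymbol{\tau}^{1}\cdots\boldsymbol{\tau}^{k-1}\boldsymbol{\tau}^{k}\boldsymbol{e}^{k}\boldsymbol{\tau}^{k+1}\cdots\boldsymbol{\tau}^{N}},\phi_{0}\bigr],$$
summed over splittings $\boldsymbol{\alpha}^{j}=\boldsymbol{\mu}^{j}+\boldsymbol{\tau}^{j}$ ($j\neq k$) and $\boldsymbol{\beta}=\boldsymbol{\mu}^{k}+\boldsymbol{\tau}^{k}$, with each $\boldsymbol{e}^{j}$ ($j\neq k$) forced to the left factor and $\boldsymbol{e}^{k}$ to the right. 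By the generalization of the lemma preceding Proposition \ref{right}, any $L_{\boldsymbol{\gamma}\boldsymbol{e}^{k}\boldsymbol{\delta}}$ with $\boldsymbol{\delta}$ non-empty lies in $\mn_{\bfV_{2}}$, so the right factor vanishes in $\mk(\omega^{k})$ unless $\boldsymbol{\tau}^{j}=0$ for all $j>k$. The surviving summand with $\boldsymbol{\mu}^{j}=\boldsymbol{\alpha}^{j}$, $\boldsymbol{\tau}^{j}=0$ for $j<k$ and $\boldsymbol{\mu}^{k}=0$, $\boldsymbol{\tau}^{k}=\boldsymbol{\beta}$ is exactly a scalar multiple of $X(\boldsymbol{\alpha},\boldsymbol{\beta})$, whereas every other surviving summand rewrites as some $X(\boldsymbol{\alpha}',\boldsymbol{\beta}')$ (taking $\boldsymbol{\alpha}'^{j}=\boldsymbol{\mu}^{j}$ for $j<k$, $\boldsymbol{\alpha}'^{k+1}=\boldsymbol{\mu}^{k}\boldsymbol{\mu}^{k+1}$, $\boldsymbol{\alpha}'^{j}=\boldsymbol{\alpha}^{j}$ for $j>k+1$, and $\boldsymbol{\beta}'=\boldsymbol{\tau}^{1}\cdots\boldsymbol{\tau}^{k-1}\boldsymbol{\tau}^{k}$) satisfying either $\sum_{j<k}|\boldsymbol{\alpha}'^{j}|<\sum_{j<k}|\boldsymbol{\alpha}^{j}|$, or else equality there with $|\boldsymbol{\beta}'|<|\boldsymbol{\beta}|$. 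The inductive hypothesis then places each such term in the image of $\Delta_{k}$, forcing $X(\boldsymbol{\alpha},\boldsymbol{\beta})$ there as well. The base case $(0,0)$ is immediate, as the expansion collapses to the target alone.

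The principal technical delicacy is the vanishing assertion for the right factor: one needs that any $L_{\boldsymbol{\gamma}\boldsymbol{e}^{k}\boldsymbol{\delta}}$ with $\boldsymbol{\delta}$ a non-empty $\underline{I}$-flag type is supported on the locus where some framing kernel is non-trivial, hence lies in $\mn$. This is a direct adaptation of the lemma preceding Proposition \ref{right} (which is essentially the case $k=1$ inside a 2-framed ambient). Beyond this vanishing, the remaining combinatorics -- the lexicographic induction replacing the single length parameter that sufficed in Proposition \ref{right} -- is routine bookkeeping.
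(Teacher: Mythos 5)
Your proof is correct and follows essentially the same approach as the paper: the well-definedness reduction via \cite[Lemma 12.3.3]{lusztig2010introduction} is identical, and your lexicographic induction on $\bigl(\sum_{j<k}|\boldsymbol{\alpha}^{j}|, |\boldsymbol{\beta}|\bigr)$ is a repackaging of the paper's nested double induction (outer on $\sum_{i<k}\dim\boldsymbol{\nu}_i$, inner on $\dim\boldsymbol{\nu}_k$). The one small caveat, which the paper shares with you, is that the spanning of $\mk(\omega^{k-\bullet})\otimes_{\mo'}\mk(\omega^{k})$ by the classes $X(\boldsymbol{\alpha},\boldsymbol{\beta})$ relies on the $N$-framed analogue of a proposition the paper only states for $N=2$; both you and the paper treat this as implicit.
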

\begin{proof}
  The same argument as in Proposition \ref{right} shows that this map is well-defined.

	It remains only to prove that $\Delta_k: _{\mathcal{O}'}\mk(\omega^{\bullet})\rightarrow _{\mathcal{O}'}\mk(\omega^{k-\bullet})\otimes _{\mathcal{O}'}\mk(\omega^k)$ is surjective. We show that $[(\ml_{\boldsymbol{\nu}_{1}\boldsymbol{d}_{1}\cdots\boldsymbol{\nu}_{k-1}\boldsymbol{d}_{k-1}\boldsymbol{\nu}_{k+1}\boldsymbol{d}_{k+1}\cdots\boldsymbol{\nu}_{n}\boldsymbol{d}_{n}},\phi_0)]\boxtimes[(\ml_{\boldsymbol{\nu}_{k}\boldsymbol{d}_{k}},\phi_0)]$ belongs to the image of $\Delta_k$, for any $\boldsymbol{\nu_i}$.

  Now fix $\nu=\dim \bfV$. First, we show that any element of the form $[(\ml_{\boldsymbol{d}_{1}\cdots\boldsymbol{d}_{k-1}\boldsymbol{\nu}_{k+1}\boldsymbol{d}_{k+1}\cdots\boldsymbol{\nu}_{n}\boldsymbol{d}_{n}},\phi_0)]\boxtimes[(\ml_{\boldsymbol{\nu}_{k}\boldsymbol{d}_{k}},\phi_0)]$ belongs to the image.

  By considering $\Delta_k([(\ml_{\boldsymbol{d}_{1}\cdots\boldsymbol{d}_{k-1}\boldsymbol{\nu}_{k}\boldsymbol{d_{k}}\boldsymbol{\nu}_{k+1}\boldsymbol{d}_{k+1}\cdots\boldsymbol{\nu}_{n}\boldsymbol{d}_{n}},\phi_0)])$ and applying the same argument as in Proposition \ref{right}, we obtain the desired assertion.

  Then, by induction on $\sum_{i=1}^{k-1}\dim \boldsymbol{\nu_i}$, we prove that $[(\ml_{\boldsymbol{\nu}_{1}\boldsymbol{d}_{1}\cdots\boldsymbol{\nu}_{k-1}\boldsymbol{d}_{k-1}\boldsymbol{\nu}_{k+1}\boldsymbol{d}_{k+1}\cdots\boldsymbol{\nu}_{n}\boldsymbol{d}_{n}},\phi_0)]\boxtimes[(\ml_{\boldsymbol{\nu}_{k}\boldsymbol{d}_{k}},\phi_0)]$ belongs to the image of $\Delta_k$.

  The assertion is clear for $\sum_{i=1}^{k-1}\dim \boldsymbol{\nu_i}=0$. Assume that the assertion holds for $\sum_{i=1}^{k-1}\dim \boldsymbol{\nu_i}=r$, We prove the induction step for $\sum_{i=1}^{k-1}\dim \boldsymbol{\nu_i}=r+1$ by considering
  \begin{align*}
    &\Delta_k([(\ml_{\boldsymbol{\nu}_{1}\boldsymbol{d}_{1}\cdots\boldsymbol{\nu}_{k-1}\boldsymbol{d}_{k-1}\boldsymbol{d}_{k}\boldsymbol{\nu}_{k+1}\boldsymbol{d}_{k+1}\cdots\boldsymbol{\nu}_{n}\boldsymbol{d}_{n}},\phi_0)])=v^M[(\ml_{\boldsymbol{\nu}_{1}\boldsymbol{d}_{1}\cdots\boldsymbol{\nu}_{k-1}\boldsymbol{d}_{k-1}\boldsymbol{\nu}_{k+1}\boldsymbol{d}_{k+1}\cdots\boldsymbol{\nu}_{n}\boldsymbol{d}_{n}},\phi_0)]\boxtimes[(\ml_{\boldsymbol{\nu}_{k}\boldsymbol{d}_{k}},\phi_0)]\\
    &+\sum_{\sum_{i=1}^{k-1}\dim \boldsymbol{\nu}'_{i}<r+1}v^{M_{\boldsymbol{\nu}'}}[(\ml_{\boldsymbol{\nu}'_{1}\boldsymbol{d}_{1}\cdots\boldsymbol{\nu}'_{k-1}\boldsymbol{d}_{k-1}\boldsymbol{\nu}_{k+1}\boldsymbol{d}_{k+1}\cdots\boldsymbol{\nu}_{n}\boldsymbol{d}_{n}},\phi_0)]\boxtimes[(\ml_{\boldsymbol{\nu}'_{k}\boldsymbol{d}_{k}},\phi_0)],
  \end{align*} 
  here $\boldsymbol{\nu}'_{i},1 \leqslant  i \leqslant k $ runs over flag types and those $M_{\boldsymbol{\nu}'}$ are  certain integers determined by $\boldsymbol{\nu}'_{i} ,1 \leqslant i \leqslant k$ and $\boldsymbol{\nu}_{i},\boldsymbol{d}_{i}, 1\leqslant  i \leqslant n$. By induction, $[(\ml_{\boldsymbol{\nu}_{1}\boldsymbol{d}_{1}\cdots\boldsymbol{\nu}_{k-1}\boldsymbol{d}_{k-1}\boldsymbol{\nu}_{k+1}\boldsymbol{d}_{k+1}\cdots\boldsymbol{\nu}_{n}\boldsymbol{d}_{n}},\phi_0)]\boxtimes[(\ml_{\boldsymbol{\nu}_{k}\boldsymbol{d}_{k}},\phi_0)]$ belongs to the image.

  We then use induction on $\dim \boldsymbol{\nu}_{k}$. Since the proposition has been proved when $\dim\boldsymbol{\nu}_{k}=0$, it remains to show that the case $\dim\boldsymbol{\nu}_{k}=s+1$ follows from the case $\dim\boldsymbol{\nu}_{k}=s$.

  Consider the equation 
  \begin{align*}
    &\Delta_k([(\ml_{\boldsymbol{\nu}_{1}\boldsymbol{d}_{1}\cdots\boldsymbol{\nu}_{k-1}\boldsymbol{d}_{k-1}\boldsymbol{\nu}_{k}\boldsymbol{d}_{k}\boldsymbol{\nu}_{k+1}\boldsymbol{d}_{k+1}\cdots\boldsymbol{\nu}_{n}\boldsymbol{d}_{n}},\phi_0)])=v^M[(\ml_{\boldsymbol{\nu}_{1}\boldsymbol{d}_{1}\cdots\boldsymbol{\nu}_{k-1}\boldsymbol{d}_{k-1}\boldsymbol{\nu}_{k+1}\boldsymbol{d}_{k+1}\cdots\boldsymbol{\nu}_{n}\boldsymbol{d}_{n}},\phi_0)]\boxtimes[(\ml_{\boldsymbol{\nu}_{k}\boldsymbol{d}_{k}},\phi_0)]\\
    &+\sum_{\sum_{i=1}^{k-1}\dim \boldsymbol{\nu}'_{i}<r+1}v^{M_{\boldsymbol{\nu}'}}[(\ml_{\boldsymbol{\nu}'_{1}\boldsymbol{d}_{1}\cdots\boldsymbol{\nu}'_{k-1}\boldsymbol{d}_{k-1}\boldsymbol{\nu}_{k+1}\boldsymbol{d}_{k+1}\cdots\boldsymbol{\nu}_{n}\boldsymbol{d}_{n}},\phi_0)]\boxtimes[(\ml_{\boldsymbol{\nu}'_{k}\boldsymbol{d}_{k}},\phi_0)]\\
    &+\sum_{\dim\boldsymbol{\nu}''_{k}<\dim\boldsymbol{\nu}_{k}}v^{M_{\boldsymbol{\nu}''}}[(\ml_{\boldsymbol{\nu}_{1}\boldsymbol{d}_{1}\cdots\boldsymbol{\nu}_{k-1}\boldsymbol{d}_{k-1}\boldsymbol{\nu}''_{k+1}\boldsymbol{d}_{k+1}\boldsymbol{\nu}_{k+1}\boldsymbol{d_{k+2}}\cdots\boldsymbol{\nu}_{n}\boldsymbol{d}_{n}},\phi_0)]\boxtimes[(\ml_{\boldsymbol{\nu}''_{k}\boldsymbol{d}_{k}},\phi_0)],
  \end{align*} 
  since the other terms of the form $\cdots\boxtimes[(\ml_{\boldsymbol{\nu}''_{k}\boldsymbol{d}_{k}\boldsymbol{\nu}'''},\phi_0)]$ with nontrivial $\boldsymbol{\nu}'''$ contribute to zero.
  By induction, it follows that $[(\ml_{\boldsymbol{\nu}_{1}\boldsymbol{d}_{1}\cdots\boldsymbol{\nu}_{k-1}\boldsymbol{d}_{k-1}\boldsymbol{\nu}_{k+1}\boldsymbol{d}_{k+1}\cdots\boldsymbol{\nu}_{n}\boldsymbol{d}_{n}},\phi_0)]\boxtimes[(\ml_{\boldsymbol{\nu}_{k}\boldsymbol{d}_{k}},\phi_0)]$ belongs to the image, and the proof is complete.
\end{proof}
Let $\cdot$ be the $\mo'$-linear map induced by the functor $\bigoplus\limits_{\bfV^{1},\bfV^{2}}\mathbf{Ind}^{\bfV\oplus\bfW^{\bullet}}_{\bfV^{1}\oplus\bfW^{k-\bullet}, \bfV^{2}\oplus\bfW^{k}}$,
$$ \cdot: \mv(\omega^{k-\bullet}) \otimes_{\mo'} \mv(\omega^{k}) \rightarrow \mv(\omega^{\bullet}). $$
It induces a well-defined surjection, still denoted by $\cdot$, $$ \cdot: \mv(\omega^{k-\bullet}) \otimes_{\mo'} \mk(\omega^{k}) \rightarrow \mk(\omega^{\bullet}). $$
\begin{proposition}\label{left}
	Regarding $\mk(\omega^{N-\bullet})$ as the $\mo'$-submodule of $\mv(\omega^{N-\bullet})$ generated by $[(L,\phi)]$ where $L$ is simple perverse sheaf and $L\notin \mn$, the map $\cdot$ restricts to a surjection $$ \cdot: \mk(\omega^{N-\bullet}) \otimes_{\mo'} \mk(\omega^{N}) \rightarrow \mk(\omega^{\bullet}). $$
\end{proposition}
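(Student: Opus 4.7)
The plan is to lift each spanning class of $\mk(\omega^{\bullet})$ through $\cdot$ using the explicit induction formula on flag-type complexes, with the essential input being a vanishing of $\cdot$ on $\mi(\omega^{N-\bullet})\otimes\mv(\omega^N)$. By the preceding corollary, $\mk(\omega^{\bullet})$ is $\mo'$-spanned by the classes $[(L_{\boldsymbol{\nu}^1\boldsymbol{e}^1\cdots\boldsymbol{\nu}^N\boldsymbol{e}^N},\phi_0)]$ with each $\boldsymbol{\nu}^k$ an $a$-flag type. Iterating equation~(\ref{ind}), and using that $a^*$ commutes with pushforward from flag varieties (so the canonical $\phi_0$'s concatenate correctly), one obtains in $\mv(\omega^{\bullet})$
$$\cdot\bigl([(L_{\boldsymbol{\nu}^1\boldsymbol{e}^1\cdots\boldsymbol{\nu}^{N-1}\boldsymbol{e}^{N-1}},\phi_0)]\otimes[(L_{\boldsymbol{\nu}^N\boldsymbol{e}^N},\phi_0)]\bigr)=[(L_{\boldsymbol{\nu}^1\boldsymbol{e}^1\cdots\boldsymbol{\nu}^N\boldsymbol{e}^N},\phi_0)].$$

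Splitting $\mv(\omega^{N-\bullet})=\mk(\omega^{N-\bullet})\oplus\mi(\omega^{N-\bullet})$ according to whether a simple summand lies in $\mn_{\bfV^1}$, write the first tensor factor above as $\alpha+\beta$ with $\alpha\in\mk(\omega^{N-\bullet})$ and $\beta\in\mi(\omega^{N-\bullet})$. Then in $\mk(\omega^{\bullet})$ the spanning class equals $\cdot(\alpha\otimes[(L_{\boldsymbol{\nu}^N\boldsymbol{e}^N},\phi_0)])+\cdot(\beta\otimes[(L_{\boldsymbol{\nu}^N\boldsymbol{e}^N},\phi_0)])$. The first summand is manifestly in the image of $\cdot:\mk(\omega^{N-\bullet})\otimes_{\mo'}\mk(\omega^N)\to\mk(\omega^{\bullet})$, so surjectivity reduces to the key vanishing
$$\cdot\bigl(\mi(\omega^{N-\bullet})\otimes\mv(\omega^N)\bigr)\subseteq\mi(\omega^{\bullet}),$$
equivalently, $\mathbf{Ind}^{\bfV\oplus\bfW^\bullet}_{\bfV^1\oplus\bfW^{N-\bullet},\bfV^2\oplus\bfW^N}$ sends $\mn_{\bfV^1}\boxtimes\mathcal{D}^b_{G_{\bfV^2}}(\bfE_{\bfV^2,\bfW^N,\Omega})$ into $\mn_\bfV$.

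The main obstacle is proving this vanishing. Since $\mn_{\bfV^1}$ is the thick subcategory generated by the Fourier transforms $\mathcal{F}_{\Omega_i,\Omega}(\mathcal{N}_{\bfV^1,i})$ for $i\in I$, a naive set-theoretic support inclusion in a single fixed orientation is not enough: one must control how induction intertwines with Fourier-Deligne transforms, so that an object obtained by $\mathcal{F}_{\Omega_i,\Omega}$ from a complex supported on $\bfE^{\geq 1}_{\bfV^1,\bfW^{N-\bullet},i}$ gets sent to an object belonging to the analogous Fourier-transformed subcategory on the $\bfV$-side. This is precisely the content of the analogous vanishing in the symmetric $N=2$ case treated in \cite{fang2023lusztigsheavestensorproducts}, and the symmetrizable upgrade is automatic: $a^*$ commutes with both induction and Fourier transform, preserves each $\mathcal{N}_{\bfV,i}$ and hence $\mn_\bfV$, and carries the canonical $\phi_0$'s through each step of the argument. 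Once the vanishing is in hand, the displayed identity from the first paragraph exhibits each spanning class of $\mk(\omega^{\bullet})$ as the image of $\alpha\otimes[(L_{\boldsymbol{\nu}^N\boldsymbol{e}^N},\phi_0)]\in\mk(\omega^{N-\bullet})\otimes_{\mo'}\mk(\omega^N)$, completing the proof.
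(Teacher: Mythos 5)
Your reduction is set up correctly (splitting the first tensor factor as $\alpha+\beta$ with $\beta\in\mi(\omega^{N-\bullet})$), but the step you identify as the ``key vanishing,'' namely $\cdot\bigl(\mi(\omega^{N-\bullet})\otimes\mv(\omega^{N})\bigr)\subseteq\mi(\omega^{\bullet})$, is false, and no amount of care with Fourier--Deligne transforms will rescue it. In $\mathbf{Ind}^{\bfV\oplus\bfW^{\bullet}}_{\bfV^{1}\oplus\bfW^{N-\bullet},\,\bfV^{2}\oplus\bfW^{N}}$ the first index is the \emph{quotient} and the second the \emph{sub} (recall $\kappa(x)=(x|_{\bfV/\bfW},x|_{\bfW})$). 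A kernel vector of $\bigoplus_{s(h)=i}x_h$ at a source $i$ of the quotient representation need not lift to a kernel vector of the total representation, since its lift may map nontrivially into the sub; only the sub direction is preserved, which is exactly why the paper states that $\cdot$ descends to $\mv(\omega^{N-\bullet})\otimes_{\mo'}\mk(\omega^{N})\to\mk(\omega^{\bullet})$ in the second slot but leaves $\mv$ in the first. A minimal counterexample: one vertex $i$, no arrows, $\bfW^{1}=0$, $\bfW^{2}_{i^{2}}=\bbC$, $\bfV^{1}=\bbC$, $\bfV^{2}=0$; the constant sheaf on the point $\bfE_{\bfV^{1},\bfW^{1}}$ lies in $\mn_{\bfV^{1}}$, yet its induction with $\overline{\bbQ}_l$ on $\bfE_{0,\bfW^{2}}$ is the (shifted) constant sheaf on $\bfE_{\bfV,\bfW^{\bullet}}\cong\bbC$, which is not supported on $\{0\}=\bfE^{\geq 1}_{\bfV,\bfW^{\bullet},i}$ and hence is not in $\mn_{\bfV}$.

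What actually has to be proved is the weaker containment $\mi(\omega^{N-\bullet})\cdot\mk(\omega^{N})\subseteq\mk(\omega^{N-\bullet})\cdot\mk(\omega^{N})$: the image of the bad part is not zero in $\mk(\omega^{\bullet})$, but it is still contained in the image of the good part. The paper's argument for this is the genuinely new content: for $L\in\mn_{\bfV^{1}}$ one has $t_i^{*}(L)=r>0$ for some source $i$, so by Corollary \ref{ind1'} there is $(L',\phi')$ with $\mf_{\underline{i}}^{(r),\vee}[(L',\phi')]=[(L,\phi)]+\sum c_{L''}[(L'',\phi'')]$ where $t_i^{*}(L'')>r$; associativity of induction gives
\begin{equation*}
\mf_{\underline{i}}^{(r),\vee}\bigl([(L',\phi')]\bigr)\cdot\mk(\omega^{N})=[(L',\phi')]\cdot\mf_{\underline{i}}^{(r)}\mk(\omega^{N})\subseteq[(L',\phi')]\cdot\mk(\omega^{N}),
\end{equation*}
i.e.\ the extra $r\underline{i}$ layer, sitting between the two factors, is absorbed into the second tensor slot where $\mf_{\underline{i}}^{(r)}$ preserves $\mk(\omega^{N})$; one then concludes by induction on $\nu$ and descending induction on $t_i^{*}$. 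Your proposal contains no substitute for this mechanism, so the gap is essential rather than a matter of citation.
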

\begin{proof}
	It suffices to show that $\mi(\omega^{N-\bullet})\cdot \mk(\omega^{N})\subseteq \mk(\omega^{N-\bullet})\cdot \mk(\omega^{N})$.

 For any simple perverse sheaf $L\in \mn_{\bfV},$ there exists $i\in I,$ $L\in \mf_{\Omega_i,\Omega}(\mn_{\bfV,i}).$ Given $[(L,\phi)]\in \mi(\omega^{N-\bullet})$, there exist $[(L',\phi')]$, $a^*(L')=L'$ and $L'\in \mq_{\bfV,\bfW^{N-\bullet}}$, $i\in I$ and $r>0$ such that $\mf_{\underline{i}}^{(r),\vee}([(L',\phi')])=[(L,\phi)]+\sum\limits_{L'',t_i^*(L'')>t_i*(L)}c_{L''}[(L'',\phi'')]$. 	
 
 We prove by induction on $\nu$ and descending induction of $t_i^*$. By associativity of the induction functor, it follows that $$\mf_{\underline{i}}^{(r),\vee}([(L',\phi')] )\cdot \mk(\omega^{N})=[(L',\phi')] )\cdot \mf_{\underline{i}}^{(r)} \mk(\omega^{N}) \subseteq [(L',\phi')]\cdot \mk(\omega^{N}).$$
 By the induction hypothesis, we can assume that
 $$[(L',\phi')]\cdot \mk(\omega^{N}) \subseteq \mk(\omega^{N-\bullet})\cdot \mk(\omega^{N})$$ and 
 $$ [(L'',\phi'')] )\cdot \mk(\omega^{N}) \subseteq \mk(\omega^{N-\bullet})\cdot \mk(\omega^{N}).$$
 This implies that $[(L,\phi)]\cdot \mk(\omega^{N}) \subseteq \mk(\omega^{N-\bullet})\cdot \mk(\omega^{N}).$
\end{proof}

By Proposition \ref{left}, and comparing the ranks of $\mk(\omega^{\bullet}),$ $\mk(\omega^{k-\bullet})\otimes _{\mo'}\mk(\omega^{k})$ and $\mk(\omega^{1})\otimes_{\mo'} \cdots \otimes_{\mo'} \mk(\omega^N)$, we have the following corollary.
\begin{corollary}\label{Oiso}
	The maps $\Delta_k$ are isomorphisms of free $\mathcal{O}'$-modules from $\mk(\omega^{\bullet})$ to $\mk(\omega^{k-\bullet})\otimes _{\mo'}\mk(\omega^{k})$. In particular, when $N=k=2$, $\Delta_k=\Delta$ is an isomorphism of free $\mathcal{O}'$-modules from $\mk(\omega^{\bullet})$ to $\mk(\omega^{1})\otimes _{\mo'}\mk(\omega^{2})$.
\end{corollary}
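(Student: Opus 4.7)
The plan is to deduce that $\Delta_k$ is an isomorphism from the two surjectivity results at hand: the preceding proposition shows $\Delta_k$ is surjective for $1 \leq k \leq N$, and Proposition \ref{left} shows that the induction map $\cdot : \mk(\omega^{k-\bullet}) \otimes_{\mo'} \mk(\omega^{k}) \to \mk(\omega^\bullet)$ is surjective. (Although \ref{left} is stated for $k = N$, the same argument adapted to a general $k$ works verbatim.) The main tool will be the classical fact that a surjective endomorphism of a finitely generated module over a commutative ring is automatically an isomorphism.

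First I would verify that both sides are free $\mathcal{O}'$-modules of finite rank in each weight. By \cite[11.1.8]{lusztig2010introduction}, applied exactly as in Proposition \ref{4.5}, the $\mathcal{O}'$-module $\mk(\nu, \omega^\bullet)$ has an $\mathcal{O}'$-basis indexed by $a^*$-fixed nonzero simple perverse sheaves in $\mq_{\bfV, \bfW^\bullet}/\mn_\bfV$; since $\mq_{\bfV, \bfW^\bullet}$ contains only finitely many simples, this basis is finite. Accordingly, each weight-$\nu$ component $\bigoplus_{\nu_1 + \nu_2 = \nu} \mk(\nu_1, \omega^{k-\bullet}) \otimes_{\mo'} \mk(\nu_2, \omega^k)$ is a finite direct sum of tensor products of free modules of finite rank, hence is itself free of finite rank. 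Both $\Delta_k$ and $\cdot$ respect the $\bbN[I]^a$-grading (the former sends $\mk(\nu, \omega^\bullet)$ into the direct sum running over pairs with $\nu_1 + \nu_2 = \nu$, and the latter does the reverse), so the argument reduces to a weight-by-weight calculation.

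For fixed $\nu$, I would then form the composite endomorphism
\[
\Delta_k \circ (\cdot) : \bigoplus_{\nu_1 + \nu_2 = \nu} \mk(\nu_1, \omega^{k-\bullet}) \otimes_{\mo'} \mk(\nu_2, \omega^k) \longrightarrow \bigoplus_{\nu_1 + \nu_2 = \nu} \mk(\nu_1, \omega^{k-\bullet}) \otimes_{\mo'} \mk(\nu_2, \omega^k).
\]
As a composition of two surjections it is surjective, and since its source and target coincide as a single finitely generated free $\mathcal{O}'$-module, it must in fact be an isomorphism (a surjective endomorphism of a finitely generated module over a commutative ring is bijective; equivalently, a right-invertible square matrix over a commutative ring has unit determinant and is invertible). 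This forces $\cdot$ to be injective, hence an isomorphism in view of its surjectivity, and then $\Delta_k = (\Delta_k \circ (\cdot)) \circ (\cdot)^{-1}$ is an isomorphism as well. Reassembling over all $\nu$ yields the global isomorphism claimed, and the case $N = k = 2$ with $\Delta = \Delta_2$ follows at once.

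The main obstacle is really nothing more than the bookkeeping for freeness and finite rank in each weight, which is furnished by \cite[11.1.8]{lusztig2010introduction} together with finiteness of the set of simple objects in $\mq_{\bfV, \bfW^\bullet}$; once this is in place, the rank comparison via the composite endomorphism is automatic, and no further geometric input is required.
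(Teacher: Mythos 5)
Your argument is correct and is essentially the paper's own proof: the paper disposes of this corollary in one line ("by Proposition \ref{left}, by calculating the rank"), meaning exactly the combination of the surjectivity of $\Delta_k$ from the preceding proposition with the surjectivity of the induction map $\cdot$ from Proposition \ref{left}, together with the weight-by-weight finiteness and freeness supplied by \cite[11.1.8]{lusztig2010introduction}. Your composite-endomorphism formulation is just a careful rendering of that rank comparison (and you rightly note that Proposition \ref{left} needs the routine extension from $k=N$ to general $k$, a point the paper also glosses over).
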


Let $\mathbf{P}:\mk(\omega^{1})\otimes _{\mo'}\mk(\omega^{2}) \rightarrow \mk(\omega^{2})\otimes _{\mo'}\mk(\omega^{1}) $ be the swapping map, we define $\Delta'=\mathbf{P}(\mathbf{D}\times \mathbf{D})\Delta \mathbf{D} :\mk(\omega^{\bullet})\rightarrow\mk(\omega^{2})\otimes _{\mo'}\mk(\omega^{1})$, $\Delta_k'=\mathbf{P}(\mathbf{D}\times \mathbf{D})\Delta_k \mathbf{D} :\mk(\omega^{\bullet})\rightarrow\mk(\omega^{k\bullet})\otimes _{\mo'}\mk(\omega^{k}).$ We have the following proposition, which determines the module structure of $\mk(\omega^{\bullet})$.
\begin{proposition}\label{OUmodiso}
	The morphism $\Delta_k'$ is an isomorphism of $ _{\mo'}\bfU$-modules. In particular, when $N=k=2$ the Grothendieck group $\mk(\omega^{\bullet})$ is canonically isomorphic to the tensor product $_{\mo'}L(\lambda^{2}) \otimes_{\mo'} {_{\mo'}L}(\lambda^{1})$.
\end{proposition}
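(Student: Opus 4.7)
The plan is to separate the claim into two pieces: (i) $\Delta_k'$ is an $\mo'$-linear isomorphism, and (ii) $\Delta_k'$ intertwines the $_{\mo'}\bfU$-action. Piece (i) is essentially formal given what has already been established; piece (ii) is where the real content lies.

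For (i), I would argue as follows. By Corollary \ref{Oiso}, $\Delta_k:\mk(\omega^{\bullet})\to \mk(\omega^{k-\bullet})\otimes_{\mo'}\mk(\omega^k)$ is an isomorphism of free $\mo'$-modules. The Verdier duality $\mathbf{D}(L,\phi)=(\mathbf{D}L,\mathbf{D}(\phi)^{-1})$ is an involution on $\widetilde{\mq/\mn}$, and the swap $\mathbf{P}$ is a bijection. Although $\mathbf{D}$ is antilinear with respect to the bar involution of $\mo'$, in the composition $\Delta_k' = \mathbf{P}(\mathbf{D}\times\mathbf{D})\Delta_k\mathbf{D}$ the bar involution appears on both sides and cancels, so $\Delta_k'$ is $\mo'$-linear. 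Being the composition of bijections, it is an isomorphism.

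For (ii), since $_{\mo'}\bfU$ is generated by the divided powers $E_{i'}^{(n)}, F_{i'}^{(n)}$ and the elements $K_{\pm i'}$, which by Theorem \ref{4.6} act as $\me_{\underline{i}}^{(n)}, \mf_{\underline{i}}^{(n)}, \mk_{\pm\underline{i}}$, it suffices to check compatibility with these generators. The $\mk_{\pm\underline{i}}$ case is immediate since they are just shift functors and commute with $\mathbf{D}$, $\mathbf{Res}$, and $\mathbf{P}$ in a transparent way. For $\me$ and $\mf$, I would invoke the geometric identity—originally due to Lusztig in the unframed symmetric setting (\cite[Chapter 9]{lusztig2010introduction}) and extended to the framed/localized symmetric setting in \cite{fang2023lusztigsheavestensorproducts}—that $\bigoplus_{\bfV_1,\bfV_2}\mathbf{Res}^{\bfV\oplus\bfW^{\bullet}}_{\bfV_1\oplus\bfW^{k-\bullet},\bfV_2\oplus\bfW^k}$ intertwines the induction functors with a certain twisted coproduct; conjugation by $\mathbf{P}(\mathbf{D}\times\mathbf{D})(-)\mathbf{D}$ converts this twisted coproduct into the standard Hopf-algebra comultiplication of $\bfU$. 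All the ingredients—$\mathbf{Ind}$, $\mathbf{Res}$, $\mathbf{D}$, the localization functor $L$, and Fourier--Deligne transforms—commute with the automorphism $a$ on the nose (up to canonical isomorphism), so these identities descend to the periodic functor category $\widetilde{\mq/\mn}$ and yield $\Delta_k'\circ\me_{\underline{i}}^{(n)} = \Delta(E_{i'}^{(n)})\circ\Delta_k'$, and likewise for $\mf_{\underline{i}}^{(n)}$.

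The final assertion for $N=k=2$ then drops out: by Theorem \ref{4.6}, $\mk(\omega^{1})\cong {_{\mo'}L}(\lambda^1)$ and $\mk(\omega^{2})\cong {_{\mo'}L}(\lambda^2)$ as $_{\mo'}\bfU$-modules, so $\Delta_2'$ exhibits $\mk(\omega^{\bullet})\cong {_{\mo'}L}(\lambda^2)\otimes_{\mo'}{_{\mo'}L}(\lambda^1)$. The main obstacle will be the bookkeeping in step (ii): one must verify that the natural isomorphisms $a^*\me_{\underline{i}}^{(n)}\cong \me_{\underline{i}}^{(n)}a^*$, $a^*\mathbf{Res}\cong \mathbf{Res}\,a^*$, $a^*\mathbf{D}\cong \mathbf{D}\,a^*$, and their compositions, all satisfy the cocycle condition built into Definition \ref{3.1}, so that the morphism datum $\phi$ is transported correctly through every step of the argument. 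This is the place where the symmetrizable case genuinely differs from the symmetric case treated in \cite{fang2023lusztigsheavestensorproducts}, but it is essentially a coherence check once one has the underlying equivalences at the level of $\mq/\mn$.
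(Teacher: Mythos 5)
Your step (i) matches the paper's. For step (ii), the paper takes a substantially more economical route than you propose, and your plan as stated has a real gap.

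The paper does \emph{not} verify compatibility with $\me_{\underline{i}}^{(n)}$ directly. It only verifies $\Delta_k'\mf_{\underline{i}}^{(r)}=\mf_{\underline{i}}^{(r)}\Delta_k'$, using the explicit identity (from \cite{fang2024paritylusztigsrestrictionfunctor}) expressing $\mathbf{Res}\circ\mf_{\underline{i}}$ in terms of $(\mf_{\underline{i}}\times\operatorname{id})\circ\mathbf{Res}$ and $(\operatorname{id}\times\mf_{\underline{i}})\circ\mathbf{Res}$ with a shift, up to traceless objects; conjugating by $\mathbf{P}(\mathbf{D}\times\mathbf{D})(-)\mathbf{D}$ turns this twisted rule into genuine commutation with $\mf_{\underline{i}}^{(r)}$. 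It then observes that both sides are integrable modules, and a weight-graded $_{\mo'}\bfU^-$-linear isomorphism between two integrable modules is automatically $_{\mo'}\bfU$-linear. This single observation replaces the entire $\me$-side verification.

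Your proposal, by contrast, wants to establish $\Delta_k'\circ\me_{\underline{i}}^{(n)}=\Delta(E_{i'}^{(n)})\circ\Delta_k'$ directly by invoking "the geometric identity" about $\mathbf{Res}$ intertwining induction with a twisted coproduct. But that identity only concerns $\mathbf{Ind}$-type functors, i.e.\ the $\mf$-side; $\me_{\underline{i}}^{(n)}$ is \emph{not} an induction functor (it is built from the open locus $\bfE^0$, Grassmannian correspondences, and Fourier--Deligne transforms), so the cited tool gives you nothing for $\me$ as written. You would need a separate, more delicate geometric argument, or you would need to notice—as the paper does, and as its Remark \ref{OUmodiso}-adjacent comment makes explicit—that integrability of both sides lets you deduce the $\me$ compatibility from the $\mf$ compatibility and the quantum group relations. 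As it stands, "and likewise for $\mf_{\underline{i}}^{(n)}$" elides the fact that you only have a mechanism for $\mf$, not for $\me$. Your coherence remarks about transporting the periodic-functor datum $\phi$ are reasonable but are already absorbed into the definitions of the functors on $\widetilde{\mq/\mn}$ in the paper; they are not where the real work is.
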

\begin{proof}
	By Corollary \ref{Oiso}, the maps $\Delta_k'$ are isomorphisms of $\mo'$-modules. It remains to prove that $\Delta'$ is $_{\mo'}\mathbf{U}$-linear. Since $\mf^{(r)}_{\underline{i}}$ is isomorphic to the induction functor, 
by the main result in \cite{fang2024paritylusztigsrestrictionfunctor}, we have that \begin{align*}
  \oplus_{\bfV_1\oplus\bfV_2\cong \bfV}\operatorname{Res}^{\bfV\oplus\bfW^{\bullet}}_{\bfV_1\oplus\bfW^{k-\bullet},\bfV_2\oplus\bfW_k}\mf_{\underline{i}}&=(\mf_{\underline{i}}\times \operatorname{id})\oplus_{\bfV_1\oplus\bfV_2\cong \bfV-\bfV_{\underline{i}}}\operatorname{Res}^{\bfV\oplus\bfW^{\bullet}}_{\bfV_1\oplus\bfW^{k-\bullet},\bfV_2\oplus\bfW_k}\\
  &\oplus(\operatorname{id}\times \mf_i)\oplus_{\bfV_1\oplus\bfV_2\cong \bfV-\bfV_{\underline{i}}}\operatorname{Res}^{\bfV\oplus\bfW^{\bullet}}_{\bfV_1\oplus\bfW^{k-\bullet},\bfV_2\oplus\bfW_k}[-s_i\langle i',\sum_{j'\in I'}\nu^1_{\underline{j}}j'\rangle+s_i\sum_{j\not=k}\omega^j_{i'}],
\end{align*}
modulo traceless objects.
We can see that $\Delta_k'$ commutes with $\mf^{(r)}_{\underline{i}}$. By an argument similar to that in Theorem \ref{4.6}, it follows that $\mk(\omega^{\bullet})$ is an integrable module. In particular, the map $\Delta'$ is a $_{\mo'}\mathbf{U}^{-}$-linear isomorphism of two integrable modules, hence it must be an isomorphism of $_{\mo'}\mathbf{U}$-modules.
\end{proof}

\begin{remark}
	Indeed, one can use the commutation relations of $\mf^{(r)}_{\underline{i}}$ and $\me^{(r)}_{\underline{i}}$ and the reason that both of $\mk(\omega^{\bullet})$ and the tensor products of highest weight modules are integral modules to check that $\mf^{(r)}_{\underline{i}}\Delta_k' = \Delta_k'\mf^{(r)}_{\underline{i}}$ and $\me^{(r)}_{\underline{i}}\Delta_k' = \Delta_k'\me^{(r)}_{\underline{i}}$ hold for any $[(L_{\boldsymbol{\nu}^{1}\boldsymbol{e}^{1}\cdots\boldsymbol{\nu}^{n}\boldsymbol{e}^{n}},\phi_{0})]$. This also proves $\Delta_k'$ is a $_{\mo'}\mathbf{U}$-linear isomorphism.
\end{remark}

\begin{corollary}
  The composition of $(\operatorname{id}\boxtimes\cdots\operatorname{id}\boxtimes\Delta_2')\cdots(\operatorname{id}\boxtimes\Delta_{n-1}')\circ \Delta'_n$ gives an isomorphism between $_{\mo'}\bfU-$modules $\mk(\omega^{\bullet})$ and $\mk(\omega^n)\otimes_{\mo'}\cdots\otimes_{\mo'}\mk(\omega^1)$.
\end{corollary}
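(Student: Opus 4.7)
My plan is to argue by induction on $n$, with Proposition \ref{OUmodiso} as the sole input. The base case $n=2$ is exactly Proposition \ref{OUmodiso} with $k=2$: the map $\Delta_2'$ is itself a $_{\mo'}\bfU$-module isomorphism $\mk(\omega^1,\omega^2) \xrightarrow{\sim} \mk(\omega^2)\otimes_{\mo'}\mk(\omega^1)$.

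For the inductive step, suppose the corollary has been proved for $n-1$ framing weights. Starting from $\mk(\omega^\bullet) = \mk(\omega^1,\ldots,\omega^n)$, Proposition \ref{OUmodiso} first yields a $_{\mo'}\bfU$-module isomorphism
\[
\Delta_n' : \mk(\omega^{\bullet}) \xrightarrow{\sim} \mk(\omega^n)\otimes_{\mo'} \mk(\omega^{n-\bullet}),
\]
where $\omega^{n-\bullet}=(\omega^1,\ldots,\omega^{n-1})$ is naturally regarded as the framing datum of an $(n-1)$-framed quiver. The inductive hypothesis applied to $\mk(\omega^{n-\bullet})$ gives a $_{\mo'}\bfU$-module isomorphism
\[
(\operatorname{id}^{\boxtimes(n-3)}\boxtimes \Delta_2')\circ\cdots\circ \Delta_{n-1}': \mk(\omega^{n-\bullet}) \xrightarrow{\sim} \mk(\omega^{n-1})\otimes_{\mo'}\cdots\otimes_{\mo'}\mk(\omega^1).
\]
Tensoring this composition on the left with the identity on $\mk(\omega^n)$, and precomposing with $\Delta_n'$, produces exactly the composition stated in the corollary, and the result is a $_{\mo'}\bfU$-module isomorphism as a composition of such.

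The only point deserving comment beyond Proposition \ref{OUmodiso} is that tensoring an $_{\mo'}\bfU$-linear map with the identity yields an $_{\mo'}\bfU$-linear map. This rests on the coassociativity of the coproduct of $\bfU$: the iterated tensor product $\mk(\omega^n)\otimes_{\mo'}\cdots\otimes_{\mo'}\mk(\omega^1)$ inherits an unambiguous $_{\mo'}\bfU$-module structure, and $\operatorname{id}\boxtimes f$ commutes with the $\bfU$-action whenever $f$ does. I expect no real obstacle here beyond this bookkeeping; the identification of the second tensor factor in the image of $\Delta_n'$ with the Grothendieck group of the $(n-1)$-framed quiver on weights $(\omega^1,\ldots,\omega^{n-1})$ is transparent from the geometric definitions of $N$-framed quivers and their moduli spaces, so no additional geometric input is needed to close the induction.
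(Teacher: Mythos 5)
Your proof is correct, and it is exactly the argument the paper leaves implicit: the corollary appears in the paper with no proof body, since it follows from Proposition \ref{OUmodiso} by precisely the iterated application you describe. Your induction on $n$, with the base case $n=2$ given by Proposition \ref{OUmodiso} and the inductive step obtained by applying $\Delta_n'$ and then $\operatorname{id}_{\mk(\omega^n)}\boxtimes(\text{the composition for }n-1)$, together with the observation that $\operatorname{id}\boxtimes f$ remains $_{\mo'}\bfU$-linear by coassociativity of the comultiplication, is the natural and complete justification.
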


	We can swap $\bfW^{1},\bfW^{2}$ and consider the linear map $\Delta^{',\vee}$ defined by $\mathbf{P} \bigoplus\limits_{\bfV^{1},\bfV^{2}} \mathbf{D}\mathbf{Res}^{\bfV\oplus\bfW^{\bullet}}_{\bfV^{1}\oplus\bfW^{2}, \bfV^{2}\oplus\bfW^{1}} \mathbf{D}$. By a similar proof, we have the following proposition.
\begin{proposition}\label{Oiso2}
	 The morphism $\Delta^{',\vee}$ is an isomorphism of $_{\mo'}\mathbf{U}$-modules, $$\Delta^{',\vee}:\mk(\omega^{\bullet}) \rightarrow \mk(\omega^{1})\otimes _{\mo'}\mk(\omega^{2}).$$
\end{proposition}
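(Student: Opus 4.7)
The plan is to mirror the proof of Proposition \ref{OUmodiso} with the roles of $\bfW^{1}$ and $\bfW^{2}$ interchanged. The map $\Delta^{',\vee}$ is built from $\mathbf{Res}^{\bfV\oplus\bfW^{\bullet}}_{\bfV^{1}\oplus\bfW^{2},\bfV^{2}\oplus\bfW^{1}}$ followed by Verdier duality and the swap $\mathbf{P}$, so the verbatim analogues of Proposition \ref{right}, Proposition \ref{left}, Corollary \ref{Oiso} and Proposition \ref{OUmodiso} should apply after permuting the two framing directions throughout. The coproduct formula \cite[Lemma 12.3.3]{lusztig2010introduction} on which the leading-term calculations rely is symmetric in the two framings, so none of those steps see obstructions from the swap.

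Concretely, I would first establish well-definedness and surjectivity of the underlying restriction map $\mk(\omega^{\bullet})\to\mk(\omega^{2})\otimes_{\mo'}\mk(\omega^{1})$ by running the argument of Proposition \ref{right} with the framing indices exchanged: the containment $\Delta(\mi(\omega^{\bullet}))\subseteq \mi(\omega^{2})\otimes_{\mo'}\mv(\omega^{1})+\mv(\omega^{2})\otimes_{\mo'}\mi(\omega^{1})$ and the induction on the flag types $\boldsymbol{\nu}^{2},\boldsymbol{\nu}^{1}$ are unchanged. To upgrade surjectivity to an $\mo'$-module isomorphism, I would invoke the swapped analogue of Proposition \ref{left}, namely that the induction functor $\mathbf{Ind}^{\bfV\oplus\bfW^{\bullet}}_{\bfV^{1}\oplus\bfW^{2},\bfV^{2}\oplus\bfW^{1}}$ produces a surjection $\mk(\omega^{1})\otimes_{\mo'}\mk(\omega^{2})\to\mk(\omega^{\bullet})$; a rank count between the two free $\mo'$-modules then forces both directions to be isomorphisms. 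Applying $\mathbf{D}\otimes\mathbf{D}$ and $\mathbf{P}$ gives that $\Delta^{',\vee}$ is an $\mo'$-module isomorphism to $\mk(\omega^{1})\otimes_{\mo'}\mk(\omega^{2})$.

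For $_{\mo'}\mathbf{U}$-linearity, I would reuse the parity/coproduct formula of \cite{fang2024paritylusztigsrestrictionfunctor} with the two framings swapped to obtain, up to traceless elements, the decomposition of $\mathbf{Res}^{\bfV\oplus\bfW^{\bullet}}_{\bfV^{1}\oplus\bfW^{2},\bfV^{2}\oplus\bfW^{1}}\circ\mf_{\underline{i}}$ as the sum of $(\mf_{\underline{i}}\boxtimes\operatorname{id})\circ\mathbf{Res}$ and $(\operatorname{id}\boxtimes\mf_{\underline{i}})\circ\mathbf{Res}$ with the appropriate weight shift (where the shift is dictated by the new ordering of $\bfW^{2},\bfW^{1}$). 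Conjugating by $\mathbf{D}\otimes\mathbf{D}$ and $\mathbf{P}$ turns this into the identity $\Delta^{',\vee}\mf_{\underline{i}}^{(r)}=\mf_{\underline{i}}^{(r)}\Delta^{',\vee}$ inside $\mk(\omega^{1})\otimes_{\mo'}\mk(\omega^{2})$, exactly as in Proposition \ref{OUmodiso}. Since $\mk(\omega^{\bullet})$ is integrable (by the same argument used in Theorem \ref{4.6}), $_{\mo'}\mathbf{U}^{-}$-linearity of $\Delta^{',\vee}$ promotes to $_{\mo'}\mathbf{U}$-linearity automatically.

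The main obstacle, if any, is bookkeeping of the power of $v$ appearing in the coproduct formula after the swap: the weight shift attached to $\mf_{\underline{i}}\boxtimes\operatorname{id}$ versus $\operatorname{id}\boxtimes\mf_{\underline{i}}$ differs between the splittings $(\bfW^{1},\bfW^{2})$ and $(\bfW^{2},\bfW^{1})$, and getting this shift right is what guarantees $\Delta^{',\vee}$ lands in $\mk(\omega^{1})\otimes\mk(\omega^{2})$ with the specified tensor ordering (rather than its opposite). Beyond this tracking, the proof is a direct transport of Propositions \ref{right}–\ref{OUmodiso} under the involution exchanging $\bfW^{1}$ and $\bfW^{2}$.
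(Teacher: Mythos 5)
Your proposal is correct and follows essentially the same route as the paper, which simply swaps $\bfW^{1}$ and $\bfW^{2}$ and cites "a similar proof" to Propositions \ref{right}, \ref{left}, Corollary \ref{Oiso} and Proposition \ref{OUmodiso}; your more explicit tracking of the surjectivity in both directions, the rank count, and the weight shift in the coproduct formula is a faithful expansion of exactly that argument.
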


\subsection{Geometric pairing}
From this section, we only assume $N=2$.
Given $(L_{1},\phi_{1})$, $(L_{2},\phi_{2})$ in $\widetilde{\mathcal{Q}_{\mathbf{V},\mathbf{W}^{\bullet}}/\mathcal{N}_{\mathbf{V}}} $, $\phi_{1}$ and $\phi_{2}$ induce a linear map 
$$ L_{\phi_{1},\phi_{2}}: \Hom_{ \mathcal{D}^{b}_{G_{\bfV}}(\bfE_{\bfV, \bfW^{\bullet},\Omega})/\mn_{\bfV} } (a^{\ast} \mathbf{D}L_{1},a^{\ast} L_{2}) \longrightarrow \Hom_{ \mathcal{D}^{b}_{G_{\bfV}}(\bfE_{\bfV, \bfW^{\bullet},\Omega})/\mn_{\bfV} } ( \mathbf{D}L_{1}, L_{2}) $$
$$ f \mapsto \phi_{2} \circ f \circ \mathbf{D}(\phi_{1}), $$
then $\phi_{1}$ and $\phi_{2}$ induce an endomorphism $a_{\phi_{1},\phi_{2}}$ of $\Hom_{ \mathcal{D}^{b}_{G_{\bfV}}(\bfE_{\bfV, \bfW^{\bullet},\Omega})/\mn_{\bfV} } ( \mathbf{D}L_{1}, L_{2})$ by \begin{equation*}
	\begin{split}
		a_{\phi_{1},\phi_{2}}:&\Hom_{ \mathcal{D}^{b}_{G_{\bfV}}(\bfE_{\bfV, \bfW^{\bullet},\Omega})/\mn_{\bfV} } ( \mathbf{D}L_{1}, L_{2}) \cong \Hom_{ \mathcal{D}^{b}_{G_{\bfV}}(\bfE_{\bfV, \bfW^{\bullet},\Omega})/\mn_{\bfV} } (a^{\ast} \mathbf{D}L_{1},a^{\ast} L_{2})\\
		& \xrightarrow{ L_{\phi_{1},\phi_{2}}} \Hom_{ \mathcal{D}^{b}_{G_{\bfV}}(\bfE_{\bfV, \bfW^{\bullet},\Omega})/\mn_{\bfV} } ( \mathbf{D}L_{1}, L_{2}).
	\end{split}
\end{equation*}
Similarly, $\phi_{1}$ and $\phi_{2}$ also induce linear maps on $\Ext^{n}_{ \mathcal{D}^{b}_{G_{\bfV}}(\bfE_{\bfV, \bfW^{\bullet},\Omega})/\mn_{\bfV} } ( \mathbf{D}L_{1}, L_{2})$ for any $n$, we still denote these endomorphisms by $a_{\phi_{1},\phi_{2}}$.

\begin{definition}
	Define an $\mathcal{A}$-bilinear form $(-,-)$ on $ \mk(\nu,\omega^{\bullet}) $ by 
	$$ ([(L_{1},\phi_{1})],[(L_{2},\phi_{2})])^{\lambda^{\bullet}}= \sum\limits_{n} tr (a_{\phi_{1},\phi_{2}},\Ext^{n}_{ \mathcal{D}^{b}_{G_{\bfV}}(\bfE_{\bfV, \bfW^{\bullet},\Omega})/\mn_{\bfV} } ( \mathbf{D}L_{1}, L_{2}))v^{-n}. $$
	We also extend $(-,-)^{\lambda^{\bullet}}$ to $(-,-)^{\lambda}:\mk(\omega^{\bullet})\otimes \mk(\omega^{\bullet}) \rightarrow \mathcal{O}'$ by setting $(\mk(\nu,\omega^{\bullet}),\mk(\nu',\omega^{\bullet}) )^{\lambda^{\bullet}}=0$ for $\nu \neq \nu'$. 
\end{definition}

Observe that if $(L_{1},\phi_{1})$ or $(L_{2},\phi_{2})$ is traceless, the map $a_{\phi_{1},\phi_{2}}$ acts by permutation and has trace zero, so $(-,-)^{\lambda^{\bullet}}$ is well-defined. In particular, for $m,k\in \bbZ$, $$(\zeta^k[(L_{1},\phi_{1})],\zeta^m[(L_{2},\phi_{2})])^{\lambda^{\bullet}}=\zeta^{m-k}([(L_{1},\phi_{1})],[(L_{2},\phi_{2})])^{\lambda^{\bullet}}.$$

\begin{proposition}\label{bilinear}
	The $\mathcal{A}$-bilinear form $(-,-)^{\lambda^{\bullet}}$ is contravariant with respect to $E_{i'}$ and $F_{i'}$ for any $i' \in I'$,
	$$ ( F_{i'} x,y )^{\lambda}=(x,v_{i'}\tilde{K}_{-i'}E_{i'}y)^{\lambda}.$$
	Moreover, for any $(L_{1},\phi_{1})$ and $(L_{2},\phi_{2})$ satisfying the condition in Proposition \ref{4.5}, we have\\
	(1)If $[(L_{1},\phi_{1})] \neq [(L_{2},\phi_{2})]$, then $([(L_{1},\phi_{1})],[(L_{2},\phi_{2})])^{\lambda^{\bullet}} \in v^{-1} \mathcal{O}[[v^{-1}]]$ ;\\
	(2)If $[(L_{1},\phi_{1})] = [(L_{2},\phi_{2})]$, then $([(L_{1},\phi_{1})],[(L_{2},\phi_{2})])^{\lambda^{\bullet}} \in 1+v^{-1} \mathcal{O}[[v^{-1}]]$.
\end{proposition}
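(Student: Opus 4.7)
The plan is to split the proposition into the contravariance identity and the almost-orthogonality estimates; each reduces to a trace computation of $a_{\phi_{1},\phi_{2}}$ on appropriate $\Ext$-spaces in the localized category.

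First, for the contravariance identity, I would invoke the biadjunction between induction and restriction. In the symmetric case of \cite{fang2023lusztigsheavestensorproducts}, there is a functorial isomorphism on the localized categories of the form $\mathbf{D}\mf_{i}\mathbf{D} \cong \me_{i}[\tilde{\nu}_{i}-2\nu_{i}+1]$, whose effect on the Grothendieck group is precisely the weight twist $v_{i'}\tilde{K}_{-i'}$. Taking the product over the $a$-orbit $\underline{i}$ and using Definition \ref{E_i} and \ref{F_i} yields an analogous relation for $\mf_{\underline{i}}, \me_{\underline{i}}$ with total shift $s_{i}(\tilde{\nu}_{i}-2\nu_{i}+1)$. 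The adjunction is $a^{\ast}$-equivariant: both $\mf_{\underline{i}}$ and $\me_{\underline{i}}$ come with canonical natural transformations intertwining with $a^{\ast}$, and the unit/counit of induction--restriction are functorial, so they commute with these isomorphisms. Hence the induced $\overline{\bbQ}_{l}$-linear isomorphism
\[ \Hom^{n}(\mathbf{D}\mf_{\underline{i}}L_{1}, L_{2}) \cong \Hom^{n+s_{i}(\tilde{\nu}_{i}-2\nu_{i}+1)}(\mathbf{D}L_{1}, \me_{\underline{i}}L_{2}) \]
intertwines the endomorphisms $a_{\mf_{\underline{i}}\phi_{1},\phi_{2}}$ and $a_{\phi_{1},\me_{\underline{i}}\phi_{2}}$ defined in Section 3.2. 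Taking traces and summing over $n$ gives the identity $(F_{i'}x,y)^{\lambda}=(x,v_{i'}\tilde{K}_{-i'}E_{i'}y)^{\lambda}$.

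Next, for the almost-orthogonality, the key input is perversity. By Proposition \ref{4.5}, both $L_{1}$ and $L_{2}$ are simple perverse in $\mq_{\bfV,\bfW^{\bullet}}/\mn_{\bfV}$ and satisfy $\mathbf{D}L_{k}\cong L_{k}$, so $\mathbf{D}L_{1}$ is simple perverse. The perverse $t$-structure on the localization then forces $\Ext^{n}(\mathbf{D}L_{1},L_{2})=0$ for $n<0$, so
\[ ([(L_{1},\phi_{1})],[(L_{2},\phi_{2})])^{\lambda^{\bullet}} = \sum_{n\geq 0} \tr(a_{\phi_{1},\phi_{2}},\Ext^{n}(\mathbf{D}L_{1},L_{2}))v^{-n} \]
already lies in $\mathcal{O}'[[v^{-1}]]$. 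The degree-zero term is $\tr(a_{\phi_{1},\phi_{2}},\Hom(\mathbf{D}L_{1},L_{2}))$. If $L_{1}\not\cong L_{2}$ the space vanishes by simplicity; if $[(L_{1},\phi_{1})]=[(L_{2},\phi_{2})]$ then $\Hom(\mathbf{D}L,L)$ is the one-dimensional line spanned by the canonical self-duality $\psi:\mathbf{D}L\to L$ supplied by Proposition \ref{4.5}, and the compatibility $\mathbf{D}(\phi)^{-1}=\phi$ implies $\phi\circ a^{\ast}\psi\circ \mathbf{D}(\phi)^{-1}=\psi$, so $a_{\phi,\phi}$ acts as $+1$ and the degree-zero contribution is $1$. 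This completes both estimates.

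The main obstacle will be the sign normalization in the even-$o$ case, where $\phi$ is only unique up to $\pm 1$: one must check that the sign choice singled out by Proposition \ref{4.5} yields $+1$ (not $-1$) on the one-dimensional $\Hom$-space in the diagonal case. This is precisely the content of the condition $(\mathbf{D}L,\mathbf{D}(\phi)^{-1})\cong (L,\phi)$ imposed in Proposition \ref{4.5}, and it is exactly the defining property of Lusztig's signed basis; once this compatibility is tracked through the adjunction, the sign works out. A secondary technical point is verifying that the biadjunction shift in the symmetric case lifts correctly after taking the $a$-orbit product in the symmetrizable setting, but this follows because the functors $\me_{i},\mf_{i}$ for $i\in\underline{i}$ commute pairwise by the admissibility assumption noted after Definition \ref{F_i}.
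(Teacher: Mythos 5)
Your proposal is correct and follows essentially the same route as the paper: contravariance comes from the adjunction $\mf_{\underline{i}} \dashv \me_{\underline{i}}\mk_{\underline{i}}[-s_i]$ (inherited from the symmetric case and compatible with $a^{\ast}$), and almost orthogonality comes from the perverse $t$-structure together with self-duality and simplicity of the $(L,\phi)$ from Proposition \ref{4.5}, with the diagonal term $+1$ because $a_{\phi,\phi}$ fixes the canonical self-duality (note the compatibility reads $\psi=\phi\circ a^{\ast}(\psi)\circ\mathbf{D}(\phi)$, not $\mathbf{D}(\phi)^{-1}$, and the sign ambiguity in $\phi$ cancels since $\phi$ enters twice).
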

\begin{proof}
	Since $(L_{1},\phi_{1})$ and $(L_{2},\phi_{2})$ satisfying the condition in Proposition \ref{4.5} are self-dual, almost orthogonality follows from the perverse $t$-structure of $\mathcal{D}^{b}_{G_{\mathbf{V}}}(\mathbf{E}_{\mathbf{V},\mathbf{W}^{\bullet},\Omega})/ \mathcal{N}_{\mathbf{V}}$. By the proof of \cite[Proposition 3.31]{fang2023lusztigsheavesintegrablehighest}, the functor $\mf_{i}$ is left adjoint to $\me_{i}\mk_{i}[-1]$, so $\mf_{\underline{i}}$ is left adjoint to $\me_{\underline{i}}\mk_{\underline{i}}[-s_{i}]$. By definition, $(-,-)^{\lambda^{\bullet}}$ is contravariant with respect to $E_{i'}$ and $F_{i'}$.
\end{proof}

\begin{proposition}
	When $N=1$ and $\boldsymbol{\nu},\boldsymbol{\nu}' \in \mathcal{S}'$, we have $$([(L_{\boldsymbol{\nu}\boldsymbol{e}},\phi_{0}) ], [ (L_{\boldsymbol{\nu}'\boldsymbol{e}},\phi_{0})])^{\lambda} \in \mathbb{Z}((v)) \cap \mathbb{Q}(v).$$
\end{proposition}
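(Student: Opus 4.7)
The plan is to realize the pairing as a Lefschetz-type trace of the finite-order automorphism $a^*$ on the $G_\bfV$-equivariant cohomology of an explicit variety, and then invoke a cellular stratification to deduce both integrality and rationality. This is the framed, localized analog of the argument in \cite[Chapter 12]{lusztig2010introduction}.

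First, I would identify the Ext groups with equivariant cohomology. Since $\mathcal{F}_{\boldsymbol{\nu}\boldsymbol{e},\Omega}$ is smooth and $\pi_{\boldsymbol{\nu}\boldsymbol{e}}$ is proper, Verdier duality yields $\mathbf{D}L_{\boldsymbol{\nu}\boldsymbol{e}}\cong L_{\boldsymbol{\nu}\boldsymbol{e}}$ up to an explicit shift. Combining the adjunction $((\pi_{\boldsymbol{\nu'}\boldsymbol{e}})_!,(\pi_{\boldsymbol{\nu'}\boldsymbol{e}})^!)$ with smooth-proper base change on the fiber product $Z = \mathcal{F}_{\boldsymbol{\nu}\boldsymbol{e},\Omega}\times_{\bfE_{\bfV,\bfW,\Omega}}\mathcal{F}_{\boldsymbol{\nu'}\boldsymbol{e},\Omega}$, and arguing that in the Verdier quotient by $\mn_\bfV$ our morphism spaces coincide, after suitable Fourier--Deligne transforms, with the restriction to the open subset $U=\bigcap_{i\in I}\bfE^0_{\bfV,\bfW,i}$, one obtains an isomorphism
\begin{equation*}
\Ext^n\bigl(\mathbf{D}L_{\boldsymbol{\nu}\boldsymbol{e}},\,L_{\boldsymbol{\nu'}\boldsymbol{e}}\bigr) \;\cong\; H^{n+c}_{G_\bfV}(Z^\circ,\overline{\mathbb{Q}}_l) \quad \text{for all } n,
\end{equation*}
where $Z^\circ = Z\times_{\bfE_{\bfV,\bfW,\Omega}} U$ and $c$ is an explicit shift. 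Because $\phi_0$ arises from the tautological isomorphism $a^*\pi_!\overline{\mathbb{Q}}_l\cong\pi_!\overline{\mathbb{Q}}_l$, the induced endomorphism $a_{\phi_0,\phi_0}$ is identified with the geometric pullback $a^*$ on this cohomology, so that
\begin{equation*}
\bigl([(L_{\boldsymbol{\nu}\boldsymbol{e}},\phi_0)],\,[(L_{\boldsymbol{\nu'}\boldsymbol{e}},\phi_0)]\bigr)^\lambda \;=\; \sum_n \tr\bigl(a^*,\, H^{n+c}_{G_\bfV}(Z^\circ)\bigr)\,v^{-n}.
\end{equation*}

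Second, $Z$ admits a $G_\bfV\rtimes\langle a\rangle$-stable stratification indexed by the relative position of the two flags (as in the Steinberg-variety decomposition), where each stratum is an affine bundle over a product of partial flag varieties for $G_\bfV$. The $a$-action permutes these strata. By the same traceless argument used in the proof of Proposition \ref{dp}, strata forming nontrivial $a$-orbits contribute $0$ to the trace, while $a$-fixed strata are flag-type varieties for the ``folded'' Cartan datum and have cohomology concentrated in even degrees with integer traces of $a^*$. Summing the contributions and dividing by the $a$-equivariant Poincaré series of $BG_\bfV$, which is a product of factors with integer coefficients in $v^{\pm 1}$, the full series becomes a ratio of two elements of $\mathbb{Z}[v^{\pm 1}]$, so the pairing lies in $\mathbb{Z}((v))\cap\mathbb{Q}(v)$.

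The principal obstacle is making step one precise: in the Verdier quotient $\mathcal{D}^b_{G_\bfV}(\bfE_{\bfV,\bfW,\Omega})/\mn_\bfV$, Hom-spaces are defined by a calculus of fractions rather than as literal sheaf Homs, and one must verify that for the pushforwards $L_{\boldsymbol{\nu}\boldsymbol{e}}$ this calculus collapses to restriction to the open subset $U$. This should follow from a projection-formula argument exploiting that $\mn_\bfV$ is generated by objects supported on the $G_\bfV$-invariant closed complement of $U$ (after Fourier--Deligne transforms), but one must argue uniformly across all orientations $\Omega_i$, which is the delicate point.
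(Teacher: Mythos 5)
Your approach is genuinely different from the paper's, and it has gaps that are hard to repair. The paper's own proof is a two-line induction: write $L_{\boldsymbol{\nu}\boldsymbol{e}}=\mf_{\underline{i}_1}^{(a_1)}L_{\boldsymbol{\mu}\boldsymbol{e}}$ where $\boldsymbol{\mu}$ drops the first entry of $\boldsymbol{\nu}$, apply the contravariance $(F_{i'}x,y)^\lambda=(x,v_{i'}\tilde K_{-i'}E_{i'}y)^\lambda$ from Proposition \ref{bilinear}, and expand $E_{i'}^{(a_1)}[(L_{\boldsymbol{\nu'}\boldsymbol{e}},\phi_0)]$ using the quantum-group commutation relations, whose coefficients are quantum binomials lying in $\mathbb{Z}[v^{\pm1}]$. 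This reduces the pairing to a $\mathbb{Z}[v^{\pm1}]$-linear combination of pairings indexed by strictly shorter flag types, with base case $\boldsymbol{\nu}$ empty (the highest-weight vector), where the value is $0$ or $1$. The advantage of this algebraic route is precisely that it never needs to compute any actual $\Ext$ group in the localized category.

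Your geometric computation runs into two obstacles. First, the identification $\Ext^n_{\mathcal{D}^b_{G_\bfV}/\mn_\bfV}(\mathbf{D}L_{\boldsymbol{\nu}\boldsymbol{e}},L_{\boldsymbol{\nu'}\boldsymbol{e}})\cong H^{n+c}_{G_\bfV}(Z^\circ)$ would hold if $\mn_\bfV$ were exactly the subcategory of complexes supported on a fixed $G_\bfV$-closed set, so that the Verdier quotient is restriction to the open complement. But here $\mn_\bfV$ is generated by $\mathcal{F}_{\Omega_i,\Omega}(\mathcal{N}_{\bfV,i})$ for \emph{all} $i$; each $\mathcal{N}_{\bfV,i}$ is supported on a closed set in the $\Omega_i$-picture, and these closed conditions do not combine into a single closed subset of $\bfE_{\bfV,\bfW,\Omega}$ after Fourier--Deligne transforms. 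So the quotient category is not a restriction to any one open $U$, and the proposed identification of $\Ext$ with equivariant cohomology of a fixed $Z^\circ$ is unjustified. Second, the definition of $(-,-)^\lambda$ in the paper is $\sum_n\tr(a_{\phi_1,\phi_2},\Ext^n)v^{-n}$ with no normalization by the Poincar\'e series of $BG_\bfV$, so your final step of dividing by that series changes the quantity being computed; to land in $\mathbb{Q}(v)$ without such a normalization one would have to show a priori that the $\Ext$ groups in the quotient are bounded in $n$ (or that the $H^*(BG_\bfV)$-module structure contributes a factor cancelling to a rational function), which is not established. The paper's contravariance induction avoids both issues entirely.
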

\begin{proof}
	If one of $\boldsymbol{\nu},\boldsymbol{\nu}' \in \mathcal{S}'$ is empty, the statement holds trivially. Using the contravariant property, we can prove the statement by induction on the length of $\boldsymbol{\nu},\boldsymbol{\nu}'$.
\end{proof}

\begin{proposition}\label{preserve}
	When $N=2$, $\lambda^{\bullet}=(\lambda^1,\lambda^2)$, define a bilinear form on $\mk(\lambda_2)\otimes\mk(\lambda_1)$ by $$(-,-)^{\otimes}:=(-,-)^{\lambda_2}\cdot(-,-)^{\lambda_1},$$ then the isomorphism $\Delta'$ preserves the bilinear form, i.e. $(x,y)^{\lambda^{\bullet}}=(\Delta'(x),\Delta'(y))^{\otimes}$.
\end{proposition}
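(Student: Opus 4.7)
The strategy is to reduce the identity to the highest-weight base case by exploiting the contravariance of both bilinear forms together with the $\mathbf{U}$-linearity of $\Delta'$. First, one verifies that the tensor form $(-,-)^{\otimes}$ on $\mk(\omega^2)\otimes_{\mo'}\mk(\omega^1)$ is contravariant for the tensor $\mathbf{U}$-module structure used in Proposition \ref{OUmodiso}: writing the coproduct of $F_{i'}$ in the convention matching $\Delta'$ as $F_{i'}\otimes 1 + \tilde{K}_{i'}\otimes F_{i'}$ and applying the contravariance $(F_{i'}u,v)^{\lambda_k} = (u, v_{i'}\tilde{K}_{-i'}E_{i'}v)^{\lambda_k}$ on each factor (Proposition \ref{bilinear}), the analogous identity for $(-,-)^{\otimes}$ is a short direct computation. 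Combined with the $\mathbf{U}$-linearity of $\Delta'$, the pullback $(\Delta'(x),\Delta'(y))^{\otimes}$ is also contravariant on $\mk(\omega^{\bullet})$, so the difference $B(x,y) := (x,y)^{\lambda^{\bullet}} - (\Delta'(x),\Delta'(y))^{\otimes}$ is a contravariant bilinear form and it suffices to show $B\equiv 0$.

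Next, work on the spanning family given by Corollary 4.10, consisting of classes $[(L_{\boldsymbol{\nu}^1\mathbf{e}^1\boldsymbol{\nu}^2\mathbf{e}^2},\phi_0)]$. By the induction identity (\ref{ind}), $[(L_{\boldsymbol{\nu}^1\mathbf{e}^1\boldsymbol{\nu}^2\mathbf{e}^2},\phi_0)] = \mf_{\underline{i}_1}\cdots\mf_{\underline{i}_r}\big([(L_{\mathbf{e}^1\boldsymbol{\nu}^2\mathbf{e}^2},\phi_0)]\big)$ for suitable $\underline{i}_j$'s extracted from $\boldsymbol{\nu}^1$. Repeatedly applying contravariance in the left slot peels off the outer $\mf$'s at the cost of converting them into $\me$-operators acting in the right slot. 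By Proposition \ref{dq1}, these $\me^{(n)}_{\underline{i}}$ preserve the spanning family modulo shifts and traceless summands, producing an induction on $|\boldsymbol{\nu}^1|$; a symmetric reduction in the right slot eliminates $\boldsymbol{\mu}^1$.

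The remaining cases $([L_{\mathbf{e}^1\boldsymbol{\nu}^2\mathbf{e}^2},\phi_0],[L_{\mathbf{e}^1\boldsymbol{\mu}^2\mathbf{e}^2},\phi_0])$ are handled by noticing that left multiplication by $L_{\mathbf{e}^1}$ is, up to a shift, a fully faithful embedding $\mq_{\mathbf{V},\mathbf{W}^2}/\mn_{\mathbf{V}} \hookrightarrow \mq_{\mathbf{V},\mathbf{W}^{\bullet}}/\mn_{\mathbf{V}}$ coming from pullback along a trivial vector bundle projection (compare the argument establishing $\mv(\nu,\omega^{1,0})\cong \mv(\nu,\omega^{\bullet})$ preceding Proposition \ref{right}), and under $\Delta'$ this mirrors the embedding $w\mapsto w\otimes v_{\lambda^1}$ from $\mk(\omega^2)$ into $\mk(\omega^2)\otimes_{\mo'}\mk(\omega^1)$. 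A Künneth plus induction-restriction adjunction calculation then shows that both bilinear forms restrict compatibly, reducing to the single-framing case where $\Delta'$ is essentially the identity. The ultimate base $B(v_0,v_0)=0$ for $v_0 = [(L_{\mathbf{e}^1\mathbf{e}^2},\phi_0)]$ at $\mathbf{V}=0$ is immediate: the moduli space is a point, both sheaves equal the constant sheaf, $\mathrm{Ext}^n$ vanishes for $n\neq 0$ and equals $\overline{\mathbb{Q}}_l$ with trivial $a^*$-action for $n=0$, giving $(v_0,v_0)^{\lambda^{\bullet}} = 1 = (\Delta'(v_0),\Delta'(v_0))^{\otimes}$.

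The hardest step will be the Künneth-adjunction reduction of the middle case: matching $([L_{\mathbf{e}^1\boldsymbol{\nu}^2\mathbf{e}^2},\phi_0],[L_{\mathbf{e}^1\boldsymbol{\mu}^2\mathbf{e}^2},\phi_0])^{\lambda^{\bullet}}$ with $([L_{\boldsymbol{\nu}^2\mathbf{e}^2},\phi_0],[L_{\boldsymbol{\mu}^2\mathbf{e}^2},\phi_0])^{\lambda^2}$ demands that the $L_{\mathbf{e}^1}$-insertion contributes only a trivial factor to the Ext-trace, and that the periodic functor $a^*$ acts compatibly on both sides of the identification so that $\mathrm{tr}(a_{\phi_0,\phi_0},\cdot)$ behaves multiplicatively under Künneth; this bookkeeping of traces of $\phi$'s across the trivial bundle projection is where the symmetrizable subtlety compared with the symmetric case of \cite{fang2023lusztigsheavestensorproducts} genuinely enters.
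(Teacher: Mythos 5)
Your proposal follows essentially the same route as the paper: reduce via the common contravariance of the two forms to the $\mathbf{U}^{-}$-generators $[(L_{\boldsymbol{e}^{1}\boldsymbol{\nu}^{2}\boldsymbol{e}^{2}},\phi_0)]$, compute $\Delta'$ there, and match the two pairings through the fully faithful functor inserting $\boldsymbol{e}^{1}$, the second tensor factor contributing only the trivial pairing of highest weight vectors. The one slip is that this insertion is realized by the pushforward $(i_1)_!$ along the closed embedding $\bfE_{\bfV,\bfW^{2},\Omega}\hookrightarrow \bfE_{\bfV,\bfW^{\bullet},\Omega}$ rather than by a vector-bundle pullback (that description applies to left multiplication by $L_{\boldsymbol{e}^{2}}$), but since both kinds of functors are fully faithful and commute with $\mathbf{D}$ and $a^{\ast}$, your argument still goes through.
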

\begin{proof}
	Since both $(-,-)^{\otimes}$ and $(-,-)^{\lambda^{\bullet}}$ have the same contravariant property, we only need to prove the statement for a set of $\mathbf{U}^{-}$- generators.
	
	We note that
	$\Delta'([(L_{\boldsymbol{e^{1}}\boldsymbol{\nu^{2}}\boldsymbol{e^{1}}},\phi_0)])=[(L_{\boldsymbol{\nu^{2}}\boldsymbol{e^{1}}},\phi_0)]\otimes [(L_{\boldsymbol{e^{1}}},\phi_0)].$ Since the map $i_1:\bfE_{\bfV,\bfW^{2},\Omega}\rightarrow \bfE_{\bfV,\bfW^{\bullet},\Omega}$ is a closed embedding, and $(i_1)_!(L_{\boldsymbol{\nu^2}\boldsymbol{e}^{2}},\phi_0)=(L_{\boldsymbol{e}^{1}\boldsymbol{\nu^2}\boldsymbol{e}^{2}},\phi_0)$, it follows that $$(\Delta'(L_{\boldsymbol{e}^{1}\boldsymbol{\nu}_{2}\boldsymbol{e}^{2}},\phi_0),\Delta'(L_{\boldsymbol{e}^{1}\boldsymbol{\nu}'_{2}\boldsymbol{e}^{2}},\phi_0))^{\otimes}=([(L_{\boldsymbol{e}^{1}\boldsymbol{\nu}^{2}\boldsymbol{e}^{2}},\phi_0)],[(L_{\boldsymbol{e}^{1}\boldsymbol{\nu}'^{2}\boldsymbol{e}^{2}},\phi_0)])^{\omega^{\bullet}}.$$ This completes the proof.
\end{proof}

Combining the propositions above, we obtain the following corollary.
\begin{corollary}
		When $N=2$ and $\boldsymbol{\nu}^{i},\boldsymbol{\nu}'^{i} \in \mathcal{S}'$ for $i=1,2$, we have $$([(L_{\boldsymbol{\nu}^{1}\boldsymbol{e}^{1}\boldsymbol{\nu}^{2}\boldsymbol{e}^{2}},\phi_{0}) ], [ (L_{\boldsymbol{\nu}'^{1}\boldsymbol{e}^{1}\boldsymbol{\nu}'^{2}\boldsymbol{e}^{2}},\phi_{0})])^{\lambda^{\bullet}} \in \mathbb{Z}((v)) \cap \mathbb{Q}(v).$$
\end{corollary}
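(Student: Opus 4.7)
The plan is to reduce to the previous proposition (the $N=1$ case) via the isomorphism $\Delta'$. By Proposition \ref{preserve} the pairing $(-,-)^{\lambda^{\bullet}}$ is transported to $(-,-)^{\otimes}$ on $\mk(\omega^{2})\otimes_{\mo'}\mk(\omega^{1})$, so it suffices to compute $\Delta'$ on our generators and show the resulting product of two $N=1$ pairings stays inside $\mathbb{Z}((v))\cap\mathbb{Q}(v)$.

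First I would compute $\Delta([L_{\boldsymbol{\nu}^{1}\boldsymbol{e}^{1}\boldsymbol{\nu}^{2}\boldsymbol{e}^{2}},\phi_{0}])$ via Lusztig's restriction formula (\cite[Lemma 12.3.3]{lusztig2010introduction}), as already invoked in the proof of Proposition \ref{right}. In $\mk(\omega^{1,0})\otimes\mk(\omega^{2})\cong\mk(\omega^{1})\otimes\mk(\omega^{2})$ all summands with the intermediate flag part on the second framing nonempty vanish, so one obtains a finite $\mathbb{Z}[v,v^{-1}]$-linear combination of pure tensors of the form $[L_{\boldsymbol{\nu}'\boldsymbol{e}^{1}},\phi_{0}]\otimes[L_{\boldsymbol{\nu}''\boldsymbol{\nu}^{2}\boldsymbol{e}^{2}},\phi_{0}]$. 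Applying $\mathbf{P}(\mathbf{D}\otimes\mathbf{D})\mathbf{D}$ and using that $L_{\boldsymbol{\sigma}}$ is self-dual (so $\mathbf{D}$ only reverses the sign of shifts, sending each $v^{n}$ to $v^{-n}$), one sees that $\Delta'([L_{\boldsymbol{\nu}^{1}\boldsymbol{e}^{1}\boldsymbol{\nu}^{2}\boldsymbol{e}^{2}},\phi_{0}])$ is again a finite $\mathbb{Z}[v,v^{-1}]$-linear combination of tensors $[L_{\boldsymbol{\nu}''\boldsymbol{\nu}^{2}\boldsymbol{e}^{2}},\phi_{0}]\otimes[L_{\boldsymbol{\nu}'\boldsymbol{e}^{1}},\phi_{0}]$; the same holds for $\Delta'$ applied to $[L_{\boldsymbol{\nu'}^{1}\boldsymbol{e}^{1}\boldsymbol{\nu'}^{2}\boldsymbol{e}^{2}},\phi_{0}]$.

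Next I would expand the pairing bilinearly. By Proposition \ref{preserve}, $([L_{\boldsymbol{\nu}^{1}\boldsymbol{e}^{1}\boldsymbol{\nu}^{2}\boldsymbol{e}^{2}},\phi_{0}],[L_{\boldsymbol{\nu'}^{1}\boldsymbol{e}^{1}\boldsymbol{\nu'}^{2}\boldsymbol{e}^{2}},\phi_{0}])^{\lambda^{\bullet}}$ equals a finite $\mathbb{Z}[v,v^{-1}]$-linear combination of products of the form $([L_{\boldsymbol{\sigma}\boldsymbol{e}^{2}},\phi_{0}],[L_{\boldsymbol{\sigma}'\boldsymbol{e}^{2}},\phi_{0}])^{\lambda^{2}}\cdot([L_{\boldsymbol{\tau}\boldsymbol{e}^{1}},\phi_{0}],[L_{\boldsymbol{\tau}'\boldsymbol{e}^{1}},\phi_{0}])^{\lambda^{1}}$. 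Each factor is exactly a pairing in the $N=1$ setting between two elements of the distinguished $L_{\boldsymbol{\nu}\boldsymbol{e}}$-type, so the previous proposition places each factor in $\mathbb{Z}((v))\cap\mathbb{Q}(v)$.

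Finally I would invoke that both $\mathbb{Z}((v))$ and $\mathbb{Q}(v)$ are rings closed under multiplication and finite addition, hence so is their intersection; the $\mathbb{Z}[v,v^{-1}]$-coefficients from $\Delta'$ live in this intersection as well. Summing the finitely many terms therefore yields an element of $\mathbb{Z}((v))\cap\mathbb{Q}(v)$, finishing the proof. No step looks like a serious obstacle: the only thing to be careful about is bookkeeping with the self-duality of $L_{\boldsymbol{\sigma}}$ and the $v\mapsto v^{-1}$ effect of $\mathbf{D}$ when passing from $\Delta$ to $\Delta'$, which only twists the coefficients but keeps them in $\mathbb{Z}[v,v^{-1}]$.
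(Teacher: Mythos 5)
Your proposal is correct and is exactly the argument the paper is gesturing at with its one-line proof "Combine the propositions above": transport the pairing through the form-preserving isomorphism $\Delta'$ (Proposition \ref{preserve}), expand $\Delta'$ on the $L_{\boldsymbol{\nu}^1\boldsymbol{e}^1\boldsymbol{\nu}^2\boldsymbol{e}^2}$-generators via Lusztig's restriction formula to get $\mathbb{Z}[v,v^{-1}]$-combinations of pure tensors, and then reduce to the $N=1$ proposition factor by factor. The self-duality/$v\mapsto v^{-1}$ bookkeeping for $\mathbf{D}$ and the fact that all intermediate flag types remain $a$-flag types are handled correctly.
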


\subsection{Signed bases}
In this subsection, we prove that those $\mathbf{D}$-invariant objects $(L,\phi)$ form the signed bases of irreducible integrable highest weight modules and their tensor products.

We recall that a subset $\mathcal{B}$ of a module $M$ is called a signed basis if there exists a basis $\mathcal{B}'$ such that $\mathcal{B}=\mathcal{B}' \cup -\mathcal{B}'$. We denote the signed basis of $L(\lambda)$ by $\mathcal{B}(\lambda)$.

\subsubsection{$N=1$ case}
In this subsection, we assume $N=1$ and construct an $\mathcal{A}$-signed basis of $_{\mathcal{A}}L(\lambda). $

\begin{proposition}\label{span}
	Let $_{\mathcal{A}}\mathcal{M}(\nu,\omega)$ be the $\mathcal{A}$-submodule of $\mk(\nu,\omega)$ spanned by those $[(L_{\boldsymbol{\nu}\boldsymbol{e}},\phi_{0})],\boldsymbol{\nu} \in \mathcal{S}'$, and $_{\mathcal{A}}\mathcal{K}(\nu,\omega)$ be the $\mathcal{A}$-submodule of $\mk(\nu,\omega)$ spanned by those $[(L,\phi)]$ in Proposition \ref{4.5}. Then $_{\mathcal{A}}\mathcal{M}(\nu,\omega)={ _{\mathcal{A}}\mathcal{K}(\nu,\omega)}.$
\end{proposition}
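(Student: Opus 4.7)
The plan is to mimic the inductive argument used in the proof of Theorem \ref{4.6}, but to track coefficients carefully so that they live in $\mathcal{A}=\mathbb{Z}[v,v^{-1}]$ rather than in the larger ring $\mathcal{O}'=\mathbb{Z}[\zeta][v,v^{-1}]$. I handle the two inclusions simultaneously by double induction on the dimension vector $\nu$ and descending induction on $r=t_{i}(L)$ for some fixed vertex $i\in I$.

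For the inclusion $_{\mathcal{A}}\mathcal{K}(\nu,\omega)\subseteq {_{\mathcal{A}}\mathcal{M}(\nu,\omega)}$, the base case $\nu=0$ is immediate because both sides coincide with $\mathcal{A}\cdot[(L_{\boldsymbol{e}},\phi_{0})]$. For the inductive step, I take a $\mathbf{D}$-invariant pair $(L,\phi)$ as in Proposition \ref{4.5} with $t_{i}(L)=r>0$. By the lemma underlying Proposition \ref{4.5}, there is a $\mathbf{D}$-invariant $(K,\phi')$ with $t_{i}(K)=0$ and an identity
\begin{equation*}
\mf^{(r)}_{\underline{i}}[(K,\phi')]=[(L,\phi)]+\sum_{t_{i}(L')>r}P_{L'}\,[(L',\phi_{L'})].
\end{equation*}
Since $(K,\phi')$ lives in a smaller dimension vector, the induction hypothesis gives $[(K,\phi')]\in {_{\mathcal{A}}\mathcal{M}(\nu-r\underline{i},\omega)}$; combined with the identity $\mf^{(r)}_{\underline{i}}[(L_{\boldsymbol{\nu}\boldsymbol{e}},\phi_{0})]=[(L_{r\underline{i},\boldsymbol{\nu},\boldsymbol{e}},\phi_{0})]$ from (\ref{ind}), this forces $\mf^{(r)}_{\underline{i}}[(K,\phi')]$ into $_{\mathcal{A}}\mathcal{M}(\nu,\omega)$. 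The descending induction on $r$ places each $[(L',\phi_{L'})]$ in $_{\mathcal{A}}\mathcal{M}(\nu,\omega)$ already, so rearranging yields $[(L,\phi)]\in {_{\mathcal{A}}\mathcal{M}(\nu,\omega)}$ as soon as one knows $P_{L'}\in\mathcal{A}$.

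For the reverse inclusion $_{\mathcal{A}}\mathcal{M}(\nu,\omega)\subseteq {_{\mathcal{A}}\mathcal{K}(\nu,\omega)}$, I proceed by decomposing $(L_{\boldsymbol{\nu}\boldsymbol{e}},\phi_{0})$ in the signed basis. Since $L_{\boldsymbol{\nu}\boldsymbol{e}}$ is self-dual (as a proper pushforward of the constant sheaf on a smooth flag variety) and $\phi_{0}$ is the canonical self-dual isomorphism, $(L_{\boldsymbol{\nu}\boldsymbol{e}},\phi_{0})$ decomposes into $\mathbf{D}$-invariant summands up to traceless parts. The existence of an expansion in terms of $[(L,\phi)]$ with coefficients in $\mathcal{O}'$ is already guaranteed by Theorem \ref{4.6}, so again everything reduces to the integrality of those multiplicities.

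The main obstacle is therefore proving $P_{L'}\in\mathcal{A}$ rather than merely $P_{L'}\in\mathcal{O}'$. Applying $\mathbf{D}$ to the defining identity and using the $\mathbf{D}$-invariance of $(K,\phi')$, $(L,\phi)$ and $(L',\phi_{L'})$ gives $\bar{P}_{L'}=P_{L'}$, which at least pins $P_{L'}$ into the bar-invariant part of $\mathcal{O}'$. To remove the $\zeta$ dependence, I would argue geometrically as in the proof of Proposition \ref{dp}: $\mf^{(r)}_{\underline{i}}$ coincides with Lusztig's induction functor $\mathbf{Ind}(\overline{\mathbb{Q}}_{l}\boxtimes-)$, whose decomposition into simples equips each multiplicity space of a summand $L'$ with a canonical $a^{*}$-action coming from the proper map. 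The non-trivial $\langle a\rangle$-orbits in this multiplicity space contribute direct summands of the form $M\oplus a^{*}M\oplus\cdots\oplus a^{*(t-1)}M$ with the cyclic permutation $\phi$, which are traceless and therefore vanish in $\mk(\nu,\omega)$; only the $a^{*}$-fixed part survives, producing a coefficient that is an integer Laurent polynomial in $v$ (the polynomial encoding shifts). This yields $P_{L'}\in\mathcal{A}$, closing the induction in both directions.
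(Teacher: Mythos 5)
You set up the same double induction as the paper and correctly isolate the crux: both inclusions reduce to showing that the coefficients $P_{L'}$ in the expansion $\mf^{(r)}_{\underline{i}}[(K,\phi')]=[(L,\phi)]+\sum_{t_{i}(L')>r} P_{L'}[(L',\phi_{L'})]$ actually lie in $\mathcal{A}=\mathbb{Z}[v,v^{-1}]$ rather than in the a priori ambient ring $\mathcal{O}'=\mathbb{Z}[\zeta][v,v^{-1}]$. However, your argument for that crucial step does not hold up. You claim that because $\mf^{(r)}_{\underline{i}}$ is Lusztig's induction, the $a^{*}$-action on the multiplicity space of an $a^{*}$-fixed simple summand $L'$ is a permutation, so the non-trivial orbits are traceless and the ``fixed part'' contributes an integer Laurent polynomial in $v$. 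But there is no canonical basis of the multiplicity space that $a^{*}$ permutes: the isotypic component $L'\otimes M_{L'}$ carries an induced action of $\phi_{0}$ which is only a finite-order automorphism of the graded $\overline{\mathbb{Q}}_{l}$-space $M_{L'}$, and it is the graded trace of this action (relative to the chosen $\phi_{L'}$) that computes $P_{L'}$. That trace a priori lies in $\mathbb{Z}[\zeta]((v))$, not in $\mathbb{Z}((v))$; already a one-dimensional multiplicity space on which the induced map acts by $\zeta$ would give $P_{L'}=\zeta v^{n}\notin\mathcal{A}$. The traceless-orbit observation only eliminates summands whose \emph{isomorphism class} is moved by $a^{*}$; it says nothing about the scalar eigenvalues of $\phi$ on the isotypic piece of a fixed $L'$, which is exactly where the $\zeta$'s can hide. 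If integrality were a formal consequence of the geometry as you suggest, Lusztig would not have needed the inner-product machinery of Chapter 12 to establish the analogous statement for $\mathbf{U}^-$.

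The paper closes this gap by an entirely different, algebraic argument built on the contravariant pairing $(-,-)^{\lambda}$ of Proposition \ref{bilinear}. One first establishes that $([(L_{\boldsymbol{\nu}\boldsymbol{e}},\phi_{0})],[(L_{\boldsymbol{\nu'}\boldsymbol{e}},\phi_{0})])^{\lambda}\in\mathbb{Z}((v))\cap\mathbb{Q}(v)$ (by the adjunction between $\mf_{\underline{i}}$ and $\me_{\underline{i}}\mk_{\underline{i}}[-s_{i}]$ and induction on the lengths of the flag types). Since by the inductive hypothesis $[(K,\phi')]$ and the $[(L',\phi_{L'})]$ with $t_{i}(L')>r$ already lie in the $\mathcal{A}$-lattice $_{\mathcal{A}}\mathcal{M}$, the pairing $(\mf^{(r)}_{\underline{i}}[(K,\phi')],[(L',\phi_{L'})])^{\lambda}$ has integer Laurent coefficients. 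Expanding it using $\widetilde{P_{L''}}$ (the $\zeta\mapsto\zeta^{-1}$-twist coming from the sesquilinearity $(\zeta^{k}x,\zeta^{m}y)^{\lambda}=\zeta^{m-k}(x,y)^{\lambda}$), the almost-orthogonality $([(L,\phi)],[(L',\phi_{L'})])^{\lambda}\in v^{-1}\mathcal{O}[[v^{-1}]]$ (with constant term $1$ on the diagonal) lets one extract the top $t\geq 0$ at which some coefficient $c_{t}(L')$ fails to be an integer and derive a contradiction. Your observation that applying $\mathbf{D}$ gives $\bar{P}_{L'}=P_{L'}$ is correct and is what the paper uses to pass from $t\geqslant 0$ to all $t$, but it cannot replace the pairing step: bar-invariance alone constrains $P_{L'}$ to the bar-fixed part of $\mathcal{O}'$, which still contains, for instance, $\zeta+\zeta^{-1}$. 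The pairing argument is the missing ingredient.
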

\begin{proof}
	(1) We first show that $_{\mathcal{A}}\mathcal{M}(\nu,\omega)$ contains $_{\mathcal{A}}\mathcal{K}(\nu,\omega).$ It suffices to show that any $[(L,\phi)]$ in Proposition \ref{4.5} belongs to $_{\mathcal{A}}\mathcal{M}(\nu,\omega)$. By an argument similar to that in Theorem 4.6, we can find $(K,\phi')$ satisfying the following equation $\mf^{(r)}_{\underline{i}}[(K,\phi')]=[(L,\phi)]+ \sum\limits_{t_{i}(L')>r} P_{L'} [(L',\phi_{L'})]$. For $P_{L'}\in \bbZ[\zeta][v,v^{-1}]$, it may be regarded as a rational polynomial in the variables $\zeta,v.$ Denote it by $P_{L'}(\zeta,v)$, and denote $P_{L'}(\zeta^{-1},v)$ by $\widetilde{P_{L'}}$.
	We claim that for any $t\geqslant 0$, the coefficient of $v^{t}$ in those $P_{L'}$ are integers. Otherwise, we take a maximal $t$ such that there exists a $P_{L'}$ such that its coefficient $c_{t}(L')$ of $v^{t}$ is not an integer.

	By induction on dimension vector of $\bfV$ and descending induction on $r$, we may assume $[(K,\phi')]$ and those $[(L'',\phi_{L''})] $ with $t_{i}(L'')>0$ belong to $\bigoplus\limits_{\nu \in \bbN[I]^{a}}{_{\mathcal{A}}\mathcal{M}(\nu,\omega)}$. Then $(\mf^{(r)}_{\underline{i}}[(K,\phi')],[(L',\phi_{L'})] )^{\lambda} \in \mathbb{Z}((v)) \cap \mathbb{Q}(v).$ 
	In particular, $\sum\limits_{t_{i}(L'')>r}\widetilde{P_{L''}}( [(L'',\phi_{L''})] ,[(L',\phi_{L'})] )^{\lambda}+ ( [(L,\phi)] ,[(L',\phi_{L'})] )^{\lambda} \in \mathbb{Z}((v)) \cap \mathbb{Q}(v)$.
	
	We have $( [(L,\phi)] ,[(L',\phi_{L'})] )^{\lambda} \in v^{-1} \mathcal{O}[[v^{-1}]]$, then the coefficient of $v^{t}$ in this term must be zero. If $ [(L'',\phi_{L''})] \neq [(L',\phi_{L'})] $, then $( [(L'',\phi_{L''})] ,[(L',\phi_{L'})] )^{\lambda} \in v^{-1} \mathcal{O}[[v^{-1}]] \cap \mathbb{Z}((v)) = v^{-1} \mathbb{Z}[[v^{-1}]]$. By maximality of $t$, it follows that the coefficient of $v^{t}$ in $\widetilde{P_{L''}}( [(L'',\phi_{L''})] ,[(L',\phi_{L'})] )^{\lambda}$ are integers. Similarly, $( [(L',\phi_{L'})] ,[(L',\phi_{L'})] )^{\lambda} \in 1+ v^{-1} \mathbb{Z}[[v^{-1}]]$, hence it follows that the coefficient of $v^{t}$ in $\widetilde{P_{L'}}( [(L',\phi_{L'})] ,[(L',\phi_{L'})] )^{\lambda}$ is an integer plus $c_{t}(L')$. By considering the coefficient of $v^{t}$ in $(\mf^{(r)}_{\underline{i}}[(K,\phi')],[(L',\phi_{L'})] )^{\lambda}$, we obtain a contradiction. 
	
	Applying Verdier duality $\mathbf{D}$ to Equation (\ref{OZ}) gives $$\mf^{(r)}_{\underline{i}}[(K,\phi')]=[(L,\phi)]+ \sum\limits_{t_{i}(L'')>r} \bar{P}_{L''} [(L'',\phi_{L''})],$$ so $\bar{P}_{L''}= P_{L''}$ for any $L''$. In particular, the coefficients of $v^{t}$ and $v^{-t}$ in $P_{L''}$ are the same and $P_{L''} \in \mathcal{A}$ for any $L''$. Hence $[(L,\phi)]$ belongs to $_{\mathcal{A}}\mathcal{M}(\nu,\omega)$.
	
	(2) Now we show that $_{\mathcal{A}}\mathcal{M}(\nu,\omega)$ is contained in $_{\mathcal{A}}\mathcal{K}(\nu,\omega).$ Take $[(L_{\boldsymbol{\nu}\boldsymbol{e}},\phi_{0})]$, it can be $\mathcal{O}'-$spanned by those $[(L,\phi)]$, so we may assume 
	$$ [(L_{\boldsymbol{\nu}\boldsymbol{e}},\phi_{0})]=\sum\limits_{(L,\phi_{L})} P_{L} [(L,\phi_{L})],$$
	where $P_{L} \in \mathcal{O}'$ and $(L,\phi_{L}) $ runs over all pairs satisfying the conditions in Proposition \ref{4.5}.
	
	We claim that $P_{L} \in \mathcal{A}$. Otherwise, there exists $t'$ such that the coefficient of $v^{t'}$ in some $P_{L}$ is not an integer. We take the maximal $t'$ with this property, and assume that the coefficient $c_{t'}(L')$ of $v^{t'}$ in $P_{L'}$ is not an integer. Observe that the coefficient of $v^{t'}$ in
	$ ([(L_{\boldsymbol{\nu}\boldsymbol{e}},\phi_{0})], [(L',\phi_{L'})] )^{\lambda}$ belongs to $\mathbb{Z}$, but the coefficient of $v^{t'}$ in $\sum\limits_{(L,\phi_{L})} \widetilde{P_{L}} ([(L,\phi_{L})], [(L',\phi_{L'})] )^{\lambda} $ is $c_{t'}(L')$ plus an integer by the maximality of $t'$, then we obtain a contradiction.
	
	In conclusion, $_{\mathcal{A}}\mathcal{M}(\nu,\omega)={_{\mathcal{A}}\mathcal{K}(\nu,\omega)}.$
\end{proof}

Combining Proposition 4.10 with Theorem 4.6 yields the following theorem.
\begin{theorem}
	The $\mathcal{A}$-submodule ${_{\mathcal{A}}\mathcal{K}(\omega)}=\bigoplus\limits_{\nu \in \bbN[I]^{a}}{_{\mathcal{A}}\mathcal{K}(\nu,\omega)}$ is canonically isomorphic to $_{\mathcal{A}}L(\lambda).$ We denote this isomorphism by $\eta^{\lambda}$. Moreover, the images of those $[(L,\phi)]$ in Proposition \ref{4.5} form a signed $\mathcal{A}$-basis of $_{\mathcal{A}}L(\lambda)$, and this basis is almost orthogonal with respect to the contravariant form $(-,-)^{\lambda}$.
\end{theorem}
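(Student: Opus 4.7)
The plan is to assemble the theorem from the ingredients already in place: the $_{\mathcal{O}'}\mathbf{U}$-linear isomorphism of Theorem~\ref{4.6}, the spanning identity ${_{\mathcal{A}}\mathcal{M}(\nu,\omega)}={_{\mathcal{A}}\mathcal{K}(\nu,\omega)}$ of Proposition~\ref{span}, the almost-orthogonality of Proposition~\ref{bilinear}, and the uniqueness-up-to-sign of the lifts $\phi$ in Proposition~\ref{4.5}.

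First, I would recall the canonical $_{\mathcal{O}'}\mathbf{U}$-linear isomorphism $\eta\colon \mk(\omega)\xrightarrow{\sim} {_{\mathcal{O}'}L(\lambda)}$ from Theorem~\ref{4.6}, which sends the class $[(L_{\boldsymbol{e}},\phi_0)]$ of the constant sheaf at $\bfV=0$ to the highest weight vector $v_{\lambda}$. Under $\eta$, by the equation $\mf^{(n)}_{\underline{i}}(L_{\boldsymbol{\nu}\boldsymbol{e}},\phi_0)=(L_{n\underline{i},\boldsymbol{\nu}\boldsymbol{e}},\phi_0)$ used in the proof of Theorem~\ref{4.6}, each element $[(L_{\boldsymbol{\nu}\boldsymbol{e}},\phi_0)]$ with $\boldsymbol{\nu}=(a_1\underline{i}_1,\dots,a_s\underline{i}_s)$ is sent to $F_{i'_1}^{(a_1)}\cdots F_{i'_s}^{(a_s)}v_{\lambda}$. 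Since $_{\mathcal{A}}L(\lambda)$ is by definition the $\mathcal{A}$-span of such monomials applied to $v_{\lambda}$, this shows $\eta({_{\mathcal{A}}\mathcal{M}(\omega)})={_{\mathcal{A}}L(\lambda)}$. Combining with Proposition~\ref{span}, which identifies $_{\mathcal{A}}\mathcal{M}(\nu,\omega)$ and $_{\mathcal{A}}\mathcal{K}(\nu,\omega)$, we obtain that $\eta$ restricts to a canonical $\mathcal{A}$-linear isomorphism $\eta^{\lambda}\colon {_{\mathcal{A}}\mathcal{K}(\omega)}\xrightarrow{\sim} {_{\mathcal{A}}L(\lambda)}$, yielding the first assertion.

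Next I turn to the signed-basis statement. By construction, ${_{\mathcal{A}}\mathcal{K}(\nu,\omega)}$ is the $\mathcal{A}$-span of the classes $[(L,\phi)]$ coming from Proposition~\ref{4.5}. Their images under $\eta^{\lambda}$ therefore span $_{\mathcal{A}}L(\lambda)$, so I only need to prove $\mathcal{A}$-linear independence up to sign and then invoke the sign ambiguity in Proposition~\ref{4.5}. For independence, I would use the contravariant form $(-,-)^{\lambda}$: by Proposition~\ref{bilinear}, the Gram matrix of the family $\{[(L,\phi)]\}$ is upper unitriangular modulo $v^{-1}\mathcal{O}[[v^{-1}]]$, with diagonal entries in $1+v^{-1}\mathcal{O}[[v^{-1}]]$ and off-diagonal entries in $v^{-1}\mathcal{O}[[v^{-1}]]$. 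Thus the Gram matrix is invertible over the completion $\mathcal{O}'[[v^{-1}]]$, forcing the $[(L,\phi)]$ to be $\mathcal{O}'$-linearly independent in $\mk(\nu,\omega)$; a fortiori they are $\mathcal{A}$-linearly independent. Finally, Proposition~\ref{4.5} says that the choice of $\phi$ for each self-dual simple $L$ is unique when $o$ is odd and unique up to an overall sign $\pm 1$ when $o$ is even, so the family $\{[(L,\phi)]\}$ consists exactly of $\mathcal{B}\cup(-\mathcal{B})$ for an honest $\mathcal{A}$-basis $\mathcal{B}$ of $_{\mathcal{A}}L(\lambda)$; this is the claimed signed basis, and the almost-orthogonality statement has already been verified in Proposition~\ref{bilinear}.

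The main subtle step is the passage from the $\mathcal{O}'$-linear information already present in Theorem~\ref{4.6} to the integral $\mathcal{A}$-structure. Everything geometric lives a priori over $\mathcal{O}'$, and it is precisely Proposition~\ref{span} --- proved through a delicate maximality argument on coefficients of $v^{t}$ using the contravariant pairing --- that pins the coefficients inside $\mathcal{A}=\mathbb{Z}[v,v^{-1}]$. So the bulk of the work has already been done there; what remains in the theorem is the bookkeeping of packaging Proposition~\ref{span} and Proposition~\ref{bilinear} against the canonical isomorphism of Theorem~\ref{4.6}, together with the parity observation of Proposition~\ref{4.5} that produces the sign ambiguity characteristic of Lusztig's signed bases.
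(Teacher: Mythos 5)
Your proof is correct and follows essentially the same route as the paper, which dispatches the theorem in one line by combining Proposition~\ref{span} with Theorem~\ref{4.6}. You have expanded the bookkeeping correctly: the canonical $_{\mathcal{O}'}\mathbf{U}$-isomorphism of Theorem~\ref{4.6} matches $[(L_{\boldsymbol{\nu}\boldsymbol{e}},\phi_0)]$ with the monomial $F_{i'_1}^{(a_1)}\cdots F_{i'_s}^{(a_s)}v_{\lambda}$, hence carries ${_{\mathcal{A}}\mathcal{M}(\omega)}$ onto ${_{\mathcal{A}}L(\lambda)}$, and Proposition~\ref{span} converts ${_{\mathcal{A}}\mathcal{M}}$ into ${_{\mathcal{A}}\mathcal{K}}$; the uniqueness-up-to-$\pm 1$ of the lift $\phi$ from Proposition~\ref{4.5} produces the sign ambiguity, and Proposition~\ref{bilinear} is exactly the almost-orthogonality claim. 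The only minor redundancy is your Gram-matrix argument for linear independence: the classes $[(L,\phi)]$ already form an $\mathcal{O}'$-basis of the free module $\mk(\nu,\omega)$ by the general fact from \cite[11.1.8]{lusztig2010introduction} cited at the start of Section~\ref{Grothendieck g}, so $\mathcal{A}$-linear independence is automatic once one knows ${_{\mathcal{A}}\mathcal{K}(\nu,\omega)}$ is by definition their $\mathcal{A}$-span; but the form-theoretic argument you give is also valid and does no harm.
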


\begin{remark}
	There is a natural vector bundle $\pi_{\bfV, \bfW^{\bullet}} : \bfE_{\bfV, \bfW^{\bullet},\Omega} \rightarrow \bfE_{\bfV,\Omega}$, whose pullback $(\pi_{\bfV, \bfW^{\bullet}})^{\ast}$ defines a functor $(\pi_{\bfV, \bfW^{\bullet}})^{\ast}: \mq_{\bfV} \rightarrow \mq_{\bfV, \bfW^{\bullet}}$. Compose $(\pi_{\bfV, \bfW^{\bullet}})^{\ast}$ with the localization functor $L: \mq_{\bfV, \bfW^{\bullet}} \rightarrow \mq_{\bfV, \bfW^{\bullet}}/\mn_{\bfV} $, we obtain an additive functor $L' :\mq_{\bfV} \rightarrow \mq_{\bfV, \bfW^{\bullet}}/\mn_{\bfV}$. Observe that $L'$ commutes with $a^{\ast}$, then $L'$ induces a functor $\tilde{L}':\widetilde{\mq_{\bfV}} \rightarrow \widetilde{ \mq_{\bfV, \bfW^{\bullet}}/\mn_{\bfV}}$. This functor categorifies the canonical quotient $ \pi:\mathbf{U}^{-} \cong M_{\lambda} \rightarrow L(\lambda)$, where $M_{\lambda}$ is the Verma module. In particular, if $b=[(L,\phi)]$ is an element in the signed basis of $\mathbf{U}^{-}$ constructed by Lusztig in \cite{lusztig2010introduction}, then $\pi(b)\neq 0$ if and only if $L$ is nonzero in $\mq_{\bfV, \bfW^{\bullet}}/\mn_{\bfV}$, equivalently, $[(L,\phi)]$ is an element in the signed basis of $L(\lambda)$.
\end{remark}

\subsubsection{$N=2$ case}
In this subsection, we assume $N=2$ and construct an $\mathcal{A}$-signed basis of $_{\mathcal{A}}L(\lambda^{2}) \otimes_{\mathcal{A}} {_{\mathcal{A}}L}(\lambda^{1}). $ Recall that the signed basis $\{b_{1} \diamond b_{2}| b_{1} \in \mathcal{B}(\lambda^{2}),b_{2} \in \mathcal{B}(\lambda^{1}) \}$ of $_{\mathcal{A}}L(\lambda^{2}) \otimes_{\mathcal{A}} {_{\mathcal{A}}L}(\lambda^{1})$ is defined in \cite{lusztig1992canonical} and \cite{bao2016canonical}.

\begin{proposition}\label{span2}
	Let $_{\mathcal{A}}\mathcal{M}(\nu,\omega^{\bullet})$ be the $\mathcal{A}$-submodule of $\mk(\nu,\omega^{\bullet})$ spanned by those $[(L_{\boldsymbol{\nu}^{1}\boldsymbol{e}^{1}\boldsymbol{\nu}^{2}\boldsymbol{e}^{2}},\phi_{0})]$ such that $\boldsymbol{\nu}^{1},\boldsymbol{\nu}^{2} \in \mathcal{S}'$, and $_{\mathcal{A}}\mathcal{K}(\nu,\omega^{\bullet})$ be the $\mathcal{A}$-submodule of $\mk(\nu,\omega^{\bullet})$ spanned by those $[(L,\phi)]$ in Proposition \ref{4.5}. Then $_{\mathcal{A}}\mathcal{M}(\nu,\omega^{\bullet})={ _{\mathcal{A}}\mathcal{K}(\nu,\omega^{\bullet})}.$
\end{proposition}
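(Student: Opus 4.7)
The plan is to imitate the proof of Proposition \ref{span} essentially verbatim, with the contravariant pairing $(-,-)^{\lambda^{\bullet}}$ in place of $(-,-)^{\lambda}$. Two facts carry over from the $N=1$ setting: the almost-orthogonality of Proposition \ref{bilinear}, which is stated for general $N$, and the Laurent integrality
$$([(L_{\boldsymbol{\nu}^{1}\boldsymbol{e}^{1}\boldsymbol{\nu}^{2}\boldsymbol{e}^{2}},\phi_{0})],[(L_{\boldsymbol{\nu'}^{1}\boldsymbol{e}^{1}\boldsymbol{\nu'}^{2}\boldsymbol{e}^{2}},\phi_{0})])^{\lambda^{\bullet}}\in\mathbb{Z}((v))\cap\mathbb{Q}(v),$$
which is precisely the corollary following Proposition \ref{preserve}. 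With these ingredients, both inclusions go by a double induction on $\dim\bfV$ and descending induction on $t_{i}$.

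For the inclusion ${}_{\mathcal{A}}\mathcal{K}(\nu,\omega^{\bullet})\subseteq {}_{\mathcal{A}}\mathcal{M}(\nu,\omega^{\bullet})$, take a basis element $[(L,\phi)]$ as in Proposition \ref{4.5}. If $\bfV=0$ there is nothing to prove; otherwise pick $i\in I$ with $t_{i}(L)=r>0$, and apply Corollary \ref{ind1} to obtain $(K,\phi')$ with $t_{i}(K)=0$, $a^{*}K\cong K$, and
$$\mf^{(r)}_{\underline{i}}[(K,\phi')]=[(L,\phi)]+\sum_{t_{i}(L')>r}P_{L'}\,[(L',\phi_{L'})],\qquad P_{L'}\in\mathcal{O}'.$$
Applying $\mathbf{D}$ to this identity forces $\bar{P}_{L'}=P_{L'}$, so the only issue is to show $P_{L'}\in\mathcal{A}$. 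By the inductive hypothesis, $[(K,\phi')]$ and each $[(L'',\phi_{L''})]$ with $t_{i}(L'')>r$ already lie in ${}_{\mathcal{A}}\mathcal{M}$, so by the contravariance of $(-,-)^{\lambda^{\bullet}}$ with respect to $F_{i'}$ the pairing $(\mf^{(r)}_{\underline{i}}[(K,\phi')],[(L',\phi_{L'})])^{\lambda^{\bullet}}$ lies in $\mathbb{Z}((v))\cap\mathbb{Q}(v)$. Exactly the extremal-exponent argument of Proposition \ref{span} then produces a contradiction to the existence of a maximal $t$ for which some $P_{L'}$ has a non-integer $v^{t}$-coefficient, using almost orthogonality to isolate the contribution of $P_{L'}$ itself. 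Hence $P_{L'}\in\mathcal{A}$ and $[(L,\phi)]\in{}_{\mathcal{A}}\mathcal{M}$.

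For the reverse inclusion, expand $[(L_{\boldsymbol{\nu}^{1}\boldsymbol{e}^{1}\boldsymbol{\nu}^{2}\boldsymbol{e}^{2}},\phi_{0})]=\sum_{L}P_{L}\,[(L,\phi_{L})]$ in the $\mathcal{O}'$-basis given by Proposition \ref{4.5}, and pair both sides with a fixed $[(L',\phi_{L'})]$. The LHS pairing lies in $\mathbb{Z}((v))\cap\mathbb{Q}(v)$ by the corollary following Proposition \ref{preserve}, while the RHS is a combination of almost-orthogonal pairings; the same maximal-$t'$ argument as in Proposition \ref{span} then forces every $P_{L}$ to have integer coefficients, i.e.\ $P_{L}\in\mathcal{A}$.

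I expect the only non-mechanical point to be verifying that the pairings appearing in step one, namely $(\mf^{(r)}_{\underline{i}}[(K,\phi')],[(L',\phi_{L'})])^{\lambda^{\bullet}}$, really do belong to $\mathbb{Z}((v))\cap\mathbb{Q}(v)$. By the adjunction between $\mf_{\underline{i}}$ and $\me_{\underline{i}}\mk_{\underline{i}}[-s_{i}]$ from Proposition \ref{bilinear} this reduces to pairings of $[(K,\phi')]$ with $\me^{(r)}_{\underline{i}}[(L',\phi_{L'})]$, which, after iterating, are sums of pairings between generators of the form $[(L_{\boldsymbol{\nu}^{1}\boldsymbol{e}^{1}\boldsymbol{\nu}^{2}\boldsymbol{e}^{2}},\phi_{0})]$ and so fall under the corollary after Proposition \ref{preserve}. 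Once this verification is recorded, the inductive machinery of Proposition \ref{span} applies unchanged.
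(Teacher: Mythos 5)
The key gap is that your single descending induction on $t_i$ does not terminate when $N=2$, so the claim that ``the inductive machinery of Proposition~\ref{span} applies unchanged'' is not correct. In the $N=1$ setting the induction bottoms out because a nonzero simple perverse sheaf $L$ in $\mq_{\bfV,\bfW}$ with $t_i(L)=0$ for every $i$ necessarily has $\bfV=0$ (this is stated explicitly in the proof of Theorem~\ref{4.6}: a simple summand of $L_{\boldsymbol{\nu}\boldsymbol{e}}$ with $\boldsymbol{\nu}=(a_1 i_1,\dots)$ nonempty has $t_{i_1}(L)\geqslant a_1>0$). For the $2$-framed quiver this fails: a simple summand of $L_{\boldsymbol{e}^1\boldsymbol{\nu}^2\boldsymbol{e}^2}$ with $\boldsymbol{\nu}^1$ empty and $\boldsymbol{\nu}^2$ nonempty can have $\bfV\neq 0$ yet $t_i(L)=0$ for all $i$, so at such a sheaf Corollary~\ref{ind1} produces nothing and your induction is stuck short of the base case $\bfV=0$.

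The paper repairs exactly this point by a different organization of the argument: it first lifts the identity to the un-localized Grothendieck group $\mv$, reduces to $\bfW^{2}=0$ via the vector-bundle pullback $\pi_{\bfV,\bfW^{2}}^{\ast}$, and then runs a \emph{double} descending induction on both $t_i$ (Corollary~\ref{ind1}, left multiplication $\mf^{(r)}_{\underline{i}}$, which peels off $\boldsymbol{\nu}^1$) and the dual invariant $t_i^{\ast}$ (Corollary~\ref{ind1'}, right multiplication $\mf^{(r),\vee}_{\underline{i}}$, which peels off $\boldsymbol{\nu}^2$). The $t_i^{\ast}$-induction is indispensable, not just a convenience. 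Moreover, the paper records explicitly (just before Corollary~\ref{ind1'}) that $\mf^{(r),\vee}_{\underline{i}}$ does not preserve the Lusztig category $\mq_{\bfV,\bfW^{\bullet}}$, which is why the $t_i^{\ast}$-induction has to be carried out in $\mv$ rather than directly in $\mk$; so your attempt to run everything inside $\mk$ would require additional care even once the dual invariant is introduced. To repair your proof you would need to import the $t_i^{\ast}$-machinery and the $\bfW^{2}=0$ reduction, at which point you essentially arrive at the paper's argument. The parts of your proposal that are sound are the use of Proposition~\ref{bilinear} for almost-orthogonality and of the corollary after Proposition~\ref{preserve} for $\mathbb{Z}((v))\cap\mathbb{Q}(v)$-integrality of pairings of generators, but they are not where the difficulty lies.
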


\begin{proof}
	Let $_{\mathcal{A}}\mathcal{M}'(\nu,\omega^{\bullet})$ be the $\mathcal{A}$-submodule of $\mv(\nu,\omega^{\bullet})$ spanned by those $[(L_{\boldsymbol{\nu}^{1}\boldsymbol{e}^{1}\boldsymbol{\nu}^{2}\boldsymbol{e}^{2}},\phi_{0})]$ such that $\boldsymbol{\nu}^{1},\boldsymbol{\nu}^{2} \in \mathcal{S}'$, and $_{\mathcal{A}}\mv(\nu,\omega^{\bullet})$ be the $\mathcal{A}$-submodule of $\mv(\nu,\omega^{\bullet})$ spanned by those $[(L,\phi)]$ with the same properties as Proposition \ref{4.5}. It suffices to show $_{\mathcal{A}}\mathcal{M}'(\nu,\omega^{\bullet})={ _{\mathcal{A}}\mathcal{V}(\nu,\omega^{\bullet})}.$
	
	(1) When $N=1,$ the proof of $_{\mathcal{A}}\mathcal{M}'(\nu,\omega^{\bullet})={ _{\mathcal{A}}\mathcal{V}(\nu,\omega^{\bullet})}$ is the same as $_{\mathcal{A}}\mathcal{M}(\nu,\omega^{\bullet})={ _{\mathcal{A}}\mathcal{K}(\nu,\omega^{\bullet})}$ using the inner product defined by Lusztig in \cite[12.2]{lusztig2010introduction}.

	(2) We first show the statement for $\bfW^{2}=0$. Denote $_{\mathcal{A}}\mathcal{M}'(\nu,\omega^{\bullet})$ by $_{\mathcal{A}}\mathcal{M}'(\nu,\omega^{1,0})$ and denote ${ _{\mathcal{A}}\mathcal{V}(\nu,\omega^{\bullet})}$ by ${ _{\mathcal{A}}\mathcal{V}(\nu,\omega^{1,0})}$. By an argument similar to that in Proposition \ref{span} and \cite[Proposition 12.6.3]{lusztig2010introduction}, we can prove $_{\mathcal{A}}\mathcal{M}'(\nu,\omega^{1,0})={ _{\mathcal{A}}\mathcal{V}(\nu,\omega^{1,0})}$ by induction on $\nu$ and descending induction on $t_{i}^{*}$.
	
	(3) For general $\bfW^{2}$, notice that $\pi_{\bfV,\bfW^{2}}^{\ast}$ induces isomorphisms of $\mathcal{A}$-modules $_{\mathcal{A}}\mathcal{M}'(\nu,\omega^{\bullet}) \cong {_{\mathcal{A}}\mathcal{M}'}(\nu,\omega^{1,0})$ and ${ _{\mathcal{A}}\mathcal{V}(\nu,\omega^{\bullet})} \cong { _{\mathcal{A}}\mathcal{V}(\nu,\omega^{1,0})}.$ We get a proof.
\end{proof}

The following proposition shows that $\mathbf{D}$ on $\mq_{\bfV'', \bfW^{\bullet}}/\mn_{\bfV}$ realizes the $\Psi$-involution of the tensor product $\mk(\omega^{2}) \otimes_{\mo'} \mk(\omega^{1})$. The definition of $\Psi$ can be seen in \cite[24.3.2]{lusztig2010introduction}. This involution is defined by $\Psi(m\otimes m')= \Theta (\bar{m}\otimes\bar{m}')$, where $\Theta$ is the universal $\mathcal{R}$-matrix. 
\begin{proposition}\label{psi}
 Let $\Psi': \mk(\omega^{2}) \otimes_{\mo'} \mk(\omega^{1}) \rightarrow \mk(\omega^{2}) \otimes_{\mo'} \mk(\omega^{1})$ be the isomorphism defined by $\Delta' \mathbf{D} \Delta'^{-1}$, then $\Psi=\Psi'$. 
\end{proposition}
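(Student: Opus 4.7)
The strategy is to invoke the uniqueness characterization of $\Psi$ from \cite{lusztig1992canonical, bao2016canonical}: on $L(\lambda^{2})\otimes L(\lambda^{1})$ there is a unique $\mathbb{Q}$-linear map $\Psi$ satisfying (i) antilinearity, $\Psi(v\cdot x)=v^{-1}\Psi(x)$; (ii) $\Psi(\eta_{\lambda^{2}}\otimes\eta_{\lambda^{1}})=\eta_{\lambda^{2}}\otimes\eta_{\lambda^{1}}$; and (iii) $\Psi(u\cdot x)=\bar{u}\cdot\Psi(x)$ for all $u\in\mathbf{U}$, where the action on the tensor product is via the comultiplication. I will verify these three conditions for $\Psi'=\Delta'\mathbf{D}(\Delta')^{-1}$ and then conclude.

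For antilinearity (i), recall $\mathbf{D}(L,\phi)=(\mathbf{D}L,\mathbf{D}(\phi)^{-1})$, so on the Grothendieck level $\mathbf{D}(v^{n}x)=v^{-n}\mathbf{D}(x)$ and $\mathbf{D}(\zeta x)=\zeta^{-1}\mathbf{D}(x)$. Since $\Delta=\bigoplus_{\bfV_{1},\bfV_{2}}\mathbf{Res}$ is $\mathcal{O}'$-linear and the two copies of $\mathbf{D}$ in the definition $\Delta'=\mathbf{P}(\mathbf{D}\times\mathbf{D})\Delta\mathbf{D}$ compose to a linear map, $\Delta'$ is $\mathcal{O}'$-linear; hence $\Psi'=\Delta'\circ\mathbf{D}\circ(\Delta')^{-1}$ is antilinear. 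For the normalization (ii), take $\bfV=0$, so $\bfE_{0,\bfW^{\bullet},\Omega}$ is a point and $\eta:=[(\mathbf{1}_{pt},\phi_{0})]=[(L_{\boldsymbol{e}^{1}\boldsymbol{e}^{2}},\phi_{0})]$ is the highest weight vector of $\mk(\omega^{\bullet})$. Direct computation of the restriction functor on a point gives $\Delta(\eta)=[(L_{\boldsymbol{e}^{1}},\phi_{0})]\otimes[(L_{\boldsymbol{e}^{2}},\phi_{0})]$, and after the swap and dualities $\Delta'(\eta)=[(L_{\boldsymbol{e}^{2}},\phi_{0})]\otimes[(L_{\boldsymbol{e}^{1}},\phi_{0})]=\eta_{\lambda^{2}}\otimes\eta_{\lambda^{1}}$. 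Since $\mathbf{D}(\mathbf{1}_{pt})=\mathbf{1}_{pt}$ with $\phi_{0}$ self-dual, $\mathbf{D}(\eta)=\eta$, hence $\Psi'(\eta_{\lambda^{2}}\otimes\eta_{\lambda^{1}})=\Delta'\mathbf{D}(\eta)=\Delta'(\eta)=\eta_{\lambda^{2}}\otimes\eta_{\lambda^{1}}$.

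The crux is bar-compatibility (iii). By Proposition \ref{OUmodiso} the map $\Delta'$ is $\mathbf{U}$-linear, so it suffices to establish the auxiliary identity $\mathbf{D}(u\cdot y)=\bar{u}\cdot\mathbf{D}(y)$ for all $u\in\mathbf{U}$ and $y\in\mk(\omega^{\bullet})$: then
\begin{equation*}
\Psi'(u\cdot x)=\Delta'\mathbf{D}(u\cdot(\Delta')^{-1}(x))=\Delta'(\bar{u}\cdot\mathbf{D}(\Delta')^{-1}(x))=\bar{u}\cdot\Psi'(x).
\end{equation*}
It is enough to check the auxiliary identity on the generators $F_{i'}^{(n)},E_{i'}^{(n)},K_{\nu}$ of $\mathbf{U}$. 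Divided powers are bar-invariant, so I must verify $\mathbf{D}\mf_{\underline{i}}^{(n)}\cong\mf_{\underline{i}}^{(n)}\mathbf{D}$ and $\mathbf{D}\me_{\underline{i}}^{(n)}\cong\me_{\underline{i}}^{(n)}\mathbf{D}$ on the localization. For $\mf_{\underline{i}}^{(n)}$ this follows from the fact that induction with its standard normalization commutes with Verdier duality ($p_{3}$ proper, $p_{2}$ a principal bundle, $p_{1}$ smooth with the correct shift), and because $\mathbf{D}$ preserves $\mn_{\bfV}$ (as the defining support condition of each $\mathcal{N}_{\bfV,i}$ is $\mathbf{D}$-stable and Fourier--Deligne transforms commute with $\mathbf{D}$), this descends to the localization. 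For $\me_{\underline{i}}^{(n)}$ the same commutation holds by the analogous normalization of the diagram defining $\tilde{\me}_{\underline{i}}^{(n)}$. Passing to $(B,\phi)$, the diagonal of $\mathbf{D}$ on pairs gives the desired compatibility. For $K_{\nu}$, it acts as a shift $[s_{i}(\tilde{\nu}_{i}-2\nu_{i})]$ and $\mathbf{D}$ inverts shifts, giving $\mathbf{D}(K_{\nu}y)=K_{\nu}^{-1}\mathbf{D}(y)=\bar{K}_{\nu}\mathbf{D}(y)$.

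Combining (i), (ii), (iii), the uniqueness theorem forces $\Psi'=\Psi$. The main obstacle I foresee is the careful verification that $\mathbf{D}$ commutes (up to canonical isomorphism) with $\me_{\underline{i}}^{(n)}$ on the quotient category $\mq_{\bfV,\bfW^{\bullet}}/\mn_{\bfV}$; this needs both the standard self-duality of the non-localized restriction-type diagram and the $\mathbf{D}$-stability of $\mn_{\bfV}$, and must then be lifted to the category $\widetilde{\mq_{\bfV,\bfW^{\bullet}}/\mn_{\bfV}}$ compatibly with the periodic structure $\phi$.
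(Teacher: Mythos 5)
Your proof plan hinges on the claim that conditions (i) antilinearity, (ii) fixing the highest weight vector, and (iii) bar‑compatibility \emph{uniquely} characterize $\Psi$. That uniqueness statement is false, and this is a genuine gap. If $\Psi$ and $\Psi'$ both satisfy (i)--(iii), then $\Phi:=\Psi^{-1}\Psi'$ is $\mathbb{Q}(v)$-linear, $\mathbf{U}$-linear, and fixes $\eta_{\lambda^2}\otimes\eta_{\lambda^1}$; but $\eta_{\lambda^2}\otimes\eta_{\lambda^1}$ does \emph{not} generate $L(\lambda^2)\otimes L(\lambda^1)$ as a $\mathbf{U}$-module. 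Already for $\mathfrak{sl}_2$ with $\lambda^1=\lambda^2$ the fundamental weight, $L(1)\otimes L(1)\cong L(2)\oplus L(0)$, and the highest weight vector generates only $L(2)$; any rescaling of $\Phi$ on the $L(0)$ summand by $v^k$ (or indeed any $\mathbf{U}$-module automorphism of that summand) produces a second map satisfying all of (i)--(iii). So verifying (i)--(iii) alone cannot force $\Psi'=\Psi$.

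The paper closes this gap by checking the equality not just on the highest weight vector but on the larger set of elements $[(L_{\boldsymbol{\nu}\boldsymbol{e}^2},\phi_0)]\otimes[(L_{\boldsymbol{e}^1},\phi_0)]$, i.e.\ the span $L(\lambda^2)\otimes v_{\lambda^1}$. These \emph{do} generate $L(\lambda^2)\otimes L(\lambda^1)$ under the $\mathbf{U}^-$-action through the comultiplication (by the standard triangularity of $\Delta(F_i)=F_i\otimes\tilde K_{-i}+1\otimes F_i$), so once both maps are known to be bar-semilinear, agreement on this set does force $\Psi=\Psi'$. Your verification of (iii) (that $\mathbf D$ intertwines $\mf^{(n)}_{\underline i}$, $\me^{(n)}_{\underline i}$, $\mk_{\pm\underline i}$ on the localized categories, hence $\Psi'$ is bar-semilinear) is exactly the input the paper also uses and is worth keeping; but you then need to replace the appeal to a nonexistent uniqueness theorem by the direct computation $\Psi'([(L_{\boldsymbol{\nu}\boldsymbol{e}^2},\phi_0)]\otimes[(L_{\boldsymbol{e}^1},\phi_0)])=[(L_{\boldsymbol{\nu}\boldsymbol{e}^2},\phi_0)]\otimes[(L_{\boldsymbol{e}^1},\phi_0)]$, together with the observation that $\Theta$ acts trivially on $F^{(r_1)}_{i_1'}\cdots F^{(r_s)}_{i_s'}v_{\lambda^2}\otimes v_{\lambda^1}$ because its positive-degree components annihilate $v_{\lambda^1}$.
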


\begin{proof}
	Since $\Psi$ and $\Psi'$ satisfy the equation $\overline{u}\Psi=\Psi u$ and $\overline{u}\Psi'=\Psi' u$ for $u\in \mathbf{U}^{-}$, we only need to check $\Psi=\Psi'$ holds for a set of $\mathbf{U}^{-}$-generators.

	By direct calculation, we have $\Psi'([( L_{\boldsymbol{\nu}\boldsymbol{e}^{2} },\phi_{0} ) ] \otimes [(L_{\boldsymbol{e}^{1}},\phi_{0} )] ) = \Delta' \mathbf{D} [( L_{\boldsymbol{e}^{1}\boldsymbol{\nu}\boldsymbol{e}^{2} },\phi_{0} ) ] = [( L_{\boldsymbol{\nu}\boldsymbol{e}^{2} },\phi_{0} ) ] \otimes [(L_{\boldsymbol{e}^{1}},\phi_{0} )]. $ 
	
	Since $\Theta ( F^{(r_{1})}_{i_{1}'} F^{(r_{2})}_{i_{2}'} \cdots F^{(r_{s})}_{i_{s}'} v_{\lambda^{2}} \otimes v_{\lambda^{1}} )= F^{(r_{1})}_{i_{1}'} F^{(r_{2})}_{i_{2}'} \cdots F^{(r_{s})}_{i_{s}'} v_{\lambda^{2}} \otimes v_{\lambda^{1}} $, it follows that $\Psi([( L_{\boldsymbol{\nu}\boldsymbol{e}^{2} },\phi_{0} ) ] \otimes [(L_{\boldsymbol{e}^{1}},\phi_{0} )] ) = [( L_{\boldsymbol{\nu}\boldsymbol{e}^{2} },\phi_{0} ) ] \otimes [(L_{\boldsymbol{e}^{1}},\phi_{0} )] , $ and the proof is complete.

\end{proof}

\begin{theorem}
	The $\mathcal{A}$-module $_{\mathcal{A}} \mk(\omega^{\bullet})$ is canonically isomorphic to the tensor product $_{\mathcal{A}}L(\lambda^{2}) \otimes_{\mathcal{A}} {_{\mathcal{A}}L}(\lambda^{1})$ via the composition of $\eta^{\lambda^{2}} \otimes\eta^{\lambda^{1}}$ and $ \Delta'$. Moreover, the images of $\{[(L,\phi_{L})]|\mathbf{D}(L,\phi_{L})=(L,\phi_{L}),\ L\in \mP_{\bfV,\bfW^{\bullet}},a^{\ast}L \cong L \}$ form an $\mathcal{A}$-basis, which coincides with the signed basis $\{b_{1} \diamond b_{2}| b_{1} \in \mathcal{B}(\lambda^{2}),b_{2} \in \mathcal{B}(\lambda^{1}) \}$ of $_{\mathcal{A}}L(\lambda^{2}) \otimes_{\mathcal{A}} {_{\mathcal{A}}L}(\lambda^{1})$.
\end{theorem}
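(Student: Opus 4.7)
The plan is to assemble the theorem from the pieces developed in the preceding sections, extending the $\mo'$-level result of Proposition \ref{OUmodiso} to the $\mathcal{A}$-level, and then matching the geometrically defined basis against Lusztig's characterisation of $\{b_1\diamond b_2\}$.

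First, I would establish the $\mathcal{A}$-form of the isomorphism. By Proposition \ref{OUmodiso}, $\Delta'$ is an isomorphism of $_{\mo'}\bfU$-modules $\mk(\omega^\bullet)\xrightarrow{\sim}\mk(\omega^2)\otimes_{\mo'}\mk(\omega^1)$, and Theorem \ref{4.6} combined with the $N=1$ signed basis theorem gives $\eta^{\lambda^k}:{_{\mathcal{A}}\mk(\omega^k)}\xrightarrow{\sim} {_{\mathcal{A}}}L(\lambda^k)$. To show that $\Delta'$ restricts to an $\mathcal{A}$-linear isomorphism, I would use that $_{\mathcal{A}}\mk(\omega^\bullet)={_{\mathcal{A}}\mathcal{M}(\omega^\bullet)}$ is spanned by the classes of $(L_{\boldsymbol{\nu}^1\boldsymbol{e}^1\boldsymbol{\nu}^2\boldsymbol{e}^2},\phi_0)$ (Proposition \ref{span2}) and that the underlying restriction functor expands such a class as an $\mathcal{A}$-linear combination of $(L_{\boldsymbol{\nu}'^1\boldsymbol{e}^1},\phi_0)\otimes(L_{\boldsymbol{\nu}'^2\boldsymbol{e}^2},\phi_0)$ with explicit $v$-monomial coefficients by \cite[Lemma 12.3.3]{lusztig2010introduction}. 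Combining this with the analogous $\mathcal{A}$-span description on the target yields that $\eta^{\lambda^2}\otimes\eta^{\lambda^1}\circ\Delta'$ restricts to an $\mathcal{A}$-module isomorphism onto $_{\mathcal{A}}L(\lambda^2)\otimes_{\mathcal{A}}{_{\mathcal{A}}}L(\lambda^1)$.

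Next, I would handle the basis statement. By the $N=2$ analogue of Proposition \ref{4.5} (which goes through verbatim using Corollaries \ref{ind1} and \ref{ind1'}), the set $\mathcal{B}^{\mathrm{geom}}=\{[(L,\phi_L)]\mid \bfD(L,\phi_L)=(L,\phi_L),\,L\in\mP_{\bfV,\bfW^\bullet},\,a^*L\cong L\}$ is a signed $\mathcal{A}$-basis of $_{\mathcal{A}}\mk(\omega^\bullet)$. I would then invoke Lusztig's characterisation (see \cite[27.3]{lusztig2010introduction} and \cite{bao2016canonical}) of the signed basis $\{b_1\diamond b_2\}$: it is the unique signed $\mathcal{A}$-basis of $_{\mathcal{A}}L(\lambda^2)\otimes_{\mathcal{A}}{_{\mathcal{A}}}L(\lambda^1)$ which is (i) $\Psi$-invariant, (ii) almost orthogonal with respect to the tensor bilinear form, and (iii) upper-triangular with respect to the natural basis $b_1\otimes b_2$ modulo $v^{-1}\bbZ[v^{-1}]$. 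For (i), Proposition \ref{psi} identifies $\bfD$ with $\Psi$ under $\Delta'$, so $\bfD$-invariance becomes $\Psi$-invariance. For (ii), Proposition \ref{preserve} tells us $\Delta'$ intertwines the contravariant form $(-,-)^{\lambda^\bullet}$ with $(-,-)^\otimes$, and Proposition \ref{bilinear} shows $\mathcal{B}^{\mathrm{geom}}$ is almost orthogonal with respect to $(-,-)^{\lambda^\bullet}$.

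The main obstacle, and the step I would devote the most care to, is the triangularity condition (iii). Concretely, one must show that for each $\bfD$-invariant simple $(L,\phi_L)\in\mq_{\bfV,\bfW^\bullet}$ with $a^*L\cong L$, the expansion of $\Delta'[(L,\phi_L)]$ in the basis $\{\eta^{\lambda^2}(b_1)\otimes \eta^{\lambda^1}(b_2)\}$ has a distinguished leading term of the form $\eta^{\lambda^2}(b_1^L)\otimes \eta^{\lambda^1}(b_2^L)$ with all remaining coefficients in $v^{-1}\bbZ[v^{-1}]$. I would argue this by analysing the restriction $\operatorname{Res}^{\bfV\oplus\bfW^\bullet}_{\bfV_1\oplus\bfW^1,\bfV_2\oplus\bfW^2}$ along the generic stratum of a semi-small map, exactly as in the proof of \cite[Proposition 4.2]{fang2023lusztigsheavestensorproducts} and \cite[Section 25.2]{lusztig2010introduction}, showing that the top stratum contributes a single simple summand with coefficient $1$ (here one must also verify that the induced morphism of periodic functor structures matches $\phi_0\otimes\phi_0$, which follows from the naturality of $\phi_L$ on the open part). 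Together with the upper-triangularity of $\bfD$ from Proposition \ref{4.5}, this yields the required asymptotic form and, by the uniqueness part of Lusztig's characterisation, forces the images of $\mathcal{B}^{\mathrm{geom}}$ to coincide with $\{b_1\diamond b_2\}$.
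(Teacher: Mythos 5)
Your proposal is broadly correct in its first half: using Proposition \ref{OUmodiso}, Proposition \ref{span2}, Proposition \ref{preserve}, Proposition \ref{bilinear}, and Proposition \ref{psi} to establish that the geometric set is an $\mathcal{A}$-signed basis that is almost orthogonal and $\Psi$-invariant under $\Delta'$ is exactly what the paper does. (Minor difference: the paper cites \cite[13.2.4]{lusztig2010introduction} directly for the $\mathcal{A}$-integrality of $\Delta'$ rather than unwinding the coefficients from \cite[Lemma 12.3.3]{lusztig2010introduction}, but both routes land in the same place.)

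The genuine divergence is in the final step, and it is worth understanding why. You invoke the triangularity characterization of $b_1 \diamond b_2$ (from Lusztig 27.3 / Bao--Wang): the $\Psi$-invariant element congruent to $b_1 \otimes b_2$ modulo $v^{-1}\mathcal{L}$. Under that characterization you are right that one must verify a leading-term congruence for $\Delta'[(L,\phi_L)]$, and this is indeed delicate. The paper bypasses this entirely by appealing to \cite[Lemma 14.2.2]{lusztig2010introduction}, which characterizes the signed basis as precisely the set of elements of the $\mathcal{A}$-lattice that are bar-invariant ($\Psi$-invariant) and self-paired in $1 + v^{-1}\mathbb{Z}[[v^{-1}]]$ — no triangularity required. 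Since you have already established $\mathcal{A}$-integrality (Proposition \ref{span2}), $\Psi$-invariance (Proposition \ref{psi}), and the self-pairing estimate (Propositions \ref{preserve} and \ref{bilinear}), the conclusion follows immediately without the stratum-by-stratum analysis of the restriction functor. Your route should work if carried out carefully — the key point being that the restriction along the generic open stratum is an isomorphism, giving the coefficient $1$ on the leading term — but it amounts to re-deriving part of what is packaged in Lemma 14.2.2. The paper's route is more economical and cleanly isolates the geometric input (almost orthogonality and $\Psi$-invariance) from the algebraic characterization of Lusztig's signed basis.
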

\begin{proof}
By \cite[13.2.4]{lusztig2010introduction}, the isomorphism $\Delta'$ restricts to an isomorphism of $\mathcal{A}$-modules, so ${_{\mathcal{A}} \mk}(\omega^{\bullet}) \cong {_{\mathcal{A}}L}(\lambda^{2}) \otimes_{\mathcal{A}} {_{\mathcal{A}}L}(\lambda^{1})$. By Proposition \ref{preserve} and \ref{bilinear}, it follows that $\{[(L,\phi_{L})]|\mathbf{D}(L,\phi_{L})=(L,\phi_{L}),\ L\in \mP_{\bfV,\bfW^{\bullet}},a^{\ast}L \cong L \}$ is almost orthogonal. By Proposition \ref{psi}, the image of $[(L,\phi_{L})]$ is also $\Psi$-invariant. By \cite[Lemma 14.2.2]{lusztig2010introduction}, this basis coincides with the signed basis $\{b_{1} \diamond b_{2}| b_{1} \in \mathcal{B}(\lambda^{2}),b_{2} \in \mathcal{B}(\lambda^{1}) \}$.
\end{proof}
\begin{remark}
    By the same method in \cite{10.1093/imrn/rnaf360}, we consider a larger quantum algebra associated to the quiver $\tilde{Q}^{(N)}$ with the admissible automorphism $a.$ By \cite{KL}, for all $P\in \bigsqcup_{\bfV}\mP_{\bfV},$ $a^*P\cong P,$ there exists a uniform choice of $\phi_P,$ such that $$\mathbf{D}(P,\phi_P)\cong (P,\phi_P)$$ and $$[\mf_{\underline{i}}^{(n)}(P,\phi_P)]=\sum_{P'\in \mP_{\bfV\oplus \bfW^{\bullet}\oplus \bfV_{l\underline{i}}},a^*P'\cong P'}f_{P,P'}(v)[(P',\phi_{P'})],$$ where $f_{P,P'}(v)\in \bbN[v,v^{-1}].$ Then we choose the corresponding $\phi_P$ for any simple perverse sheaf $P\in \mq_{\bfV\oplus\bfW^{\bullet}}$ satisfying $a^*P\cong P.$ Applying the localization of $\mn_{\bfV\oplus \bfV_{l\underline{i}}},$ it follows that in $\mk(\omega^{\bullet}),$ $$F_{i'}^{(n)}[(P,\phi_P)]=\sum_{P'\in \mP_{\bfV\oplus \bfW^{\bullet}\oplus \bfV_{l\underline{i}}}/\mn_{\bfV\oplus \bfV_{l\underline{i}}},a^*P'\cong P'}g_{P,P'}(v)[(P',\phi_{P'})],$$ $g_{P,P'}(v)\in \bbN[v,v^{-1}].$ 
\end{remark}

\section{Applications}
In this section, we present several applications of our realization.
\subsection{Yang--Baxter equation}
We assume $N=3$ and fix three dominant weights $\lambda^{1},\lambda^{2}$ and $\lambda^{3}$. For any $\mk(\omega^{i}) \otimes_{\mo'} \mk(\omega^{j})$, we let $\mathbf{R}_{i,j}$ be the isomorphism defined by 
$$\mathbf{R}_{ij}: \mk(\omega^{i}) \otimes_{\mo'} \mk(\omega^{j}) \xrightarrow{ (\Delta^{',\vee})^{-1} } \mk(\omega^{i},\omega^{j} ) \xrightarrow{\Delta'} \mk(\omega^{j}) \otimes_{\mo'} \mk(\omega^{i}) ,$$
where $\mk(\omega^{i},\omega^{j} )$ is the Grothendieck group associated with the $2$-framed quiver.
These $\mathbf{R}_{ij}$ can naturally be regarded as isomorphisms between tensor products of the form $_{\mathcal{A}}\mk(\omega^i)\otimes _{\mathcal{A}}\mk(\omega^j)\otimes_{\mathcal{A}}\mk(\omega^k)$ such that $\{i,j,k\}=\{1,2,3\}$.

\begin{proposition}\label{Yang}
The following equality of $_{\mathcal{A}}\mathbf{U}$-isomorphisms holds $$\mathbf{R_{23}}\mathbf{R_{13}}\mathbf{R_{12}}=\mathbf{R_{12}}\mathbf{R_{13}}\mathbf{R_{23}}:_{\mathcal{A}}\mk(\omega^1)\otimes _{\mathcal{A}}\mk(\omega^2)\otimes_{\mathcal{A}}\mk(\omega^3)\rightarrow _{\mathcal{A}}\mk(\omega^3)\otimes _{\mathcal{A}}\mk(\omega^2)\otimes_{\mathcal{A}}\mk(\omega^1).$$
\end{proposition}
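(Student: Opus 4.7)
My plan is to deduce the Yang-Baxter relation by lifting both sides to the $3$-framed Grothendieck group $\mk(\omega^1,\omega^2,\omega^3)$, which will play the role of a universal intermediate realization. The strategy is parallel to the argument in \cite{fang2023lusztigsheavestensorproducts} for the symmetric case, but every step must be carried out at the level of the localized periodic category $\widetilde{\mq/\mn}$ so that the proof is insensitive to $a^{\ast}$.

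First, I would adapt Proposition \ref{OUmodiso} and Proposition \ref{Oiso2} to the $3$-framed setting to produce, for each $k \in \{1,2,3\}$, four $_{\mo'}\bfU$-linear isomorphisms:
\begin{equation*}
\Delta'_k, \Delta^{',\vee}_k : \mk(\omega^1,\omega^2,\omega^3) \xrightarrow{\ \sim\ } \mk(\omega^{k-\bullet}) \otimes_{\mo'} \mk(\omega^k) \quad \text{or} \quad \mk(\omega^k) \otimes_{\mo'} \mk(\omega^{k-\bullet}),
\end{equation*}
given by swapped/unswapped versions of the restriction functor $\mathbf{Res}^{\bfV\oplus \bfW^{\bullet}}_{\bfV_1\oplus \bfW^{k-\bullet},\, \bfV_2 \oplus \bfW^k}$. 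Iterating such maps, one obtains for every total ordering $\sigma$ of $\{1,2,3\}$ and every choice of a reduced word in the adjacent transpositions $\{(12),(23)\}$ an isomorphism $\Phi_\sigma : \mk(\omega^1,\omega^2,\omega^3) \xrightarrow{\sim} \mk(\omega^{\sigma(1)}) \otimes \mk(\omega^{\sigma(2)}) \otimes \mk(\omega^{\sigma(3)})$. Crucially, whenever two such iterated decompositions factor out the same framed label first, the remaining map is an iteration of the 2-framed $\Delta'$ and $\Delta^{',\vee}$, i.e. exactly matches some $\mathbf{R}_{ij}$ (acting on the factor not yet separated out), by the very definition of $\mathbf{R}_{ij}$.

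Next I would show that for every adjacent transposition $(i\, j)$ of the labels in positions $k,k+1$ of an ordering $\sigma$, the change of decomposition from $\Phi_\sigma$ to $\Phi_{(ij)\sigma}$ is implemented by inserting $\mathbf{R}_{ij}$ into the $k$-th and $(k+1)$-th tensor slots. This follows from two consecutive applications of a $3$-framed version of Proposition \ref{Oiso} and Proposition \ref{Oiso2} to factor out first one of $\omega^i, \omega^j$ and then the other: the composition $\Delta' \circ (\Delta^{',\vee})^{-1}$ appearing in the definition of $\mathbf{R}_{ij}$ is exactly what one reads off. Granting this, both compositions $\mathbf{R}_{23}\mathbf{R}_{13}\mathbf{R}_{12}$ and $\mathbf{R}_{12}\mathbf{R}_{13}\mathbf{R}_{23}$ arise as
\begin{equation*}
\Phi_{(3,2,1)} \circ \Phi_{(1,2,3)}^{-1}
\end{equation*}
because both long words $(12)(23)(12)$ and $(23)(12)(23)$ take $(1,2,3)$ to $(3,2,1)$, and each intermediate ordering appearing in the two reduced expressions gives a well-defined $\Phi_{\sigma}$ that cancels with the next step. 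Thus both sides coincide with the single composite map $\Phi_{(3,2,1)} \circ \Phi_{(1,2,3)}^{-1}$, which is the content of Yang-Baxter.

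The main obstacle is the \emph{coherence} underlying the last step: one must verify that all the intermediate $\Phi_{\sigma}$'s used in the two reduced words really do fit into the hexagon and that $\Phi_\sigma$ is, in the relevant sense, well-defined independently of the reduced decomposition. Concretely this boils down to a transitivity/associativity identity for the $3$-framed restriction functors under a nested change of parabolic, which on the geometric side is a standard fiber-product argument for the diagram defining $\mathbf{Res}$ (and its swapped/Verdier-dualized variants) but which must be upgraded to an isomorphism of objects in $\widetilde{\mq/\mn}$ — that is, the Fourier-Deligne and periodic-structure compatibilities of Section 2 must be propagated through the identification, as was done in the symmetric case in \cite{fang2023lusztigsheavestensorproducts}.
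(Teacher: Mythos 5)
Your proposal is correct and follows essentially the same route as the paper: both sides are lifted to the $3$-framed Grothendieck group $\mk(\omega^{\bullet})$, each $\mathbf{R}_{ij}$ is realized by factoring out the remaining framed label via the (swapped/dualized) restriction maps, and the whole hexagon commutes by coassociativity of Lusztig's restriction functor. The paper records exactly this as a single commutative diagram with $\mk(\omega^{\bullet})$ in the center mapping to all intermediate decompositions $(ij)k$ and $i(jk)$, so no further comparison is needed.
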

\begin{proof}
	Denote $_{\mathcal{A}}\mk(\omega^i)\otimes _{\mathcal{A}}\mk(\omega^j)\otimes_{\mathcal{A}}\mk(\omega^k)$ by $ijk$, and $_{\mathcal{A}}\mk(\omega^i,\omega^j)\otimes_{\mathcal{A}}\mk(\omega^k)$ by $(ij)k$. Let $\mk(\omega^{\bullet})$ be the Grothendieck group associated with the $3$-framed quiver, then we have the following commutative diagram.
\[
\xymatrix{
	&    & 123 \ar[dll]_{R_{12}} \ar[drr]^{R_{23}}  &     &  \\
	213 \ar[dd]_{R_{13}}	& (12)3 \ar[l] \ar[ur] &  & 1(23) \ar[ul] \ar[r] &132 \ar[dd]^{R_{13}} \\
	& 2(13) \ar[ul] \ar[dl] & \mk(\omega^{\bullet}) \ar[ul] \ar[l]\ar[dl] \ar[ur] \ar[r] \ar[dr] & (13)2 \ar[ur]\ar[dr] &  \\
	231 \ar[drr]_{R_{23}}	& (23)1 \ar[l] \ar[dr] &    & 3(12) \ar[dl] \ar[r] & 312 \ar[dll] \\
	&    & 321  &     &.
}
\]
All arrows in the diagram are induced by restriction functors, and the assertion follows from the coassociativity of the restriction functor.
\end{proof}

\subsection{Symmetrizable crystal structure arising from quiver variety}
In this subsection, we deduce a crystal structure on the set of $a$-fixed irreducible components of Nakajima's quiver varieties and show that it agrees with the crystal structure on the signed basis of $L(\lambda)$. This recovers one of the main results of \cite{SA}.

In this subsection, all varieties are over the field $\mathbb{C}$. Consider the quiver $Q = (I, H, \Omega)$ with an admissible automorphism $a$, and $I$-graded spaces $\bfW$ and $\bfV$, where their dimensions are given by vectors $v$ and $w$ as described above. Let $\bar{\Omega}$ be the set of arrows $\bar{h}$ that have the opposite orientation of $h \in \Omega$. 

\begin{definition}
	Forgetting the admissible automorphism $a$, we define $\bfM=\bfM_{\bfV,\bfW}$ to be the vector space 
	\[
	\bigoplus_{h \in \Omega}\Hom_{\bbC}(\bfV_{s(h)}, \bfV_{t(h)}) \oplus \bigoplus_{h \in \bar{\Omega}}\Hom_{\bbC}(\bfV_{s(h)}, \bfV_{t(h)}) \oplus \bigoplus_{i \in I}\Hom_{\bbC}(\bfV_i, \bfW_i) \oplus \bigoplus_{i \in I}\Hom_{\bbC}(\bfW_i, \bfV_i),
	\]
	with $\bfG = \bfG_{v} := \prod_{k \in I} \text{GL}(\bfV_k)$ acting on it naturally. The moment map is defined by $$\mu: \bfM \rightarrow \bigoplus_{k \in I}\Hom_{\bbC}(\bfV_k, \bfV_k).$$ For $B \in \bigoplus_{h \in \Omega}\Hom_{\bbC}(\bfV_{s(h)}, \bfV_{t(h)}) \oplus \bigoplus_{h \in \bar{\Omega}}\Hom_{\bbC}(\bfV_{s(h)}, \bfV_{t(h)})$, $j \in \bigoplus_{k \in I}\Hom_{\bbC}(\bfV_k, \bfW_k)$, and $i \in \bigoplus_{k \in I}\Hom_{\bbC}(\bfW_k, \bfV_k)$, the moment map can be denoted by:
	\[
	\mu(B, i, j) := \sum_{h \in \Omega \cup \bar{\Omega}} \varepsilon(h)B_hB_{\bar{h}} + ij,
	\]
	where $\varepsilon(h) = 1$ if $h \in \Omega$ and $\varepsilon(h) = -1$ if $h \in \bar{\Omega}$.
	
	The affine variety $\mu^{-1}(0)$ gives rise to the affine GIT quotient $\mm_0 = \mm_0(v, w) := \textbf{Spec} A(\mu^{-1}(0))^{\bfG}$. Fix the character $\chi$ of $\bfG$ given by $\chi(g) = \prod_{k \in I}\det(g_k)^{-1}.$ The quiver variety is defined by $\mm = \mm(v, w) := \textbf{Proj}\bigoplus_{n \geq 0} A(\mu^{-1}(0))^{\bfG, \chi^n}$. 
	
\end{definition} 

\begin{remark}
		When $\bf W=0$ , the variety $\mu^{-1}(0) \subset \bfM_{\bfV}$ is the moduli space of the preprojective algebra. Lusztig's nilpotent variety $\Lambda_{\bfV}$ is the subset of $\mu^{-1}(0) \subset \bfM_{\bfV}$ consisting of nilpotent representations of the preprojective algebra. (See \cite[Section 12]{L01}.)
\end{remark}

Lift the action of $\bfG$ to $\mu^{-1}(0) \times \bbC$ by $g(B, i, j, z) = (gBg^{-1}, gi, jg^{-1}, \chi(g)^{-1}z)$, then $\mu^{-1}(0)^s := \{(B, i, j) \in \mu^{-1}(0) \mid \overline{\bfG(B, i, j, z)} \cap (\mu^{-1}(0) \times \{0\}) = \emptyset, z \neq 0\}$. According to Nakajima, $\bfG$ acts freely on $\mu^{-1}(0)^s$, and the GIT quotient $\mu^{-1}(0)^s / \bfG$ coincides with the geometric points of $\mm$. There is a natural projective morphism $\pi: \mm \rightarrow \mm_0$, with $\pi^{-1}(0)$ being a Lagrangian subvariety denoted by $\mathfrak{L} = \mathfrak{L}(v, w)$, which is homotopy equivalent to $\mm$. 

Nakajima's quiver variety admits a stratification $$\mm(v, w) = \bigcup_{r \geq 0} \mm_{k, r}(v, w), k\in I,$$ where $$\mm_{k, r}(v, w) := \{(B, i, j) \in \mm(v, w) \mid \text{codim}(\text{im}\oplus_{h \in H, t(h) = k}B_h + \text{im} i_k) = r\}.$$ There is a natural smooth map, $p: \mm_{k, r}(v, w) \rightarrow \mm_{k, 0}(v - rk, w)$ whose fiber is a connected Grassmannian. This gives a bijection between the irreducible components of $\mathfrak{L}_{k, r}(v, w)$ and $\mathfrak{L}_{k, 0}(v - rk, w)$, where $\mathfrak{L}_{k, r}(v, w) = \mathfrak{L} \cap \mm_{k, r}(v, w)$. For an irreducible component $X \in \mathfrak{L}_{k, r}(v, w)$, the closure $\bar{X}$ of $X$ is an irreducible component of $\mathfrak{L}(v, w)$ and we denote $t_k(\bar{X}) = r$. Hence $p$ induces a bijection by $X \mapsto \overline{p(X \cap \mm_{k, r}(v, w) )} $ and we denote this bijection by $$\rho_{k, r}: \{X \in \text{Irr}(\mathfrak{L}(v, w)) \mid t_k(X) = r\} \rightarrow \{X \in \text{Irr}(\mathfrak{L}(v - rk, w)) \mid t_k(X) = 0\}.$$

The admissible automorphism $a$ acts naturally on $\mathfrak{L}(v,w)$ and induces a permutation, which is still denoted by $a$, on the set of its irreducible components. Let $\text{Irr}^{a}\mathfrak{L}(v,w)$ be the set of $a$-fixed irreducible components. For any irreducible $X \in \text{Irr}^{a}\mathfrak{L}(v,w)$, if $t_{k}(X)=r$, then $t_{k'}=r$ for any $k' \in \underline{k}$. Then the map $\rho_{\underline{k}, r}(X) = \prod\limits_{k \in \underline{k}}\rho_{k, r}(X)$ is well-defined. Since each $\rho_{k, r}$ is bijective, it follows that $\rho_{\underline{k}, r}(X)$ is also fixed by $a$. Hence $\rho_{\underline{k}, r}$ is a bijection between the corresponding sets of $a$-fixed irreducible components.
 
These maps $\rho_{\underline{k}, r}$ induce a crystal operator on the set $\bigcup\limits_{v}\text{Irr}^{a}\mathfrak{L}(v,w)$ in the following way: 
\begin{equation*}
	\begin{split}
		\tilde{e}_{\underline{k}}(X) = 
		\begin{cases} 
			\rho^{-1}_{\underline{k}, r-1}\rho_{\underline{k}, r}(X) & \text{if } t_{k}(X)=r>0,\\ 
			0 & \text{otherwise};
		\end{cases}
	\end{split}
\end{equation*}
and
\begin{equation*}
		\tilde{f}_{\underline{k}}(X) = 	\rho^{-1}_{\underline{k}, r+1}\rho_{\underline{k}, r}(X), \text{if } t_{k}(X)=r.
\end{equation*}
(For the detailed definition of the crystal operators and crystal structures, see \cite{crystal}. )

For $X \in \text{Irr}^{a}\mathfrak{L}(v,w)$, we also set $\epsilon_{\underline{k}}(X)=t_{k}(X)$ and $\varphi_{\underline{k}}(X)=t_{k}(X)+ \langle h_{\underline{k}},wt(X) \rangle$, then we have the following theorem.

\begin{theorem}\label{4.15}
	Given a symmetrizable generalized Cartan matrix $C=(c_{ij})_{i,j\in I'}$, let $Q=(I,H,\Omega)$ be an associated quiver with an admissible automorphism $a$. We identify $\underline{I}$ with $I'$, then the crystal structure of $(\bigcup\limits_{v \in \mathbb{N}I^{a}}\text{Irr}(\mathfrak{L}(v, w))^a,\tilde{f}_{\underline{k}},\tilde{e}_{\underline{k}},\epsilon_{\underline{k}},\varphi_{\underline{k}} ,\underline{k} \in I' )$ is isomorphic to the crystal structure $B(\lambda)$ of the irreducible highest weight module $L(\lambda)$ of the quantum group associated to $C$. 
\end{theorem}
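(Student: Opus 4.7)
The plan is to compare the two crystal structures through the categorification built in Section 4. In the symmetric case (i.e.\ $a$ trivial), the prequel \cite{fang2023lusztigsheavesintegrablehighest} establishes a canonical bijection
\[
\Phi_{\bfV}: \{ L \in \mq_{\bfV,\bfW}/\mn_{\bfV} \mid L \text{ nonzero simple perverse} \} \longleftrightarrow \mathrm{Irr}(\mathfrak{L}(v,w)),
\]
obtained by sending $L$ to an irreducible component of the characteristic cycle of its shifted Fourier transform, and this bijection intertwines the sheaf-theoretic stratification $t_i$ with the Nakajima stratification $t_k$ and sends $\pi_{i,r}$ to $\rho_{k,r}$. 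I would first record (or quickly verify along the lines of \cite{fang2023lusztigsheavesintegrablehighest}) that $\Phi_{\bfV}$ is defined purely from the moduli data of the quiver $Q$ without reference to the automorphism $a$, and hence is automatically $a$-equivariant: the diagram consisting of $a^{\ast}$ on the sheaf side and the $a$-action on $\mathfrak{L}(v,w)$ commutes with $\Phi_{\bfV}$.

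Second, I would take $a$-fixed points of $\Phi_{\bfV}$. By Proposition~\ref{4.5}, for $\nu \in \mathbb{N}I^a$ every nonzero simple $L$ with $a^{\ast}L \cong L$ carries a (signed) isomorphism $\phi$, and the resulting set $\{[(L,\phi)]\}$ is the signed $\mathcal{A}$-basis $\mathcal{B}(\lambda)$ of ${}_{\mathcal{A}}L(\lambda)$. On the variety side, taking $a$-invariants of $\Phi_{\bfV}$ yields a bijection
\[
\Phi_{\bfV}^{a}: \{[(L,\phi)]\}/\{\pm 1\} \longleftrightarrow \mathrm{Irr}^{a}(\mathfrak{L}(v,w)),
\]
because $a$ acts on both sides compatibly and the orbit structure is preserved. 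This gives a set-level identification between $\mathcal{B}(\lambda)$ (mod signs) and $\bigcup_{v\in \mathbb{N}I^{a}}\mathrm{Irr}^{a}(\mathfrak{L}(v,w))$.

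Third, I would match the crystal data. The weight grading on both sides is given by the dimension vector $\nu$, read off via the identification $\underline{I}\cong I'$, so weights agree. For the Kashiwara operators, the key input is Corollary~\ref{ind1}: the map $\pi_{\underline{i},r}$ is a bijection between $a$-fixed simples with $t_i = r$ and $a$-fixed simples with $t_i = 0$ in smaller dimension, compatible with $\mf_{\underline{i}}^{(r)}$. Under $\Phi_{\bfV}^{a}$, the stratum $t_i(L)=r$ corresponds (stratum by stratum, since $a$ permutes the strata $\bfE^{0}_{\bfV,\bfW,i}$ within one orbit) to the stratum $\mathfrak{L}_{k,r}$ with $k\in \underline{i}$. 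Hence $\pi_{\underline{i},r}$ is transported to the product $\prod_{k\in\underline{i}}\rho_{k,r}=\rho_{\underline{k},r}$, and the operators $\tilde e_{\underline{k}}, \tilde f_{\underline{k}}$ defined on $\mathrm{Irr}^{a}(\mathfrak{L})$ agree with the ones produced on $\mathcal{B}(\lambda)$ by the recursion $\pi_{\underline{i},r}^{-1}\pi_{\underline{i},r\pm 1}$. Finally, the values of $\epsilon_{\underline{k}}$ and $\varphi_{\underline{k}}$ match because $t_k(X)$ equals $t_i(L)$ for any $i\in\underline{k}$, and the weight identification forces the $\varphi_{\underline{k}}$ values to coincide.

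To conclude, Theorem~\ref{4.6} gives a canonical $\mathbf{U}$-module isomorphism $\mk(\omega)\cong L(\lambda)$ under which $\{[(L,\phi)]\}$ realizes the signed crystal basis of $L(\lambda)$; since $\mathcal{B}(\lambda)$ is by definition this crystal basis, $\Phi^{a}$ is the desired crystal isomorphism. The main obstacle I anticipate is the stratum-by-stratum $a$-equivariance in the second paragraph: one must check that the irreducible components of $\mathfrak{L}_{k,r}$ for $k$ varying over a single $a$-orbit are permuted by $a$ in exactly the way that $a^{\ast}$ permutes the simple perverse summands of $\mf_{i}^{(r)} K$ for $i \in \underline{k}$. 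This reduces to a careful bookkeeping on the Grassmannian/flag fibers appearing in the diagrams of Section~3.3, combined with the uniqueness clause in Proposition~\ref{4.5} and Corollary~\ref{ind1}, but it is the step where the symmetrizable case genuinely differs from the symmetric one and deserves the most scrutiny.
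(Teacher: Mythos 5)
Your proposal follows the same overall route as the paper: invoke the bijection $\Phi$ of \cite[Theorem 4.20]{fang2023lusztigsheavesintegrablehighest} between nonzero simple perverse sheaves and irreducible components of $\mathfrak{L}(v,w)$, restrict to $a$-fixed points, and match the recursively defined operators. However, there is a genuine gap in your concluding paragraph: you assert that the signed basis $\{[(L,\phi)]\}$ ``realizes the signed crystal basis of $L(\lambda)$'' and then treat the crystal isomorphism to $B(\lambda)$ as automatic. Theorem~\ref{4.6} only gives a $\mathbf{U}$-module isomorphism $\mk(\omega)\cong L(\lambda)$; it does not say that the combinatorial operators $\tilde e_{\underline{i}}=\pi^{-1}_{\underline{i},r-1}\pi_{\underline{i},r}$, $\tilde f_{\underline{i}}=\pi^{-1}_{\underline{i},r+1}\pi_{\underline{i},r}$ you put on the set $\mathcal{B}'$ of signed-basis elements (mod $\pm$) coincide with Kashiwara's crystal operators on $B(\lambda)$. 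That identification is precisely the content of Lusztig's \cite[Theorem 18.3.8]{lusztig2010introduction}, which the paper explicitly invokes and which must be cited here; without it, you have only shown that the two recursion-defined operator systems agree under $\Phi^a$, not that either one is the crystal $B(\lambda)$.

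Regarding your anticipated ``main obstacle'': you worry about a stratum-by-stratum bookkeeping of how $a$ permutes components of $\mathfrak{L}_{k,r}$ versus summands of $\mf_i^{(r)}K$. The paper's argument circumvents this. It does not verify any fine permutation compatibility; rather it uses that on $a$-fixed objects one has $\pi_{\underline{i},r}=\prod_{i\in\underline{i}}\pi_{i,r}$ and $\rho_{\underline{i},r}=\prod_{i\in\underline{i}}\rho_{i,r}$ (Corollary~\ref{ind1} and its geometric analogue), that $\Phi$ intertwines $\pi_{i,r}$ and $\rho_{i,r}$ individually, hence also the products, and that $\Phi$ being canonical (constructed purely from the moduli data without reference to $a$) commutes with $a$ and thus restricts to the $a$-fixed loci. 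The uniqueness clauses in Proposition~\ref{4.5} and Corollary~\ref{ind1} then force the $a$-fixed bijection to land in the correct target. No fiber-by-fiber comparison on the Grassmannian/flag diagrams of Section~3.3 is required. So the step you flag as ``where the symmetrizable case genuinely differs'' is in fact handled at a softer level, and your effort would be better spent closing the gap on the $B(\lambda)$ identification.

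One further small inaccuracy: you describe $\Phi$ as sending $L$ to a component of the characteristic cycle of a shifted Fourier transform. That characteristic-cycle description belongs to the bijection $\Phi_{Q^{(2)}}$ used in the proof of Theorem~\ref{5.7} (citing \cite{fang2025lusztigsheavescharacteristiccycles}); the bijection relevant here is the one from \cite[Theorem 4.20]{fang2023lusztigsheavesintegrablehighest}, which is constructed differently.
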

\begin{proof}
	Let $\mathcal{B}$ be the set of $[(L,\phi)]$, where $(L,\phi)\cong \mathbf{D}((L,\phi))$. We say $(L,\phi)$ is equivalent to $(L',\phi')$ if $[(L,\phi)]=\pm [(L',\phi')]$, and denote the set of equivalence classes of $\mathcal{B}$ by $\mathcal{B}'$. A crystal structure on $\mathcal{B}'$ can be described as follows:
	\begin{equation*}
		\begin{split}
			\tilde{e}_{\underline{i}}([(L,\phi)] ) = 
			\begin{cases} 
				\pi^{-1}_{\underline{i}, r-1}\pi_{\underline{i}, r}([(L,\phi)]) & \text{if } t_{i}(L)=r>0,\\ 
				0 & \text{otherwise};
			\end{cases}
		\end{split}
	\end{equation*}
	and
	\begin{equation*}
		\tilde{f}_{\underline{i}}([(L,\phi)]) = 	\pi^{-1}_{\underline{i}, r+1}\pi_{\underline{i}, r}([(L,\phi)]), \text{if } t_{i}(L)=r.
	\end{equation*}
Applying \cite[Theorem 18.3.8]{lusztig2010introduction} to the signed basis of ${_{\mathcal{A}}\mathcal{K}(\nu,\omega)} \cong {_{\mathcal{A}}L(\lambda)}$, it follows that there is an isomorphism of crystals $\mathcal{B}' \cong B(\lambda) $. Hence it suffices to show $\bigcup\limits_{v \in \mathbb{N}I^{a}}\text{Irr}(\mathfrak{L}(v, w))^a$ is isomorphic to $\mathcal{B}'$ as crystals.

 	Denote $\tilde{e}_{\underline{i}}^{max} ([(L,\phi)])=\tilde{e}_{\underline{i}}^{r} ([(L,\phi)])= [\pi_{\underline{i}, r}((L,\phi))] $ for those pairs with $t_{i}(L)=r$. Similarly, we can define $\tilde{e}_{\underline{i}}^{max}(X)=\rho_{\underline{i}, r}(X) $ for an irreducible component $X$ with $t_{i}(X)=r$. By \cite[Theorem 4.20]{fang2023lusztigsheavesintegrablehighest}, there is a bijection $\Phi$ from the set $\mathcal{P}_{\bfW}$ of nonzero simple perverse sheaves in $\coprod\limits_{\bfV}\mq_{\bfV, \bfW}/\mn_{\bfV}$ to the set $\bigcup\limits_{v \in \mathbb{N}I^{a}}\text{Irr}(L(v, w))$ of irreducible components such that $\pi_{i,r}(L)=K$ if and only if $\rho_{i,r}(\Phi(L))=\Phi(K).$
 	
 	If we forget the isomorphisms  $\phi$, then $\mathcal{B}'$ is in bijection with the set of $a^*-$invariant nonzero simple perverse sheaves in $ \coprod\limits_{\bfV}\mq_{\bfV, \bfW}/\mn_{\bfV}$ and 
 	$\pi_{\underline{i},r}(L) =\prod\limits_{i \in \underline{i}}\pi_{i,r} (L)$ for any simple perverse sheaf $L$ with $a^{\ast}L \cong L$. In particular, $\Phi$ restricts to a bijection $\Phi^{a}$ between the $a$-fixed points of $\mathcal{P}_{\bfW}$ and those of $\bigcup\limits_{v \in \mathbb{N}I^{a}}\text{Irr}(L(v, w))$, such that $\pi_{\underline{i},r}(L)=K$ if and only if $\rho_{\underline{i},r}(\Phi^{a}(L))=\Phi^{a}(K).$ Since the $a$-fixed points of $\mathcal{P}_{\bfW}$ are naturally in bijection with $\mathcal{B}'$ and the $a$-fixed points of $\bigcup\limits_{v \in \mathbb{N}I^{a}}\text{Irr}(L(v, w))$ are $\bigcup\limits_{v \in \mathbb{N}I^{a}}\text{Irr}(L(v, w))^{a}$, we obtain a bijection $\Phi^{a}: \mathcal{B}' \rightarrow \bigcup\limits_{v \in \mathbb{N}I^{a}}\text{Irr}(L(v, w))^{a} $ which intertwines $\pi_{\underline{i},r} $ and $\rho_{\underline{i},r}$. Since the crystal operators $\tilde{e}_{\underline{i}}$ and $\tilde{f}_{\underline{i}}$ are uniquely determined by $\pi_{\underline{i},r} $ and $\rho_{\underline{i},r}$, it follows that $\Phi^{a}: \mathcal{B}' \rightarrow \bigcup\limits_{v \in \mathbb{N}I^{a}}\text{Irr}(L(v, w))^{a} $ is an isomorphism of crystals in the sense of Kashiwara \cite{crystal}.
 	 
\end{proof}

 Choose the dimension vector of $\bfW$ sufficiently large so that $\lambda=\sum\limits_{i' \in I'}\omega_{i'}\beta_{i'}$ satisfies $\omega_{i'}>\nu_{i}$ for each $i$ in the $a$-orbit corresponding to $i'$ and so that $B(\lambda)_{\nu}$ is in bijection with $B(\infty)_{\nu}$ for a fixed weight $\nu$. Then we obtain the following corollary, which is another main result of \cite{SA}.
\begin{corollary}
	Given a symmetrizable Cartan matrix $C=(c_{ij})_{i,j\in I'}$, let $Q=(I,H,\Omega)$ be an associated quiver with an admissible automorphism $a$, let $\bigcup\limits_{\bfV}\text{Irr}(\Lambda_{\bf V})^{a}$ be the set of $a$-fixed irreducible components of Lusztig's nilpotent variety $\coprod\limits_{\bfV}\Lambda_{\bf V}$ of the preprojective algebra of $Q$. Then $\bigcup\limits_{\bfV}\text{Irr}(\Lambda_{\bf V})^{a}$ carries a natural crystal structure and is canonically isomorphic to $B(\infty)$ of the quantum group associated to $C$.
\end{corollary}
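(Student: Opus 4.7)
The plan is to deduce this corollary from Theorem \ref{4.15} by a stabilization argument in $\bfW$, using the known comparison between Lusztig's nilpotent variety $\Lambda_\bfV$ and Nakajima's Lagrangian $\mathfrak{L}(v,w)$ for $w$ large. First, I would define crystal operators directly on $\bigcup_{\bfV}\text{Irr}(\Lambda_\bfV)^a$ by mimicking the construction in Theorem \ref{4.15}: for each $\underline{k}\in\underline{I}$, stratify $\Lambda_\bfV$ by the codimension function $t_k$ measuring the codimension of $\sum_{h\in H, t(h)=k}\operatorname{im} B_h$ in $\bfV_k$; this is $a$-invariant when $k$ runs through an $a$-orbit (the same value on each vertex of $\underline{k}$), and gives rise to bijections $\rho^\Lambda_{\underline{k},r}$ by the usual Grassmannian fibration argument exactly parallel to Nakajima's $\rho_{\underline{k},r}$. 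From these, define $\tilde e_{\underline{k}}, \tilde f_{\underline{k}}, \varepsilon_{\underline{k}}, \varphi_{\underline{k}}$ on $\bigcup_{\bfV}\text{Irr}(\Lambda_\bfV)^a$ in the same manner.

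Next, I would fix a weight $\nu\in\bbN I^a$ and choose $\bfW$ whose dimension vector $\omega\in(\bbN I)^a$ is so large that $\omega_{i'}>\nu_i$ for every $i'\in I'$ and every $i$ in the orbit $\underline{i}$, and large enough that every crystal operator relevant to the weight space under consideration stays inside the range controlled by $\omega$. For such $\bfW$, there is a classical identification (going back to Nakajima, and used already in the proof of \cite[Theorem 4.20]{fang2023lusztigsheavesintegrablehighest}) between the irreducible components of $\Lambda_\bfV$ and those of $\mathfrak{L}(\nu,\omega)$: sending a nilpotent representation $B$ to the quadruple $(B,i,j)$ with $j=0$ and $i$ being the tautological framing embeds $\Lambda_\bfV$ as (the closure of) a dense open stratum of $\mathfrak{L}(\nu,\omega)$, and the resulting bijection intertwines $t_k$ and the $\rho$ maps on both sides. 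Since $a$ acts compatibly on both sides, this bijection restricts to a bijection between $a$-fixed irreducible components.

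The third step is to transport the crystal structure. By Theorem \ref{4.15}, $\text{Irr}(\mathfrak{L}(\nu,\omega))^a\cong B(\lambda)_\nu$ as crystals, where $\lambda=\sum_{i'}\omega_{i'}\beta_{i'}$. For $\omega$ chosen as above, the canonical projection $B(\infty)\twoheadrightarrow B(\lambda)$ restricts to a bijection on the weight space of weight $\lambda-\nu$, because no Kashiwara operator of interest can leave $B(\lambda)_\nu$ within this weight range. Combining the two bijections yields a crystal isomorphism $\text{Irr}(\Lambda_\bfV)^a\cong B(\infty)_{-\nu}$; by construction it is compatible with all $\tilde e_{\underline{k}},\tilde f_{\underline{k}}$, and it is independent of the choice of sufficiently large $\omega$ because of the stabilization of $B(\lambda)\to B(\infty)$. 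Assembling over $\nu$ gives the required crystal isomorphism $\bigcup_{\bfV}\text{Irr}(\Lambda_\bfV)^a\cong B(\infty)$.

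The main obstacle is the second step: establishing the $a$-equivariant identification between $\text{Irr}(\Lambda_\bfV)$ and $\text{Irr}(\mathfrak{L}(\nu,\omega))$ intertwining the stratifications. In the symmetric case this is part of the standard dictionary, but here one must check that the choices embedding $\Lambda_\bfV$ into the $j=0$ locus of Nakajima's variety can be made $a$-equivariantly, that the dense stratum producing each irreducible component is preserved by $a$, and that the Grassmannian bundles giving $\rho_{\underline{k},r}$ and $\rho^\Lambda_{\underline{k},r}$ match up under the embedding. Once this compatibility is in place, everything else is a formal consequence of Theorem \ref{4.15} and the stabilization $B(\lambda)_\nu\simeq B(\infty)_\nu$.
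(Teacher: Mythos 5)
Your proposal follows essentially the same route as the paper: invoke Nakajima's bijection (the paper cites \cite[Lemma~5.8]{N94}) between $\text{Irr}(\mathfrak{L}(v,w))$ and $\text{Irr}(\Lambda_{\bfV})$ for $\bfW$ large via the projection $(B,i,0)\mapsto B$, observe it is compatible with the $t_k$-stratifications and hence with the crystal operators, restrict to $a$-fixed components, and then apply Theorem~\ref{4.15} together with the stabilization $B(\lambda)_\nu\cong B(\infty)_\nu$. Your writeup is more detailed (in particular in spelling out the $a$-equivariance of the identification and the stabilization argument), but the underlying argument is the same; one small inaccuracy is that you describe the comparison map as an embedding $\Lambda_{\bfV}\hookrightarrow \mathfrak{L}(v,w)$ via a ``tautological framing,'' whereas the map actually used is the forgetful projection from the $j=0$ locus to $\Lambda_{\bfV}$, which for $\bfW$ large induces the bijection on irreducible components.
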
 
\begin{proof}
	Recall that by \cite[Lemma 5.8]{N94}, the set of irreducible components of $L(v, w)$ is in bijection with a subset of irreducible components of $\Lambda_{\bf V}$ via the projection $(B,i,0) \mapsto B$. Observe that if $\bfW$ is large enough, the map $(B,i,0) \mapsto B$ induces a bijection from $\text{Irr}(L(v, w))$ to $\text{Irr}(\Lambda_{\bf V})$, which is compatible with the crystal operator. Restricting this bijection to the $a$-fixed irreducible components completes the proof.
\end{proof}
\begin{remark}
	When the automorphism $a$ is trivial, i.e, the Cartan matrix is symmetric, our results also agree with the main result of \cite{KS} and \cite{Saito2002}.
\end{remark}

\subsection{Symmetrizable crystal structure of tensor products}
With the notation of Section 5.2, fix a decomposition $\mathbf{W}=\mathbf{W}^{1} \oplus \mathbf{W}^{2}$ such that their dimension vectors $|\mathbf{W}^1|=\omega^1$ and $|\mathbf{W}^2|=\omega^2$, then $\omega=\omega^1+\omega^2$. Recall that the group $\mathrm{GL}_{\mathbf{W}}$ acts on $\mathbf{M}$ by conjugation. This induces a $\mathrm{GL}_{\mathbf{W}}$-action on $\mathcal{M}$. Then the one-parameter subgroup 
\begin{align*}
	\lambda:\mathbb{G}_m&\rightarrow \mathrm{GL}_{\mathbf{W}^1}\times \mathrm{GL}_{\mathbf{W}^2}\subset \mathrm{GL}_{\mathbf{W}}\\
	t&\mapsto ({\mathrm{Id}}_{\mathbf{W}^{1}},t\, {\mathrm{Id}}_{\mathbf{W}^{2}})
\end{align*}
acts on $\mathcal{M}$. Nakajima's (Lagrangian) tensor product variety is defined to be
$$\tilde{\mathfrak{Z}}(\nu,\omega)=\{ [x,y,z] \in \mathcal{M}(\nu,\omega)\mid \lim_{t \rightarrow 0} \lambda(t). [x,y,z] \in \bigsqcup_{\nu'+\nu''=\nu} \mathfrak{L} (\nu',\omega^{1}) \times \mathfrak{L}(\nu'',\omega^{2}) \}.$$

Similarly, the morphism $p$ also induces a bijection 
 $$\rho_{k, r}: \{X \in \text{Irr}(\tilde{\mathfrak{Z}}(\nu,\omega) \mid t_k(X) = r\} \rightarrow \{X \in \text{Irr}(\tilde{\mathfrak{Z}}(\nu-rk,\omega)) \mid t_k(X) = 0\}.$$
 
 The admissible automorphism $a$ acts naturally on $\tilde{\mathfrak{Z}}(\nu,\omega)$ and induces a permutation, which is still denoted by $a$, on the set of its irreducible components. Let $\text{Irr}^{a}\tilde{\mathfrak{Z}}(\nu,\omega)$ be the set of $a$-fixed irreducible components. For any irreducible $X \in \text{Irr}^{a}\tilde{\mathfrak{Z}}(\nu,\omega)$, if $t_{k}(X)=r$, then $t_{k'}=r$ for any $k' \in \underline{k}$. Then the map $\rho_{\underline{k}, r}(X) = \prod\limits_{k \in \underline{k}}\rho_{k, r}(X)$ is well-defined. Since each $\rho_{k, r}$ is bijective, it follows that $\rho_{\underline{k}, r}(X)$ is also fixed by $a$. Hence $\rho_{\underline{k}, r}$ is a bijection between the corresponding sets of $a$-fixed irreducible components.
 
 These maps $\rho_{\underline{k}, r}$ induce crystal operators on the set $\bigcup\limits_{v}\text{Irr}^{a}\tilde{\mathfrak{Z}}(\nu,\omega)$ in the following way: 
 \begin{equation*}
 	\begin{split}
 		\tilde{e}_{\underline{k}}(X) = 
 		\begin{cases} 
 			\rho^{-1}_{\underline{k}, r-1}\rho_{\underline{k}, r}(X) & \text{if } t_{k}(X)=r>0,\\ 
 			0 & \text{otherwise};
 		\end{cases}
 	\end{split}
 \end{equation*}
 \begin{equation*}
 	\tilde{f}_{\underline{k}}(X) = 	\rho^{-1}_{\underline{k}, r+1}\rho_{\underline{k}, r}(X), \text{if } t_{k}(X)=r;
 \end{equation*}
 $$\epsilon_{\underline{k}}(X)=t_{k}(X);$$ 
 and $$\varphi_{\underline{k}}(X)=t_{k}(X)+ \langle h_{\underline{k}},wt(X) \rangle.$$ Then we have the following theorem.

 \begin{theorem}\label{5.7}
 	Given a symmetrizable generalized Cartan matrix $C=(c_{ij})_{i,j\in I'}$, let $Q=(I,H,\Omega)$ be an associated quiver with an admissible automorphism $a$. Identifying $\underline{I}$ with $I'$, the crystal structure of $(\bigcup\limits_{v \in \mathbb{N}I^{a}}\text{Irr}(\tilde{\mathfrak{Z}}(\nu,\omega))^a,\tilde{f}_{\underline{k}},\tilde{e}_{\underline{k}},\epsilon_{\underline{k}},\varphi_{\underline{k}} ,\underline{k} \in I' )$ is isomorphic to the crystal structure $B(\lambda_{1})\otimes B(\lambda_2)$ of the tensor product $L(\lambda_{1}) \otimes L(\lambda_2)$. Here $\lambda_1$ and $\lambda_2$ are the dominant weights of the quantum group associated to $C$ determined by $\mathbf{W}^{1}$ and $\mathbf{W}^{2}$ respectively. 
 \end{theorem}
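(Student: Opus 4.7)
The strategy is to mirror the proof of Theorem \ref{4.15}, replacing the $N=1$ ingredients by their $N=2$ counterparts established in Section \ref{Grothendieck g}. Let $\mathcal{B}$ denote the set of classes $[(L,\phi)]$ in $\mk(\omega^\bullet)$ with $(L,\phi)\cong \mathbf{D}(L,\phi)$ and $a^*L\cong L$ (up to the $\pm 1$ ambiguity from Proposition \ref{4.5}); let $\mathcal{B}'$ denote the quotient of $\mathcal{B}$ by the equivalence $[(L,\phi)]\sim\pm[(L',\phi')]$. Exactly as in Theorem \ref{4.15}, the bijections $\pi_{\underline{i},r}$ on $a$-fixed simple perverse sheaves in $\mq_{\bfV,\bfW^\bullet}/\mn_\bfV$ define Kashiwara-type operators
\[
\tilde{e}_{\underline{i}}([(L,\phi)])=\pi^{-1}_{\underline{i},r-1}\pi_{\underline{i},r}([(L,\phi)]) \text{ if } t_i(L)=r>0,\quad \tilde{f}_{\underline{i}}([(L,\phi)])=\pi^{-1}_{\underline{i},r+1}\pi_{\underline{i},r}([(L,\phi)]),
\]
together with $\epsilon_{\underline{i}}([(L,\phi)])=t_i(L)$ and the weight coming from the grading, making $\mathcal{B}'$ into a crystal.

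Next I would identify this crystal with $B(\lambda_1)\otimes B(\lambda_2)$. By the signed-basis theorem for $N=2$ proved at the end of Section \ref{Grothendieck g}, the image of $\mathcal{B}'$ under $\Delta'\circ(\eta^{\lambda^2}\otimes\eta^{\lambda^1})$ (or $\Delta^{',\vee}$) is the signed canonical basis $\{b_1\diamond b_2\}$ of the tensor product constructed by Lusztig and Bao--Wang. The operators $\pi_{\underline{i},r}$ on $\mathcal{B}'$ are, by the discussion around Corollary \ref{ind1} and Proposition \ref{bilinear}, precisely the ``$\tilde e_{\underline{i}}^{\max}$'' operators used in Lusztig's abstract axiomatic treatment, so by the tensor-product analogue of \cite[Theorem~18.3.8]{lusztig2010introduction} (i.e. the fact that the signed canonical basis carries the tensor-product crystal structure) one obtains an isomorphism of crystals $\mathcal{B}'\cong B(\lambda_1)\otimes B(\lambda_2)$.

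The remaining step is to match $\mathcal{B}'$ with $\bigcup_{\nu\in\mathbb{N}I^a}\mathrm{Irr}(\tilde{\mathfrak{Z}}(\nu,\omega))^a$. The analogue of \cite[Theorem~4.20]{fang2023lusztigsheavesintegrablehighest} for the $2$-framed quiver, proved in \cite{fang2023lusztigsheavestensorproducts}, provides a bijection $\Phi$ from the set of nonzero simple perverse sheaves in $\coprod_\bfV \mq_{\bfV,\bfW^\bullet}/\mn_\bfV$ to $\bigcup_\nu\mathrm{Irr}(\tilde{\mathfrak{Z}}(\nu,\omega))$ which intertwines $\pi_{i,r}$ with $\rho_{i,r}$. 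Because the automorphism $a$ acts compatibly on both sides—commuting with the geometric correspondences defining $\pi_{i,r}$ and $\rho_{i,r}$—restricting $\Phi$ to $a$-fixed objects yields a bijection $\Phi^a:\mathcal{B}'\to\bigcup_\nu\mathrm{Irr}(\tilde{\mathfrak{Z}}(\nu,\omega))^a$. Since $a$ is admissible the vertices of an orbit $\underline{i}$ carry no arrows between them, so the commuting operators $\pi_{i,r}$ (resp.\ $\rho_{i,r}$) for $i\in\underline{i}$ compose to $\pi_{\underline{i},r}=\prod_{i\in\underline{i}}\pi_{i,r}$ (resp.\ $\rho_{\underline{i},r}$), and $\Phi^a$ intertwines $\pi_{\underline{i},r}$ with $\rho_{\underline{i},r}$. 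This forces $\Phi^a$ to respect $\tilde e_{\underline{i}},\tilde f_{\underline{i}},\epsilon_{\underline{i}},\varphi_{\underline{i}}$, completing the proof.

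The main technical obstacle is showing that the crystal structure on $\mathcal{B}'$ defined via $\pi_{\underline{i},r}$ really does correspond under $\Delta'$ to the Lusztig--Bao--Wang tensor-product crystal on $\{b_1\diamond b_2\}$; i.e.\ that the ``highest-weight stratification'' by $t_i$ on $\mq_{\bfV,\bfW^\bullet}/\mn_\bfV$ is exactly the one encoded by tensor-product Kashiwara operators. This is handled by combining the contravariance of the geometric pairing (Proposition \ref{bilinear}), its compatibility with $\Delta'$ (Proposition \ref{preserve}), the $\Psi$-invariance of $\mathbf{D}$-fixed classes (Proposition \ref{psi}), and the almost-orthogonality/integrality statements from Section 4.6, exactly as in the symmetric case treated in \cite{fang2023lusztigsheavestensorproducts}—the periodic-functor bookkeeping of Section 3 ensures that all these arguments descend from simple perverse sheaves to the $a$-equivariant objects $(L,\phi)$.
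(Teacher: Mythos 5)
Your proposal is correct and follows essentially the same route as the paper: a bijection between nonzero simple perverse sheaves and irreducible components of the tensor product variety intertwining $\pi_{i,r}$ with $\rho_{i,r}$, identification of the resulting operators with the Kashiwara tensor-product crystal operators (the paper delegates this to a cited lemma of Fang--Lan rather than re-running the pairing/$\Psi$-invariance arguments you sketch), and then restriction to $a$-fixed objects exactly as in Theorem \ref{4.15}.
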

 
 \begin{proof}
 	We have the following facts.
 	
 	(1) 	In \cite{fang2025lusztigsheavescharacteristiccycles}, there is a bijection $\Phi=\Phi_{Q^{(2)}}$ between the set $\bigcup\limits_{v \in \mathbb{N}}\text{Irr}(\tilde{\mathfrak{Z}}(\nu,\omega))$ and the set of nonzero simple perverse sheaves in $\mathcal{Q}_{\mathbf{V},\mathbf{W}^{\bullet}}/ \mathcal{N}_{\mathbf{V}}$ such that $\Phi$ intertwines the bijections $\rho_{i,r}$ and $\pi_{i,r}$.
 	
 	(2) The bijections $\pi_{i,r}$ induce operators 
 		\begin{equation*}
 			\begin{split}
 				\tilde{e}_{i}(L) = 
 				\begin{cases} 
 					\pi^{-1}_{i, r-1}\pi_{i, r}(L) & \text{if } t_{i}(L)=r>0,\\ 
 					0 & \text{otherwise};
 				\end{cases}
 			\end{split}
 		\end{equation*}
 		and
 		\begin{equation*}
 			\tilde{f}_{i}(L) = 	\pi^{-1}_{i, r+1}\pi_{i, r}(L), \text{if } t_{i}(L)=r.
 		\end{equation*}
 		By \cite[Lemma 5.4]{10.1093/imrn/rnaf360}, the operators defined above can be identified with Kashiwara's crystal operators on the tensor product crystals for the symmetric quantum group associated to $Q$ via the canonical isomorphism $\Delta$. (That is, we forget the automorphism $a$.)

 	Restricting (1) and (2) to the $a$-fixed subsets of $\text{Irr}(\tilde{\mathfrak{Z}}(\nu,\omega))$ and $\mathcal{P}_{\mathbf{V},\mathbf{W}^{\bullet}} \backslash (\mathcal{N}_{\mathbf{V}} \cap \mathcal{P}_{\mathbf{V},\mathbf{W}^{\bullet}})$, the proof follows by the same argument as in Theorem \ref{4.15}. 	
 \end{proof}
\subsection*{Funding}
Y. Lan, Y. Wu and J. Xiao are supported by National Natural Science Foundation of China [Grant No. 12031007] and [Grant No. 12471030].
\subsection*{Acknowledgement} This paper is a continuation of a collaborative work \cite{fang2023lusztigsheavesintegrablehighest} with Jiepeng Fang, and we are very grateful to him for many helpful discussions.
\end{spacing}

\bibliography{ref}

\end{document}